\newcommand{\p}{\mathfrak p}
\newcommand{\Z}{\mathbb Z}
\newcommand{\N}{\mathbb N}
\newcommand{\Q}{\mathbb Q}
\newcommand{\R}{\mathbb R}
\newcommand{\C}{\mathbb C}
\newcommand{\F}{\mathbb F}
\newcommand{\qbar}{\overline{\mathbb Q}}
\renewcommand{\P}{\mathbb P}
\renewcommand{\phi}{\varphi}
\newcommand{\PrePer}{\operatorname{PrePer}}
\newcommand{\ord}{\operatorname{ord}}
\newcommand{\Jac}{\operatorname{Jac}}
\newcommand{\Spec}{\operatorname{Spec}}
\newcommand{\Gal}{\operatorname{Gal}}
\newcommand{\calC}{\mathcal C}
\newcommand{\calI}{\mathcal I}
\newcommand{\calL}{\mathcal L}
\newcommand{\calP}{\mathcal P}
\newcommand{\calX}{\mathcal X}
\newcommand{\K}{\mathcal K}
\renewcommand{\O}{\mathcal O}
\renewcommand{\to}{\rightarrow}
\newcommand{\into}{\hookrightarrow}
\renewcommand{\ss}{\mathcal S}
\newtheorem{thm}{Theorem}[section]
\newtheorem{lem}[thm]{Lemma}
\newtheorem{prop}[thm]{Proposition}
\newtheorem{cor}[thm]{Corollary}
\newtheorem{speculation}[thm]{Speculation}
\newtheorem*{thm*}{Theorem}
\newtheorem*{ubc}{Uniform Boundedness Conjecture}
\theoremstyle{definition}
\newtheorem{alg}{Algorithm}
\theoremstyle{remark}
\newtheorem*{rem}{Remark}
\numberwithin{equation}{section}
\title{Preperiodic points for quadratic polynomials over quadratic fields}
\author{John R. Doyle}
\address{Department of Mathematics \\
University of Georgia \\
Athens, GA  30602} 
\email{jdoyle@math.uga.edu}
\author{Xander Faber}
\address{Department of Mathematics \\
University of Hawaii \\
Honolulu, HI  96822} 
\email{xander@math.hawaii.edu}
\author{David Krumm}
\address{Department of Mathematics \\
Claremont McKenna College \\
Claremont, CA 91711}
\email{dkrumm@cmc.edu}
\begin{document}
\begin{abstract} 
	To each quadratic number field $K$ and each quadratic polynomial $f$ with $K$-coefficients, one can associate a finite directed graph $G(f,K)$ whose vertices are the $K$-rational preperiodic points for $f$, and whose edges reflect the action of $f$ on these points. This paper has two main goals. (1) For an abstract directed graph $G$, classify the pairs $(K,f)$ such that the isomorphism class of $G$ is realized by $G(f,K)$. We succeed completely for many graphs $G$ by applying a variety of dynamical and Diophantine techniques. (2) Give a complete description of the set of isomorphism classes of graphs that can be realized by some $G(f,K)$. A conjecture of Morton and Silverman implies that this set is finite. Based on our theoretical considerations and a wealth of empirical evidence derived from an algorithm that is developed in this paper, we speculate on a complete list of isomorphism classes of graphs that arise from quadratic polynomials over quadratic fields. 
\end{abstract}
\maketitle
 
\section{Introduction}\label{paper_intro}

\subsection{Background}\label{intro_background}
Let $K$ be a number field and let $f(z)=A(z)/B(z)$ be a rational function defined over $K$, where $A(z)$ and $B(z)$ are coprime polynomials with coefficients in $K$.  The function $f(z)$ naturally induces a map $f:\P^1(K)\to\P^1(K)$; a fundamental problem in dynamics is that of describing the behavior of points $P\in\P^1(K)$ under repeated iteration of the map $f$. Thus, we consider the sequence 
\[P,f(P),f(f(P)),f(f(f(P))),\ldots.\]
For convenience we denote by $f^m$ the $m$-fold composition of $f$: $f^0$ is the identity map, and $f^m = f \circ f^{m-1}$ for all $m \ge 1$. We say that a point $P \in \P^1(K)$ is {\it preperiodic} for $f$ if the orbit of $P$ under $f$, i.e., the set $\{f^m(P): m\geq 0\}$, is finite. Furthermore, we say that $P$ is {\it periodic} for $f$ if it satisfies the stronger condition that $f^m(P)=P$ for some $m>0$; in this case, the least positive integer $m$ with this property is called the {\it period} of $P$. The set of all points $P\in\P^1(K)$ that are preperiodic for $f$ is denoted by $\PrePer(f,K)$. Finally, the {\it degree} of $f(z)$ is defined to be the number $d := \max\{\deg A,\deg B\}$.

Using the theory of height functions, Northcott \cite{northcott} proved that the set $\PrePer(f,K)$ is finite as long as the degree of $f$ is greater than 1. This set can be given the structure of a directed graph by letting the elements $P\in \PrePer(f,K)$ be the vertices of the graph, and drawing directed edges $P\to f(P)$ for every such point $P$. Thus, we obtain a finite directed graph representing the $K$-rational preperiodic points for $f$. It is then natural to ask how large the set $\PrePer(f,K)$ can be, and what structure the associated graph can have. Drawing an analogy between preperiodic points of maps and torsion points on abelian varieties, Morton and Silverman \cite[100]{morton-silverman} proposed the following conjecture regarding the size of the set $\PrePer(f,K)$:

\begin{ubc}[Morton-Silverman] Fix integers $n\geq 1$ and  $d\geq 2$. There exists a constant $M(n,d)$ such that for every number field $K$ of degree $n$, and every rational function $f(z)\in K(z)$ of degree $d$, \[\#\PrePer(f,K)\leq M(n,d).\]
\end{ubc}

Very little is currently known about this conjecture; indeed, it has not been proved that such a constant $M(n,d)$ exists, even in the simplified setting where $K=\Q$ and $f(z)\in K[z]$ is a quadratic polynomial. However, Poonen \cite[Cor. 1]{poonen_prep} proposed an upper bound of 9 in this case, and moreover gave a conjecturally complete list of all possible graph structures arising in this context --- see \cite[17]{poonen_prep}.

\begin{thm}[Poonen]\label{poonen_thm} Assume that there is no quadratic polynomial over $\Q$ having a rational periodic point of period greater than 3. Then, for every quadratic polynomial $f$ with rational coefficients,
\[\#\PrePer(f,\Q)\leq 9.\] Moreover, there are exactly 12 graphs that arise from $\PrePer(f,\Q)$ as $f$ varies over all quadratic polynomials with rational coefficients.
\end{thm}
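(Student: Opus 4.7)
The plan is to first normalize via affine conjugation so that $f(z)=z^2+c$ for some $c\in\Q$: every quadratic polynomial over $\Q$ is $\Q$-affine-conjugate to some $f_c$, and such conjugation induces an isomorphism on preperiodic graphs. So it suffices to classify $G(f_c,\Q)$ as $c$ ranges over $\Q$.

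Second, I would stratify by cycle structure. Under the hypothesis, every cycle of $G(f_c,\Q)$ has length in $\{1,2,3\}$. The associated Diophantine conditions are simple: a rational fixed point exists iff $1-4c$ is a rational square; a rational $2$-cycle exists iff $-(4c+3)$ is a rational square; and the locus of $c$ admitting a rational $3$-cycle is cut out by an explicit genus-$2$ curve whose complete set of rational points was determined by Morton. A Chabauty-style computation on a related genus-$2$ curve rules out the simultaneous existence of a rational $2$-cycle and a rational $3$-cycle, so at most one ``large'' cycle appears in any $G(f_c,\Q)$.

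Third, I would analyze the trees of strictly preperiodic points hanging off each cycle. Because $f_c(-z)=f_c(z)$, each point of the image has exactly two preimages $\pm w$ (with $w^2=z-c$), and $0$ is the only critical point. Thus for any rational periodic point $P$ on a cycle of length $>1$, the ``other'' preimage of $P$ (the one not already on the cycle) is automatically rational, while each deeper layer of preimages is rational iff a specific polynomial expression in $c$ becomes a rational square. Iterating produces a tower of covers of the $c$-line, and each level is a curve whose rational points parametrize configurations realizing a prescribed partial graph.

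The main obstacle is controlling this tower: at depth $2$ or $3$ the relevant curves typically have genus $\ge 2$, and one must show---via quotients, maps to rank-$0$ elliptic factors of their Jacobians, or direct Chabauty---that only the trivial rational points occur, thereby uniformly bounding the depth of each tail. Having bounded the depth in every feasible cycle--tree configuration, I would assemble the finite list of candidate graphs and, for each, either exhibit a rational $c$ realizing it or rule it out from the curve analysis. The surviving list contains exactly $12$ graphs, and a direct count shows the largest has $9$ vertices, giving the stated uniform bound.
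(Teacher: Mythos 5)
The paper does not actually prove this statement --- it is quoted from Poonen's article --- so the comparison is with Poonen's argument, of which your outline is essentially a reconstruction (normalize to $f_c(z)=z^2+c$, parametrize each admissible cycle/tail configuration by a curve over $\Q$, determine the rational points); this is also the template the present paper follows in \S\ref{classification} over quadratic fields. As written, though, your plan contains one concrete error and defers the decisive steps. The error: the locus of $c$ admitting a rational 3-cycle is not ``an explicit genus-2 curve whose rational points were determined by Morton'' --- it is a rational (genus-0) family, given by the explicit parametrization $c=-\bigl(x^6+2x^5+4x^4+8x^3+9x^2+4x+1\bigr)/\bigl(4x^2(x+1)^2\bigr)$ of Proposition \ref{cycles_prop}(3), so infinitely many $c$ carry a rational 3-cycle (Poonen's graphs 6(3) and 8(3)); Morton's genus-2 analysis concerns 4-cycles, which your hypothesis has already excluded. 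If the 3-cycle locus were finite the shape of the classification would be quite different, so this is not a harmless slip.

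Moreover, the exclusions you need go beyond ``no rational 2-cycle together with a rational 3-cycle.'' That exclusion comes from $X_1(13)$ (genus 2, only the six cuspidal rational points, by Mazur--Tate --- not really a Chabauty computation), but to reach the bound $9$ you must also rule out a rational fixed point coexisting with a rational 3-cycle, which is governed by $X_1(18)$ and is a separate argument; fixed points and 2-cycles, by contrast, do coexist over $\Q$ (graph 8(2,1,1)), so ``at most one large cycle'' is not the right organizing statement. Finally, the heart of the theorem --- that each preimage tail has bounded depth and that the mixed configurations (points of type $1_2$, $1_3$, $2_2$, $2_3$, $3_2$, and their simultaneous occurrences) contribute only the observed graphs --- is precisely the determination of the rational points on a specific finite list of rank-0 elliptic curves (11a3, 15a8, 17a4, 24a4, 40a3, \dots) and genus-2 curves such as $y^2=x^6-2x^4+2x^3+5x^2+2x+1$. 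Your proposal names this as ``the main obstacle'' and gestures at quotients, rank-0 factors, or Chabauty, but does not carry any of it out; without those computations neither the bound $\#\PrePer(f,\Q)\le 9$ nor the count of exactly 12 graphs (which also requires exhibiting a $c$ realizing each graph) follows.
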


Regarding the assumption made in Poonen's result, it is known that there is no quadratic polynomial over $\Q$ having a rational periodic point of period $m=4$ or 5 (see \cite[Thm. 1]{flynn-poonen-schaefer} and \cite[Thm. 4]{morton_4cycles}), and assuming standard conjectures on $L$-series of curves, the same holds for $m=6$ (see \cite[Thm. 7]{stoll_6cycles}). In \cite{flynn-poonen-schaefer} Flynn, Poonen, and Schaefer conjecture that no quadratic polynomial over $\Q$ has a rational point of period greater than 3, a hypothesis which Hutz and Ingram have verified extensively by explicit computation (see \cite[Prop. 1]{hutz-ingram}). However, a proof of this conjecture seems distant at present.

One direction in which to continue the kind of work carried out by Poonen in studying the Uniform Boundedness Conjecture (UBC) is to consider preperiodic points for higher degree polynomials over $\Q$, such as was done by Benedetto et al. \cite{benedetto_cubics} in the case of cubics. In this paper we take a different approach and consider maps defined over number fields of degree $n>1$. The case of degree $n=1$ is very special because there is only one number field with this degree; hence, in this case the UBC is a statement only about uniformity as one varies the rational function $f(z)$ with $\Q$-coefficients. More striking is that the conjecture predicts upper bounds even when all number fields of a fixed degree $n>1$ are considered. Our goal in this article is to carry out an initial study of preperiodic points for maps defined over quadratic number fields (the case $n=2$ of the UBC). Having fixed this value of $n$, we will focus on the simplest family of maps to which the conjecture applies, namely quadratic polynomials. Thus, we wish to address the following questions:
\begin{enumerate}[itemsep=1mm]
\item How large can the set $\PrePer(f,K)$ be as $K$ varies over all quadratic number fields and $f$ varies over all quadratic polynomials with coefficients in $K$?
\item What are all the possible graph structures corresponding to sets $\PrePer(f,K)$ as $K$ and $f$ vary as above?
\end{enumerate}

\subsection{Outline of the paper}\label{intro_outline} Our initial guesses for answers to the above questions were obtained by gathering large amounts of data, and doing this required an algorithm for computing all the preperiodic points of a given quadratic polynomial defined over a given number field. In \S\ref{data_gathering} we develop an algorithm for doing this which relies heavily on a new method, due to the first and third authors \cite{doyle-krumm}, for listing elements of bounded height in number fields. Our algorithm for computing preperiodic points can in principle be applied to quadratic polynomials over any number field, but we will focus here on the case of quadratic fields. Using this algorithm we computed the set $\PrePer(f,K)$ for roughly 250,000 pairs $(K,f)$ consisting of a quadratic field $K$ and a quadratic polynomial $f$ with coefficients in $K$. Our strategy for choosing the fields $K$ and polynomials $f$ is explained in \S\ref{quad_prep_comp}. The graph structures found by this computation are shown in Appendix \ref{graph_pictures}, and for each such graph $G$ we give in Appendix \ref{graph_data} an example of a pair $(K,f)$ for which the graph associated to $\PrePer(f,K)$ is isomorphic to $G$.

In order to state the more refined questions addressed in this article and our main results, we introduce some notation. From a dynamical standpoint, quadratic polynomials form a one-parameter family; more precisely, if $K$ is a number field and $f(z)\in K[z]$ is a quadratic polynomial, then $f(z)$ is equivalent, in a dynamical sense, to a unique polynomial of the form $f_c(z)=z^2+c$. (See the introduction to \S\ref{classification} for more details.) In studying the dynamical properties of quadratic polynomials, we will thus consider only polynomials of the form $f_c(z)$. We denote by $G(f_c,K)$ the directed graph corresponding to the set $\PrePer(f_c,K)$, excluding the point at infinity.

The graphs arising from our computation did not all occur with the same frequency: some of them appeared only a few times, while others were extremely common. For each graph $G$ that was found we may then ask: 
\begin{enumerate}[itemsep=1mm]
\item How many pairs $(K,c)$ are there for which the graph $G(f_c,K)$ is isomorphic to $G$?
\item If there are only finitely many such pairs, can they be completely determined?
\item If there are infinitely many such pairs, can they be explicitly described?
\end{enumerate}

Our strategy for addressing these questions is to translate them into Diophantine problems of determining the set of quadratic points on certain algebraic curves over $\Q$. In essence, the idea is to attach to each graph $G$ an algebraic curve $C$ whose points parameterize instances of the graph $G$. This philosophy of studying rational preperiodic points via algebraic curves was first taken up by Morton \cite{morton_4cycles}, and then pushed much further by Flynn-Poonen-Schaefer \cite{flynn-poonen-schaefer}, Poonen \cite{poonen_prep}, and Stoll \cite{stoll_6cycles}. However, the Diophantine questions we need to answer in this article differ from those studied by previous authors, since they were interested primarily in finding $\Q$-rational points on curves, whereas we need to determine all $K$-rational points on a given curve $C/\Q$, where $K$ is allowed to vary over all quadratic number fields. A survey of known theoretical results on this type of question is given in \S\ref{quad_pts_on_crvs}, where we also develop our basic computational methods for attacking the problem in practice. In \S\ref{classification} we construct the algebraic curves corresponding to the graphs found by our computation, and the methods of  \S\ref{quad_pts_on_crvs} are used to describe or completely determine their sets of quadratic points.

\subsection{Main results}\label{intro_results} In our extensive computation of preperiodic points mentioned above, we obtained a total of 46 non-isomorphic graphs, and the maximum number of preperiodic points for the polynomials considered was 15 (counting the fixed point at infinity). This data may provide the correct answers to questions (1) and (2) posed in \S\ref{intro_background}, though it is not our goal here to make this claim and attempt a proof. However, our computations do yield the following result:

\begin{thm} Suppose that there exists a constant $N$ such that $\#\PrePer(f,K) \leq N$ for every quadratic number field $K$ and quadratic polynomial $f$ with coefficients in $K$. Then $N \geq 15$. Moreover, there are at least 46 directed graphs that arise from the set $\PrePer(f, K)$ for such a field $K$ and polynomial $f$. 
\end{thm}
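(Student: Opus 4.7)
The plan is to prove both assertions by exhibiting explicit examples drawn from the computational search described in the outline. For the first assertion ($N \geq 15$), it suffices to produce a single pair $(K, c)$, with $K$ a quadratic field and $c \in K$, such that $\#\PrePer(f_c, K) \geq 15$. Since this is only a lower bound, there is no need to prove that the exhibited set of preperiodic points is complete: given $14$ candidate $K$-rational points $P_1, \ldots, P_{14}$, one simply iterates $f_c$ starting at each $P_i$ and verifies, by hand or by machine, that each resulting orbit is finite. Together with the fixed point at $\infty$, this yields $15$ $K$-rational preperiodic points, forcing $N \geq 15$.

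For the second assertion, the strategy is to invoke the algorithm developed in Section \ref{data_gathering}, which computes the entire set $\PrePer(f_c, K)$ for a given pair $(K, c)$ and hence the full directed graph $G(f_c, K)$. Applying this algorithm to the sample of roughly $250{,}000$ pairs $(K, c)$ constructed in Section \ref{quad_prep_comp}, one tabulates the isomorphism classes of the resulting graphs and extracts $46$ pairs $(K_i, c_i)$, $i = 1, \ldots, 46$, whose graphs $G(f_{c_i}, K_i)$ are pairwise non-isomorphic. Explicit representatives are recorded in Appendices \ref{graph_pictures} and \ref{graph_data}. Non-isomorphism of the $46$ graphs can be certified by computing a standard invariant (e.g., vertex count together with the multiset of in- and out-degree sequences, or the list of cycle lengths and tail lengths) and noting that these invariants already distinguish all $46$ graphs from one another.

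The main obstacle lies in the second assertion rather than the first: to conclude that $46$ distinct \emph{full} preperiodic graphs are realized, one must know that the algorithm of Section \ref{data_gathering} returns \emph{all} $K$-rational preperiodic points for each input pair, not merely a subset. If the algorithm missed points, several of the tabulated graphs could in principle be proper subgraphs of a common larger graph, reducing the count below $46$. The correctness and termination of the algorithm are therefore the essential inputs, and these are established in Section \ref{data_gathering} (relying in turn on the bounded-height enumeration from \cite{doyle-krumm}). Granting that correctness, the theorem follows immediately from inspection of the computational output.
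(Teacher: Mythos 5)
Your proposal is correct and follows essentially the same route as the paper: the theorem is established by exhibiting explicit pairs $(K,c)$ from the computation of \S\ref{data_gathering}--\S\ref{quad_prep_comp} (representatives recorded in Appendices \ref{graph_pictures} and \ref{graph_data}), with the completeness of each computed set $\PrePer(f_c,K)$ guaranteed by the height bound of Theorem \ref{preperheight} and the bounded-height enumeration, exactly the input you identify as essential. Your additional observation that the bound $N\geq 15$ needs only verification that the exhibited orbits are finite (not completeness) is a correct, if minor, simplification.
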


For each of the 46 graphs that were found we would like to answer questions (1) - (3) stated in \S\ref{intro_outline}. This can be done easily for 12 of the graphs, namely those that appeared in Poonen's paper \cite{poonen_prep} --- see \S\ref{rational_prep_section} below. The essential tool for this is Northcott's theorem on height bounds for the preperiodic points of a given map. For 15 of the 34 remaining graphs we were able to determine all pairs $(K,c)$ giving rise to the given graph. In most cases this was done by finding all quadratic points on the parameterizing curve of the graph, using our results concerning quadratic points on elliptic curves and on curves of genus 2 with Mordell-Weil rank 0. With the notation used in Appendix \ref{graph_pictures}, the labels for these graphs are
\begin{center}4(1), 5(1,1,)b, 5(2)a, 5(2)b, 6(2,1), 7(1,1)a, 7(1,1)b, 7(2,1,1)a, 

7(2,1,1)b, 9(2,1,1), 10(1,1)a, 12(2,1,1)a, 14(2,1,1), 14(3,1,1), 14(3,2). \end{center} 
With the exception of graph 5(2)a --- which occurs for two Galois conjugate pairs, namely $(\Q(i),\pm i)$ --- all of these graphs turned out to be unique; i.e., they occur for exactly one pair $(K,c)$. Moreover, the parameter $c$ in all of these pairs is rational.

Of the remaining 19 graphs, there are 4 for which we were not able to determine all pairs $(K,c)$ giving rise to the graph, but instead proved an upper bound on the number of all such pairs that could possibly exist. This was done by reducing the problem of determining all quadratic points on the parameterizing curve to a problem of finding all \textit{rational} points on certain hyperelliptic curves. Table \ref{upper_bound_table} below summarizes our results for these graphs. The first column gives the label of the graph under consideration, the second column gives the number of known pairs $(K,c)$ corresponding to this graph structure, and the third column gives an upper bound for the number of such pairs.

\begin{table}[ht]
\centering
\begin{tabular}{c c c}
\hline\hline
Graph & Known pairs & Upper bound \\
\hline\hline
12(2) & 2 & 6 \\
12(2,1,1)b & 2 & 6 \\
12(4) & 1 & 6 \\
12(4,2) & 1 & 2 \\
\hline
\end{tabular}
\bigskip
\caption{}
\label{upper_bound_table}
\end{table}

In order to complete our analysis of these four graphs we would need to determine all rational points on the hyperelliptic curves defined by the following equations:
\begin{align*}
&w^2 = x^7 + 3x^6 + x^5 - 3x^4 + x^3 + 3x^2 - 3x + 1,\\
&s^2 =-6x^7 - 6x^6 + 34x^5 + 22x^4 - 18x^3 + 38x^2 - 10x + 10,\\
&w^2 =x^{12} + 2x^{11} - 13x^{10} - 26x^9 + 67x^8 + 124x^7 + 26x^6 - 44x^5 + 179x^4 - 62x^3 - 5x^2 + 6x + 1,\\
&w^2 = (x^2 + 1)(x^2 - 2x - 1)(x^6-3x^4-16x^3+3x^2-1).
\end{align*}

Of the 15 graphs that remain to be considered, there are 9 for which we showed that the graph occurs infinitely many times over quadratic fields. This is achieved by using results from Diophantine geometry giving asymptotics for counting functions on the set of rational points on a curve.

\begin{thm} For each of the graphs
\begin{center}\rm{ 8(1,1)a, 8(1,1)b, 8(2)a, 8(2)b, 8(4), 10(2,1,1)a, 10(2,1,1)b}\end{center}
there exist infinitely many pairs $(K,c)$ consisting of a real (resp. imaginary) quadratic field and an element $c\in K$ for which $G(f_c,K)$ contains a graph of this type. The same holds for the graphs {\rm10(3,1,1)} and {\rm10(3,2)}, but these occur only over real quadratic fields.
\end{thm}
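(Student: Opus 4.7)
The plan is to construct, for each of the nine listed graphs $G$, a parameterizing curve $X_G$ over $\Q$ whose quadratic points correspond (at least generically) to pairs $(K,c)$ with $K$ a quadratic field and $G(f_c,K)$ containing a subgraph isomorphic to $G$. These parameterizations are built in the spirit of \S\ref{classification}, by writing down the universal dynamical equations cutting out the required periodic and preperiodic orbits in the variables $\{c, z_1, z_2, \ldots\}$ and then eliminating down to a plane model. The problem then reduces to showing that $X_G$ has infinitely many quadratic points of the appropriate signature (real or imaginary).

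To access infinitely many quadratic points, I would produce for each $G$ a birational model over $\Q$ of the form $y^2 = f(x)$ with $f \in \Z[x]$ squarefree and $\deg f \geq 3$, i.e., exhibit an explicit degree-$2$ map $X_G \to \P^1_\Q$. Given such a model, each $x_0 \in \Q$ for which $f(x_0)$ is a nonzero nonsquare yields a genuinely quadratic point $(x_0, \sqrt{f(x_0)})$ defined over $K_{x_0} = \Q(\sqrt{f(x_0)})$. By Hilbert's irreducibility theorem applied to $y^2 - f(x)$, or equivalently (when $\deg f \geq 5$) by Siegel's theorem on integral points applied to each quadratic twist $dy^2 = f(x)$, the squarefree part of $f(x_0)$ takes infinitely many values as $x_0$ varies over $\Q$, producing infinitely many distinct quadratic fields. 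Because the forgetful map from a point of $X_G$ to its $c$-coordinate has finite generic fibers, distinct $K$-values yield distinct pairs $(K, c)$.

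The real-versus-imaginary dichotomy is controlled entirely by the sign of $f(x_0)$: the field $\Q(\sqrt{f(x_0)})$ is real quadratic when $f(x_0) > 0$ and imaginary quadratic when $f(x_0) < 0$. For each of the graphs 8(1,1)a, 8(1,1)b, 8(2)a, 8(2)b, 8(4), 10(2,1,1)a, and 10(2,1,1)b, I expect the resulting $f$ to change sign on $\R$ (for instance when $\deg f$ is odd, or when $f$ has a real root of odd multiplicity), so that both $\{x \in \Q : f(x) > 0\}$ and $\{x \in \Q : f(x) < 0\}$ are infinite, and both real and imaginary quadratic fields arise infinitely often. For 10(3,1,1) and 10(3,2), by contrast, one should find that $f$ is everywhere positive on $\R$, so that $\sqrt{f(x_0)}$ is always real and only real quadratic fields occur --- matching the statement that these two graphs do not appear over imaginary quadratic fields.

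The main obstacle I anticipate is the explicit case-by-case construction: writing down a tractable equation for each $X_G$, identifying the degree-$2$ morphism to $\P^1_\Q$, and simplifying to the form $y^2 = f(x)$. The dynamical system cutting out $X_G$ involves many variables and high-degree polynomial relations, so producing a clean hyperelliptic model requires systematic but computationally intensive elimination. Once the model is in hand, the remaining ingredients --- generic finiteness of the $c$-map, the application of Hilbert irreducibility or Siegel's theorem, and the sign analysis of $f(x)$ on $\R$ --- are routine. A secondary subtlety is verifying that the degree-$2$ parameterization recovers $c$ (and the accompanying periodic data) over $K_{x_0}$, which amounts to checking that the auxiliary preperiodic-point coordinates arising from the point $(x_0, \sqrt{f(x_0)})$ lie in $K_{x_0}$ and not in a larger extension; this is automatic once the defining equations are written over $\Q(x,y)$.
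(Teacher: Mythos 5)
Your overall strategy for the \emph{existence} part is the same as the paper's: take the parameterizing curve for each graph, specialize the $x$-coordinate of a hyperelliptic model $y^2=f(x)$ at rational values, and argue that infinitely many distinct quadratic fields of each sign arise. But there is a genuine gap in how you handle the final clause of the statement, that 10(3,1,1) and 10(3,2) occur \emph{only} over real quadratic fields. Positivity of $f$ on $\R$ (here $f=F_{18}$, resp.\ $F_{13}$) only controls the fields produced by your construction, i.e.\ by quadratic points $(x_0,\sqrt{f(x_0)})$ with $x_0\in\Q$ (the ``obvious'' quadratic points). It says nothing about quadratic points whose $x$-coordinate is itself quadratic, and a priori such a point could be defined over an imaginary field and still yield a valid pair $(K,c)$ through the parameterization. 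The paper closes exactly this hole: the curves in question are $X_1(18)$ and $X_1(13)$, and Theorem \ref{131618_quad} (proved via Lemma \ref{genus2_quad_pts} and the finiteness of $J_1(13)(\Q)$, $J_1(18)(\Q)$) determines all non-obvious quadratic points; those on $X_1(18)$ have $x$ a primitive cube root of unity, excluded by the condition $x^2+x+1\ne 0$ in the parameterization, and $X_1(13)$ has none at all. Only then can one conclude $x\in\Q$, hence $K=\Q(\sqrt{F(x)})$ is real. Your proposal contains no analogue of this step, so the ``only real'' claim is unproved.

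Two further points would need repair before the rest goes through. First, your expectation that $f$ changes sign fails for the models that actually come out of the dynamical elimination for 8(2)a and 10(2,1,1)a: the quartics $2(x^4+2x^3-2x+1)$ and $5x^4-8x^3+6x^2+8x+5$ are positive on $\R$, so the naive specialization yields only real fields; the paper must pass to the cubic Weierstrass models ($Y^2=X^3-2X+1$, resp.\ $Y^2=X^3-11X+6$), pull points back through an explicit birational map, and verify that only finitely many parameter values violate the nondegeneracy conditions (conditions such as $ab\ne 0$, $c(4c-1)\ne 0$, which you do not discuss but which are needed so that $G(f_c,K)$ really \emph{contains} the stated graph rather than a proper subgraph). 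Second, your mechanism for producing infinitely many distinct fields is not quite right as stated: Siegel's theorem concerns integral points and the specializations here are rational, and for the genus-one models an individual twist $dy^2=f(x)$ can have infinitely many rational points, so ``finitely many points per twist'' is false; plain Hilbert irreducibility gives irreducibility of $y^2-f(x_0)$, not infinitude of the squarefree parts. One needs either the thin-set refinement or, as the paper does in Lemma \ref{poly_lemma} and Appendix \ref{counting_appendix}, a height-counting comparison ($\sim cT^2$ rationals in an interval versus $O(T^{\epsilon})$ or $O(\log^b T)$ points on finitely many twists), which also keeps the interval constraint needed for the sign analysis.
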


\begin{rem} Showing the existence of infinitely many pairs for which $G(f_c,K)$ not only \textit{contains} a graph of a given type but in fact \textit{is} itself of this type is a more difficult problem. This kind of result was achieved in the article \cite{faber} for several of the graphs $G(f_c,\Q)$ with $c\in\Q$.
\end{rem}

For the six graphs that remain, our methods did not yield a satisfactory upper bound on the number of possible instances --- see \S\ref{intro_future} below for more information.

Finally, we point out some results in this article which may be of independent interest. First, our description of quadratic points on elliptic curves in \S\ref{ellcrv_quad_pts_section}, though completely elementary, has been useful in practice for quickly generating many quadratic points on a given elliptic curve, as well as for proving several of our results stated above. The formula in \S\ref{genus2_quad_section} for the number of ``non-obvious" quadratic points on a curve of genus 2 does not seem to be explicitly stated in the literature, nor is its application (in Theorem \ref{131618_quad}) to the study of quadratic points on the modular curves $X_1(N)$ of genus 2. The techniques used here to determine all quadratic points on certain curves also appear to be new. As an example of our methods we mention our study of the graph 12(2,1,1)a in \S\ref{12_211a_section}, which illustrates our approach to finding all quadratic points on a curve $C$ having maps $C\to E_1$ and $C\to E_2$, where $E_1$ and $E_2$ are elliptic curves of rank 0. A second example is our study of the graph 12(4,2) in \S\ref{12_42_section}, in which we bound the number of quadratic points on a curve $C$ having a map $C\to X_1$, where $X_1$ is a curve of genus 2 and Mordell-Weil rank 0, and a map $C\to X_2$, where $X_2$ is a curve of genus 3 and Mordell-Weil rank 1.

\subsection{Future work}\label{intro_future} This paper leaves open several questions that we intend to address in subsequent articles. First of all, there are a few graphs in Appendix \ref{graph_pictures} for which we are at present not able to carry out a good analysis of the quadratic points on the corresponding curves; precisely, these are the graphs labeled 
\begin{center}10(1,1)b, 10(2), 10(3)a, 10(3)b, 12(3), and 12(6).\end{center} 
In most cases the difficulties do not seem insurmountable, but the methods required to study these graphs may be rather different from the ones used in this paper. Second, for some of the graphs analyzed in \S\ref{classification} we have only partially determined the quadratic points on the parameterizing curve, the obstruction being a problem of finding all rational points on certain hyperelliptic curves (listed above). We expect that the method of Chabauty and Coleman can be successfully applied to determine all rational points on these curves, thus completing our study of the corresponding graphs; this analysis will appear in a sequel to the present paper.

The next open question concerns 5-cycles. All evidence currently available suggests that there does not exist a quadratic polynomial $f$ defined over a quadratic field $K$ such that $f$ has a $K$-rational point of period 5. Such a polynomial did not show up in our computations, nor was it found in a related search carried out by Hutz and Ingram \cite{hutz-ingram}. We therefore set the following goal for future research:

\begin{center}Either find an example of a 5-cycle over a quadratic field, or show that it does not exist.\end{center}

As a result of their own extensive search for periodic points with large period defined over quadratic fields, Hutz and Ingram \cite[Prop. 2]{hutz-ingram} provide evidence supporting the conjecture that 6 is the longest cycle length that can appear in this setting. Moreover, they found exactly one example of a 6-cycle over a quadratic field, which is the same one found during our computations and the same one that had been found earlier by Flynn, Poonen, and Schaefer \cite[461]{flynn-poonen-schaefer}; namely the example given in Appendix \ref{graph_data} under the label 12(6). While the question of proving that 6 is the longest possible cycle length may be too ambitious, we do aim to study the following question:

\begin{center} Determine all instances of a 6-cycle over a quadratic field.\end{center}

We remark that it follows from known results in Diophantine geometry (see Theorem \ref{hs_finiteness} below) applied to the curve parameterizing 6-cycles \footnote{The geometry of this curve, and its set of rational points, were studied by Stoll in \cite{stoll_6cycles}. In particular, it is shown that the curve is not hyperelliptic and not bielliptic.} that there are only finitely many pairs $(K,c)$, with $K$ quadratic, giving rise to a 6-cycle.

As a final goal, we wish to prove a theorem analogous to Poonen's result (Theorem \ref{poonen_thm}), but in the context of quadratic fields. In view of the above discussion, we propose the following:

\begin{speculation} Assume that there is no quadratic polynomial over a quadratic field having a periodic point of period $n=5$ or $n>6$. Then, for every quadratic polynomial $f$ with coefficients in a quadratic field $K$, 
\[ \# \PrePer(f,K) \le 15. \]
Moreover, there are exactly 46 graphs that arise from $\PrePer(f, K)$ as $f$ varies over all quadratic polynomials with coefficients in a quadratic field $K$.
\end{speculation}

Using techniques similar to those applied in \S\ref{classification}, substantial progress towards proving this result --- or one very similar to it --- has already been made by the first author and will appear in a later paper.

\subsection*{Acknowledgements}
Part of the research for this article was undertaken during an NSF-sponsored VIGRE research seminar at the University of Georgia, supervised by the second author and Robert Rumely. The second author was partially supported by an NSF postdoctoral research fellowship. We would like to acknowledge the contributions of the other members of our research group: Adrian Brunyate, Allan Lacy, Alex Rice, Nathan Walters, and Steven Winburn. We are especially grateful to the Brown University Center for Computation and Visualization for providing access to their high performance computing cluster, and to the Institute for Computational and Experimental Research in Mathematics for facilitating this access. Finally, we thank Dino Lorenzini and Robert Rumely for many comments and suggestions; Anna Chorniy for her help in preparing the figures shown in Appendix \ref{graph_pictures}; and the anonymous referee for pointing out some gaps in our exposition in \S3.


\section{Quadratic points on algebraic curves}\label{quad_pts_on_crvs}
The material in this section forms the basis for our analysis of preperiodic graph structures in \S\ref{classification}. As mentioned in the introduction, questions concerning the sets of preperiodic points for quadratic polynomials over quadratic number fields can be translated into questions about the sets of quadratic points on certain algebraic curves defined over $\Q$. We will therefore require some basic facts about quadratic points on curves, from a theoretical as well as computational perspective.

Let $k$ be a number field, and fix an algebraic closure $\bar k$ of $k$. Let $C$ be a smooth, projective, geometrically connected curve defined over $k$. We say that a point $P\in C(\bar k)$ is {\it quadratic over} $k$ if $[k(P):k]=2$, where $k(P)$ denotes the field of definition of $P$, i.e., the residue field of $C$ at $P$. The set of all quadratic points on $C$ will be denoted by $C(k,2)$. It may well happen that $C(k,2)=\emptyset$: for instance, it follows from a theorem of Clark \cite[Cor. 4]{clark} that there are infinitely many curves of genus 1 over $k$ with this property. The set $C(k,2)$ may also be nonempty and finite; several examples of this will be seen in \S\ref{classification}. Finally, the set $C(k,2)$ may be infinite. Suppose, for instance, that $C$ is hyperelliptic, so that it admits a morphism $C\to\P^1$ of degree 2 defined over $k$. By pulling back $k$-rational points on $\P^1$ we obtain --- by Hilbert's irreducibility theorem \cite[Chap. 12]{fried-jarden} ---  infinitely many quadratic points on $C$. Similarly, suppose $C$ is bielliptic, so that it admits a morphism of degree 2 to an elliptic curve $E/k$. If the group $E(k)$ has positive rank, then the same argument as above shows that $C$ has infinitely many quadratic points. Using Faltings' theorem \cite{faltings} concerning rational points on subvarieties of abelian varieties (formerly a conjecture of Lang), Harris and Silverman \cite[Cor. 3]{harris-silverman} showed that these are the only two types of curves that can have infinitely many quadratic points.

\begin{thm}[Harris-Silverman]\label{hs_finiteness}
Let $k$ be a number field and let $C/k$ be a curve. If $C$ is neither bielliptic nor hyperelliptic, then the set $C(k,2)$ is finite.
\end{thm}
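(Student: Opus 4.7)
The plan is to translate the problem from quadratic points on $C$ to rational points on the symmetric square $C^{(2)} = \operatorname{Sym}^2 C$, and then to exploit Faltings' theorem on subvarieties of abelian varieties. A quadratic point $P \in C(k,2)$ gives a $k$-rational divisor $P + P^\sigma$, where $\sigma$ generates $\Gal(k(P)/k)$, and hence a $k$-rational point on $C^{(2)}$. Conversely, a $k$-point of $C^{(2)}$ either corresponds to such a Galois-stable pair or to an unordered pair of $k$-rational points. Since $C(k)$ is finite by Faltings (the genus is $\geq 2$, because every curve of genus $\leq 1$ is either hyperelliptic or, vacuously, trivial to handle), it suffices to prove that $C^{(2)}(k)$ is finite.

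Next, I would map $C^{(2)}$ into the Jacobian $J = \Jac(C)$. Assuming for the moment a $k$-rational divisor $D_0$ of degree $2$ exists on $C$ (one may bootstrap by fixing any quadratic point to build such a $D_0$, since otherwise $C(k,2)$ is already empty and there is nothing to prove), we define the Abel--Jacobi map $\phi : C^{(2)} \to J$ by $\{P,Q\} \mapsto [P + Q - D_0]$, and set $W_2 = \phi(C^{(2)}) \subset J$, a surface (for $g \geq 3$) or the whole of $J$ (for $g = 2$). A Clifford/Riemann--Roch argument shows that the fibers of $\phi$ are linear systems $|D|$ of degree $2$, which are $0$-dimensional unless $C$ is hyperelliptic with $D$ in the unique $g^1_2$. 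Since $C$ is not hyperelliptic by hypothesis, $\phi$ is injective on $k$-points, and finiteness of $C^{(2)}(k)$ is equivalent to finiteness of $W_2(k)$.

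Now I would invoke Faltings' theorem on subvarieties of abelian varieties: the Zariski closure of $W_2(k)$ inside $W_2$ is a finite union of translates $t_i + A_i$, where each $A_i$ is an abelian subvariety of $J$ and each translate is contained in $W_2$. Thus the finiteness of $W_2(k)$ reduces to showing that $W_2$ contains no positive-dimensional translate of an abelian subvariety. This is the crucial geometric step. If an elliptic curve $E$ sits (up to translation) inside $W_2$, then $\phi^{-1}(t + E)$ is a curve $Y \subset C^{(2)}$, and the generic fiber of the projection $Y \to C$ (sending $\{P,Q\} \mapsto P$) has degree $1$: for generic $P$, there is a unique $Q = Q(P)$ with $\{P,Q\} \in Y$. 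The resulting rational map $P \mapsto \phi(\{P, Q(P)\}) \in E$ is then non-constant and, because every fiber of $\phi|_Y$ over $E$ consists of a single unordered pair, one checks that the induced map $C \to E$ realizes $C$ as a double cover of $E$, i.e.\ $C$ is bielliptic, contrary to hypothesis. A parallel argument using the subvariety $A = J$ (when $W_2 = J$, i.e.\ $g = 2$) forces $C$ to be hyperelliptic.

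The main obstacle I anticipate is precisely this last step: making rigorous the passage from a translate of an abelian subvariety inside $W_2$ to an honest hyperelliptic or bielliptic structure on $C$. The fiber-dimension bookkeeping, the need to handle $g = 2$ separately from $g \geq 3$, and the subtlety that $\phi$ may fail to be an embedding at non-generic points all require care. A secondary technical point is the initial choice of a $k$-rational degree-$2$ divisor $D_0$; this is harmless, since if no such divisor exists then no quadratic point exists either, but it must be addressed so that $\phi$ is genuinely defined over $k$ and Faltings applies to $W_2(k)$.
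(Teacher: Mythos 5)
The paper itself does not prove this statement: it is quoted from Harris--Silverman \cite[Cor. 3]{harris-silverman}, and their argument is exactly the route you outline --- pass to $\operatorname{Sym}^2 C$, map it to $\Jac(C)$ via a $k$-rational degree-$2$ divisor, apply Faltings' theorem on rational points of subvarieties of abelian varieties, and then show that a positive-dimensional translate of an abelian subvariety inside the image $W_2$ forces a hyperelliptic or bielliptic structure. Your bookkeeping up to that point is fine: quadratic points give $k$-points of $\operatorname{Sym}^2 C$, $C(k)$ is finite since the hypotheses force $g \ge 3$ (genus $\le 2$ curves are geometrically hyperelliptic, so your separate treatment of $g=2$, where $W_2$ is all of $\Jac(C)$, is actually vacuous), the base divisor $D_0$ can be bootstrapped from any quadratic point, and injectivity of $\phi$ off a $g^1_2$ is standard Riemann--Roch.

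The genuine gap is the step you yourself flag: the claim that the incidence projection $Y \to C$, $\{P,Q\} \mapsto P$, has generic degree $1$ is asserted, not proved, and it is essentially equivalent to the conclusion you are trying to reach. A priori, an elliptic curve $E$ (translated) inside $W_2$ only gives a one-parameter family of effective degree-$2$ divisors $D_e$ on $C$, and nothing in the setup prevents a generic point $P$ from lying on $d \ge 2$ of the divisors $D_e$; Riemann--Hurwitz applied to the incidence curve (degree $2$ over $E$, degree $d$ over $C$) does not exclude $d \ge 2$, since the genus of the incidence curve is unbounded in a degree-$2$ cover of $E$. If you try to salvage the argument by summing over the fiber (using the group law on $E$), or by projecting $\Jac(C) \to E$ via Poincar\'e reducibility, you only obtain a non-constant map $C \to E$ of \emph{uncontrolled} degree, which does not contradict the hypothesis that $C$ is not bielliptic. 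Establishing that the only such families come from a $g^1_2$ or from the fibers of a degree-$2$ map to an elliptic curve is precisely the content of the Harris--Silverman lemma on elliptic curves in $\operatorname{Sym}^2 C$ (closely related to Abramovich--Harris's study of abelian subvarieties of $W_d$), and it requires an actual argument --- e.g.\ an intersection-theoretic/correspondence computation on the surface $\operatorname{Sym}^2 C$ --- not just a fiber-dimension check. So either cite that lemma or supply its proof; as written, the crucial implication ``translate of $E$ in $W_2$ $\Rightarrow$ $C$ hyperelliptic or bielliptic'' is not established.
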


Thus, we have simple geometric criteria for deciding whether a given curve $C/k$ has finitely many or infinitely many quadratic points. However, we are not only interested here in abstract finiteness statements, but also in the practical question of explicitly determining all quadratic points on a curve, given a specific model for it. If the given curve has infinitely many quadratic points, then we will require an explicit description of all such points, and if it has only a finite number of quadratic points, then we require that they be determined. For the purposes of this paper we will mostly need to address these questions in the case of elliptic and hyperelliptic curves, or curves with a map of degree 2 to such a curve.

We begin by discussing quadratic points on hyperelliptic curves in general. Suppose that the curve $C/k$ admits a morphism $\phi: C\to\P_k^1$ of degree 2. Let $\sigma$ be the hyperelliptic involution on $C$, i.e., the unique involution such that $\phi\circ\sigma=\phi$. Corresponding to the map $\phi$ there is an affine model for $C$ of the form $y^2=f(x)$, where $f(x)\in k[x]$ has nonzero discriminant. With respect to this equation,  $\sigma$ is given by $(x,y)\mapsto (x,-y)$, and the quotient map $\phi: C\to C/\langle\sigma\rangle=\P_k^1$ is given by $(x,y)\mapsto x$. We wish to distinguish between two kinds of quadratic points on $C$; first, there is the following obvious way of generating quadratic points: by choosing any element $x_0\in k$ we obtain a point $(x_0,\sqrt{f(x_0)})\in C(\bar k)$ which will often be quadratic as we vary $x_0$. Indeed, Hilbert's irreducibility theorem implies that this will occur for infinitely many $x_0\in k$. Points of this form will be called {\it obvious quadratic points} for the given model. Stated differently, these are the quadratic points $P\in C(\bar k)$ such that $\phi(P)\in\P^1(k)$, or equivalently $\sigma(P)=\overline P$, where $\overline P$ is the Galois conjugate of $P$. Quadratic points that do not arise in this way will be called {\it non-obvious quadratic points} on $C$. Though a hyperelliptic curve always has infinitely many obvious quadratic points, this is not the case for non-obvious points; in fact, a theorem of Vojta \cite[Cor. 0.3]{vojta} implies that the set of non-obvious quadratic points on a hyperelliptic curve of genus $\ge 4$ is always finite. We focus now on studying the non-obvious quadratic points on elliptic curves and on curves of genus 2.

\subsection{Case of elliptic curves}\label{ellcrv_quad_pts_section} 
The following result gives an explicit description of all non-obvious quadratic points on an elliptic curve.

\begin{lem}\label{ellcrv_quad_pts} Let $E/k$ be an elliptic curve defined by an equation of the form \[y^2=ax^3+bx^2+cx+d,\] where $a,b,c,d\in k$ and $a\neq 0$. Suppose that  $(x,y)\in E(\bar k)$ is a quadratic point with $x\notin k$. Then there exist a point $(x_0,y_0)\in E(k)$ and an element $v\in k$ such that $y=y_0+v(x-x_0)$ and \[x^2 + \frac{ax_0 - v^2 + b}{a}x + \frac{ax_0^2 + v^2x_0 + bx_0 - 2y_0v + c}{a}=0.\]
\end{lem}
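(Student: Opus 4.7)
The key observation is geometric: a quadratic point $P = (x,y)$ with $x \notin k$ has a Galois conjugate $\bar P = (\bar x, \bar y)$ lying on $E$, and the chord through $P$ and $\bar P$ must be defined over $k$, since the pair $\{P,\bar P\}$ is Galois-stable. This chord meets $E$ in a third point $(x_0,y_0)$, which is then forced to be $k$-rational. The slope $v$ of this chord will be the $v \in k$ demanded by the lemma, and the quadratic equation satisfied by $x$ is exactly the one cut out by substituting the equation of the chord into the cubic $y^2 = ax^3+bx^2+cx+d$ and dividing out the factor $X - x_0$.

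\textbf{Key steps.} First I would check that $v$ is well defined over $k$. Since $x \notin k$ and $[k(P):k]=2$, one has $[k(x):k]=2$, so $x \ne \bar x$ and we may set $v = (y-\bar y)/(x-\bar x)$. A direct Galois computation shows $v \in k$; similarly $u := y - vx$ is Galois-invariant, hence in $k$. Thus the chord is $Y = vX + u$ with $v, u \in k$. Second, substituting this line into the defining cubic gives
\[
aX^3 + (b-v^2)X^2 + (c-2uv)X + (d-u^2) = 0,
\]
a $k$-rational cubic having $x$ and $\bar x$ among its roots. By Vieta, the remaining root $x_0 = (v^2-b)/a - (x+\bar x)$ lies in $k$, and then $y_0 := vx_0 + u \in k$ satisfies $y_0^2 = ax_0^3+bx_0^2+cx_0+d$. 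Rewriting $u = y_0 - v x_0$ converts the equation of the chord into the stated form $y = y_0 + v(x-x_0)$.

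\textbf{Finishing the quadratic equation.} With $(x_0, y_0)$ and $v$ in hand, divide the cubic above by $a(X-x_0)$ to obtain a monic quadratic $X^2 + pX + q$ with roots $x$ and $\bar x$. Vieta gives
\[
p = -(x+\bar x) = x_0 + \frac{b - v^2}{a} = \frac{ax_0 + b - v^2}{a},
\]
and, using $u = y_0 - vx_0$ to eliminate $u$ from the coefficient $c - 2uv$,
\[
q = x\bar x = \frac{c - 2uv}{a} - x_0(x+\bar x) = \frac{ax_0^2 + v^2 x_0 + b x_0 - 2 y_0 v + c}{a},
\]
which is precisely the quadratic in the statement.

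\textbf{Expected obstacle.} There is essentially no obstacle once the geometric picture is in place: the entire proof is a two-line Galois verification (that $v$ and $u$ are $k$-rational) followed by two applications of Vieta's formulas. The only mild subtlety is the degenerate case when $y \in k$, in which case $v = 0$ and the chord is horizontal; the argument above still applies verbatim, since $x$ and $\bar x$ are simply roots of the cubic $aX^3 + bX^2 + cX + (d - y^2)$ and the third root $x_0$ furnishes a $k$-rational point $(x_0, y)$ on $E$.
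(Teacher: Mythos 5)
Your proof is correct and is essentially the paper's argument: the paper likewise writes $y=p(x)$ with $p$ a linear polynomial over $k$ (your chord $Y=vX+u$, justified there by $y\in k(x)$ rather than by Galois-invariance of the slope), factors $at^3+bt^2+ct+d-p(t)^2$ as $a(t-x_0)m(t)$ with $x_0\in k$, and reads off the quadratic $m(t)$ — your Vieta computation replacing its polynomial division. Indeed, the chord-through-$P$-and-$\overline{P}$ picture you start from is exactly the geometric interpretation recorded in the remark following the lemma.
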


\begin{proof} Since $y\in k(x)$, we can write $y=p(x)$ for some polynomial $p(t)\in k[t]$ of degree at most 1. Note that $x$ is a root of the polynomial \[F(t):=at^3+bt^2+ct+d-p(t)^2,\]  so $F(t)$ must factor as $F(t)=a(t-x_0)m(t)$, where $m(t)$ is the minimal polynomial of $x$ and $x_0\in k$. Since $F(x_0)=0$, then $(x_0,p(x_0))\in E(k)$. Letting $y_0=p(x_0)$ we can write $p(t)=y_0+v(t-x_0)$ for some $v\in k$; in particular, $y=p(x)=y_0+v(x-x_0)$. Carrying out the division $F(t)/(a(t-x_0))$ we obtain \[m(t)=t^2 + \frac{ax_0 - v^2 + b}{a}t + \frac{ax_0^2 + v^2x_0 + bx_0 - 2y_0v + c}{a}.\qedhere\]
\end{proof}

\begin{rem}The description of quadratic points on $E$ given above has the following geometric interpretation: suppose $P=(x,y)\in E(\bar k)$ is quadratic over $k$, let $K=k(x,y)$ be the field of definition of $P$, and let $\sigma$ be the nontrivial element of $\Gal(K/k)$. We can then consider the point $Q=P+P^{\sigma}\in E(k)$, where $P^{\sigma}=(\sigma(x),\sigma(y))$ denotes the Galois conjugate of $P$. If $Q$ is the point at infinity on $E$, then the line through $P$ and $P^{\sigma}$ is vertical, so that $x=\sigma(x)$ and hence $x\in k$; this gives rise to obvious quadratic points on $E$. If $Q$ is not the point at infinity, then it is an affine point in $E(k)$, say $Q=(x_0,-y_0)$ for some elements $x_0,y_0\in k$. The points $P,P^{\sigma}$, and $(x_0,y_0)$ are collinear, and the line containing them has slope in $k$, say equal to $v\in k$. We then have $y=y_0+v(x-x_0)$, and this gives rise to the formula in Lemma \ref{ellcrv_quad_pts}.
\end{rem}

\subsection{Curves of genus 2}\label{genus2_quad_section}
Suppose now that $C/k$ is a curve of genus 2. Fix an affine model $y^2=f(x)$ for $C$, where $f(x)$ has degree 5 or 6, and let $\sigma$ be the hyperelliptic involution on $C$.

\medskip
\begin{lem}\label{genus2_quad_pts} Suppose that $C/k$ has genus 2 and $C(k)\neq\emptyset$. Let $J$ be the Jacobian variety of $C$.
\begin{enumerate}[itemsep = 1.1mm]
\item The set of  non-obvious quadratic points for the model $y^2=f(x)$ is finite if and only if $J(k)$ is finite.
\item Suppose that $J(k)$ is finite, and let $q$ denote the number of non-obvious quadratic points for the given model. Then there is a relation \[q=2j-2+w-c^2,\] where $j=\#J(k),\; c=\# C(k)$,\; and $w$ is the number of points in $C(k)$ that are fixed by $\sigma$.
\end{enumerate}
\end{lem}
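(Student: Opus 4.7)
The plan is to lift the counting problem from $C$ to the symmetric square $C^{(2)}$ and then to $J$ via the Abel-Jacobi map. Fix $P_0 \in C(k)$ (which exists by hypothesis) and consider the morphism
$$\alpha \colon C^{(2)} \longrightarrow J, \qquad \{P,Q\} \longmapsto [P + Q - 2P_0],$$
defined over $k$. Riemann-Roch on a genus-$2$ curve gives $\ell(D) = 1$ for every effective degree-$2$ divisor $D$ not linearly equivalent to the canonical class, while $\ell(K_C) = 2$ with $|K_C| \cong \P^1$ the hyperelliptic pencil. Consequently $\alpha$ is birational with a single positive-dimensional fiber, over the $k$-rational class $[K_C - 2P_0]$, equal to $|K_C| \cong \P^1_k$; away from this fiber $\alpha$ restricts to a $k$-isomorphism $C^{(2)} \setminus |K_C| \xrightarrow{\sim} J \setminus \{[K_C - 2P_0]\}$. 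Passing to $k$-points yields a bijection between $S := C^{(2)}(k) \setminus |K_C|(k)$ and $J(k) \setminus \{[K_C - 2P_0]\}$, so $\# S = j - 1$.

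Next I would identify which elements of $S$ come from non-obvious quadratic points of $C$. Every $k$-rational effective divisor of degree $2$ on $C$ falls into one of three types: (a) $2P$ with $P \in C(k)$; (b) $P + Q$ with $P \ne Q$ in $C(k)$; or (c) $P + \bar P$ with $P$ quadratic over $k$ and $\bar P$ its Galois conjugate. Moreover $D$ lies in $|K_C|$ precisely when $D = R + \iota(R)$ for some $R \in C(\bar k)$, where $\iota$ is the hyperelliptic involution. Inside $|K_C|(k)$ the type-(a) contribution consists of the $w$ Weierstrass rational points, the type-(b) contribution consists of the $(c-w)/2$ pairs $\{R, \iota(R)\}$ with $R$ a non-Weierstrass rational point, and the type-(c) contribution is exactly the set of obvious quadratic pairs. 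The non-obvious quadratic points therefore contribute precisely $q/2$ elements to $S$, all of type (c), while the type-(a) and type-(b) contributions to $S$ total
$$(c - w) + \left( \binom{c}{2} - \frac{c-w}{2} \right) = \frac{c^2 - w}{2}.$$

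Equating $\tfrac{c^2 - w}{2} + \tfrac{q}{2}$ with $\# S = j - 1$ rearranges to the formula $q = 2j - 2 + w - c^2$ asserted in part (2). Part (1) is then immediate: Faltings' theorem applied to the genus-$2$ curve $C$ guarantees that $c$, and hence $w$, is finite, so the identity above shows that $q$ is finite if and only if $j$ is finite. The step I expect to need the most care is the geometric analysis of $\alpha$: one must verify that the exceptional fiber is actually $\P^1_k$ and not a $k$-twist of it (which uses that the hyperelliptic map $\phi$ is defined over $k$ and supplies $k$-points in $|K_C|$), and that $\alpha$ really is a $k$-isomorphism away from $|K_C|$ — equivalently, that $C^{(2)}$ is the blowup of $J$ at $[K_C - 2P_0]$ — so that descent of the bijection on $k$-points is automatic. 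The rest of the argument is the bookkeeping that separates divisors in $|K_C|$ from those outside it.
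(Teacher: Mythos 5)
Your proposal is correct and follows essentially the same route as the paper: both pass to the symmetric square, use that the Abel--Jacobi map contracts the canonical pencil $\P^1 \subset \mathrm{Sym}^2(C)$ and is an isomorphism elsewhere to get the bijection with $J(k)\setminus\{\ast\}$, and then perform the same bookkeeping, with identical counts $(c^2-w)/2$ and $q/2$. Your added remarks (the Riemann--Roch justification of the exceptional fiber and the observation that it is genuinely $\P^1_k$ rather than a twist, since the hyperelliptic pencil is defined over $k$) fill in details the paper delegates to Milne's article, but the argument is the same.
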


\begin{proof} Fix a point $P_0\in C(k)$ and let $\iota:C\into J$ be the embedding taking $P_0$ to 0. Let $\ss=$ Sym$^2(C)$ denote the symmetric square of $C$. Points in $\ss(\bar k)$ correspond to unordered pairs $\{P,Q\}$, where $P,Q\in C(\bar k)$. The embedding $\iota$ induces a morphism $f:\ss\to J$ taking $\{P,Q\}$ to $\iota(P)+\iota(Q)$. We will need a few facts concerning the fibers of this morphism; see the article of Milne \cite{milne} for the necessary background material. There is a copy of $\P^1_k$ inside $\ss$ whose points correspond to pairs of the form $\{P,\sigma(P)\}$.  The image of $\P^1$ under $f$ is a single point $\ast\in J(k)$, and $f$ restricts to an isomorphism $f:U=\ss\backslash \P^1\;\stackrel{\sim}\longrightarrow \;J\backslash \{\ast\}$. In particular, there is a bijection \begin{equation}\label{genus2_aux1}U(k)=\ss(k)\backslash \P^1(k) \longleftrightarrow J(k)\backslash \{\ast\}.
\end{equation}

Points in $\ss(k)$ correspond to pairs of the form $\{P,Q\}$ where either $P$ and $Q$ are both in $C(k)$, or they are quadratic over $k$ and $Q=\overline P$; in particular, points in $\P^1(k)\subset\ss(k)$ correspond to pairs $\{P,\sigma(P)\}$ where either $P\in C(k)$ or $P$ is an obvious quadratic point. Finally, the points of $U(k)$ are either pairs $\{P,\overline P\}$ with $P$ a non-obvious quadratic point, or pairs $\{P,Q\}$ with $P,Q\in C(k)$ but $Q\neq\sigma(P)$.

Hence, there are three essentially distinct ways of producing points in $\ss(k)$: first, we can take points $P$ and $Q$ in $C(k)$ and obtain a point $\{P,Q\}\in\ss(k)$. Second, we can take an obvious quadratic point $P$ and obtain $\{P,\sigma(P)\}\in\P^1(k)\subset\ss(k)$. Finally, we can take a non-obvious quadratic point $P$ and obtain $\{P,\overline P\}\in U(k)\subset\ss(k)$.

Let $Q^o$ and $Q^n$ denote, respectively, the set of obvious and non-obvious quadratic points on $C$. We then have maps $\psi^o:Q^o\to \P^1(k)$ and $\psi^n:Q^n\to U(k)$, and a map $\phi:C(k)\times C(k)\to \ss(k)$ defined as above. The proof of the lemma will be a careful analysis of the images of these three maps.

We have $\ss(k)=\mathrm{im}(\phi)\;\sqcup\;\mathrm{im}(\psi^o)\;\sqcup\;\mathrm{im}(\psi^n)\;$. Removing the points of $\P^1(k)$ from both sides we obtain
\begin{equation}\label{genus2_aux2}U(k)=(\mathrm{im}(\phi)\backslash \P^1(k))\;\sqcup\;\mathrm{im}(\psi^n).
\end{equation}

To prove part (1), suppose first that $Q^n$ is finite. We know by Faltings' theorem that $C(k)$ is finite, so it follows from \eqref{genus2_aux2} that $U(k)$ is finite. By \eqref{genus2_aux1} we conclude that $J(k)$ is finite. Conversely, assume that $J(k)$ is finite. Then $U(k)$ is finite by \eqref{genus2_aux1}, so im$(\psi^n)$ is finite by \eqref{genus2_aux2}. But $\psi^n$ is 2-to-1 onto its image, so we conclude that $Q^n$ is finite. This completes the proof of part (1).

To prove part (2), suppose that $J(k)$ is finite and let  $q=\#Q^n$, so that $\#\mathrm{im}(\psi^n)=q/2$. By \eqref{genus2_aux1} and \eqref{genus2_aux2} we have
\begin{equation}\label{genus2_aux3}q/2=\# U(k)-\#(\mathrm{im}(\phi)\backslash \P^1(k))=j-1-\#(\mathrm{im}(\phi)\backslash \P^1(k)).\end{equation}

By simple combinatorial arguments we see that \[ \#\text{im}(\phi)=c+\frac{c(c-1)}{2}\;\;\;\mathrm{and}\;\;\;\; \#(\P^1(k)\cap\text{im}(\phi))=w+\frac{c-w}{2}.\] Therefore, \[\#(\mathrm{im}(\phi)\backslash \P^1(k))=c+\frac{c(c-1)}{2}-w-\frac{c-w}{2}=\frac{c^2-w}{2}.\] 

By \eqref{genus2_aux3} we then have \[j-1=\frac{q}{2}+\frac{c^2-w}{2},\] and part (2) follows immediately.
\end{proof}

\subsection{Application to the modular curves $X_1(N)$ of genus 2}\label{modular_genus2_section} For later reference we record here some consequences of Lemma \ref{genus2_quad_pts} in the particular case that $k=\Q$ and $C$ is one of the three modular curves $X_1(N)$ of genus 2. We will fix models for these curves to be used throughout this section. The following equations are given in \cite[15]{poonen_prep}, \cite[774]{washington}, and \cite[39]{rabarison}, respectively: 
\begin{align*}
X_1(13): & \; y^2=x^6+2x^5+x^4+2x^3+6x^2+4x+1;\\
X_1(16): &\; y^2=-x(x^2+1)(x^2-2x-1);\\
X_1(18): &\; y^2=x^6 + 2x^5 + 5x^4 + 10x^3 + 10x^2 + 4x + 1.
\end{align*}

\begin{thm}\label{131618_quad}\mbox{}
\begin{enumerate}
\item Every quadratic point on $X_1(13)$ is obvious.
\item The only non-obvious quadratic points on $X_1(16)$ are the following four: 
\[(\sqrt{-1},0),\; (-\sqrt{-1},0),\; (1+\sqrt 2,0), \;(1-\sqrt 2,0).\]
\item The only non-obvious quadratic points on $X_1(18)$ are the following four: \[(\omega,\omega-1), \;(\omega^2,\omega^2-1),\;(\omega,1-\omega),\;(\omega^2,1-\omega^2),\] where $\omega$ is a primitive cube root of unity.
\end{enumerate} 
\end{thm}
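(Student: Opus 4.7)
The plan is to derive all three parts from the formula
\[ q = 2j - 2 + w - c^2 \]
of Lemma \ref{genus2_quad_pts}(2), applied in turn to $X_1(13)$, $X_1(16)$, and $X_1(18)$. For each curve the task reduces to assembling three numerical inputs: the order $j = \#J_1(N)(\Q)$ of the Mordell--Weil group of the Jacobian, the number $c = \#X_1(N)(\Q)$ of rational points, and the number $w$ of rational Weierstrass points on the chosen affine model.

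First I would extract $j$ from the literature. Mazur's torsion theorem gives $J_1(13)(\Q) \cong \Z/19\Z$, so $j=19$; the analysis of Washington \cite{washington} gives $J_1(16)(\Q) \cong \Z/2\Z \oplus \Z/10\Z$, so $j=20$; and Rabarison \cite{rabarison} gives $J_1(18)(\Q) \cong \Z/21\Z$, so $j=21$. In particular the Jacobians have Mordell--Weil rank $0$, so Lemma \ref{genus2_quad_pts}(2) applies. Next I would determine $c$: since a non-cuspidal $\Q$-rational point on $X_1(N)$ would correspond to an elliptic curve over $\Q$ with a $\Q$-rational torsion point of order $N$, Mazur's torsion theorem rules this out for $N \in \{13, 16, 18\}$. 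A direct enumeration of Galois-stable cusps then yields $c = 6$ in each case.

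The number $w$ is read off the defining equations. For $X_1(13)$, the sextic on the right-hand side is irreducible over $\Q$ and its leading coefficient is a square, so the two points at infinity are rational but not Weierstrass; hence $w = 0$. For $X_1(16)$, the quintic $-x(x^2+1)(x^2-2x-1)$ contributes the single rational Weierstrass point $(0,0)$, and the unique point at infinity (present because the model has odd degree) is Weierstrass, giving $w = 2$. For $X_1(18)$, the sextic has no rational roots, so $w = 0$. Substituting yields $q = 2(19) - 2 + 0 - 36 = 0$ for $X_1(13)$, $q = 2(20) - 2 + 2 - 36 = 4$ for $X_1(16)$, and $q = 2(21) - 2 + 0 - 36 = 4$ for $X_1(18)$. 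Part (1) follows immediately.

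To finish parts (2) and (3), I would verify that the listed points lie on the corresponding curves, are quadratic over $\Q$, and are non-obvious. For $X_1(16)$ each listed point has $y = 0$, so $\sigma(P) = P$, whereas $\overline P \neq P$ because the $x$-coordinates lie in $\Q(i) \setminus \Q$ or $\Q(\sqrt 2) \setminus \Q$. For $X_1(18)$ one checks directly that $\sigma(\omega, \omega - 1) = (\omega, 1 - \omega) \neq (\omega^2, \omega^2 - 1) = \overline{(\omega, \omega - 1)}$, and similarly for the other three. Since the numerical count of non-obvious quadratic points matches, the enumerations in (2) and (3) must be exhaustive. The main obstacle is not the formula itself, which collapses to a short calculation, but justifying the three numerical inputs rigorously --- in particular the exact structure of $J_1(N)(\Q)$ and the statement that every $\Q$-rational point on these modular curves is a cusp. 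These depend on the cited results of Mazur, Washington, and Rabarison, and the argument above is essentially a bookkeeping exercise once those inputs are in hand.
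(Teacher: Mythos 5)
Your proposal is correct and takes essentially the same route as the paper: apply the count $q=2j-2+w-c^2$ of Lemma \ref{genus2_quad_pts}(2) with $j\in\{19,20,21\}$, $c=6$, and $w$ read off from the rational Weierstrass points of each model (including the point at infinity on the degree-$5$ model of $X_1(16)$), then observe that the explicitly listed points are non-obvious quadratic points and exhaust the count. The only differences are in sourcing the numerical inputs --- the paper cites Ogg for $\#X_1(N)(\Q)=6$ and Mazur--Tate, Kenku, and Kubert for the orders of $J_1(N)(\Q)$, rather than your Mazur-plus-cusp-enumeration argument for $c$ --- which does not affect the argument.
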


\begin{proof} It is known that the Jacobians $J_1(N)$ for $N\in\{13,16,18\}$ have only finitely many rational points (see \cite[\S 4]{mazur_tate_13} for the case of $J_1(13)$, \cite[Thm. 1]{kenku} for $J_1(16)$, and \cite[Thm. IV.5.7]{kubert} for $J_1(18)$). Hence, Lemma \ref{genus2_quad_pts} implies that the corresponding curves $X_1(N)$ have only finitely many non-obvious quadratic points. The various quantities appearing in the lemma are either known or can be easily computed. Indeed, Ogg \cite[226]{ogg} showed that $\#X_1(N)(\Q)=6$ for $N\in\{13,16,18\}$
and 
\[\#J_1(13)(\Q)=19\;,\;\#J_1(16)(\Q)=20\;,\;\#J_1(18)(\Q)=21.\]
The number $w$ in Lemma \ref{genus2_quad_pts} is determined by the number of rational roots of the polynomial $f(x)$.

Applying Lemma \ref{genus2_quad_pts} to the curve $C=X_1(13)$, we have $j=19, c=6, w=0$, and hence $q=0$. Therefore, all quadratic points on $X_1(13)$ are obvious.
 
Similarly, with $C=X_1(16)$ we have $j=20, c=6,w=2$, and hence $q=4$. We have already listed four non-obvious quadratic points, so these are all.

Finally, with $C=X_1(18)$ we have $j=21, c=6, w=0$, and hence $q=4$. Therefore, $X_1(18)$ has exactly four non-obvious quadratic points. Since we have already listed four such points, these must be all.
\end{proof}


\section{Classification of preperiodic graph structures}\label{classification} For each graph $G$ appearing in the list of 46 graphs in Appendix \ref{graph_pictures}, it is our goal in this section to describe --- as explicitly as possible --- all the quadratic fields $K$ and quadratic polynomials $f(z)\in K[z]$ such that the graph corresponding to the set $\PrePer(f,K)$ is isomorphic to $G$. There are several graphs in the list for which this description can be achieved without too much work: see \S\ref{rational_prep_section} below for the case of graphs arising from quadratic polynomials over $\Q$, and \S\ref{pcf_ufp} for other graphs with special properties. Together, these two sections will cover all graphs in the appendix up to and including the one labeled 7(2,1,1)b, and also 8(2,1,1), 8(3), and 9(2,1,1); in this section we will focus on studying the remaining graphs. Our general approach is to construct a curve parameterizing occurrences of a given graph, then apply results from \S\ref{quad_pts_on_crvs} to study the quadratic points on this curve, and from there obtain the desired description. As mentioned in \S\ref{intro_future}, there are a few graphs for which this approach does not yet yield the type of result we are looking for; hence, we will exclude these graphs from consideration in this section.

For the purpose of studying preperiodic points of quadratic polynomials over a number field $K$, it suffices to consider only polynomials of the form \[f_c(z) := z^2 + c\] with $c\in K$. Indeed, for every quadratic polynomial $f(z) \in K[z]$ there is a unique linear polynomial $g(z) \in K[z]$ and a unique $c \in K$ such that $g \circ f \circ g^{-1} = f_c$. One sees easily that the graph representing the set of preperiodic points is unchanged upon passing from $f$ to $g \circ f \circ g^{-1}$; hence, for our purposes we may restrict attention to the one-parameter family $\{f_c(z):c\in K\}$. For a quadratic polynomial $f(z)$ with coefficients in $K$, we denote by $G(f,K)$ the directed graph corresponding to the set of $K$-rational preperiodic points for $f$, excluding the fixed point at infinity. If $m$ and $n$ are positive integers, a \textit{point of type $m_n$} for $f(z)$ is an element $x\in K$ which enters an $m$-cycle after $n$ iterations of the map $f$. 

\subsection{Graphs occurring over $\Q$}\label{rational_prep_section} Given a rational number $c$ and a quadratic field $K$, we may consider the two sets $\PrePer(f_c,K)$ and $\PrePer(f_c,\Q)$. For all but finitely many quadratic fields $K$ these sets will be equal, since Northcott's theorem \cite[Thm. 3.12]{silverman_dynamics} implies that there are only finitely many quadratic elements of $\qbar$ that are preperiodic for $f_c$. Therefore, every graph appearing in Poonen's paper \cite{poonen_prep} --- that is, every graph of the form $G(f_c,\Q)$ with $c\in\Q$ --- will also occur as a graph $G(f_c,K)$ for some quadratic field $K$ (in fact, for all but finitely many such $K$). These graphs all appear in Appendix \ref{graph_pictures} and are labeled  
\begin{center}0, 2(1), 3(1,1), 3(2), 4(1,1), 4(2), 5(1,1){\rm a}, 6(1,1), 6(2), 6(3), 8(2,1,1), and 8(3).\end{center}

For every graph $G$ in the above list, Poonen provided an explicit parameterization of the rational numbers $c$ for which $G(f_c,\Q)\cong G$. This essentially achieves, for each of the graphs above, our stated goal of describing the pairs $(K,c)$ giving rise to a given graph. There still remains the following question, which will not be further discussed here: given $c\in\Q$, how can one determine the quadratic fields $K$ for which $\PrePer(f_c,K)\ne\PrePer(f_c,\Q)$ and moreover, what are all the graphs $G(f_c,K)$ that can arise in this way? From the data in Appendix \ref{graph_data} we see that many of the graphs shown in Appendix \ref{graph_pictures} are induced by a rational number $c$.

\subsection{Preliminaries}\label{analysis_prelims}
We collect here a few results that will be used repeatedly throughout this section.

Let $X$ be a  smooth, projective, geometrically integral curve defined over $\Q$; let $g$ denote the genus of $X$, and assume that $g\ge 2$. Let $r$ be the rank of the group $\Jac(X)(\Q)$, where $\Jac(X)$ denotes the Jacobian variety of $X$. By Faltings' theorem we know that $X(\Q)$ is a finite set; the following three results can be used to obtain explicit upper bounds on the size of this set under the assumption that $r<g$.

\begin{thm}[Coleman]\label{coleman_bound} Suppose that $r<g$ and let $p>2g$ be a prime of good reduction for $X$. Let $\calX / \Z_p$ be a model of $X$ with good reduction. Then \[ \#X(\Q) \le \#\calX(\F_p) + 2g-2. \]
\end{thm}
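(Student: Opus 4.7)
The plan is to carry out Chabauty's method in the sharper form due to Coleman. First, if $X(\Q) = \emptyset$ the inequality is trivial, so I assume there is a point $P_0 \in X(\Q)$ and use it to embed $\iota \colon X \hookrightarrow J$, where $J = \Jac(X)$, by $P \mapsto [P - P_0]$. Via this embedding the pullback induces an isomorphism $\iota^* \colon H^0(J_{\Q_p}, \Omega^1) \to H^0(X_{\Q_p}, \Omega^1)$, so I can freely translate between invariant $1$-forms on $J$ and regular $1$-forms on $X$.

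Next I produce a globally vanishing Coleman integral. The $p$-adic closure $\overline{J(\Q)}$ inside $J(\Q_p)$ is a compact $p$-adic Lie subgroup of dimension at most $r$, and since $r < g = \dim J$ its annihilator in $H^0(J_{\Q_p}, \Omega^1)$ under Coleman integration is a nonzero subspace. Pulling back through $\iota^*$ and then scaling by a suitable power of $p$, I obtain a nonzero regular $1$-form $\omega$ on $X_{\Q_p}$ whose reduction $\bar\omega \in H^0(\calX_{\F_p}, \Omega^1)$ is nonzero, and such that the Coleman primitive $\lambda(P) := \int_{P_0}^P \omega$ vanishes on every $P \in X(\Q)$.

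Then comes the local zero count, done one residue disc at a time. For each $\bar Q \in \calX(\F_p)$ let $D_{\bar Q} \subset X(\Q_p)$ denote the residue disc reducing to $\bar Q$, let $n_{\bar Q}$ be the order of vanishing of $\bar\omega$ at $\bar Q$, and pick a local parameter $t$ at $\bar Q$ identifying $D_{\bar Q}$ with $p\Z_p$. Expanding $\omega|_{D_{\bar Q}} = \bigl(\sum_{i \ge 0} a_i t^i\bigr)\, dt$ with $a_i \in \Z_p$, Coleman's formalism gives
\[
\lambda(P) \;=\; \lambda(Q_0) \;+\; \sum_{i \ge 0} \frac{a_i}{i+1}\, t(P)^{i+1}
\]
for any fixed lift $Q_0 \in D_{\bar Q}$. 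The hypothesis $p > 2g$ makes every denominator $i+1$ with $i \le 2g - 2$ (and in particular $i \le n_{\bar Q}$) a $p$-adic unit, so the reduction of the integrated series still has a unit coefficient in degree $n_{\bar Q} + 1$; a Newton polygon / Strassman analysis then shows that $\lambda$ has at most $1 + n_{\bar Q}$ zeros in $D_{\bar Q}$.

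Finally I sum over residue classes. Since every point of $X(\Q)$ reduces into some $D_{\bar Q}$ and is a zero of $\lambda$,
\[
\#X(\Q) \;\le\; \sum_{\bar Q \in \calX(\F_p)} (1 + n_{\bar Q}) \;=\; \#\calX(\F_p) + \deg(\bar\omega) \;=\; \#\calX(\F_p) + 2g - 2,
\]
using that $\sum_{\bar Q} n_{\bar Q}$ equals the degree of the canonical class of $\calX_{\F_p}$. The hard part is the Newton-polygon step: one must verify carefully that the $p > 2g$ hypothesis prevents any tail term with $i + 1 \ge p$ from producing a slope smaller than the one contributed by $i = n_{\bar Q}$, so that the minimum slope of the integrated series is genuinely controlled by $n_{\bar Q}$. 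Once this local estimate is in hand, the Chabauty half of the argument is a short linear-algebra observation, and the overall bound falls out of the canonical-class degree computation.
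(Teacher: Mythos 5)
The paper does not actually prove this statement: it is quoted from Coleman, and the ``proof'' given is a pointer to Corollary 4a of \cite{coleman} and the remark following it. Your sketch reproduces precisely Coleman's original Chabauty--Coleman argument --- an annihilating regular $1$-form scaled to have nonzero reduction, the power-series expansion of the Coleman primitive on each residue disc, a Newton-polygon/Strassman count of at most $1+n_{\bar Q}$ zeros per disc made legitimate by $p>2g$, and a final summation over $\calX(\F_p)$ --- so in substance you are supplying the proof that the paper delegates to its reference, and the outline is sound. One small correction: $\sum_{\bar Q\in\calX(\F_p)} n_{\bar Q}$ is not in general \emph{equal} to the degree $2g-2$ of the divisor of $\bar\omega$, since $\bar\omega$ may also vanish at closed points of $\calX_{\F_p}$ of degree greater than one; you only get $\sum_{\bar Q\in\calX(\F_p)} n_{\bar Q}\le 2g-2$, but this inequality points in the direction you need, so the stated bound is unaffected (indeed slightly strengthened).
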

\begin{proof} See the proof of Corollary 4a in \cite{coleman} and the remark following the corollary.
\end{proof}

\begin{thm}[Lorenzini-Tucker] \label{lorenzini_tucker_bound} Assume that $r<g$. Let $p$ be a prime of good reduction for $X$, and let $\calX / \Z_p$ be a model of $X$ with good reduction. If $d$ is a positive integer such that $p>d$ and $p^d>2g-1+d$, then \[\#X(\Q)\le\#\calX(\F_p) + \left(\frac{p-1}{p-d}\right)(2g-2).\]
\end{thm}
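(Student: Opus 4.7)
The plan is to adapt the Chabauty--Coleman argument underlying Theorem~\ref{coleman_bound}, refining the $p$-adic count in each residue disk so that it remains effective for primes smaller than $2g$.

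\textbf{Step 1 (Chabauty).} Since $r<g$, Chabauty's method yields a nonzero regular $1$-form $\omega$ on $X_{\Q_p}$ whose Coleman integral $\eta(Q):=\int_{P_0}^{Q}\omega$ vanishes on every $Q\in X(\Q)$. Scale $\omega$ so that it lies in $H^0(\calX,\Omega^1_{\calX/\Z_p})$ but not in $pH^0(\calX,\Omega^1_{\calX/\Z_p})$; then its reduction $\bar\omega$ is a nonzero regular differential on $\calX_{\F_p}$, so
\[
\sum_{\bar Q\in\calX(\F_p)}\ord_{\bar Q}(\bar\omega)\ \le\ 2g-2.
\]

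\textbf{Step 2 (Localization in residue disks).} Decompose $X(\Q_p)=\bigsqcup_{\bar Q\in\calX(\F_p)}D_{\bar Q}$ into residue disks. On each $D_{\bar Q}$ fix a uniformizer $t$ identifying the disk with $p\Z_p$, and expand $\omega=\bigl(\sum_{i\ge 0}w_i t^i\bigr)\,dt$ with $w_i\in\Z_p$. Writing $n=n_{\bar Q}:=\ord_{\bar Q}(\bar\omega)$, we have $w_i\in p\Z_p$ for $i<n$ and $w_n\in\Z_p^{\times}$. Every $Q\in X(\Q)\cap D_{\bar Q}$ is a zero, in $p\Z_p$, of the antiderivative $F(t):=\sum_{i\ge 0}\frac{w_i}{i+1}\,t^{i+1}$.

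\textbf{Step 3 (Newton polygon estimate, the crux).} Substitute $t=p\tau$ to get $F(p\tau)=\sum_{m\ge 1}\tfrac{w_{m-1}p^m}{m}\tau^m$, whose $\tau^m$-coefficient has valuation $v_m:=v_p(w_{m-1})+m-v_p(m)$. The key numerical facts I would extract from the hypotheses are: (a) for $m\le d$ the condition $p>d$ gives $v_p(m)=0$, so $v_m\ge m$; in particular the first vertex of the Newton polygon sits at $(n+1,\,n+1)$. (b) For $m>d$ one has $v_p(m)\le\log_p m$, and the bound $p^d>2g-1+d\ge n+d+1$ is exactly what is needed to force every segment of the Newton polygon past the vertex $(n+1,n+1)$ to have slope $\ge -(p-1)/(p-d)$. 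By the Newton polygon theorem this shows that $F(p\tau)$ has at most $1+\tfrac{p-1}{p-d}\,n_{\bar Q}$ zeros with $|\tau|_p\le 1$, i.e., $F$ has at most that many zeros in $p\Z_p$.

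\textbf{Step 4 (Summation).} Summing the per-disk bound,
\[
\#X(\Q)\ \le\ \sum_{\bar Q\in\calX(\F_p)}\Bigl(1+\tfrac{p-1}{p-d}\,n_{\bar Q}\Bigr)\ =\ \#\calX(\F_p)+\tfrac{p-1}{p-d}\sum_{\bar Q}n_{\bar Q}\ \le\ \#\calX(\F_p)+\tfrac{p-1}{p-d}(2g-2).
\]

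\textbf{Main obstacle.} The delicate step is Step~3: translating the combined hypothesis $p>d$ and $p^d>2g-1+d$ into the precise slope bound $-(p-1)/(p-d)$ on the Newton polygon of $F(p\tau)$ past its vertex at $(n+1,n+1)$. This requires a careful balancing of the denominators $1/m$ (which can be divisible by $p$ once $m\ge p$) against the growth $p^m$, and a vertex-by-vertex check that no coefficient lies below the line of slope $-(p-1)/(p-d)$ through $(n+1,n+1)$. Everything else---Chabauty's theorem, the formalism of Coleman integration, reduction to residue disks, and the final summation using $\deg\bar\omega\le 2g-2$---follows the template of Coleman's proof verbatim.
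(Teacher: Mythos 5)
You should know that the paper offers no proof of this theorem at all: it simply cites Theorem 1.1 of \cite{lorenzini-tucker}. So your sketch has to be measured against the Lorenzini--Tucker argument itself, whose overall shape you reproduce correctly in Steps 1, 2 and 4: the Chabauty form $\omega$ scaled to have nonzero reduction, the decomposition of $X(\Q_p)$ into residue disks, the bound $\sum_{\bar Q}\ord_{\bar Q}(\bar\omega)\le 2g-2$, and the final summation are all standard and fine, and the per-disk bound $1+\frac{p-1}{p-d}n_{\bar Q}$ is indeed the right local statement to aim for.

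The gap is exactly where you flag it, in Step 3, and it is genuine rather than a routine verification. First, the claim that ``the first vertex of the Newton polygon sits at $(n+1,n+1)$'' does not follow from $p>d$: that hypothesis only controls $v_p(m)$ for $m\le d$, whereas $n+1$ can be as large as $2g-1$, far beyond $d$ (in the paper's own applications of this theorem one has $d=2$ with $g=3$ or $g=5$); moreover the $\tau^{n+1}$-coefficient has valuation $(n+1)-v_p(n+1)$, which is strictly less than $n+1$ whenever $p\mid n+1$ (e.g.\ $p=3$, $n=2$, perfectly possible in the application with $p=3$, $g=3$). Second, the criterion you propose to check --- that every segment of the Newton polygon past that vertex has slope $\ge -(p-1)/(p-d)$ --- is not the statement you need: zeros of $F(p\tau)$ with $|\tau|_p\le 1$ correspond to segments of slope $\le 0$, so a lower bound on slopes by a negative quantity does not by itself bound their number. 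What is actually required is an upper bound on the largest index $M$ at which the minimal coefficient valuation is attained: from $M-v_p(M)\le (n+1)-v_p(n+1)$ together with $p^{v_p(M)}\le M$, one uses $p^d>2g-1+d\ge n+1+d$ to force $v_p(M)\le d-1$ and then deduces $M\le 1+n\cdot\frac{p-1}{p-d}$. That elementary but nontrivial computation (or, equivalently, Lorenzini and Tucker's local lemma on zeros of $p$-adic integrals) is the entire content of the theorem, and your proposal asserts it rather than proves it, with the supporting claims partly misstated as noted above.
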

\begin{proof} This follows from \cite[Thm. 1.1]{lorenzini-tucker}\end{proof}

\begin{thm}[Stoll]\label{stoll_bound}
Suppose that $r<g$ and let $p > 2r + 2$ be a prime of good reduction for $X$. Let $\calX / \Z_p$ be a model of $X$ with good reduction. Then \[ \#X(\Q) \le \#\calX(\F_p) + 2r. \]
\end{thm}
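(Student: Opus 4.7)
The plan is to implement the Chabauty--Coleman method with Stoll's refined residue-disk bound. First I would set up the Chabauty subspace: if $X(\Q) = \emptyset$ the conclusion is trivial, so fix a basepoint $P_0 \in X(\Q)$ and embed $X \hookrightarrow \Jac(X)$ via $P \mapsto [P - P_0]$. Since $r < g$, the $p$-adic closure $\overline{\Jac(X)(\Q)} \subset \Jac(X)(\Q_p)$ is an analytic subgroup of dimension at most $r$, so its annihilator
\[ V := \left\{\omega \in H^0(X_{\Q_p}, \Omega^1) : \textstyle\int_\gamma \omega = 0 \text{ for all } \gamma \in \overline{\Jac(X)(\Q)}\right\} \]
has $\Q_p$-dimension $d \geq g - r \geq 1$. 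For any $\omega \in V$, the Coleman integral $\lambda_\omega(P) := \int_{P_0}^P \omega$ is a locally analytic function on $X(\Q_p)$ that vanishes on $X(\Q)$, so counting rational points reduces to counting zeros of $\lambda_\omega$ in each residue disk.

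Next I would analyze a single residue disk $D_{\bar P}$ of $\bar P \in \calX(\F_p)$. Fix a uniformizer $t$ at $\bar P$ on $\calX$, so $D_{\bar P}$ is parameterized by $t \in p\Z_p$. For an integral $\omega \in V$ with nonzero reduction of vanishing order $n = \mathrm{ord}_{\bar P}(\bar\omega)$, the restriction $\lambda_\omega|_{D_{\bar P}}$ is a power series in $t$ whose Newton polygon has at most $n + 1$ zeros on $p\Z_p$, provided $p > n + 1$. Stoll's key optimization is to choose, \emph{separately for each disk}, an integral form $\omega_{\bar P} \in V$ minimizing
\[ n_{\bar P} := \min_{0 \ne \omega \in V} \mathrm{ord}_{\bar P}(\bar\omega). \]
Once we know $n_{\bar P} \leq 2r$ (established below), the hypothesis $p > 2r + 2$ gives $p > n_{\bar P} + 1$ uniformly, so $\#(X(\Q) \cap D_{\bar P}) \leq 1 + n_{\bar P}$ in every disk.

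The crux of the argument---and the step that upgrades Coleman's bound $2g-2$ (Theorem \ref{coleman_bound}) to Stoll's $2r$---is the global inequality
\[ \sum_{\bar P \in \calX(\F_p)} n_{\bar P} \leq 2r. \]
I would prove this by Clifford's theorem applied to the reduction $\bar V \subset H^0(\calX_{\F_p}, \Omega^1)$ (obtained by saturating the $\Z_p$-lattice $V \cap H^0(\calX, \Omega^1_{\calX/\Z_p})$, so $\dim_{\F_p} \bar V = d$), viewed as a linear subseries of the canonical series. By construction $\sum_{\bar P} n_{\bar P} \leq \deg B$, where $B$ is the base divisor of $\bar V$, and every section of $\bar V$ factors as $B + (\text{effective of degree } 2g-2-\deg B)$, so $\bar V$ embeds into $H^0(\calX_{\F_p}, \omega_{\calX_{\F_p}}(-B))$. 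Writing $E$ for a divisor class with $K = E + B$, this yields $h^0(E) \geq d$ and $h^0(K-E) = h^0(B) \geq 1$, so Clifford gives $h^0(E) \leq \deg(E)/2 + 1$, hence $g - r \leq (2g-2-\deg B)/2 + 1$, and therefore $\deg B \leq 2r$. Summing the per-disk estimates yields
\[ \#X(\Q) \leq \sum_{\bar P \in \calX(\F_p)} (1 + n_{\bar P}) \leq \#\calX(\F_p) + 2r. \]

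The main obstacle is isolating the Clifford-based bound $\sum n_{\bar P} \leq 2r$ cleanly: the Chabauty--Coleman setup and the Newton polygon bound in each disk are essentially routine (the latter needing only $p > n_{\bar P}+1$, which the hypothesis $p > 2r+2$ guarantees), but the base-locus analysis must be carried out with care to pass from $V$ over $\Q_p$ to its reduction $\bar V$ over $\F_p$ (including choosing a $\Z_p$-basis whose reductions remain linearly independent) and to verify that Clifford's specialness hypothesis holds automatically because $\bar V$ is a subseries of the canonical series.
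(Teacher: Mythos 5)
The paper does not actually prove this statement: its ``proof'' is a one-line citation of Stoll's Corollary 6.7. What you have written is essentially a reconstruction of Stoll's own argument, and its structure is sound: the Chabauty space $V$ of dimension $d\ge g-r$, the per-disk count governed by the minimal vanishing order $n_{\bar P}$ of reductions of integral forms in $V$, and the global estimate $\sum_{\bar P\in\calX(\F_p)} n_{\bar P}\le\deg B\le 2r$ obtained by viewing the reduction $\bar V$ as a subseries of the canonical system and applying Clifford's theorem to $K-B$. You also flag the two points where care is genuinely required and handle them correctly: the lattice $V\cap H^0(\calX,\Omega^1_{\calX/\Z_p})$ is saturated, so $\dim_{\F_p}\bar V=d$ (using that $H^0$ of $\Omega^1$ commutes with reduction for a model with good reduction), and Clifford's speciality hypothesis holds because $h^0(B)\ge 1$ and $h^0(K-B)\ge d\ge 1$.

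One quantitative slip: the single-disk Strassmann/Newton-polygon lemma needs $p>n+2$, not $p>n+1$, to give ``at most $n+1$ zeros.'' Parameterizing the disk by $t=ps$ with $s\in\Z_p$, the coefficient of $s^{n+1}$ in $\lambda_\omega$ has valuation $n+1$ (using $p>n+1$), while the coefficient of $s^{p}$ only has valuation at least $p-1$; when $p=n+2$ these can be equal (e.g.\ $w(t)=a_nt^n+a_{p-1}t^{p-1}$ with both coefficients units), and then Strassmann only bounds the number of zeros in the disk by $p=n+2$, not $n+1$. So the lemma as you state it is too optimistic by one. This does not affect the theorem: since $n_{\bar P}\le 2r$ and the hypothesis gives $p\ge 2r+3$, you always have $p\ge n_{\bar P}+3$, so the corrected lemma applies in every residue disk and the summation $\#X(\Q)\le\sum_{\bar P}(1+n_{\bar P})\le\#\calX(\F_p)+2r$ goes through exactly as you wrote it. You should simply state the disk lemma with the hypothesis $n\le p-3$ (equivalently $p>n+2$) and observe that $p>2r+2$ guarantees it.
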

\begin{proof} This is a consequence of \cite[Cor. 6.7]{stoll_bound}.\end{proof}

The next result is crucial for obtaining equations for the parameterizing curves of the graphs to be considered in this section. Different versions of the various parts of this result have appeared elsewhere (for instance, \cite{poonen_prep} and \cite{walde-russo}), but not exactly in the form we will need. Hence, we include here the precise statements we require for our purposes.

\begin{prop}\label{cycles_prop} Let $K$ be a number field and let $f(z)=z^2+c$ with $c\in K$.
\begin{enumerate}[itemsep=1.2mm]
\item If $f(z)$ has a fixed point $p\in K$, then there is an element $x\in K$ such that
\[p=x+1/2\;\;, \;\;c=1/4-x^2.\]
Moreover, the point $p'=1/2-x$ is also fixed by $f(z)$.
\item If $f(z)$ has a point $p\in K$ of period 2, then there is a nonzero element $x\in K$ such that
\[p=x-1/2\;\;, \;\;c=-3/4-x^2.\]
Moreover, the orbit of $p$ under $f$ consists of the points $p$ and $f(p)=-x-1/2$.
\item If $f(z)$ has a point $p\in K$ of period 3, then there is an element $x\in K$ such that $x(x+1)(x^2+x+1)\ne 0$ and
\[p=\frac{x^3+2x^2+x+1}{2x(x+1)}\;\;,\;\;c=-\frac{ x^6 + 2x^5 + 4x^4 + 8x^3 + 9x^2 + 4x + 1}{4x^2(x+1)^2}.\]
Moreover, the orbit of $p$ under $f$ consists of the points $p$ and
\begin{align*}
f(p) &= \frac{x^3-x-1}{2x(x+1)},\\
f^2(p) &= -\frac{x^3+2x^2+3x+1}{2x(x+1)}.
\end{align*}
\item If $f(z)$ has a point $p\in K$ of period 4, then there are elements $x,y\in K$ with $y(x^2-1)\ne 0$ such that \[y^2 = F_{16}(x):=-x(x^2 + 1)(x^2 - 2x - 1)\] and
\[p = \frac{x-1}{2(x+1)} + \frac{y}{2x(x-1)}\;\;,\;\;c = \frac{(x^2 - 4x - 1)(x^4 + x^3 + 2x^2 - x + 1)}{4x(x-1)^2(x+1)^2}.\]
 Moreover, the orbit of $p$ under $f$ consists of the points $p$ and
\begin{align*}
f(p) &= -\frac{x+1}{2(x-1)} + \frac{y}{2x(x+1)},\\
f^2(p) &= \frac{x-1}{2(x+1)} - \frac{y}{2x(x-1)},\\
f^3(p) &= -\frac{x+1}{2(x-1)} - \frac{y}{2x(x+1)}.
\end{align*}
\end{enumerate}
\end{prop}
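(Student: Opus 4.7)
The plan is to treat each case by first expressing the condition ``$p$ has exact period $n$'' as the vanishing of the $n$-th dynatomic polynomial
\[
\Phi_n^*(p,c) \;=\; \prod_{d \mid n} (f^d(p)-p)^{\mu(n/d)} \;\in\; \Q[p,c],
\]
then exhibiting a parametrization of the affine curve $\Phi_n^* = 0$, and finally computing the forward orbit of $p$ to verify the listed formulas. For parts (1) and (2) the dynatomic polynomials
\[
\Phi_1^*(p,c) \;=\; p^2 - p + c, \qquad \Phi_2^*(p,c) \;=\; p^2 + p + c + 1
\]
(the latter obtained by dividing $f^2(p) - p$ by $f(p) - p$) are conics in $(p,c)$ that admit obvious rational parametrizations. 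Substituting $p = x + 1/2$, $c = 1/4 - x^2$ kills $\Phi_1^*$, and Vieta's formula applied to $\Phi_1^*(\,\cdot\,,c)$ gives the second fixed point $1 - p = 1/2 - x$. Likewise $p = x - 1/2$, $c = -3/4 - x^2$ kills $\Phi_2^*$, and $f(p) = p^2 + c$ simplifies to $-x - 1/2$, which is the other root of $\Phi_2^*(\,\cdot\,,c)$. The exclusion $x \ne 0$ forces the period to be exactly $2$ rather than $1$: at $x = 0$ one recovers $p = -1/2$, which is fixed by $f_{-3/4}$.

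For part (3), the polynomial $\Phi_3^*(p,c)$ is irreducible of degree $6$ in $p$, and the corresponding affine dynatomic curve $Y_1(3)$ has genus $0$ together with rational points, hence is $\Q$-isomorphic to a Zariski-open subset of $\mathbb{A}^1$. The strategy is to exhibit an explicit birational parametrization $\mathbb{A}^1 \dashrightarrow Y_1(3)$ --- the formulas for $p$ and $c$ in the statement being the candidate --- and then verify the polynomial identity
\[
\Phi_3^*\bigl(p(x),\, c(x)\bigr) \;=\; 0 \qquad \text{in } \Q(x).
\]
Once this holds, one application of $f_{c(x)}$ followed by simplification reproduces the stated formula for $f(p)$, and a second application yields $f^2(p)$. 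The excluded locus $x(x+1)(x^2+x+1) = 0$ is precisely the set of parameter values at which either the rational map degenerates (a pole of $p$ or $c$) or the orbit collapses onto a fixed point, so excluding it is equivalent to demanding exact period $3$. Surjectivity onto the set of period-$3$ points over $K$ follows from birationality together with the explicit nature of the parametrization.

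Part (4) is parallel but with $Y_1(4)$ of genus $2$. Classically, a smooth projective model for $Y_1(4)$ over $\Q$ is the hyperelliptic curve $y^2 = F_{16}(x) = -x(x^2+1)(x^2 - 2x - 1)$; this is precisely the modular curve $X_1(16)$ appearing in \S\ref{modular_genus2_section}. The plan is to verify the parametrization by substituting $(p(x,y), c(x,y))$ into $\Phi_4^*(p,c)$ and reducing modulo $y^2 - F_{16}(x)$ to obtain $0$, and then to compute $f(p)$, $f^2(p)$, $f^3(p)$ with repeated use of the same relation. The exclusion $y(x^2 - 1) \ne 0$ removes the ramification locus of the hyperelliptic map --- where $p$ has period strictly dividing $4$ --- together with the poles of the parametrization at $x = \pm 1$. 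The main obstacle in both (3) and (4) is not conceptual but computational: $\Phi_3^*$ and especially $\Phi_4^*$ are bulky polynomials, and the identity checks are best outsourced to a computer algebra system. The genuine conceptual inputs are the production of the parametrization for (3) and the identification of the genus-$2$ model $y^2 = F_{16}(x)$ for (4); once the ansatz is in place, the verification reduces to mechanical expansion.
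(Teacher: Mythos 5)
For parts (1) and (2) your argument is the paper's own: complete the square for $\Phi_1^*$, divide $f^2(p)-p$ by $f(p)-p$ to get $p^2+p+c+1$ for $\Phi_2^*$, and note $x\ne 0$ because $p\ne f(p)$; that part is fine. The problem is in parts (3) and (4), where the entire content of the proposition is the surjectivity statement ``if $p\in K$ has period $n$, \emph{then there exists} $x\in K$ (resp.\ $(x,y)\in K^2$ on $y^2=F_{16}(x)$) producing it.'' Your plan spends its effort on the opposite, easy direction: checking $\Phi_3^*\bigl(p(x),c(x)\bigr)=0$ (or the analogous identity modulo $y^2-F_{16}(x)$) only shows that the parametrization \emph{lands on} the dynatomic curve, and the orbit computations then show those points do have exact period $3$ or $4$. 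For the direction actually asserted, you offer one sentence — ``surjectivity follows from birationality together with the explicit nature of the parametrization'' — but you never establish birationality, never exhibit the inverse map, and never rule out $K$-rational period-$n$ points that might correspond to excluded or infinite parameter values (for part (4) you do not address surjectivity at all beyond ``parallel''). As it stands the main claim is unproved.

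The gap is fillable, and the cheapest fix is to write down the inverse explicitly and verify it: for period $3$ one can take $x=p^2+p+c$ (this is exactly the inversion the paper uses later, in the proofs of Lemmas \ref{10_311_curve} and \ref{10_32_curve}), check that $x\in K$, that $x(x+1)(x^2+x+1)\ne 0$ for a genuine $3$-cycle, and that substituting back recovers $p$ and $c$; for period $4$ one takes $x=\bigl(1+p+f^2(p)\bigr)/\bigl(1-p-f^2(p)\bigr)$ and the corresponding expression for $y$, as in Lemma \ref{8_4_curve}, and checks $y(x^2-1)\ne 0$. Without this (or a citation), asserting birationality is circular. For comparison, the paper itself does not reprove (3) and (4): it quotes Walde--Russo for the period-$3$ parametrization and Morton for period $4$, and only computes the orbit formulas directly. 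One further small imprecision: exact period $n$ is \emph{not} equivalent to $\Phi_n^*(p,c)=0$ in general (e.g.\ $c=-3/4$, $p=-1/2$ satisfies $\Phi_2^*=0$ but is a fixed point); only the implication ``exact period $n\Rightarrow\Phi_n^*=0$'' holds, which fortunately is the direction your surjectivity argument needs, while the converse on the parametrized locus must be argued via the excluded-locus relations, as you do for $n=2$.
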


\begin{proof}\mbox{}
\begin{enumerate}
\item The equation $p^2+c=p$ can be rewritten as $(p-1/2)^2+c-1/4=0$. Letting $x=p-1/2$ we then have $x^2+c-1/4=0$, and the result follows. 
\item Since $f^2(p)=p$ and $f(p)\ne p$, then we have the equation
\[p^2+p+c+1=\frac{f^2(p)-p}{f(p)-p}=0.\] Letting $x=p+1/2$, this equation becomes $x^2+c+3/4=0$, and hence $c=-3/4-x^2$. Expressing $p$ and $c$ in terms of $x$ we obtain $f(p)=p^2+c=-x-1/2$. We must have $x\ne 0$ since $p$ and $f(p)$ are distinct.
\item See the proof of \cite[Thm. 3]{walde-russo}.
\item The existence of $x$ and $y$ and the expressions for $p$ and $c$ in terms of $x$ and $y$ can be obtained from the discussion in \cite[91-93]{morton_4cycles}; the expressions for the elements of the orbit of $p$ are obtained by a straightforward calculation from the expressions for $p$ and $c$. Finally, we must have $y\ne 0$ since otherwise $p$ would have period smaller than 4: indeed, note that $p=f^2(p)$ if $y=0$. \qedhere
\end{enumerate}
\end{proof}

\begin{rem} Note that part (2) of Proposition \ref{cycles_prop} implies that $f(z)$ can have at most two points of period 2 in $K$, so that the graph $G(f,K)$ can have at most one 2-cycle. This fact will be needed in the analysis of some of the graphs below.
\end{rem}

The following lemma will allow us to show that certain preperiodic graph structures occur infinitely many times over quadratic fields.

\begin{lem}\label{poly_lemma}
Let $p(x)\in\Q[x]$ have nonzero discriminant and degree $\ge 3$. For every rational number $r$, define a field $K_r$ by \[K_r:=\Q\left(\sqrt{p(r)}\right).\] Then, for every interval $I\subset\R$ of positive length, the set $\Sigma(I,p)=\{K_r:r\in\Q\cap I\}$ contains infinitely many quadratic fields. In particular, if the polynomial function $p:\R\to\R$ induced by $p(x)$ takes both positive and negative values, then $\Sigma(I,p)$ contains infinitely many real (resp. imaginary) quadratic fields.
\end{lem}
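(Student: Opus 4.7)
The proposition reduces to two assertions: (a) for infinitely many $r \in \Q \cap I$, the value $p(r)$ is not a square in $\Q$, so that $K_r$ is a genuine quadratic field; and (b) the square classes of these values in $\Q^*/(\Q^*)^2$ take infinitely many values, so the corresponding fields $K_r$ are pairwise distinct for infinitely many $r$. My plan is to deduce both from Hilbert's irreducibility theorem, phrased in the language of thin sets (cf.\ \cite[Chap.\ 12]{fried-jarden}). The crucial background fact is that $\Q \cap I$ is \emph{not} a thin subset of $\Q$: for any real interval of positive length, counting rationals in $I$ by naive height gives $\sim c(I)\, N^2$ elements of height $\le N$, whereas any thin subset of $\Q$ contains only $o(N^2)$ such elements.

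For (a), the hypothesis that the discriminant of $p$ is nonzero (together with $\deg p \ge 1$) ensures that $p(T)$ has only simple roots, so it is not a square in $\Q[T]$ and hence not a square in $\Q(T)$. Therefore $F(T, X) := X^2 - p(T)$ is irreducible in $\Q(T)[X]$, and by Hilbert's theorem the set $E_0 := \{r \in \Q : p(r) \in (\Q^*)^2 \cup \{0\}\}$ is thin in $\Q$. In particular $(\Q \cap I) \setminus E_0$ is infinite. For (b), suppose by way of contradiction that only finitely many distinct quadratic fields $K_r$ arise, say $\Sigma(I,p) \setminus \{\Q\} = \{\Q(\sqrt{d_1}), \ldots, \Q(\sqrt{d_n})\}$ with distinct square-free $d_i \in \Z \setminus \{1\}$. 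For each $i$, the affine curve $C_i : d_i y^2 = p(x)$ is geometrically integral (because $d_i\, p(x)$ has distinct roots), and the $x$-projection $\pi_i : C_i \to \mathbb{A}^1$ is a dominant morphism of degree $2$; by Serre's definition, $\pi_i(C_i(\Q))$ is thin. The finite union $E_0 \cup \bigcup_{i=1}^n \pi_i(C_i(\Q))$ is therefore thin and would, by assumption, contain all of $\Q \cap I$, contradicting its non-thinness.

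For the sign refinement, $K_r$ is real when $p(r) > 0$ and imaginary when $p(r) < 0$. If $p$ takes both signs on $\R$, the sets $\{x \in \R : p(x) > 0\}$ and $\{x \in \R : p(x) < 0\}$ are both nonempty open subsets of $\R$ and each contains an open interval; applying the main part of the lemma to a sub-interval of $I$ contained in such a set produces infinitely many real (resp.\ imaginary) quadratic fields in $\Sigma(I,p)$. I expect the main obstacle to be the non-thinness of $\Q \cap I$: this is the one genuinely quantitative input, relying on standard height estimates for rationals in a real interval, while every other step is a formal application of Hilbert's theorem and Serre's thin-set formalism.
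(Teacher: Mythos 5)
Your proof is correct, but it runs on different machinery than the paper's, so a comparison is worth making. The paper argues by the same counting contradiction, with two inputs: Proposition \ref{counting_asymptotic} (proved from scratch in the appendix by a M\"obius-inversion argument), which gives $\#\{r\in\Q\cap I:H(r)\le T\}\sim cT^2$, and Theorem \ref{curve_counting} (Hindry--Silverman, resting on Faltings and Mordell--Weil), which shows that the $r$ with $K_r$ equal to one of finitely many prescribed fields $\Q(\sqrt{d_i})$ (with $d_0=1$ absorbing the square values of $p(r)$) lift to rational points on the twists $C_{d_i}\colon d_iy^2=p(x)$, curves of genus $\ge 1$ because $\deg p\ge 3$ and the discriminant is nonzero, so that the exceptional count is $\ll T$. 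You replace the Faltings-type input by Hilbert irreducibility and Serre's thin-set counting theorem: the bad locus is a finite union of type-II thin sets (the images of the degree-$2$ projections $C_{d_i}\to\mathbb{A}^1$, whose geometric irreducibility you correctly verify, together with the set where $p(r)$ is a square), hence contains only $O(T^{3/2}\log T)=o(T^2)$ rationals of height $\le T$. The trade-off is real: your bound on the exceptional set is weaker, but the arithmetic-geometric input is lighter (large sieve rather than Faltings), no case distinction on the genus of the twists is needed, and the argument would even tolerate $\deg p=2$; the paper's route gets a much stronger bound on the exceptional count at the cost of quoting deep finiteness theorems. Do note that the one quantitative fact you defer to ``standard height estimates'' --- that $\Q\cap I$ contains $\gg T^2$ rationals of height $\le T$ --- is exactly what the paper's Proposition \ref{counting_asymptotic} is there to prove, so in a self-contained write-up you would still need it or a citation for it. Finally, in the sign refinement your phrase ``a sub-interval of $I$ contained in such a set'' should read ``some interval on which $p$ has the desired sign,'' as in the paper's proof: the given $I$ need not meet the region where $p<0$, and the ``in particular'' clause is really about intervals chosen after the fact.
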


\begin{proof} For the proof of the statement that $\Sigma(I,p)$ contains infinitely many quadratic fields, see Appendix \ref{counting_appendix}. The second part follows from this statement by choosing an interval $I_1$ where $p>0$ and an interval $I_2$ where $p<0$.
\end{proof}

\subsection*{Use of computational software}
In preparing this article we have made extensive use of both the Magma \cite{magma} and Sage \cite{sage} computer algebra systems. Our data-gathering computations explained in \S\ref{quad_prep_comp} were carried out by implementing the algorithms of \S\ref{prep_algorithm} in Sage; these methods rely on the algorithm \cite{doyle-krumm} for listing elements of bounded height in number fields, which is also implemented in Sage. In Magma, we have made use of some rather sophisticated tools; in particular, we apply the \texttt{RankBound} function, which implements Stoll's algorithm \cite{stoll_2descent} of 2-descent for bounding the rank of the group of rational points on the Jacobian of a hyperelliptic curve over $\Q$. In addition, we frequently use the \texttt{CurveQuotient} function relying on Magma's invariant theory functionality to determine the quotient of a curve by an automorphism. Whenever this function is used in our paper, one can easily check by hand that the output is correct. Finally, for the analysis of the graph 14(3,1,1) in \S\ref{14_311_section} we require the \texttt{Chabauty} function, which implements a method due to Bruin and Stoll \cite[\S 4.4]{bruin-stoll} combining the method of Chabauty and Coleman with a Mordell-Weil sieve in order to determine the set of rational points on a curve of genus 2 with Jacobian of rank 1.


We can now proceed to the main task of this paper, namely to study the preperiodic graph structures appearing in Appendix \ref{graph_pictures}. We will consider the graphs one at a time, following the order in which they are listed in the appendix; however, the graphs discussed in \S\ref{rational_prep_section} and \S\ref{pcf_ufp} will not be considered henceforth in this section. The format for our discussion of each graph $G$ is roughly the same for all graphs: first, a parameterizing curve $C$ is constructed and an explicit map is given which shows how to use points on $C$ to obtain instances of $G$. Next, a theorem is proved which describes the set of quadratic points on $C$; finally, we use this theorem to deduce information about the instances of $G$ defined over quadratic fields. When there are infinitely many examples of a particular graph occurring over quadratic fields, we will also be interested in deciding whether it occurs over both real and imaginary fields, or only one type of quadratic field.

\subsection{Graph 8(1,1)a}\label{8_11a_section}
\begin{lem}\label{8_11a_curve} Let $C/\Q$ be the affine curve of genus 1 defined by the equation
 \[y^2 = -(x^2-3)(x^2+1).\]
Consider the rational map $\varphi: C \dashrightarrow \mathbb{A}^3 = \Spec \Q[a,b,c]$ given by 
\[a= -\frac{2x}{x^2 -1 } \;\;,\;\;b=\frac{y}{x^2-1}\;\;,\;\;c=\frac{-2(x^2 + 1)}{(x^2 - 1)^2}.\]
For every number field $K$, the map $\phi$ induces a bijection from the set $\{(x,y)\in C(K): (x^4-1)(x^2+3)\ne 0\}$ to the set of all triples $(a,b,c)\in K^3$ such that $a$ and $b$ are points of type $1_2$ for the map $f_c$ satisfying $f_c^2(a)\ne f_c^2(b)$. 
\end{lem}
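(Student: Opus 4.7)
The plan is to establish the bijection by exhibiting an explicit inverse to $\varphi$ and verifying that the two compositions are the identity. The asymmetry in the formulas — with $a$ depending only on $x$ and $b$ depending on $y$ — signals that $(a,c)$ alone should determine $x$, and that $b$ then pins down $y$.

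First I would check that $\varphi$ is well defined and produces triples with the claimed dynamical properties. The denominator $x^2-1$ is nonzero by $x^4\neq 1$. A direct computation gives $1-4c = (x^2+3)^2/(x^2-1)^2$, so the fixed points of $f_c$ are $K$-rational, namely $p_+ = (x^2+1)/(x^2-1)$ and $p_- = -2/(x^2-1)$, and they are distinct because $x^2\neq -3$. Another short calculation yields $f_c(a) = 2/(x^2-1) = -p_-$, whence $f_c^2(a) = p_-$; invoking the defining relation $y^2 = -(x^2-3)(x^2+1)$, one similarly obtains $f_c(b) = -p_+$ and $f_c^2(b) = p_+$. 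Thus $a$ and $b$ are preperiodic, landing on the two distinct fixed points after exactly two iterations; a brief case analysis of when $a$ or $b$ could already equal a fixed or type-$1_1$ point shows that each such coincidence forces $x^2=1$ or $x^2=-1$, both excluded by $x^4\neq 1$.

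For the inverse, given a triple $(a,b,c)$ in the target set, I would put $p_-:=f_c^2(a)$ and $p_+:=f_c^2(b)$; these are distinct $K$-rational fixed points of $f_c$. A key preliminary observation is that $p_-\neq 0$: if $p_-=0$ then $c = p_+p_-=0$ and $f_c(a)^2 = f_c^2(a) = 0$ forces $f_c(a)=0$, a fixed point of $f_0$, contradicting $a$ being of type $1_2$. Define $x:=a/p_-$ and $y:=-2b/p_-$. Using $a^2+c=-p_-$ together with the fixed-point relation $p_-^2+c=p_-$, one finds $x^2 = 1-2/p_-$, which recovers $p_- = -2/(x^2-1)$, $p_+ = (x^2+1)/(x^2-1)$ (via $p_++p_-=1$), and $c = -2(x^2+1)/(x^2-1)^2$. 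A parallel manipulation for $b$ and $p_+$ then yields $y^2 = -(x^2-3)(x^2+1)$, so $(x,y)\in C(K)$, and substituting these expressions into $\varphi$ directly recovers $(a,b,c)$.

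The main obstacle is bookkeeping rather than a substantive difficulty: one must confirm that the excluded locus $(x^4-1)(x^2+3)=0$ corresponds precisely to the three degenerate configurations — $x^2=1$ making $\varphi$ undefined, $x^2=-3$ collapsing the two fixed points, and $x^2=-1$ forcing one of the fixed points to equal $0$ (whose only preimage under $f_c$ is itself, so no type-$1_2$ point can map to it) — and that outside this locus the forward and inverse constructions really are mutually inverse.
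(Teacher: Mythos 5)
Your proposal is correct and follows essentially the same route as the paper: verify by direct computation that the forward map lands in the target set, then write down an explicit inverse — and your inverse agrees with the paper's, since for a type-$1_2$ point one has $f_c^2(a) = -(a^2+c)$, so your $x = a/f_c^2(a)$, $y = -2b/f_c^2(a)$ are exactly the paper's $x = -a/(a^2+c)$, $y = 2b/(a^2+c)$. The only real difference is that where the paper defers the surjectivity step to Poonen's argument, you re-derive the parameterization directly from the two fixed points $p_\pm$ (using $p_++p_-=1$, $p_+p_-=c$, and the fact that the preimages of a fixed point are that point and its negative), which makes the proof self-contained.
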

\begin{figure}[h!]
\begin{center}\includegraphics[scale=0.5]{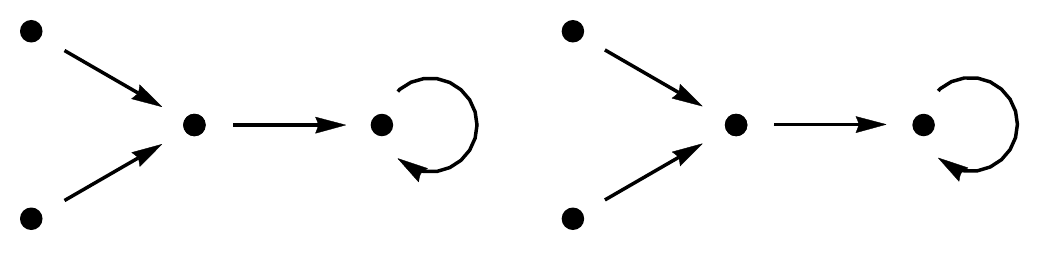}\end{center}
\caption{Graph type 8(1,1)a}
\end{figure}
\begin{proof}
Fix a number field $K$ and suppose that $(x,y)\in C(K)$ satisfies $x^2\ne 1$. Defining $a,b,c$ as in the lemma, it is straightforward to verify that $a$ is a point of type $1_2$ for the map $f_c$; that $f_c^2(b)$ is fixed by $f_c$; and that the following relations hold:
\begin{equation}\label{8_11a_relations}
f_c^2(b)-f_c(b)=\frac{2(x^2+1)}{x^2-1}\;\;,\;\;f_c^2(b)-f_c^2(a)=\frac{x^2+3}{x^2-1}.
\end{equation} 
It follows from these relations that if $(x^2+1)(x^2+3)\ne 0$, then $b$ is of type $1_2$ and $f_c^2(a)\ne f_c^2(b)$. Hence, $\phi$ gives a well-defined map.

To see that $\phi$ is surjective, suppose that $a,b,c\in K$ are such that $a$ and $b$ are points of type $1_2$ for the map $f_c$ satisfying $f_c^2(a)\ne f_c^2(b)$. The argument given in \cite[19]{poonen_prep} then shows that there exists a point $(x,y) \in C(K)$ with $x^2\ne 1$ such that $\phi(x,y)=(a,b,c)$. Furthermore, the relations \eqref{8_11a_relations} imply that necessarily $(x^2+1)(x^2+3)\ne 0$. To see that $\phi$ is injective, one can verify that if $\phi(x,y)=(a,b,c)$, then
\[x=\frac{-a}{a^2+c}\;\;,\;\;y=\frac{2b}{a^2+c}.\qedhere\]
\end{proof}

\begin{rem} As shown in \cite[19]{poonen_prep}, the curve $C$ is birational over $\Q$ to the elliptic curve 24a4 in Cremona's tables \cite{cremona}.
\end{rem} 
	
\begin{prop}
There are infinitely many real (resp. imaginary) quadratic fields $K$ containing an element $c$ for which $G(f_c,K)$ admits a subgraph of type {\rm 8(1,1)a}. 
\end{prop}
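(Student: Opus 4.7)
The plan is to apply Lemma \ref{poly_lemma} to the polynomial $p(x) := -(x^2-3)(x^2+1)$, which has degree $4$ and distinct complex roots $\pm\sqrt{3}, \pm i$, so that its discriminant is nonzero. Since Lemma \ref{8_11a_curve} already shows that a point $(x,y) \in C(K)$ with $(x^4-1)(x^2+3) \neq 0$ produces, via the map $\phi$, a triple $(a,b,c) \in K^3$ with $a$ and $b$ of type $1_2$ for $f_c$ and $f_c^2(a) \neq f_c^2(b)$, such a triple automatically witnesses a subgraph of type 8(1,1)a inside $G(f_c,K)$. So the task reduces to producing rational values of $x$ whose associated field $K_x = \Q(\sqrt{p(x)})$ runs through infinitely many real (resp.\ imaginary) quadratic fields.

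First I would observe that for each rational $r$ with $r \notin \{\pm 1\}$ (avoiding the poles of the formulas in $\phi$), taking $y := \sqrt{p(r)}$ gives a point $(r,y) \in C(K_r)$ on the affine model. The condition $(r^4-1)(r^2+3)\neq 0$ is automatic, since $r^2 = -3$ has no rational solutions and $r \neq \pm 1$ by assumption. Thus, whenever $p(r)$ fails to be a square in $\Q^\times$, the field $K_r$ is a genuine quadratic extension of $\Q$, and the triple $(a,b,c) = \phi(r,y)$ has $c \in \Q$, $a \in \Q$, and $b = y/(r^2-1) \in K_r \setminus \Q$; hence $G(f_c,K_r)$ properly contains the desired 8(1,1)a subgraph.

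Next I would carry out the sign analysis of $p$: we have $p(x) > 0$ on $(-\sqrt{3},\sqrt{3})$ and $p(x) < 0$ outside $[-\sqrt{3},\sqrt{3}]$. Choose a rational interval $I_+ \subset (-\sqrt{3},\sqrt{3}) \setminus \{\pm 1\}$ on which $p > 0$, and a rational interval $I_- \subset (\sqrt{3},\infty)$ on which $p < 0$. By Lemma \ref{poly_lemma}, the sets $\Sigma(I_+,p)$ and $\Sigma(I_-,p)$ each contain infinitely many distinct quadratic fields, which by construction are real in the first case and imaginary in the second. Pairing each such $K_r$ with the corresponding $c \in \Q$ completes the argument.

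I do not anticipate a serious obstacle here: the parameterization from Lemma \ref{8_11a_curve} does all the dynamical work, and Lemma \ref{poly_lemma} supplies the needed Hilbert-irreducibility style infinitude of quadratic fields. The only thing to keep track of is the elementary sign analysis and the verification that $b$ is genuinely quadratic over $\Q$, both of which are routine.
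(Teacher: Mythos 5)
Your overall route is the same as the paper's: apply Lemma \ref{poly_lemma} to $p(x)=-(x^2-3)(x^2+1)$ to get infinitely many real (resp.\ imaginary) quadratic fields $K_r=\Q(\sqrt{p(r)})$, and feed the point $(r,\sqrt{p(r)})\in C(K_r)$ into Lemma \ref{8_11a_curve}. The sign analysis and the check that $(r^4-1)(r^2+3)\ne 0$ for rational $r\ne\pm1$ are fine.

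The gap is in the sentence claiming that a triple $(a,b,c)$ with $a,b$ of type $1_2$ and $f_c^2(a)\ne f_c^2(b)$ ``automatically witnesses a subgraph of type 8(1,1)a.'' It does not: the orbits of $a$ and $b$ supply only six vertices ($a,f_c(a),f_c^2(a),b,f_c(b),f_c^2(b)$), and the graph 8(1,1)a has eight. The two extra vertices are the second preimages $-a$ of $f_c(a)$ and $-b$ of $f_c(b)$, so you additionally need $a\ne 0$ and $b\ne 0$; your observation that $b$ is irrational is not what is needed here and does nothing toward the graph structure. In this parameterization $b=\sqrt{p(r)}/(r^2-1)$ is automatically nonzero for rational $r$, but $a=-2r/(r^2-1)$ vanishes exactly when $r=0$, so your argument as written would fail at $r=0$. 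The paper handles this by computing
\[
ab^2=\frac{2r(r^2-3)(r^2+1)}{(r^2-1)^3},
\]
which is nonzero for all rational $r\ne 0$, and then excluding $r=0$. Since this removes only one parameter value, the infinitude conclusion is unaffected, but you must add this nonvanishing check (equivalently, exclude $r=0$) to make the ``subgraph of type 8(1,1)a'' claim correct.
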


\begin{proof} 
Let $p(x)=-(x^2-3)(x^2+1)\in\Q[x]$. Applying Lemma \ref{poly_lemma} to the polynomial $p(x)$ we obtain infinitely many real (resp. imaginary) quadratic fields of the form $K_r=\Q(\sqrt{p(r)})$ with $r\in\Q$. For every such field there is a point $(r,\sqrt{p(r)})\in C(K_r)$ which necessarily satisfies $(r^4-1)(r^2+3)\ne 0$; hence, by Lemma \ref{8_11a_curve} there is an element $c\in K_r$ such that $f_c$ has points $a, b\in K_r$ of type $1_2$ with $f_c^2(a)\ne f_c^2(b)$. In order to conclude that $G(f_c,K_r)$ contains a subgraph of type 8(1,1)a we need the additional condition that $ab\ne 0$, so that the points $f_c(a)$ and $f_c(b)$ each have two distinct preimages. We have
\[ab^2=\frac{2r(r^2-3)(r^2+1)}{(r^2-1)^3},\]
so the condition $ab\ne 0$ will be satisfied as long as $r\ne 0$.
\end{proof}


\subsection{Graph 8(1,1)b}\label{8_11b_section}
\begin{lem}\label{8_11b_curve} Let $C/\Q$ be the affine curve of genus 1 defined by the equation
 \[y^2 = 2(x^3 + x^2 - x + 1).\]
Consider the rational map $\varphi: C \dashrightarrow \mathbb{A}^2 = \Spec \Q[p,c]$ given by 
\[p= \frac{y}{x^2 - 1} \;\;,\;\; c=\frac{-2(x^2 + 1)}{(x^2 - 1)^2}.\]
For every number field $K$, the map $\phi$ induces a bijection from the set $\{(x,y)\in C(K): x^2\ne 1\}$ to the set of all pairs $(p,c)\in K^2$ such that $p$ is a point of type $1_3$ for the map $f_c$. 
\end{lem}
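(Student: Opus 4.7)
The plan is to mirror the proof of Lemma~\ref{8_11a_curve}: parametrize the type-$1_3$ points of $f_c$ by explicit iteration identities, then invert the construction. The three things I would need to check are that $\varphi$ is well defined with image in the asserted set, that it is injective, and that it is surjective.

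\emph{Well-definedness.} For $(x,y)\in C(K)$ with $x^2\ne 1$, I would compute $f_c^k(p)$ for $k=1,2,3$ directly by substitution, using the defining relation $y^2 = 2(x^3+x^2-x+1)$ to simplify after each iteration. A routine calculation gives
\[
f_c(p)=\frac{2x}{x^2-1},\qquad f_c^2(p)=\frac{2}{x^2-1},\qquad f_c^3(p)=-\frac{2}{x^2-1}.
\]
One verifies immediately that $-2/(x^2-1)$ satisfies the fixed-point equation $z^2-z+c=0$, while $2/(x^2-1)$ satisfies this equation only when $x^2=1$. Thus, assuming $x^2\ne 1$, $f_c^3(p)$ is a fixed point and $f_c^2(p)$ is not, so $p$ is of type exactly $1_3$.

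\emph{Injectivity.} The iteration identities give $x = f_c(p)/f_c^2(p)$ and $y = p(x^2-1)$, recovering $(x,y)$ uniquely from $(p,c)$; that $f_c^2(p)\ne 0$ is automatic, as noted below.

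\emph{Surjectivity.} Given $(p,c)\in K^2$ with $p$ of type $1_3$ for $f_c$, set $q := f_c^3(p)$, a $K$-rational fixed point of $f_c$; then $c = q-q^2$. Since the fiber $f_c^{-1}(q)$ is precisely $\{\pm q\}$, the minimality of $n=3$ in the type-$1_3$ definition forces $f_c^2(p) = -q$, and in particular $q\ne 0$. I would then set
\[
x := -\frac{f_c(p)}{q},\qquad y := -\frac{2p}{q}.
\]
The relation $f_c(p)^2 = -q-c = q^2 - 2q$ yields $x^2 = (q-2)/q$, hence $x^2-1 = -2/q \ne 0$. Direct substitution then confirms $\varphi(x,y) = (p,c)$. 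The defining equation $y^2 = 2(x^3+x^2-x+1)$ reduces via $p^2 = f_c(p) - c$ to the rational identity $x^3+x^2-x+1 = 2(q-x-1)/q$, which becomes a polynomial identity after clearing denominators and substituting $x^2 = (q-2)/q$.

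The hard part will really just be bookkeeping. The single genuinely structural step is the identification $f_c^2(p) = -q$: this uses only the two-preimage description $f_c^{-1}(q) = \{\pm q\}$ of a fixed point, together with the minimality of the tail length in the type-$1_3$ hypothesis. Everything else is polynomial identities that reduce to straightforward computation.
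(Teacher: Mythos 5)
Your proposal is correct and follows essentially the same route as the paper: the same direct verification that $f_c(p)=2x/(x^2-1)$, $f_c^2(p)=2/(x^2-1)$, $f_c^3(p)=-2/(x^2-1)$ give a point of type $1_3$, and the identical inversion formulas $x=f_c(p)/f_c^2(p)$, $y=p(x^2-1)$ for injectivity. The only difference is that for surjectivity the paper simply cites Poonen's argument, whereas you give a short self-contained derivation (using that the fixed point $q=f_c^3(p)$ satisfies $f_c^{-1}(q)=\{\pm q\}$, so $f_c^2(p)=-q$ and $q\ne 0$), and your computations there check out.
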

\begin{figure}[h!]
\begin{center}\includegraphics[scale=0.5]{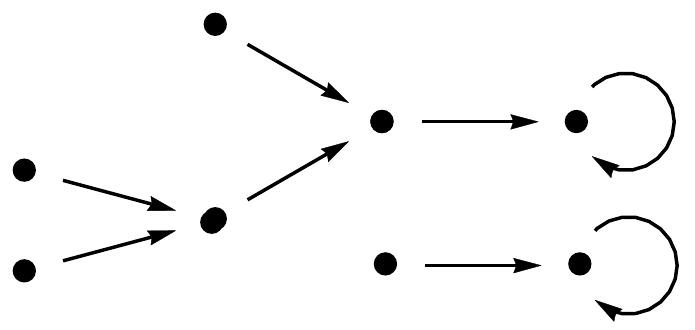}\end{center}
\caption{Graph type 8(1,1)b}
\end{figure}
\begin{proof}
Fix a number field $K$ and suppose that $(x,y)\in C(K)$ satisfies $x^2\ne 1$. Defining $p$ and $c$ as in the lemma, it is straightforward to verify that $p$ is a point of type $1_3$ for the map $f_c$. Hence, $\phi$ gives a well-defined map.

To see that $\phi$ is surjective, suppose that $p,c\in K$ are such that $p$ is a point of type $1_3$ for $f_c$. Then an argument given in \cite[22]{poonen_prep} shows that there exists a point $(x,y) \in C(K)$ with $x^2\ne 1$ such that $\phi(x,y)=(p,c)$. To see that $\phi$ is injective, one can verify that if $\phi(x,y)=(p,c)$, then
\[x=\frac{f_c(p)}{f_c^2(p)}\;\;,\;\;y=p(x^2-1).\qedhere\]
\end{proof}

\begin{rem} As shown in \cite[23]{poonen_prep}, the curve $C$ is birational over $\Q$ to the elliptic curve 11a3 in Cremona's tables \cite{cremona}, which is the modular curve $X_1(11)$.
\end{rem} 
	
\begin{prop}
There are infinitely many real (resp. imaginary) quadratic fields $K$ containing an element $c$ for which $G(f_c,K)$ admits a subgraph of type {\rm 8(1,1)b}. 
\end{prop}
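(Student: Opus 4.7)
My proposal is to follow, almost verbatim, the strategy used for the preceding proposition about 8(1,1)a. Take $p(x) := 2(x^3 + x^2 - x + 1)$, the polynomial defining the affine model of $C$ in Lemma \ref{8_11b_curve}. A direct check shows that $p(x)$ has nonzero discriminant, and the values $p(0) = 2 > 0$ and $p(-2) = -2 < 0$ show that $p$ takes both signs on $\R$. Lemma \ref{poly_lemma} then yields infinitely many real quadratic fields and infinitely many imaginary quadratic fields of the form $K_r = \Q(\sqrt{p(r)})$ with $r \in \Q$ ranging over a suitable interval. For each such $r$ with $r^2 \ne 1$, the point $(r, \sqrt{p(r)}) \in C(K_r)$ meets the hypotheses of Lemma \ref{8_11b_curve}, producing $c \in K_r$ together with a $K_r$-rational point $P$ of type $1_3$ for $f_c$.

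\textbf{From a single tail to the full graph.} The next task is to show that a single $K_r$-rational point of type $1_3$ already forces $G(f_c, K_r)$ to contain a subgraph of type 8(1,1)b. Set $\alpha := f_c^3(P)$, a fixed point of $f_c$. Since $\alpha^2 = \alpha - c$, the two preimages of $\alpha$ under $f_c$ are $\pm \alpha$; because $P$ has type $1_3$ rather than $1_2$, one must have $f_c^2(P) = -\alpha$. The other fixed point $1 - \alpha$ is automatically in $K_r$, and a direct computation gives $f_c(\alpha - 1) = 1 - \alpha$. Using $f_c(-z) = f_c(z)$, the points $-P$ and $-f_c(P)$ are also $K_r$-rational preperiodic points. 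Altogether one obtains eight preperiodic points
\[
\{\pm P,\; \pm f_c(P),\; -\alpha,\; \alpha,\; \alpha - 1,\; 1 - \alpha\} \subset K_r,
\]
arranged exactly in the pattern of 8(1,1)b.

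\textbf{Nondegeneracy and the expected obstacle.} For the eight points displayed above to be genuinely distinct, one must exclude a handful of degenerate values of $c$, corresponding to conditions such as $P = 0$, $f_c(P) = 0$, $\alpha = 0$, or $\alpha = 1/2$. Each of these translates via the formulas of Lemma \ref{8_11b_curve} into a nontrivial algebraic condition on the parameter $r$, and so rules out only finitely many $r$ in any chosen interval; infinitely many real (resp.\ imaginary) fields $K_r$ therefore survive. The main obstacle is purely this nondegeneracy bookkeeping: there is no genuinely hard step beyond what already appeared in the proof for 8(1,1)a, the only conceptual input being the explicit verification that a point of type $1_3$ automatically drags along the seven other vertices of 8(1,1)b.
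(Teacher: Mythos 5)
Your proposal is correct and follows essentially the same route as the paper: apply Lemma \ref{poly_lemma} to $q(x)=2(x^3+x^2-x+1)$, feed the resulting points of $C(K_r)$ into Lemma \ref{8_11b_curve}, and then impose nondegeneracy conditions to pass from a single point of type $1_3$ to the full graph 8(1,1)b. The paper packages your degeneracy bookkeeping into the single condition $p\cdot f_c(p)\cdot c(4c-1)\ne 0$ and verifies it via the explicit identity $p^2\cdot f_c(p)\cdot c(4c-1)=\frac{8r(r^3+r^2-r+1)(r^2+1)(r^2+3)^2}{(r^2-1)^7}$, nonzero for all rational $r\ne 0$; this is the one computation you assert (``nontrivial algebraic condition on $r$'') rather than carry out.
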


\begin{proof} 
Let $q(x)=2(x^3 + x^2 - x + 1)\in\Q[x]$. Applying Lemma \ref{poly_lemma} to the polynomial $q(x)$ we obtain infinitely many real (resp. imaginary) quadratic fields of the form $K_r=\Q(\sqrt{q(r)})$ with $r\in\Q$. For every such field there is a point $(r,\sqrt{q(r)})\in C(K_r)$ which necessarily satisfies $r^2\ne 1$; hence, by Lemma \ref{8_11b_curve} there is an element $c\in K_r$ such that $f_c$ has a point $p\in K_r$ of type $1_3$. In order to conclude that $G(f_c,K_r)$ contains a subgraph of type 8(1,1)b we need the additional condition that 
$p\cdot f_c(p)\cdot c(4c-1)\ne 0$. Indeed, the condition $p\cdot f_c(p)\ne 0$ ensures that $f_c(p)$ and $f_c^2(p)$ each have two distinct preimages, while the condition $c(4c-1)\ne 0$ guarantees that $f_c$ has two distinct fixed points, and each fixed point has a preimage different from itself. Now, one can check that 
\[p^2\cdot f_c(p)\cdot c(4c-1)=\frac{8r(r^3+r^2-r+1)(r^2+1)(r^2+3)^2}{(r^2-1)^7},\]
so we will have $p\cdot f_c(p)\cdot c(4c-1)\ne 0$ as long as $r\ne 0$.
\end{proof}


\subsection{Graph 8(2)a}\label{8_2a_section}
\begin{lem}\label{8_2a_curve} Let $C/\Q$ be the affine curve of genus 1 defined by the equation
\[y^2 = 2(x^4 + 2x^3 - 2x + 1).\]

Consider the rational map $\varphi: C \dashrightarrow \mathbb{A}^3 = \Spec \Q[a,b,c]$ given by 
\[a= -\frac{x^2+1}{x^2-1} \;\;,\;\; b= \frac{y}{x^2-1} \;\;,\;\; c=- \frac{x^4 + 2x^3 + 2x^2 -2x + 1}{(x^2 - 1)^2}.\]
For every number field $K$, the map $\phi$ induces a bijection from the set 
\[\{(x,y)\in C(K): x(x^2-1)(x^2+4x-1)(x^2+2x-1)\ne 0\}\] to the set of all triples $(a,b,c)\in K^3$ such that $a$ and $b$ are points of type $2_2$ for the map $f_c$ satisfying $f_c^2(a)\ne f_c^2(b)$. 
\end{lem}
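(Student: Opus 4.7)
The plan follows the pattern of Lemmas \ref{8_11a_curve} and \ref{8_11b_curve}. The starting point is Proposition \ref{cycles_prop}(2): for any $c \in K$ such that $f_c$ has a $K$-rational $2$-cycle, there is a unique $t \in K^\times$ (up to sign) with $c = -3/4 - t^2$, the cycle being $\{t - 1/2,\; -t - 1/2\}$. The two type-$2_1$ preimages are then $t + 1/2$ (mapping to $t - 1/2$) and $1/2 - t$ (mapping to $-t - 1/2$). The existence of a $K$-rational type-$2_2$ preimage of $t + 1/2$ amounts to $t^2 + t + 5/4 = (t + 1/2)^2 + 1$ being a square in $K$, and a type-$2_2$ preimage of $1/2 - t$ amounts to $t^2 - t + 5/4 = (t - 1/2)^2 + 1$ being a square. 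Imposing both conditions simultaneously defines a curve over $\Q$ in variables $t, u, v$, and the formulas in the statement arise from an explicit birational identification of this curve with $C$.

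For the forward direction, I would fix $(x,y) \in C(K)$ with the stated nondegeneracy hypothesis, define $a, b, c$ via $\varphi$, and compute $f_c(a), f_c^2(a), f_c^3(a), f_c^4(a)$ as rational functions of $x$, using $y^2 = 2(x^4 + 2x^3 - 2x + 1)$ to simplify $b^2$, $f_c(b)$, and $f_c^2(b)$. The identities to check are $f_c^4(a) = f_c^2(a) \ne f_c^3(a)$ together with $a \notin \{f_c(a), f_c^2(a), f_c^3(a)\}$, establishing that $a$ is of type $2_2$; the analogous statements for $b$; and finally $f_c^2(a) \ne f_c^2(b)$. Each of these reduces to a polynomial identity in $\Q[x]$. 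For the reverse direction, given $(a,b,c)$ with the prescribed configuration, Proposition \ref{cycles_prop}(2) supplies $t \in K^\times$ with $c = -3/4 - t^2$. After possibly swapping $t \mapsto -t$ I may assume $f_c^2(a) = t - 1/2$; since the preimages of $t - 1/2$ under $f_c$ are $\pm(t + 1/2)$ and $-(t + 1/2) = -t - 1/2$ lies on the cycle, the type-$2_2$ hypothesis forces $f_c(a) = t + 1/2$. The condition $f_c^2(a) \ne f_c^2(b)$ then forces $f_c(b) = 1/2 - t$, yielding $a^2 = t^2 + t + 5/4$ and $b^2 = t^2 - t + 5/4$. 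Inverting $a = -(x^2 + 1)/(x^2 - 1)$ gives $x^2 = (a - 1)/(a + 1)$, and using the additional datum $f_c(a) = -2x/(x^2 - 1)$ (which simplifies to $f_c(a) = x(a+1)$) selects the correct branch, producing the unique $x = f_c(a)/(a+1) = (a^2 + c)/(a + 1) \in K$. Setting $y := b(x^2 - 1)$ then automatically lands on $C(K)$ by the two square relations above, and the same formulas give the inverse of $\varphi$, establishing injectivity.

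The delicate part of the argument is pinning down the precise list of nondegeneracy conditions on $x$. The factor $x^2 - 1$ is needed just so $\varphi$ is defined. The factor $x^2 + 4x - 1$ vanishes exactly when $t = 0$, corresponding to $c = -3/4$ and a degenerate ``$2$-cycle'' collapsing to the fixed point $-1/2$ of $f_{-3/4}$. The factor $x$ corresponds to $t = -1/2$, hence $c = -1$; direct substitution shows $a = 1$ in this case, which is of type $2_1$ rather than $2_2$. Finally, $x^2 + 2x - 1 = 0$ gives $c = -1$ again (via the other branch of $C$ above $c=-1$) but forces $b \in \{1, -1\}$, where $1$ is a type-$2_1$ point and $-1$ lies on the cycle, so $b$ fails to have type $2_2$. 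The main obstacle is to verify that these four loci are exactly the ones that must be excluded — no less and no more — and that on their complement the eight would-be vertices are pairwise distinct with the claimed types; this verification is case-by-case but elementary.
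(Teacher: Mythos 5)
Your plan is essentially the paper's own proof: the forward direction is the same direct verification of polynomial identities in $x$, and your surjectivity step re-derives, from Proposition~\ref{cycles_prop}(2) and the explicit inversion $x=(a^2+c)/(a+1)$, $y=b(x^2-1)$, the parametrization that the paper imports from Poonen's argument; your inverse formula is equivalent to the paper's $x=(a-1)/(a^2+c)$ on the relevant locus, so injectivity goes through the same way. Two corrections to your checklist, though. First, the conditions you list in the forward direction ($f_c^4(a)=f_c^2(a)\ne f_c^3(a)$ together with $a\notin\{f_c(a),f_c^2(a),f_c^3(a)\}$) do \emph{not} establish that $a$ has type $2_2$: a point of type $2_1$ satisfies all of them. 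What must be checked is that $f_c(a)$ is not on the cycle, i.e.\ $f_c(a)\notin\{f_c^2(a),f_c^3(a)\}$, which amounts to $f_c^3(a)-f_c(a)=4x/(x^2-1)\ne 0$; this is exactly the role of the factor $x$ that you correctly identify in your discussion of the excluded loci, so it is a misstatement to repair rather than a missing idea, and the analogous condition $f_c(b)\ne f_c^3(b)$ is where $x^2+2x-1$ enters. Second, you should not try to prove that the ``eight would-be vertices are pairwise distinct'' on the complement of the four loci: that claim is false there (at points of $C$ with $x^2+1=0$, which the hypothesis allows over suitable number fields $K$, one gets $c$ with $a=0=-a$, e.g.\ $c=i$), and it is not what the lemma asserts. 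Only the statements ``$a$ and $b$ have type $2_2$ and $f_c^2(a)\ne f_c^2(b)$'' need to hold on the complement, and those follow from the displayed difference identities exactly as in the paper.
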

\begin{figure}[h!]
\begin{center}\includegraphics[scale=0.5]{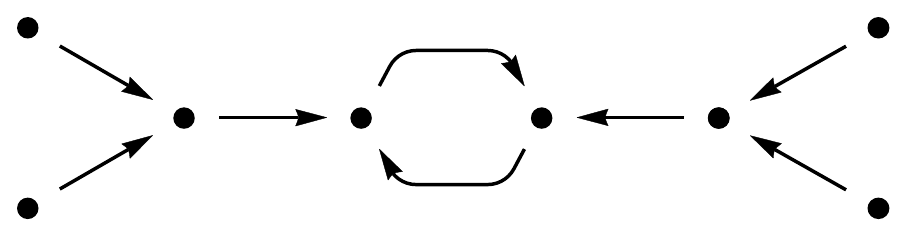}\end{center}
\caption{Graph type 8(2)a}
\end{figure}
\begin{proof}
Fix a number field $K$ and suppose that $(x,y)\in C(K)$ satisfies $x^2\ne 1$. Defining $a,b,c$ as in the lemma, it is a routine calculation to verify that $f_c^2(a)=f_c^4(a)$, $f_c^2(b)=f_c^4(b)$, $f_c^2(b)=f_c^3(a)$, and
\begin{equation}\label{8_2a_relations}
f_c^3(a)-f_c^2(a)=\frac{x^2+4x-1}{x^2-1}\;\;,\;\;f_c^3(a)-f_c(a)=\frac{4x}{x^2-1}\;\;,\;\;f_c(b)-f_c^3(b)=\frac{2(x^2+2x-1)}{x^2-1}.
\end{equation}
From these relations it follows that if $x(x^2+4x-1)(x^2+2x-1)\ne 0$, then $a$ and $b$ are points of type $2_2$ for $f_c$ with $f_c^2(a)\ne f_c^2(b)$. Hence, $\phi$ gives a well-defined map.

To see that $\phi$ is surjective, suppose that $a,b,c\in K$ are such that $a$ and $b$ are points of type $2_2$ for the map $f_c$ satisfying $f_c^2(a)\ne f_c^2(b)$. Since the map $f_c$ can have only one 2-cycle, the points $f_c^2(a)$ and $f_c^2(b)$ must form a 2-cycle. The argument given in \cite[20]{poonen_prep} then shows that there is a point $(x,y)\in C(K)$ with $x^2\ne 1$ such that $\phi(x,y)=(a,b,c)$. Furthermore, the relations \eqref{8_2a_relations} imply that $x(x^2+4x-1)(x^2+2x-1)\ne 0$. To see that $\phi$ is injective, one can verify that if $\phi(x,y)=(a,b,c)$, then
\[x=\frac{a-1}{a^2+c}\;\;,\;\;y=b(x^2-1).\qedhere\]
\end{proof}

\begin{rem} As shown in \cite[20]{poonen_prep}, the curve $C$ is birational over $\Q$ to the elliptic curve 40a3 in Cremona's tables \cite{cremona}.
\end{rem}

\begin{prop}
There are infinitely many real (resp. imaginary) quadratic fields $K$ containing an element $c$ for which $G(f_c,K)$ admits a subgraph of type {\rm 8(2)a}. 
\end{prop}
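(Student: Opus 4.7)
The plan has two parts. For real quadratic $K$, I would follow the template of the preceding propositions: apply Lemma \ref{poly_lemma} to $q(x) := 2(x^4 + 2x^3 - 2x + 1)$ to produce infinitely many real quadratic fields $K_r = \Q(\sqrt{q(r)})$ together with obvious quadratic points $(r, \sqrt{q(r)}) \in C(K_r)$; feed each through $\phi$ of Lemma \ref{8_2a_curve} to obtain $c \in K_r$ with $a, b \in K_r$ of type $2_2$ for $f_c$ satisfying $f_c^2(a) \ne f_c^2(b)$; and verify that for all but finitely many $r$, the excluded loci of Lemma \ref{8_2a_curve} together with the auxiliary conditions $a, b \ne 0$ (needed so that $\pm a, \pm b$ furnish four distinct second-level preimages, completing the 8-vertex subgraph) are satisfied. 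This gives a subgraph of type 8(2)a in $G(f_c, K_r)$.

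The direct approach does not, however, deliver imaginary quadratic $K$: one checks the identity $q(x)/2 = (x^2 + x - 1)^2 + x^2$, so $q > 0$ on $\R$, whence $\Q(\sqrt{q(r)})$ is always real. To handle the imaginary case, I would transfer the problem to a Weierstrass model. The $\Q$-rational point $(1, 2) \in C(\Q)$ makes $C$ into an elliptic curve over $\Q$, in fact $\Q$-isomorphic to the Weierstrass model $y^2 = g(x)$ of 40a3 (see the remark following Lemma \ref{8_2a_curve}); since $g$ is a real cubic, it assumes both positive and negative values on $\R$, so Lemma \ref{poly_lemma} applied to $g$ furnishes infinitely many imaginary quadratic fields equipped with an obvious quadratic point on the Weierstrass model. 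Transporting these points back to $C$ via the $\Q$-isomorphism gives infinitely many imaginary quadratic points on $C$; applying $\phi$ then yields $c \in K$ for each, producing the desired subgraph 8(2)a in $G(f_c, K)$ once the generic non-degeneracy conditions are checked.

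The main obstacle is showing that infinitely many genuinely distinct pairs $(K, c)$ arise from the imaginary-quadratic construction, since it passes through a birational transformation whose effect on $c$ is not immediately transparent. Because $c$ depends only on the $x$-coordinate of the point on $C$ and is a non-constant rational function thereof, while the family of pulled-back quadratic points is Zariski-dense in $C$, $c$ cannot be constant along this family; combined with Lemma \ref{poly_lemma}'s supply of infinitely many distinct imaginary quadratic fields, this yields infinitely many distinct pairs $(K, c)$. The non-degeneracy conditions (such as $X^2 \ne -1$ for $a \ne 0$ and $Y \ne 0$ for $b \ne 0$) fail only on a proper closed subset of $C$, hence exclude only finitely many parameters.
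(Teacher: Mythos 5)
Your proposal is correct and takes essentially the same route as the paper: the real case by applying Lemma \ref{poly_lemma} directly to $2(x^4+2x^3-2x+1)$, and the imaginary case by passing to the Weierstrass model $Y^2=X^3-2X+1$ of 40a3, pulling quadratic points back to $C$ through the birational map, and observing that the degeneracy conditions (including $ab\ne 0$) exclude only finitely many parameters. The only cosmetic differences are that the paper checks non-degeneracy via the explicit identity $-ab^2 = 2(r^2+1)(r^4+2r^3-2r+1)/(r^2-1)^3$ rather than a Zariski-density argument, and that your concern about producing infinitely many distinct pairs $(K,c)$ is unnecessary, since the statement only requires infinitely many distinct fields $K$, which Lemma \ref{poly_lemma} already supplies.
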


\begin{proof}
Let $p(x)=2(x^4 + 2x^3 - 2x + 1)\in\Q[x]$. Applying Lemma \ref{poly_lemma} to the polynomial $p(x)$ we obtain infinitely many quadratic fields of the form $K_r=\Q(\sqrt{p(r)})$ with $r\in\Q^{\ast}$. For every such field there is a point $(r,\sqrt{p(r)})\in C(K_r)$ which necessarily satisfies $(r^2-1)(r^2+4r-1)(r^2+2r-1)\ne 0$; hence, by Lemma \ref{8_2a_curve} there is an element $c\in K_r$ such that $f_c$ has points $a, b\in K_r$ of type $2_2$ satisfying $f_c^2(a)\ne f_c^2(b)$. In order to conclude that $G(f_c,K_r)$ contains a subgraph of type 8(2)a we need the additional condition $ab\ne 0$ so that $f_c(a)$ and $f_c(b)$ each have two distinct preimages. Now, one can check that
\begin{equation}\label{8_2a_exception}
-ab^2=\frac{2(r^2+1)(r^4 + 2r^3 - 2r + 1)}{(r^2-1)^3},
\end{equation}
so the condition $ab\ne 0$ is automatically satisfied since $r\in\Q$.

Note that the polynomial function $p:\R\to\R$ induced by $p(x)$ only takes positive values, so that all fields $K_r$ are real quadratic fields. Hence, this argument proves the statement only for real quadratic fields. To prove the statement for imaginary quadratic fields, we first obtain a Weierstrass equation for the elliptic curve birational to $C$. The following are inverse rational maps between $C$ and the elliptic curve $E$ with equation $Y^2=X^3-2X+1:$
\begin{align*}
(x,y) \mapsto & \left(\frac{2x^2 + y}{(x-1)^2},\frac{3x^3 + 3x^2 + 2xy - 3x + 1}{(x-1)^3}\right), \\
(X,Y)\mapsto & \left(\frac{X^2 + 2Y}{X^2 - 4X + 2},\frac{2X^4 + 8X^3 + 8X^2Y - 24X^2 - 8XY + 24X - 8}{(X^2 - 4X + 2)^2}\right).
\end{align*}
Let $q(X)=X^3-2X+1\in\Q[X]$. Applying Lemma \ref{poly_lemma} to the polynomial $q(X)$ we obtain infinitely many imaginary quadratic fields of the form $K_R=\Q(\sqrt{q(R)})$ with $R\in\Q$. For every such field there is a point $(R,\sqrt{q(R)})\in E(K_R)$; applying the change of variables above we obtain a point $(r,s)\in C(K_R)$ with
\begin{equation}\label{8_2a_rR}
r=\frac{R^2+2\sqrt{q(R)}}{R^2-4R+2}.\end{equation}
In particular, $r$ must satisfy $r(r^2-1)(r^2+4r-1)(r^2+2r-1)\ne 0$, since otherwise $r$ would be rational or would generate a real quadratic field. We can now apply Lemma \ref{8_2a_curve} to see that there is an element $c\in K_R$ such that $f_c$ has points $a, b\in K_R$ of type $2_2$ satisfying $f_c^2(a)\ne f_c^2(b)$. From \eqref{8_2a_exception} and \eqref{8_2a_rR} it follows that there are only finitely many values of $R\in\Q$ for which we might have $ab=0$. Hence, for all but finitely many values of $R$, this construction will yield a graph $G(f_c,K_R)$ containing a subgraph of type 8(2)a.
\end{proof}


\subsection{Graph 8(2)b}\label{8_2b_section}
\begin{lem}\label{8_2b_curve} Let $C/\Q$ be the affine curve of genus 1 defined by the equation
\[y^2 = 2(x^3 + x^2 - x + 1).\]
Consider the rational map $\varphi: C \dashrightarrow \mathbb{A}^2 = \Spec \Q[p,c]$ given by 
\[p= \frac{y}{x^2 - 1} \;\;,\;\; c=- \frac{x^4 + 2x^3 + 2x^2 -2x + 1}{(x^2 - 1)^2}.\]
For every number field $K$, the map $\phi$ induces a bijection from the set $\{(x,y)\in C(K): x(x^2-1)(x^2+4x-1)\ne 0\}$ to the set of all pairs $(p,c)\in K^2$ such that $p$ is a point of type $2_3$ for the map $f_c$. 
\end{lem}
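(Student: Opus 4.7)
The plan is to mirror the proofs of Lemmas \ref{8_11b_curve} and \ref{8_2a_curve}, exploiting the fact that a point of type $2_3$ for $f_c$ is precisely a preimage (under $f_c$) of a point of type $2_2$ that does not itself lie on the resulting 2-cycle. Since the formula for $c$ here coincides with the one in Lemma \ref{8_2a_curve}, the task reduces to identifying $p = y/(x^2-1)$ as a preimage of the point $a = -(x^2+1)/(x^2-1)$ already known from that lemma to be of type $2_2$.

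For well-definedness, fix $(x,y) \in C(K)$ with $x(x^2-1)(x^2+4x-1) \ne 0$ and let $(p,c) = \varphi(x,y)$. A direct computation gives
\[
p^2 + c = \frac{2(x^3+x^2-x+1) - (x^4+2x^3+2x^2-2x+1)}{(x^2-1)^2} = -\frac{x^2+1}{x^2-1} =: a,
\]
so $f_c(p) = a$. By Lemma \ref{8_2a_curve} (applied to the same $x$), the point $a$ is of type $2_2$ for $f_c$, with 2-cycle $\{f_c^2(a),\, f_c^3(a)\}$. Since $f_c(p) = a$ lies one step before the cycle and $p$ itself cannot belong to the cycle (else $a = f_c(p)$ would too, contradicting $a$ being of type $2_2$), the point $p$ must be of type $2_3$.

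Injectivity follows from the explicit inversion formulas
\[
a = p^2 + c, \qquad x = \frac{a-1}{a^2+c}, \qquad y = p(x^2-1),
\]
the middle one being the one used in the proof of Lemma \ref{8_2a_curve}. For surjectivity, suppose $(p,c) \in K^2$ with $p$ of type $2_3$ for $f_c$, and set $a = f_c(p) = p^2+c$, which is then of type $2_2$. By Lemma \ref{8_2a_curve} there exists $x \in K$ with $x^2 \ne 1$ such that $a = -(x^2+1)/(x^2-1)$ and $c = -(x^4+2x^3+2x^2-2x+1)/(x^2-1)^2$, and the relations \eqref{8_2a_relations} force $x(x^2+4x-1) \ne 0$. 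Setting $y := p(x^2-1)$ and using $p^2 = a - c = 2(x^3+x^2-x+1)/(x^2-1)^2$, we obtain $y^2 = 2(x^3+x^2-x+1)$; hence $(x,y) \in C(K)$ and $\varphi(x,y) = (p,c)$ by construction.

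The only genuine calculation is the polynomial identity $p^2 + c = -(x^2+1)/(x^2-1)$, which is routine. No essential obstacle arises: the entire lemma is a thin layer over Lemma \ref{8_2a_curve}, and the fact that adjoining a single preimage of $a$ still yields a genus $1$ curve (rather than a higher-genus cover) is explained by $p^2 = a - c$ being, after clearing the square denominator, a cubic in $x$.
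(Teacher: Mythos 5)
Your overall strategy---reading $p$ as a preimage of the type-$2_2$ point $a=-(x^2+1)/(x^2-1)$ and reducing to the $2_2$ parametrization---is sound and parallels how the paper handles the related graph 12(2); the computation $p^2+c=a$, the inversion formulas, and the recovery of $y^2=2(x^3+x^2-x+1)$ from $p^2=a-c$ are all correct. The difficulty is that both halves of your argument lean on the \emph{statement} of Lemma \ref{8_2a_curve} in situations where its hypotheses fail, and in the surjectivity direction this is a genuine gap: the surjectivity part of Lemma \ref{8_2a_curve} takes as input a \emph{pair} of type-$2_2$ points $a,b$ with $f_c^2(a)\ne f_c^2(b)$ (equivalently, a $K$-point on the quartic $y^2=2(x^4+2x^3-2x+1)$), and from the single point $a=f_c(p)$ you cannot manufacture such a $b$ (the obvious candidate $-a$ satisfies $f_c^2(-a)=f_c^2(a)$). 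For a typical instance of graph 8(2)b no such second point exists in $K$, so that lemma simply does not apply. What you actually need is the parametrization of a \emph{single} type-$2_2$ point by an element $x\in K$, i.e.\ the argument from \cite[20]{poonen_prep} that the paper invokes inside the proofs of Lemmas \ref{8_2a_curve} and \ref{12_2_curve} (the paper's own proof of this lemma instead cites \cite[23]{poonen_prep} directly for type-$2_3$ points). Once that input is cited or rederived, the rest of your surjectivity computation goes through verbatim.

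The well-definedness direction has the same citation problem in milder form: your point $(x,y)$ lies on the cubic curve, not on the quartic of Lemma \ref{8_2a_curve}, and your domain does not exclude $x^2+2x-1=0$, which that lemma's domain does; so ``Lemma \ref{8_2a_curve} applied to the same $x$'' is not literally available. Here the repair is easy, because the verification in the proof of Lemma \ref{8_2a_curve} that $a$ is of type $2_2$ uses only $x$ and $c$ (the identity $f_c^2(a)=f_c^4(a)$ together with the first two relations in \eqref{8_2a_relations}), so you should appeal to those computations rather than to the lemma's statement; alternatively one can argue directly, as the paper does, from the relations $f_c^3(p)=f_c^5(p)$ and \eqref{8_2b_relations}. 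Also a small slip: $a$ lies \emph{two} steps before the cycle, not one; your conclusion that $p$ is of type $2_3$ is nevertheless correct, since $f_c^3(p)=f_c^2(a)$ is periodic while $f_c^2(p)=f_c(a)$ is not.
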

\begin{figure}[h!]
\begin{center}\includegraphics[scale=0.5]{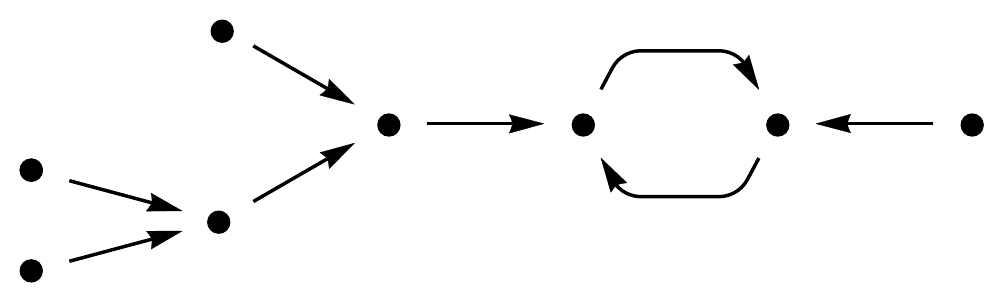}\end{center}
\caption{Graph type 8(2)b}
\end{figure}
\begin{proof}
Fix a number field $K$ and suppose that $(x,y)\in C(K)$ satisfies $x^2\ne 1$. Defining $p,c$ as in the lemma, it is straightforward to verify that $f_c^3(p)=f_c^5(p)$ and
\begin{equation}\label{8_2b_relations}
f_c^4(p)-f_c^2(p)=\frac{4x}{x^2-1}\;\;,\;\;f_c^4(p)-f_c^3(p)=\frac{x^2+4x-1}{x^2-1}.
\end{equation}
From these relations it follows that if $x(x^2+4x-1)\ne 0$, then $p$ is of type $2_3$ for $f_c$. Hence, $\phi$ gives a well-defined map.

To see that $\phi$ is surjective, suppose that $p,c\in K$ are such that $p$ is a point of type $2_3$ for the map $f_c$. Then an argument given in \cite[23]{poonen_prep} shows that there is a point $(x,y)\in C(K)$ with $x^2\ne 1$ such that $\phi(x,y)=(p,c)$. Furthermore, the relations \eqref{8_2b_relations} imply that we must have $x(x^2+4x-1)\ne 0$.  To see that $\phi$ is injective, one can verify that if $\phi(x,y)=(p,c)$, then
\[x=\frac{f_c(p)-1}{f_c^2(p)}\;\;,\;\;y=p(x^2-1).\qedhere\]
\end{proof}

\begin{rem} The curve $C$ is the same curve parameterizing the graph 8(1,1)b. As noted earlier, $C$ is birational to the modular curve $X_1(11)$.
\end{rem}
	
\begin{prop}
There are infinitely many real (resp. imaginary) quadratic fields $K$ containing an element $c$ for which $G(f_c,K)$ admits a subgraph of type {\rm 8(2)b}. 
\end{prop}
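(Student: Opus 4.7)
My plan is to follow the same template used in the preceding three propositions, exploiting the fact that the curve $C : y^2 = 2(x^3+x^2-x+1)$ is the very same one that parameterized graph 8(1,1)b. The first step is to apply Lemma \ref{poly_lemma} to the cubic $q(x) = 2(x^3+x^2-x+1)$; this polynomial has nonzero discriminant and, being of odd degree, takes both positive and negative real values, so the lemma yields simultaneously infinitely many real and infinitely many imaginary quadratic fields $K_r = \Q(\sqrt{q(r)})$ with $r \in \Q$. Both the real and imaginary cases are thus handled in parallel, and no separate construction through an auxiliary Weierstrass model will be needed (unlike in the 8(2)a case).

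Next, for each such $r$ outside the finite bad set where $r(r^2-1)(r^2+4r-1) = 0$, the point $(r,\sqrt{q(r)}) \in C(K_r)$ satisfies the hypotheses of Lemma \ref{8_2b_curve}, which produces an element $c \in K_r$ and a point $p \in K_r$ of type $2_3$ for $f_c$. This immediately accounts for the five orbit vertices $p, f_c(p), f_c^2(p), f_c^3(p), f_c^4(p)$, with $\{f_c^3(p), f_c^4(p)\}$ forming the attracting 2-cycle and $f_c^2(p) = -f_c^4(p)$. To complete the subgraph 8(2)b I will adjoin the three further vertices $-p$ and $-f_c(p)$ (the second preimages of $f_c(p)$ and $f_c^2(p)$, respectively) and $-f_c^3(p)$ (the non-cycle preimage of $f_c^4(p)$), giving the required eight points, all automatically in $K_r$.

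The remaining task, which I expect to be the only one requiring any care, is to check that these eight points are pairwise distinct. Most cross-distinctness relations are forced by $p$ being genuinely of type $2_3$; what needs to be verified are the three non-degeneracy conditions $p \neq 0$, $f_c(p) \neq 0$, and $c + 1 \neq 0$, the last of which prevents the 2-cycle from degenerating to $\{0,-1\}$ (in which case $f_c^3(p) = -f_c^3(p)$, among other collapses). Substituting the explicit formulas of Lemma \ref{8_2b_curve}, each condition becomes a rational function in $r$ whose numerator has only finitely many rational zeros; in particular, $c+1$ should factor as a scalar multiple of $r(r^2+2r-1)/(r^2-1)^2$, whose quadratic factor has irrational roots, so only $r = 0$ need be excluded on that account. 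After discarding finitely many additional values of $r$, every remaining $K_r$ supplies a $c$ for which $G(f_c, K_r)$ contains a subgraph of type 8(2)b, completing the proof in both signatures.
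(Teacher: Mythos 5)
Your proposal is correct and follows essentially the same route as the paper: apply Lemma \ref{poly_lemma} to $q(x)=2(x^3+x^2-x+1)$, feed the resulting points of $C(K_r)$ into Lemma \ref{8_2b_curve}, and then verify non-degeneracy, where your conditions $p\ne 0$, $f_c(p)\ne 0$, $c+1\ne 0$ are equivalent to the paper's $p\cdot f_c(p)\cdot f_c^3(p)\ne 0$ (since, for a type $2_3$ point, $f_c^3(p)=0$ forces $c=-1$ and conversely). The only cosmetic difference is that the paper computes $p^2\cdot f_c(p)\cdot f_c^3(p)=\tfrac{2(r^3+r^2-r+1)(r^2+1)(r^2+2r-1)}{(r^2-1)^4}$ and notes it is automatically nonzero for rational $r$, whereas you discard at most finitely many bad values of $r$; both suffice.
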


\begin{proof}
Let $q(x)=2(x^3 + x^2 - x + 1)\in\Q[x]$. Applying Lemma \ref{poly_lemma} to the polynomial $q(x)$ we obtain infinitely many real (resp. imaginary) quadratic fields of the form $K_r=\Q(\sqrt{q(r)})$ with $r\in\Q^{\ast}$. For every such field there is a point $(r,\sqrt{q(r)})\in C(K_r)$ which necessarily satisfies $r^2\ne 1$; hence, by Lemma \ref{8_2b_curve} there is an element $c\in K_r$ such that $f_c$ has a point $p\in K_r$ of type $2_3$. In order to conclude that $G(f_c, K_r)$ contains a subgraph of type 8(2)b we need the additional condition $p\cdot f_c(p)\cdot f_c^3(p)\ne 0$ so that $f_c(p), f_c^2(p), $ and $f_c^4(p)$ each have two distinct preimages. One can check that
\[p^2\cdot f_c(p)\cdot f_c^3(p)=\frac{2(r^3+r^2-r+1)(r^2+1)(r^2+2r-1)}{(r^2-1)^4},\]
so in fact the condition $p\cdot f_c(p)\cdot f_c^3(p)\ne 0$ is automatically satisfied since $r\in\Q$.
\end{proof}


\subsection{Graph 8(4)}\label{8_4_section}
\begin{lem}\label{8_4_curve}
Let $C/\Q$ be the affine curve of genus 2 defined by the equation
		\[y^2 = F_{16}(x) := -x(x^2 + 1)(x^2 - 2x - 1).\]
Consider the rational map $\phi : C \dashrightarrow \mathbb{A}^2 = \Spec \Q[p,c]$ given by
\begin{equation}\label{8_4_pc}
p = \frac{x-1}{2(x+1)} + \frac{y}{2x(x-1)}\;\;,\;\;c = \frac{(x^2 - 4x - 1)(x^4 + x^3 + 2x^2 - x + 1)}{4x(x-1)^2(x+1)^2}.
\end{equation}
For every number field $K$, the map $\phi$ induces a bijection from the set $\{ (x,y) \in C(K) : y(x^2-1) \ne 0 \}$
to the set of all pairs $(p,c) \in K^2$ such that $p$ is a point of period 4 for the map $f_c$.
\end{lem}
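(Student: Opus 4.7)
The plan is to follow the three-step template used in the proofs of the earlier lemmas in this section (8(1,1)a, 8(1,1)b, 8(2)a, 8(2)b): verify well-definedness of $\phi$ on the indicated open subset of $C$, invoke Proposition 3.5(4) for surjectivity, and produce an explicit inverse rational map for injectivity.

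For well-definedness, I would fix $(x,y) \in C(K)$ with $y(x^2-1) \ne 0$ and first observe that $y^2 = -x(x^2+1)(x^2-2x-1)$ together with $y \ne 0$ forces $x \ne 0$, so the denominators appearing in the definitions of $p$ and $c$ are all nonzero. The crux of this step is the direct algebraic computation, simplifying repeatedly via $y^2 = F_{16}(x)$, that substituting the formulas for $p$ and $c$ into $f_c$ yields
\[ f_c(p) = -\frac{x+1}{2(x-1)} + \frac{y}{2x(x+1)}, \qquad f_c^2(p) = \frac{x-1}{2(x+1)} - \frac{y}{2x(x-1)}, \]
with an analogous expression for $f_c^3(p)$, and consequently $f_c^4(p) = p$. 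The identity $p - f_c^2(p) = y/(x(x-1))$ then shows that $p \ne f_c^2(p)$ under the hypothesis $y \ne 0$, so the period of $p$ is exactly $4$ rather than $1$ or $2$.

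Surjectivity is Proposition 3.5(4) essentially restated: any $p \in K$ of period $4$ under some $f_c$ arises in this way from a point $(x,y) \in C(K)$ with $y(x^2 - 1) \ne 0$. For injectivity, the key observation is that the formulas for $p$ and $f_c^2(p)$ combine to give
\[ p + f_c^2(p) = \frac{x-1}{x+1}, \qquad p - f_c^2(p) = \frac{y}{x(x-1)}. \]
Since $x \ne -1$, the first equation can be solved for $x$ in terms of $p$ and $f_c^2(p) = (p^2+c)^2 + c$, yielding $x = (1 + p + f_c^2(p))/(1 - p - f_c^2(p))$; the second then determines $y = (p - f_c^2(p))\,x(x-1)$ uniquely.

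The main obstacle is the algebraic verification in the well-definedness step: although entirely mechanical, the intermediate rational expressions for the iterates $f_c^k(p)$ are unwieldy and simplify only after repeated substitution $y^2 \to F_{16}(x)$. In practice I would carry out these reductions using a computer algebra system, as is done routinely elsewhere in this section.
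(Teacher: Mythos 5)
Your proposal is correct and follows essentially the same route as the paper's proof: verify $f_c^4(p)=p$ and $p-f_c^2(p)=y/(x(x-1))$ (so $y\ne 0$ forces exact period $4$), cite Proposition \ref{cycles_prop}(4) for surjectivity, and invert via $x=(1+p+f_c^2(p))/(1-p-f_c^2(p))$ together with the formula recovering $y$, which matches the paper's inverse map.
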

\begin{figure}[h!]
\begin{center}\includegraphics[scale=0.5]{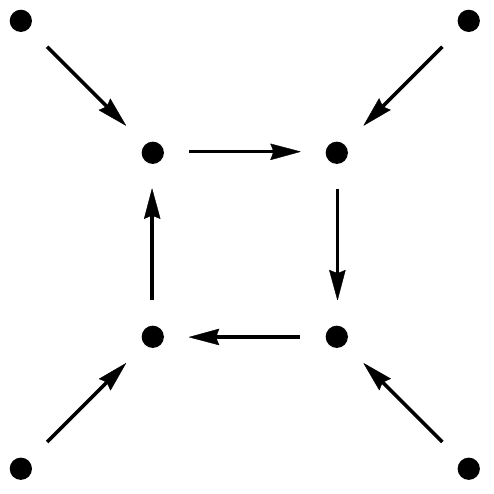}\end{center}
\caption{Graph type 8(4)}
\end{figure}
\begin{proof}
Fix a number field $K$ and suppose that $(x,y) \in C(K)$ satisfies $x(x^2-1) \ne 0$. Defining $p$ and $c$ as in the lemma, it is straightforward to verify the relations
\begin{equation}\label{8_4_relations}
f_c^4(p) = p\;\;,\;\;p-f_c^2(p)=\frac{y}{x(x-1)}.
\end{equation}
The condition that $y \ne 0$ thus implies that $p$ has period 4. Hence, $\phi$ gives a well-defined map. The fact that $\phi$ is surjective follows immediately from Proposition \ref{cycles_prop}.  To see that $\phi$ is injective, one can verify that if $\phi(x,y)=(p,c)$, then
\[x=\frac{1+f_c^2(p)+p}{1-f_c^2(p)-p}\;\;,\;\;y=\frac{2px(x^2-1)-x(x-1)^2}{x+1}.\qedhere\]
\end{proof}

\begin{rem} As noted in \S\ref{modular_genus2_section}, the curve $C$ is birational over $\Q$ to the modular curve $X_1(16)$.
\end{rem}

\begin{prop}\label{infinite_four_cycles}
There are infinitely many real (resp. imaginary) quadratic fields $K$ containing an element $c$ for which $G(f_c,K)$ admits a subgraph of type {\rm 8(4)}.
\end{prop}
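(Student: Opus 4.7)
The plan is to mirror the strategy used for graphs 8(1,1)a through 8(2)b, applying Lemma \ref{poly_lemma} to the quintic $F_{16}(x) = -x(x^2+1)(x^2-2x-1)$ that defines the genus-2 curve $C$ of Lemma \ref{8_4_curve}. First I would verify that $F_{16}(x)$ has nonzero discriminant — its roots are $0$, $\pm\sqrt{-1}$, $1\pm\sqrt{2}$, all distinct — and since $\deg F_{16} = 5 \ge 3$, Lemma \ref{poly_lemma} applies. A quick sign check shows $F_{16}$ takes both positive and negative values on $\Q$: for example $F_{16}(1) = 4$ while $F_{16}(3) = -60$. Hence there exist open intervals $I^+, I^- \subset \R$ on which $F_{16}$ is respectively positive and negative, and applying Lemma \ref{poly_lemma} separately on each interval produces infinitely many real (resp.\ imaginary) quadratic fields of the form $K_r = \Q(\sqrt{F_{16}(r)})$ with $r \in \Q$.

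For each such $r$ avoiding the finite exceptional set $\{r : r^2 = 1\}$, the point $(r, \sqrt{F_{16}(r)})$ lies in $C(K_r)$ and satisfies the hypotheses of Lemma \ref{8_4_curve}, yielding an element $c \in K_r$ and a point $p \in K_r$ of period 4 for $f_c$. To upgrade the conclusion from ``$f_c$ has a 4-cycle in $K_r$'' to ``$G(f_c,K_r)$ contains a subgraph of type 8(4),'' I need the four cycle points $p, f_c(p), f_c^2(p), f_c^3(p)$ to all be nonzero. When this holds, the alternate preimages $-p,\,-f_c(p),\,-f_c^2(p),\,-f_c^3(p)$ (which automatically lie in $K_r$ since $f_c$ is even) are four additional distinct $K_r$-rational preperiodic points attached to the cycle, giving exactly the eight vertices of graph 8(4). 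If some $p_i$ vanished the two preimages of $f_c(p_i)$ would collapse and the graph would degenerate.

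Using the formulas \eqref{8_4_pc} and applying $f_c$ repeatedly, I would compute the product
\[
P(x,y) := p \cdot f_c(p) \cdot f_c^2(p) \cdot f_c^3(p).
\]
Because opposite members of the cycle — namely $\{p, f_c^2(p)\}$ and $\{f_c(p), f_c^3(p)\}$ — differ only in the sign of the $y$-dependent term (the first pair sharing $\pm y/(2x(x-1))$ and the second pair sharing $\pm y/(2x(x+1))$), the two subproducts $p\,f_c^2(p)$ and $f_c(p)\,f_c^3(p)$ are polynomial in $y^2$, so after substituting $y^2 = F_{16}(x)$ the product $P$ reduces to a rational function of $x$ alone. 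The numerator of this rational function is a polynomial in $x$ with finitely many rational roots, so it is nonzero for all but finitely many $r \in I^{\pm} \cap \Q$, leaving infinitely many valid choices in each interval and completing the proof.

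The main task — there is no serious obstacle — is the explicit calculation confirming that $P$ does not vanish identically as a rational function on $C$. This is a bookkeeping exercise analogous to the computations of $ab^2$ and $p^2 f_c(p) f_c^3(p)$ carried out in \S\ref{8_2a_section} and \S\ref{8_2b_section}, and it can be verified in a line of computer algebra; the nontriviality of $P$ on $C$ is guaranteed geometrically since the locus where any $f_c^i(p)$ vanishes cuts out a proper subvariety of $C$.
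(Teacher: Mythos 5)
Your proposal is correct and follows essentially the same route as the paper: apply Lemma \ref{poly_lemma} to $F_{16}$, invoke Lemma \ref{8_4_curve} to produce the $4$-cycle over $K_r$, and then ensure the orbit points are nonzero so that each has two distinct preimages. The only difference is that the paper carries out the computation you defer, showing that $p\cdot f_c^2(p)=0$ or $f_c(p)\cdot f_c^3(p)=0$ forces $r(r-1)^4+(r^2+1)(r+1)^2(r^2-2r-1)=0$ or $r(r+1)^4+(r^2+1)(r-1)^2(r^2-2r-1)=0$, neither of which has a rational solution, so no exceptional $r$ need be excluded at all — your weaker ``all but finitely many $r$'' conclusion (with the nontriviality of the product justified by the finiteness of parameters $c$ for which $0$ is periodic) suffices equally well.
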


\begin{proof}
Applying Lemma \ref{poly_lemma} to the polynomial $F_{16}(x)$ we obtain infinitely many real (resp. imaginary) quadratic fields of the form $K_r=\Q(\sqrt{F_{16}(r)})$ with $r\in\Q^{\ast}$. For every such field there is a point $(r,\sqrt{F_{16}(r)})\in C(K_r)$ which necessarily satisfies $(r^2-1)\sqrt{F_{16}(r)}\ne 0$; hence, by Lemma \ref{8_4_curve} there is an element $c\in K_r$ such that $f_c$ has a point $p\in K_r$ of period 4. In order to conclude that $G(f_c, K_r)$ contains a subgraph of type 8(4) we need the additional condition that every point in the orbit of $p$ is nonzero, so that each point has two distinct preimages. Using Proposition \ref{cycles_prop} (4) we find that if $p\cdot f_c(p)\cdot f_c^2(p)\cdot f_c^3(p)=0$, then one of the following equations will be satisfied:
\begin{align*}
 r(r-1)^4+(r^2+1)(r+1)^2(r^2-2r-1) &=0, \\
 r(r+1)^4+(r^2+1)(r-1)^2(r^2-2r-1)&=0. \\
 \end{align*}
 However, one can verify that neither of these equations has a rational solution; therefore, every point in the orbit of $p$ is necessarily nonzero.
\end{proof}

\begin{rem}
Our search in \S\ref{quad_prep_comp} failed to produce an example of a graph of type 8(4) over an imaginary quadratic field, though the previous proposition suggests that there are infinitely many such examples. The reason for this is that, even for rational parameters $x$ of moderate size, the discriminant of the field $K=\Q(\sqrt{F_{16}(x)})$ may be large, and the complexity of the rational function defining $c$ forces the height of $c$ to be large as well, thus placing the pair $(K,c)$ outside of our search range. We obtain an instance of this graph by taking $x = 5$ in \eqref{8_4_pc}; this leads to the pair $(K,c) = (\Q(\sqrt{-455}), 199/720)$. A computation of preperiodic points using the algorithm developed in \S\ref{prep_algorithm} shows that, indeed, the graph $G(f_c, K)$ for this pair $(K,c)$ is of type 8(4).
\end{rem}

We end our discussion of the graph type 8(4) by stating explicitly how to obtain all pairs $(K,c)$ consisting of a quadratic field $K$ and an element $c\in K$ for which $G(f_c,K)$ is of this type.

\begin{thm} Let $K$ be a quadratic field. Suppose that there exists an element $c\in K$ such that $G(f_c,K)$ is of type {\rm 8(4)}. Then there is a rational number $x\notin\{0,\pm 1\}$ such that 
\begin{equation}\label{8_4_c}
c = \frac{(x^2 - 4x - 1)(x^4 + x^3 + 2x^2 - x + 1)}{4x(x-1)^2(x+1)^2}
\end{equation} 
and $K=\Q(\sqrt{-x(x^2 + 1)(x^2 - 2x - 1)})$.
\end{thm}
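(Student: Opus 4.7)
The plan is to exploit the parameterization of $4$-cycles by the curve $C$ given in Lemma \ref{8_4_curve} and then invoke the classification of quadratic points on $X_1(16)$ from Theorem \ref{131618_quad} to force the parameter $x$ to be rational. Since $G(f_c,K)$ is of type 8(4), the map $f_c$ has a $K$-rational point $p$ of exact period $4$. By the surjectivity statement in Lemma \ref{8_4_curve}, there is a point $(x,y)\in C(K)$ with $y(x^2-1)\ne 0$ mapping to $(p,c)$ under $\varphi$; in particular $x,y\in K$, and once we show $x\in\Q$ the expression for $c$ in the theorem is immediate.

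The heart of the argument is the claim that $x\in\Q$. The affine model of $C$ used here is identical to the model of $X_1(16)$ fixed in \S\ref{modular_genus2_section}, so $(x,y)$ is a point of $X_1(16)(K)$. Because $[K:\Q]=2$, either $(x,y)\in C(\Q)$, or $(x,y)$ is a quadratic point over $\Q$. Suppose the latter with $x\notin\Q$; then $\Q(x)=K$ and $(x,y)$ is a non-obvious quadratic point on $X_1(16)$. By Theorem \ref{131618_quad}(2), every such point satisfies $y=0$, contradicting our assumption $y\ne 0$. Hence $x\in\Q$. The conditions $y\ne 0$ and $x^2\ne 1$, together with the fact that $F_{16}(0)=0$ would force $y=0$, give $x\notin\{0,\pm 1\}$.

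Finally, to identify $K$, note that $y\in K$ satisfies $y^2=F_{16}(x)\in\Q$. If $F_{16}(x)$ were a rational square, we would have $(x,y)\in C(\Q)$ with $y(x^2-1)\ne 0$, and Lemma \ref{8_4_curve} applied with base field $\Q$ would yield a $\Q$-rational point of period $4$ for $f_c$, contradicting the theorem of Morton \cite[Thm.~4]{morton_4cycles} quoted in the introduction. Therefore $\Q(\sqrt{F_{16}(x)})$ is a quadratic subfield of the quadratic field $K$, forcing the two to coincide, and this gives $K=\Q\bigl(\sqrt{-x(x^2+1)(x^2-2x-1)}\bigr)$. The only substantive input is Theorem \ref{131618_quad}; once the non-obvious quadratic points on $X_1(16)$ are in hand, the rest of the argument is essentially bookkeeping, and there is no genuine obstacle.
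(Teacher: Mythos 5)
Your proof is correct and follows essentially the same route as the paper: Lemma \ref{8_4_curve} produces the point $(x,y)\in C(K)$, Theorem \ref{131618_quad} forces $x\in\Q$ since $y\ne 0$, and then $K=\Q(y)=\Q(\sqrt{F_{16}(x)})$. The only (harmless) deviation is that you rule out $(x,y)\in C(\Q)$ by invoking Morton's theorem on the nonexistence of rational $4$-cycles, whereas the paper simply observes that the five affine rational points of $X_1(16)$, namely $(0,0)$ and $(\pm 1,\pm 2)$, all violate $y(x^2-1)\ne 0$; both arguments are valid.
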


\begin{proof}
By Lemma \ref{8_4_curve} there exists a point $(x,y) \in C(K)$ with $y(x^2-1) \ne 0$ such that $c$ is given by \eqref{8_4_c}. We claim that $(x,y)$ cannot be a rational point on $C$. Indeed, $C$ is an affine model of the curve $X_1(16)$, which has exactly six rational points. Therefore, $C$ has five rational points, and it is easy to see that they are $(0,0)$ and $(\pm 1,\pm 2)$.  Thus, if $(x,y)\in C(\Q)$, then either $x=0$ or $x=\pm 1$; however, both possibilities are precluded by the fact that $y(x^2-1)\ne 0$. Therefore, $(x,y)$ is a quadratic point on $C$.  Since $y\ne 0$, it follows from Theorem \ref{131618_quad} that $x$ must be a rational number. In particular, since $(x,y)$ is a quadratic point, then $K=\Q(x,y)=\Q(y)=\Q(\sqrt{-x(x^2 + 1)(x^2 - 2x - 1)})$.
\end{proof}


\subsection{Graph 10(1,1)a}\label{10_11a_section}
Our search described in \S\ref{quad_prep_comp} produced the pair \[(K,c)=\left(\Q(\sqrt{-7}), \frac{3}{16}\right)\] for which the graph $G(f_c,K)$ is of type 10(1,1)a. We show here that this is the only such pair $(K,c)$ with $K$ a quadratic number field and $c\in K$.
\begin{lem}\label{10_11a_curve} Let $C/\Q$ be the affine curve of genus 4 defined by the equations
\begin{equation}\label{10_11a_curve_equations}
\begin{cases}
& y^2 = 2(x^3+x^2-x+1)\\
& z^2 = -2(x^3-x^2-x-1).
\end{cases}
\end{equation}

Consider the rational map $\varphi: C \dashrightarrow \mathbb{A}^3 = \Spec \Q[a,b,c]$ given by 
\[a=\frac{y}{x^2-1}\;\;,\;\;b=\frac{z}{x^2-1}\;\;,\;\;c=\frac{-2(x^2 + 1)}{(x^2 - 1)^2}.\]
For every number field $K$, the map $\phi$ induces a bijection from the set $\{(x,y,z)\in C(K): x(x^2-1)\ne 0\}$ to the set of all triples $(a,b,c)\in K^3$ such that $a$ and $b$ are points of type $1_3$ for the map $f_c$ satisfying $f_c^2(a)=f_c^2(b)$ and $f_c(a)\ne f_c(b)$. 
\end{lem}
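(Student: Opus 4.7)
The plan is to imitate the structure of Lemma \ref{8_11b_curve}, treating the first curve equation $y^2 = 2(x^3+x^2-x+1)$ as the $1_3$-condition on $a$ and exploiting the symmetry $x \mapsto -x$ to read the second equation $z^2 = -2(x^3-x^2-x-1)$ as the $1_3$-condition on $b$. The decisive computational fact is that a direct calculation using the two curve equations gives
$$f_c(a) = a^2 + c = \frac{y^2 - 2(x^2+1)}{(x^2-1)^2} = \frac{2x}{x^2-1}, \qquad f_c(b) = b^2 + c = -\frac{2x}{x^2-1}.$$
Thus $f_c(b) = -f_c(a)$, which immediately yields $f_c^2(a) = f_c^2(b)$, with $f_c(a) \ne f_c(b)$ precisely when $x \ne 0$.

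For well-definedness, the hypothesis $x^2 \ne 1$ lets me apply Lemma \ref{8_11b_curve} to the pair $(x,y)$ on the first curve to conclude that $a$ is of type $1_3$ for $f_c$. For $b$, I observe that substituting $x \mapsto -x$ transforms the first equation into the second and leaves the rational function $c = -2(x^2+1)/(x^2-1)^2$ invariant; applying Lemma \ref{8_11b_curve} to $(-x,z)$ then shows that $b = z/((-x)^2 - 1)$ is also of type $1_3$ for the same $f_c$. Injectivity is immediate from Lemma \ref{8_11b_curve}: one recovers $x = f_c(a)/f_c^2(a)$ and $y = a(x^2-1)$, and then $z = b(x^2-1)$.

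Surjectivity follows by running these steps in reverse. Given $(a,b,c) \in K^3$ satisfying the hypotheses, Lemma \ref{8_11b_curve} applied to $(a,c)$ produces $(x,y)\in K^2$ with $x^2 \ne 1$, $y^2 = 2(x^3+x^2-x+1)$, $a = y/(x^2-1)$, and the correct value of $c$. The conditions $f_c^2(a) = f_c^2(b)$ and $f_c(a) \ne f_c(b)$ force $f_c(b) = -f_c(a) = -2x/(x^2-1)$, and in particular $x \ne 0$. Setting $z := b(x^2-1) \in K$ and using $b^2 = f_c(b) - c$, a short computation verifies that $z^2 = -2(x^3-x^2-x-1)$, so $(x,y,z) \in C(K)$ satisfies $x(x^2-1) \ne 0$ and $\phi(x,y,z) = (a,b,c)$.

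I do not expect a serious obstacle: the argument is essentially a symmetrized repackaging of Lemma \ref{8_11b_curve}. The only conceptual point requiring care is recognizing that the two defining equations of $C$ are interchanged by $x \mapsto -x$, which is precisely what forces the relation $f_c(b) = -f_c(a)$ and is therefore responsible for the ``merging'' behavior $f_c^2(a) = f_c^2(b)$ with $f_c(a) \ne f_c(b)$ that characterizes graph 10(1,1)a.
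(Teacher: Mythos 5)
Your proof is correct and follows essentially the same route as the paper: the key relation $f_c(b)=-f_c(a)=-2x/(x^2-1)$ (equivalently $f_c(a)-f_c(b)=4x/(x^2-1)$), surjectivity by producing $(x,y)$ from the type-$1_3$ point $a$ and then deriving $b^2=-2(x^3-x^2-x-1)/(x^2-1)^2$ from $f_c(b)=-f_c(a)$, and injectivity via $x=f_c(a)/f_c^2(a)$, $y=a(x^2-1)$, $z=b(x^2-1)$. Your only cosmetic variation is invoking Lemma \ref{8_11b_curve} together with the $x\mapsto -x$ symmetry in place of the paper's direct verification and its citation of Poonen's argument, which is a legitimate repackaging of the same argument.
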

\begin{figure}[h!]
\begin{center}\includegraphics[scale=0.5]{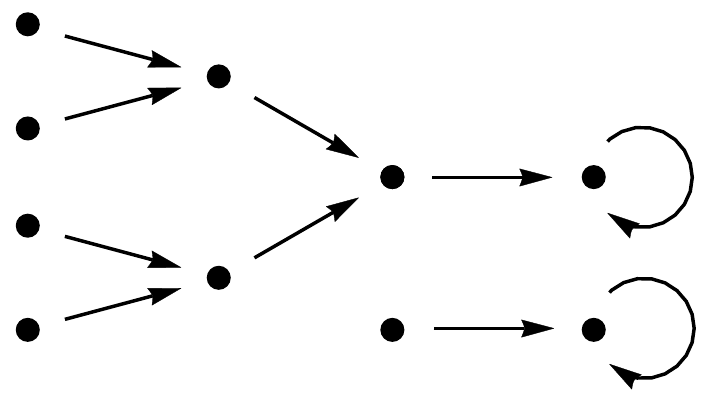}\end{center}
\caption{Graph type 10(1,1)a}
\end{figure}
\begin{proof} Fix a number field $K$ and suppose that $(x,y,z)\in C(K)$ satisfies $x^2\ne 1$. Defining $a,b,c$ as in the lemma, it is a routine calculation to verify that $a$ and $b$ are points of type $1_3$ for the map $f_c$ satisfying $f_c^2(a)=f_c^2(b)$; moreover, we have the relation
\begin{equation}\label{10_11a_relation}
f_c(a)-f_c(b)=\frac{4x}{x^2-1}.
\end{equation}
It follows that if $x\ne 0$, then $f_c(a)\ne f_c(b)$. Hence, $\phi$ gives a well-defined map. 

To see that $\phi$ is surjective, suppose that $a,b,c\in K$ are such that $a$ and $b$ are points of type $1_3$ for the map $f_c$ satisfying $f_c^2(a)=f_c^2(b)$ and $f_c(a)\ne f_c(b)$. Then an argument given in \cite[22]{poonen_prep} shows that there is an element $x\in K\setminus\{\pm 1\}$ such that 
\begin{equation}\label{10_type1_3} c = \frac{-2(x^2 + 1)}{(x^2 - 1)^2}\;\mathrm{\;and\;}\; a^2 = \frac{2(x^3+x^2-x+1)}{(x^2 - 1)^2}.\end{equation} 
Since $a^2+c=f_c(a)=-f_c(b)=-b^2-c$, then using \eqref{10_type1_3} we obtain \[b^2=\frac{-2(x^3-x^2-x-1)}{(x^2-1)^2}.\] Letting $y=a(x^2-1)$ and $z=b(x^2-1)$ we then have a point $(x,y,z)\in C(K)$ with $x^2\ne 1$ such that $\phi(x,y,z)=(a,b,c)$. Furthermore, the relation \eqref{10_11a_relation} implies that $x\ne 0$.  To see that $\phi$ is injective, one can verify that if $\phi(x,y,z)=(a,b,c)$, then
\[x=\frac{f_c(a)}{f_c^2(a)}\;\;,\;\;y=a(x^2-1)\;\;,\;\;z=b(x^2-1).\qedhere\]
\end{proof}

\begin{thm}\label{10_11a_points} With $C$ as in Lemma \ref{10_11a_curve} we have the following:
\begin{enumerate}
\item $C(\Q)=\{(\pm 1,\pm 2,\pm 2)\}$.
\item If $K$ is a quadratic field different from $\Q(\sqrt{2})$ and $\Q(\sqrt{-7})$, then $C(K)=C(\Q)$.
\item For $K=\Q(\sqrt{2})$, $C(K)\setminus C(\Q)=\{(0,\pm\sqrt 2,\pm\sqrt 2)\}$.
\item For $K=\Q(\sqrt{-7})$, $C(K)\setminus C(\Q)$ consists of the points $(x,\pm(2x-4),\pm(2x+4))$ with $x^2+7=0$.
\end{enumerate}
\end{thm}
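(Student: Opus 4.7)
The plan is to exploit the two degree-$2$ projections of $C$ onto the elliptic curves $E_1\colon y^2=2(x^3+x^2-x+1)$ and $E_2\colon z^2=-2(x^3-x^2-x-1)$ given by $(x,y,z)\mapsto(x,y)$ and $(x,y,z)\mapsto(x,z)$. The curve $E_1$ is the same one that appeared in \S\ref{8_11b_section}, where it was shown to be birational to $X_1(11)$; in particular, $E_1(\Q)=\{\infty,(\pm 1,\pm 2)\}\cong\Z/5\Z$. The substitution $x\mapsto -x$ gives an isomorphism $E_2\xrightarrow{\sim}E_1$ via $(x,z)\mapsto(-x,z)$, since
\[
-2\bigl((-x)^3-(-x)^2-(-x)-1\bigr)=2(x^3+x^2-x+1),
\]
so $E_2(\Q)=\{\infty,(\pm 1,\pm 2)\}$ as well. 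Part (1) is then immediate: any rational point on $C$ projects to an affine rational point of $E_1$, forcing $x=\pm 1$, and at those values $P(x):=2(x^3+x^2-x+1)=4$ and $Q(x):=-2(x^3-x^2-x-1)=4$, so all eight sign choices $(\pm 1,\pm 2,\pm 2)$ work.

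For parts (2)--(4), fix a quadratic field $K$ and a point $(x,y,z)\in C(K)\setminus C(\Q)$. Consider first the case $x\in\Q$. If either $y$ or $z$ is rational then projecting to $E_1$ or $E_2$ forces $x=\pm 1$, and then $P(x)=Q(x)=4$ makes the other coordinate rational as well, contradicting our choice. So $y,z\in K\setminus\Q$, and $\Q(y)=\Q(z)$ is equivalent to $P(x)Q(x)=-4(x^6-3x^4-x^2-1)$ being a nonzero rational square. Setting $t=yz/2$, this amounts to finding rational points on the genus $2$ hyperelliptic curve
\[
H\colon\; t^2=-x^6+3x^4+x^2+1.
\]
The critical step is the claim $H(\Q)=\{(0,\pm 1),(\pm 1,\pm 2)\}$. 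The curve $H$ carries the two involutions $(x,t)\mapsto(-x,\pm t)$, both with elliptic quotients, and $\Jac(H)$ is isogenous to the product of those two elliptic curves; a $2$-descent in Magma should bound $\mathrm{rank}\,\Jac(H)(\Q)\le 1$, after which Magma's \texttt{Chabauty} routine (or the Chabauty--Coleman method applied directly at a small prime of good reduction) pins down $H(\Q)$. The $x$-values $\pm 1$ recover rational points of $C$, whereas $x=0$ gives $y^2=z^2=2$, yielding $K=\Q(\sqrt 2)$ and the four points $(0,\pm\sqrt 2,\pm\sqrt 2)$ of part (3).

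The remaining case is $x\notin\Q$, whence $K=\Q(x)$ and both $(x,y)\in E_1(K,2)$ and $(x,z)\in E_2(K,2)$ are non-obvious quadratic points. Applying Lemma~\ref{ellcrv_quad_pts} to each projection produces affine base points $(x_0,y_0)\in E_1(\Q)$ and $(x_0',z_0)\in E_2(\Q)$ together with slopes $v_1,v_2\in\Q$ so that $y=y_0+v_1(x-x_0)$, $z=z_0+v_2(x-x_0')$, and $x$ is the common root of two monic rational quadratics whose coefficients are explicit rational functions of the chosen parameters. Because $x\notin\Q$, these two quadratics must coincide as polynomials, yielding two polynomial equations in $(v_1,v_2)$ for each of the $4\times 4=16$ choices of $(x_0,y_0,x_0',z_0)\in\{(\pm 1,\pm 2)\}^2$. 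I would solve the 16 systems in turn, using the symmetries $(y,z)\mapsto(\pm y,\pm z)$ on $C$ to cut the work, and expect to find that only the four sign choices with $(x_0,x_0')=(1,-1)$ admit rational $(v_1,v_2)$ producing an irrational $x$; each of those cases gives $\{v_1,v_2\}\subset\{\pm 2\}$ and the common minimal polynomial $x^2+7=0$, hence $K=\Q(\sqrt{-7})$ and the eight points $(x,\pm(2x-4),\pm(2x+4))$ of part (4). No other quadratic field contributes new points, which is part (2).

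The main obstacle will be the determination of $H(\Q)$ in the first case: if $\mathrm{rank}\,\Jac(H)(\Q)$ turns out to be at least $2$, standard Chabauty cannot be applied directly and we would instead have to run elliptic Chabauty through the two elliptic quotients of $H$ or invoke a Mordell--Weil sieve. Once $H(\Q)$ is in hand, the remaining $16$-case enumeration in the second case is a routine exercise in linear algebra and sign-chasing.
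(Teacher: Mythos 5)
Your proposal follows the paper's proof essentially step for step: part (1) via the two (rank-zero) elliptic projections $y^2=2(x^3+x^2-x+1)$ and $z^2=-2(x^3-x^2-x-1)$, the case $x\in\Q$ via the genus-2 curve $t^2=-x^6+3x^4+x^2+1$, and the case $x\notin\Q$ via Lemma~\ref{ellcrv_quad_pts} applied to both projections and the resulting sixteen zero-dimensional systems in the two slopes, which indeed single out $x^2+7=0$. The only divergence is at the step you flag as the main obstacle: rather than bounding the rank of the Jacobian and invoking Chabauty (or elliptic Chabauty), the paper simply notes that the quotient of this genus-2 curve by the involution $(x,w)\mapsto(-x,-w)$, via $(x,w)\mapsto(1/x^2,w/x^3)$, is the elliptic curve 44a1 with exactly three rational points, so the curve has at most six rational points, which are the six you list; thus no Chabauty argument, and no concern about rank $\ge 2$, is needed. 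With that simplification your argument coincides with the paper's.
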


\begin{proof} The equation $y^2=2(x^3 + x^2 - x + 1)$ defines the elliptic curve with Cremona label 11a3. This curve has rank 0 and torsion order 5; the affine rational points are $(\pm 1, \pm 2)$. The equation $z^2 = -2(x^3-x^2-x-1)$ defines the same elliptic curve, and the affine rational points are again $(\pm 1, \pm 2)$. Therefore, $C(\Q)=\{(\pm 1,\pm 2,\pm 2)\}$. 

Suppose now that $(x,y,z)\in C(\overline\Q)$ is a point with $[\Q(x,y,z):\Q]=2$, and let $K=\Q(x,y,z)$.

{\bf Case 1:} $x\in\Q$. We cannot have $x=\pm 1$, since this would imply that $y=\pm 2$ and $z=\pm 2$, contradicting the assumption that $(x,y,z)$ is a quadratic point on $C$; hence, $x\ne\pm 1$. It follows that $y\notin \Q$, since having $x,y\in\Q$ would imply that $x=\pm 1$. By the same argument, $z\notin\Q$. Therefore, $K=\Q(y)=\Q(z)$, so the rational numbers $2(x^3+x^2-x+1)$ and $-2(x^3-x^2-x-1)$ have the same squarefree part, and thus their product is a square. Hence, there is a rational number $w$ such that 
\begin{equation}\label{10_11a_hyper} 
w^2= -x^6 + 3x^4 + x^2 + 1.
\end{equation} 
Let $X$ be the hyperelliptic curve of genus 2 defined by \eqref{10_11a_hyper}. We claim that the following is a complete list of rational points on $X$: \[X(\Q)=\{(\pm 1, \pm 2), (0,\pm 1)\}.\] To see this, note that $X$ has a nontrivial involution given by $(x,w)\mapsto (-x,-w)$. The quotient of $X$ by this involution is the elliptic curve $E$ defined by the equation $v^2 = u^3 + u^2 + 3u - 1$; this is the curve 44a1 in Cremona's tables. The quotient map $X\to E$ of degree 2 is given by $(x,w)\mapsto (1/x^2, w/x^3)$. The curve $E$ has exactly 3 rational points, namely the point at infinity and the points $(1,\pm 2)$. It follows that $X$ has at most six rational points, and since we have already listed six points, these must be all.

Returning to \eqref{10_11a_hyper}, we have a point $(x,w)\in X(\Q)$ with $x\neq\pm 1$, so we conclude that $x=0$. By \eqref{10_11a_curve_equations} we then have $y^2=z^2=2$. Thus, we have shown that the only quadratic points on $C$ having a rational $x$-coordinate are the points $(x,y,z)=(0,\pm\sqrt 2,\pm\sqrt 2)$.

{\bf Case 2:} $x$ is quadratic. By Lemma \ref{ellcrv_quad_pts} applied to the equation $y^2 = 2(x^3+x^2-x+1)$, there exist a rational number $v$ and a point $(x_0,y_0)\in \{(\pm1 ,\pm 2)\}$ such that 
\begin{equation}\label{10_11a_xpoly1} x^2+\frac{2x_0-v^2+2}{2}x+\frac{2x_0^2+v^2x_0+2x_0-2y_0v-2}{2}=0.\end{equation} 
Similarly, applying Lemma \ref{ellcrv_quad_pts} to the equation $z^2 = -2(x^3-x^2-x-1)$ we see that there exist a rational number $w$ and a point $(x_1,z_1)\in \{(\pm1 ,\pm 2)\}$ such that 
\begin{equation}\label{10_11a_xpoly2} x^2+\frac{2x_1+w^2-2}{2}x+\frac{2x_1^2-w^2x_1-2x_1+2z_1w-2}{2}=0.\end{equation}
Comparing \eqref{10_11a_xpoly1} and \eqref{10_11a_xpoly2} we obtain the system

\[
\begin{cases} & 2x_0-v^2+4 = 2x_1+w^2 \\ 
& 2x_0^2+v^2x_0+2x_0-2y_0v = 2x_1^2-w^2x_1-2x_1+2z_1w. 
\end{cases}
\]

For each choice of points $(x_0,y_0), (x_1,z_1)$ the above system defines a zero-dimensional scheme $S$ in the $(v,w)$ plane over $\Q$, and hence all its rational points may be determined. There are a total of 16 choices of pairs $(x_0,y_0), (x_1,z_1)$, leading to 16 schemes $S$. Using the Magma function \texttt{RationalPoints} we find all the rational points $(v,w)$ on these schemes, and in every case check whether the polynomial \eqref{10_11a_xpoly1} is irreducible. This occurs for four of these schemes, and for all of these \eqref{10_11a_xpoly1} becomes $x^2+7=0$. The equations \eqref{10_11a_curve_equations} now imply that $y^2=(2x-4)^2$ and $z^2=(2x+4)^2$. Thus, we have shown that the only quadratic points on $C$ having a quadratic $x$-coordinate are those of the form $\left(x,\pm(2x-4),\pm(2x+4)\right)$ with $x^2+7=0$.

In conclusion, the quadratic points on $C$ are the points $(0,\pm\sqrt 2,\pm\sqrt 2)$ defined over the field $\Q(\sqrt 2)$ and the points $\left(x,\pm(2x-4),\pm(2x+4)\right)$ where $x^2+7=0$, defined over the field $\Q(\sqrt{-7})$; the theorem now follows immediately.
\end{proof}

\begin{cor}\label{10_11a_pairs} Let $K$ be a quadratic field and let $c\in K$. Suppose that $G(f_c,K)$ contains a graph of type \rm{10(1,1)a}. Then $c=3/16$ and $K=\Q(\sqrt{-7})$.
\end{cor}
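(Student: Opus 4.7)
The plan is to combine the parameterization provided by Lemma \ref{10_11a_curve} with the complete determination of quadratic points on $C$ recorded in Theorem \ref{10_11a_points}. A graph of type $10(1,1)\mathrm{a}$ contains, in particular, two distinct points $a,b\in K$ of type $1_3$ for $f_c$ that share a common second iterate while having distinct first iterates (they sit as the two second-preimages, on opposite branches, of the common vertex that maps to a fixed point). Thus if $G(f_c,K)$ contains a subgraph of this type, Lemma \ref{10_11a_curve} produces a point $(x,y,z)\in C(K)$ with $x(x^2-1)\ne 0$ whose image under $\phi$ is the triple $(a,b,c)$.

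The second step is to enumerate which quadratic fields $K$ admit such a point. By Theorem \ref{10_11a_points}(1), every point of $C(\Q)$ has $x=\pm 1$, hence is excluded by $x^2-1\ne 0$. By Theorem \ref{10_11a_points}(2), for any quadratic field $K$ distinct from $\Q(\sqrt{2})$ and $\Q(\sqrt{-7})$ one has $C(K)=C(\Q)$, so no valid point exists. The extra quadratic points over $\Q(\sqrt{2})$ recorded in part (3) all satisfy $x=0$ and are therefore likewise ruled out by the condition $x\ne 0$. Consequently the only surviving possibility is $K=\Q(\sqrt{-7})$, and the valid points are precisely those of part (4), namely $(x,\pm(2x-4),\pm(2x+4))$ with $x^2=-7$.

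Substituting $x^2=-7$ into the formula $c=-2(x^2+1)/(x^2-1)^2$ from Lemma \ref{10_11a_curve} yields
\[
c \;=\; \frac{-2(-7+1)}{(-7-1)^2} \;=\; \frac{12}{64} \;=\; \frac{3}{16},
\]
which gives the claim. There is essentially no residual obstacle: all of the hard work has already been done in Theorem \ref{10_11a_points}, which combined a rank-$0$ descent via the auxiliary hyperelliptic curve $w^2=-x^6+3x^4+x^2+1$ (to handle points with rational $x$-coordinate) with the direct quadratic-points formula of Lemma \ref{ellcrv_quad_pts} applied to each of the two elliptic projections of $C$ (to handle points with quadratic $x$-coordinate). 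The corollary is just a bookkeeping step that discards the points violating $x(x^2-1)\ne 0$ and records the resulting value of $c$.
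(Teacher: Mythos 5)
Your proposal is correct and follows the same route as the paper: invoke Lemma \ref{10_11a_curve} to get a point $(x,y,z)\in C(K)$ with $x(x^2-1)\ne 0$, use Theorem \ref{10_11a_points} to rule out rational points and the $\Q(\sqrt{2})$ points (all of which violate $x(x^2-1)\ne 0$), conclude $K=\Q(\sqrt{-7})$ with $x^2+7=0$, and substitute into the formula for $c$ to obtain $3/16$. Your write-up merely makes explicit the graph-theoretic observation and the case elimination that the paper leaves implicit.
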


\begin{proof} By Lemma \ref{10_11a_curve} there is a point $(x,y,z)\in C(K)$ with $x(x^2-1)\ne 0$ such that 
\begin{equation}\label{10_11a_cvalue}c = \frac{-2(x^2 + 1)}{(x^2 - 1)^2}.
\end{equation} 
It follows from Theorem \ref{10_11a_points} that $(x,y,z)$ is a quadratic point on $C$ with $x\ne 0$. Therefore, $K=\Q(\sqrt{-7})$ and $x^2+7=0$. From \eqref{10_11a_cvalue} we then obtain $c =3/16$.
\end{proof}


\subsection{Graph 10(2,1,1)a}\label{10_211a_section}
\begin{lem}\label{10_211a_curve} Let $C/\Q$ be the affine curve of genus 1 defined by the equation
 \[y^2 = 5x^4-8x^3+6x^2+8x+5.\]
Consider the rational map $\varphi: C \dashrightarrow \mathbb{A}^3 = \Spec \Q[a,b,c]$ given by 
\[a= \frac{3x^2+1}{2(x^2 -1)} \;\;,\;\;b=\frac{y}{2(x^2-1)}\;\;,\;\;c=-\frac{3x^4+10x^2+3}{4(x^2 - 1)^2}.\]
For every number field $K$, the map $\phi$ induces a bijection from the set $\{(x,y)\in C(K): x(x^2-1)(x^2-4x-1)\ne 0\}$ to the set of all triples $(a,b,c)\in K^3$ such that $a$ is a fixed point and $b$ is a point of type $2_2$ for the map $f_c$. 
\end{lem}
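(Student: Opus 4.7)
The plan is to follow the template of the earlier lemmas in this section: first verify directly that $\phi$ is well-defined on the given open subset of $C$, then establish surjectivity using Proposition \ref{cycles_prop}, and finally obtain injectivity from an explicit inverse.

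For well-definedness, fix $(x,y) \in C(K)$ with $x(x^2-1)(x^2-4x-1) \neq 0$ and define $a, b, c$ as in the statement. A short calculation using the factorization $3x^4 - 2x^2 - 1 = (3x^2+1)(x^2-1)$ shows $a^2 + c = a$, so $a$ is fixed. Substituting the equation of $C$ into $b^2 + c$, and then iterating once more, I would compute
\[
f_c(b) = \frac{x^2 - 4x - 1}{2(x^2-1)}, \qquad f_c^2(b) = -\frac{x^2 + 4x - 1}{2(x^2-1)},
\]
both independent of $y$. Since the 2-cycle of $f_c$ consists of the roots of $z^2 + z + c + 1$, two further routine identities emerge:
\[
\bigl(f_c^2(b)\bigr)^2 + f_c^2(b) + c + 1 = 0, \qquad \bigl(f_c(b)\bigr)^2 + f_c(b) + c + 1 = \frac{x^2 - 4x - 1}{x^2-1}.
\]
The first identity places $f_c^2(b)$ on the 2-cycle; the second together with the hypothesis $x^2-4x-1 \neq 0$ shows $f_c(b)$, and hence $b$ itself by forward-invariance, is not on it. The discriminant of $z^2+z+c+1$ is $-3-4c$, which vanishes iff $c = -3/4$; substitution shows this happens iff $x=0$, which is excluded. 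Hence the 2-cycle is non-degenerate and $b$ genuinely has type $2_2$.

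For surjectivity, suppose $a, b, c \in K$ satisfy the dynamical conditions. By Proposition \ref{cycles_prop}(1) write $a = x_1 + 1/2$ with $c = 1/4 - x_1^2$, and by Proposition \ref{cycles_prop}(2) write the 2-cycle as $\{x_2 - 1/2, -x_2 - 1/2\}$ with $x_2 \in K$ nonzero and $c = -3/4 - x_2^2$. Choosing the labeling $f_c^2(b) = x_2 - 1/2$ fixes the sign of $x_2$. Equating the two formulas for $c$ gives the conic $x_1^2 - x_2^2 = 1$, which is rationally parameterized by
\[
x_1 = \frac{x^2+1}{x^2-1}, \qquad x_2 = -\frac{2x}{x^2-1}, \qquad x = \frac{x_1 - x_2 + 1}{x_1 - x_2 - 1} \in K \setminus \{0, \pm 1\}.
\]
Setting $y := 2b(x^2-1)$ and noting that $f_c(b)$ is forced to equal $x_2 + 1/2$ (the unique preimage of $x_2 - 1/2$ not lying on the 2-cycle), a direct expansion of $b^2 = f_c(b) - c = (x_2+1/2)^2 + 1$ yields $y^2 = 5x^4 - 8x^3 + 6x^2 + 8x + 5$, so $(x,y) \in C(K)$ and $\phi(x,y) = (a,b,c)$. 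The condition $x^2 - 4x - 1 \neq 0$ is automatic, since its failure would place $f_c(b)$ on the 2-cycle, contradicting that $b$ has type $2_2$.

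Injectivity is immediate from the inverse formulas $x_1 = a - 1/2$ and $x_2 = f_c^2(b) + 1/2$, which determine $x$ and hence $y = 2b(x^2-1)$ uniquely. The only real subtlety -- and the place most prone to a sign error -- is pinning down the labeling of the 2-cycle: the opposite choice replaces the target equation by its $x \mapsto -x$ conjugate $y^2 = 5x^4 + 8x^3 + 6x^2 - 8x + 5$, so the convention must be fixed consistently before the direct expansion is carried out.
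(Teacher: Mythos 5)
Your proof is correct, and it follows the same overall template as the paper (direct verification of well-definedness, then surjectivity, then an explicit inverse for injectivity); the computations you record for $f_c(b)$, $f_c^2(b)$, and the exclusion $x^2-4x-1\ne 0$ all check out, as does the recovery formula for $x$, which is equivalent to the paper's $x=(1+2f_c^2(b))/(3-2a)$. The one genuine difference is in the surjectivity step: the paper simply cites Poonen's argument for the existence of $(x,y)\in C(K)$, whereas you supply a self-contained derivation, using Proposition \ref{cycles_prop} to reduce to the conic $x_1^2-x_2^2=1$ and then parameterizing it rationally (with the labeling of the $2$-cycle fixed by $x_2=f_c^2(b)+\tfrac12$, and $x_2\ne 0$ guaranteeing the parameterization misses no relevant point). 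This is presumably the content of the cited argument, so what your route buys is completeness and an explicit bookkeeping of the sign convention, at the cost of a slightly longer write-up; your verification that $b$ has type $2_2$ via the period-two polynomial $z^2+z+c+1$ (together with $c\ne -3/4$, i.e.\ $x\ne 0$) is an equivalent substitute for the paper's explicit difference relations \eqref{10_211a_relations}.
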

\begin{figure}[h!]
\begin{center}\includegraphics[scale=0.5]{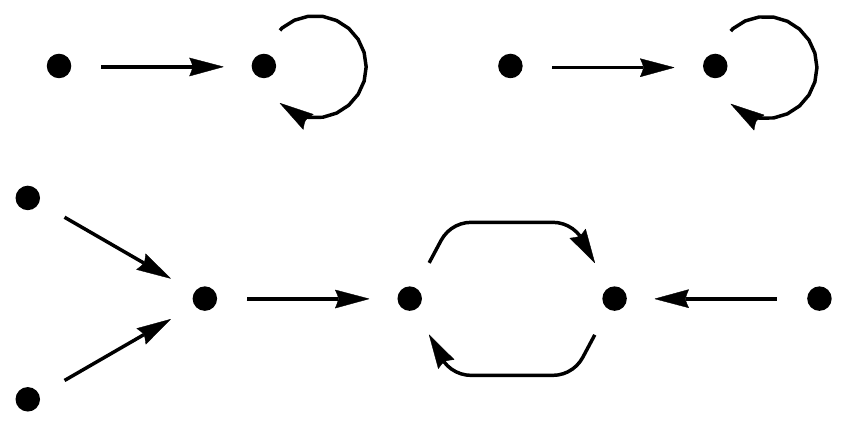}\end{center}
\caption{Graph type 10(2,1,1)a}
\end{figure}
\begin{proof}
Fix a number field $K$ and suppose that $(x,y)\in C(K)$ satisfies $x^2\ne 1$. Defining $a,b,c$ as in the lemma, it is straightforward to verify that $a$ is a fixed point for $f_c$; that $f_c^2(b)=f_c^4(b)$, and that the following relations hold:
\begin{equation}\label{10_211a_relations}
f_c^3(b)-f_c^2(b)=\frac{4x}{x^2-1}\;\;,\;\;f_c(b)-f_c^3(b)=\frac{x^2-4x-1}{x^2-1}.
\end{equation}
From these relations it follows that if $x(x^2-4x-1)\ne 0$, then $b$ is of type $2_2$ for $f_c$. Hence, $\phi$ gives a well-defined map.

To see that $\phi$ is surjective, suppose that $a,b,c\in K$ are such that $a$ is a fixed point and $b$ is a point of type $2_2$ for the map $f_c$. Then an argument given in \cite[21-22]{poonen_prep} shows that there exists a point $(x,y) \in C(K)$ with $x(x^2-1)\ne 0$ such that $\phi(x,y)=(a,b,c)$. Furthermore, the relations \eqref{10_211a_relations} imply that we must have $x^2-4x-1\ne 0$.  To see that $\phi$ is injective, one can verify that if $\phi(x,y)=(a,b,c)$, then
\[x=\frac{1+2f_c^2(b)}{3-2a}\;\;,\;\;y=2b(x^2-1).\qedhere\]
\end{proof}

\begin{rem} As shown in \cite[22]{poonen_prep}, the curve $C$ is birational over $\Q$ to the elliptic curve 17a4 in Cremona's tables \cite{cremona}.
\end{rem} 
	
\begin{prop}\label{10_211_charac}
There are infinitely many real (resp. imaginary) quadratic fields $K$ containing an element $c$ for which $G(f_c,K)$ admits a subgraph of type {\rm 10(2,1,1)a}. 
\end{prop}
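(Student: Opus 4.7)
The plan is to follow the template used for graphs 8(2)a and 8(2)b above: apply Lemma~\ref{poly_lemma} to the defining polynomial of the parameterizing curve $C$ in order to manufacture infinitely many rational points whose $y$-coordinates generate the required fields, and then pull the resulting points on $C$ through the map $\phi$ of Lemma~\ref{10_211a_curve} to obtain the desired triples $(c,a,b)$.

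For the real quadratic half, let $p(x) := 5x^4 - 8x^3 + 6x^2 + 8x + 5$. Since $p(0) = 5 > 0$, Lemma~\ref{poly_lemma} produces infinitely many real quadratic fields $K_r = \Q(\sqrt{p(r)})$ with $r \in \Q$. For each such $r$ avoiding the finite exceptional set $r(r^2-1)(r^2-4r-1)=0$, the point $(r,\sqrt{p(r)}) \in C(K_r)$ yields via $\phi$ an element $c \in K_r$ together with a fixed point $a$ and a point $b$ of type $2_2$ for $f_c$. To conclude that $G(f_c,K_r)$ actually contains a subgraph of type 10(2,1,1)a I would verify that the ten candidate vertices
\[ a,\ -a,\ 1-a,\ a-1,\ f_c^2(b),\ f_c^3(b),\ -f_c^2(b),\ f_c(b),\ b,\ -b \]
are pairwise distinct and that no preperiodic collisions occur; using $f_c(b) = -f_c^3(b)$ (forced by the fact that $b$ is of type $2_2$), this reduces to the non-vanishing of an explicit rational function of $r$ whose factors include $a$, $1-a$, $b$, $f_c^2(b)$, and $f_c^3(b)$. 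A direct calculation with the formulas of Lemma~\ref{10_211a_curve} shows this rational function is not identically zero, so only finitely many $r$ fail.

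For the imaginary quadratic half, a short analysis reveals that $p$ takes only positive values on $\R$: the derivative $p'(x) = 4(5x^3 - 6x^2 + 3x + 2)$ has a unique real root near $x_0 \approx -0.35$, and the corresponding critical value $p(x_0)$ is positive. So Lemma~\ref{poly_lemma} applied to $p$ alone will not produce imaginary fields, and one must instead transfer the argument to a Weierstrass model. Using the birationality of $C$ with the elliptic curve $17$a$4$ (noted in the remark after Lemma~\ref{10_211a_curve}), I would write down explicit inverse rational maps between $C$ and a short Weierstrass equation $E : Y^2 = q(X)$, exactly in the style of the treatment of graph 8(2)a. Because $q$ is a cubic with positive leading coefficient, it takes both signs on $\R$, so Lemma~\ref{poly_lemma} applied to $q$ yields infinitely many imaginary quadratic fields $K_R = \Q(\sqrt{q(R)})$. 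Pulling the point $(R,\sqrt{q(R)}) \in E(K_R)$ back to $C(K_R)$ produces a point $(r,s)$ whose $x$-coordinate $r$ must be genuinely quadratic over $\Q$ (otherwise $K_R$ would be rational or real), and hence automatically avoids the finite set of degenerate rational $r$. The non-degeneracy check proceeds exactly as in the real case, and excludes only finitely many $R$.

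The main obstacle is the bookkeeping in the distinctness check, which must confirm that none of the ten vertices of the type 10(2,1,1)a subgraph collapse onto one another and, equally importantly, that no vertex accidentally coincides with an element of another component (e.g., that the level-$2$ preimage $b$ is not secretly a fixed point of $f_c$). This amounts to one explicit rational identity in $r$ and one in $R$, and is straightforward in principle but tedious in practice; once carried out, both halves of the proposition follow immediately.
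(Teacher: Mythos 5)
Your proposal is correct and follows essentially the same route as the paper: Lemma \ref{poly_lemma} applied to $p(x)=5x^4-8x^3+6x^2+8x+5$ for the real case, and a transfer to the Weierstrass model of the curve $17$a$4$ (here $Y^2=X^3-11X+6$) for the imaginary case, with the pulled-back $x$-coordinate automatically quadratic and hence non-degenerate. The only difference is one of detail: where you defer the distinctness bookkeeping to a tedious rational identity, the paper condenses it to the single condition $b\cdot f_c^2(b)\cdot c(4c-1)\ne 0$ and verifies it by an explicit product formula in $r$ (automatic for rational $r$, failing for at most finitely many $R$ in the imaginary case), which is exactly the computation you anticipate.
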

\begin{proof}
Let $p(x)=5x^4-8x^3+6x^2+8x+5\in\Q[x]$. Applying Lemma \ref{poly_lemma} to the polynomial $p(x)$ we obtain infinitely many quadratic fields of the form $K_r=\Q(\sqrt{p(r)})$ with $r\in\Q$. For every such field $K_r$ with $r\ne 0$ there is a point $(r,\sqrt{p(r)})\in C(K_r)$ which necessarily satisfies $r(r^2-1)(r^2-4r-1)\ne 0$; hence, by Lemma \ref{10_211a_curve} there is an element $c\in K_r$ such that $f_c$ has a fixed point $a\in K_r$ and a point $b\in K_r$ of type $2_2$. In order to guarantee that $G(f_c,K_r)$ contains a subgraph of type 10(2,1,1)a we need the additional condition that $b\cdot f_c^2(b)\cdot c(4c-1)\ne 0$. Indeed, the condition $b\cdot f_c^2(b)\ne 0$ ensures that $f_c(b)$ and $f_c^3(b)$ each have two distinct preimages, and the condition $c(4c-1)\ne 0$ ensures that $f_c$ has two distinct fixed points, each of which has a preimage different from itself. Now, one can check that
\begin{equation}\label{10_211a_exceptions}
-b^2\cdot f_c^2(b)\cdot c(4c-1)=\frac{(5r^4-8r^3+6r^2+8r+5)(3r^2 + 1)(r^2+4r-1)(r^2+3)(r^2+1)^2}{8(r^2-1)^7},
\end{equation}
so the condition $b\cdot f_c^2(b)\cdot c(4c-1)\ne 0$ is automatically satisfied since $r\in\Q$.

Note that the polynomial function $p:\R\to\R$ induced by $p(x)$ only takes positive values, so that all fields $K_r$ are real quadratic fields. Hence, this argument proves the statement only for real quadratic fields. To prove the statement for imaginary quadratic fields, we first obtain a Weierstrass equation for the elliptic curve birational to $C$. The following are inverse rational maps between $C$ and the elliptic curve $E$ with equation $Y^2=X^3-11X+6:$
\begin{align*}
(x,y) \mapsto & \left(\frac{2y+x^2+2x+5}{(x-1)^2},\frac{2y(x+3)+6x^3 - 6x^2 + 18x + 14}{(x-1)^3}\right), \\
(X,Y)\mapsto & \left(\frac{4Y+(X+1)^2}{X^2-2X-19},\frac{8Y(X^2+10X+9)+4(X^4+2X^3+24X^2+46X-233)}{(X^2-2X-19)^2}\right).
\end{align*}
Let $q(X)=X^3-11X+6\in\Q[X]$. Applying Lemma \ref{poly_lemma} to the polynomial $q(X)$ we obtain infinitely many imaginary quadratic fields of the form $K_R=\Q(\sqrt{q(R)})$ with $R\in\Q$. For every such field there is a point $(R,\sqrt{q(R)})\in E(K_R)$; applying the change of variables above we obtain a point $(r,s)\in C(K_R)$ with
 \begin{equation}\label{10_211a_rR}
 r=\frac{4\sqrt{q(R)}+(R+1)^2}{R^2-2R-19}.
 \end{equation}
In particular, $r$ must satisfy $r(r^2-1)(r^2-4r-1)\ne 0$, since otherwise $r$ would be rational or would generate a real quadratic field. We can now apply Lemma \ref{10_211a_curve} to see that there is an element $c\in K_R$ such that $f_c$ has a fixed point $a\in K_R$ and a point $b\in K_R$ of type $2_2$. Once again, we must check the condition $b\cdot f_c^2(b)\cdot c(4c-1)\ne 0$. From \eqref{10_211a_exceptions} and \eqref{10_211a_rR} it follows that there are only finitely many values of $R\in\Q$ for which we might have $b\cdot f_c^2(b)\cdot c(4c-1)=0$. Hence, for all but finitely many $R$, the above construction yields a graph $G(f_c,K_R)$ containing a subgraph of type 10(2,1,1)a.
\end{proof}


\subsection{Graph 10(2,1,1)b}\label{10_211b_section}
\begin{lem}\label{10_211b_curve} Let $C/\Q$ be the affine curve of genus 1 defined by the equation
 \[y^2 = (5x^2-1)(x^2+3).\]
Consider the rational map $\varphi: C \dashrightarrow \mathbb{A}^3 = \Spec \Q[a,b,c]$ given by 
\[a= \frac{y}{2(x^2 -1)} \;\;,\;\;b=-\frac{x^2-4x-1}{2(x^2-1)}\;\;,\;\;c=-\frac{3x^4+10x^2+3}{4(x^2 - 1)^2}.\]
For every number field $K$, the map $\phi$ induces a bijection from the set $\{(x,y)\in C(K): x(x^2-1)(x^2+3)\ne 0\}$ to the set of all triples $(a,b,c)\in K^3$ such that $a$ is a point of type $1_2$ and $b$ is a point of period 2 for the map $f_c$. 
\end{lem}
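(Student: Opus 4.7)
My plan is to follow the template of Lemmas~\ref{8_11a_curve}--\ref{10_211a_curve}, verifying in turn the well-definedness, surjectivity, and injectivity of $\phi$.

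For well-definedness, fix $(x,y) \in C(K)$ with $x(x^2-1)(x^2+3) \ne 0$ and define $a,b,c$ as in the statement. Using the curve relation $y^2 = (5x^2-1)(x^2+3)$ to eliminate $y^2$, a direct computation yields
\[f_c(a) = \frac{x^2+3}{2(x^2-1)}, \quad f_c^2(a) = -f_c(a), \quad f_c(b) = -\frac{x^2+4x-1}{2(x^2-1)}, \quad f_c^2(b) = b.\]
Since $f_c(-f_c(a)) = f_c(a)^2 + c = f_c^2(a) = -f_c(a)$, the value $f_c^2(a)$ is a fixed point of $f_c$, so $a$ is of type $1_n$ for some $n \le 2$; the condition $x^2 + 3 \ne 0$ excludes $f_c(a) = f_c^2(a)$ and forces $n = 2$. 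Likewise $x \ne 0$ excludes $b = f_c(b)$, so $b$ has exact period $2$, while $x^2 - 1 \ne 0$ just ensures that no denominator vanishes.

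For surjectivity, suppose $(a,b,c) \in K^3$ with $a$ of type $1_2$ and $b$ of period $2$. By Proposition~\ref{cycles_prop}(2) there is a nonzero $s \in K$ with $b = s - 1/2$ and $c = -3/4 - s^2$. Writing $\alpha := f_c(a)$, the hypothesis that $\alpha^2 + c = f_c^2(a)$ is fixed while $\alpha$ is not factors $(\alpha^2+c)^2 - \alpha^2 = 0$ into the clean relation $\alpha^2 + \alpha + c = 0$. Substituting $\alpha = a^2 + c$ with $c = -3/4 - s^2$ yields a quadratic in $a^2$ whose discriminant simplifies to $4(1 + s^2)$; this forces the existence of $u \in K$ with $u^2 = 1 + s^2$ and $a^2 = (u \pm 1/2)^2 - 1$. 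A rational parameterization of the conic $u^2 = 1 + s^2$ combined with the $a$-relation yields a curve birational to $C$, and a M\"obius substitution in the parameter identifies it with the model $y^2 = (5x^2-1)(x^2+3)$, producing the required point $(x,y) \in C(K)$. The nonvanishing of $x(x^2-1)(x^2+3)$ then follows from the formulas recorded in the well-definedness step.

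For injectivity, I would exhibit the explicit inverse
\[x = \frac{2b+1}{2a^2+2c-1}, \qquad y = \frac{8a}{2a^2+2c-1},\]
and verify it by substitution: under the curve relation one checks $2a^2 + 2c - 1 = 4/(x^2-1)$ and $2b + 1 = 4x/(x^2-1)$, so both quotients simplify correctly. The main obstacle is the surjectivity step: although reducing the type-$1_2$ condition to the two relations $u^2 = 1 + s^2$ and $a^2 = (u \pm 1/2)^2 - 1$ is routine, identifying the precise M\"obius substitution that yields this specific model $y^2 = (5x^2-1)(x^2+3)$---rather than some isogenous or higher-genus cover---is the crux of the argument.
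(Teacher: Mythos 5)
Your well-definedness and injectivity steps are correct and essentially the same as the paper's: the displayed values of $f_c(a)$, $f_c^2(a)=-f_c(a)$, $f_c(b)$, $f_c^2(b)=b$ all check out against the curve relation, and your inverse formulas $x=(2b+1)/(2a^2+2c-1)$, $y=8a/(2a^2+2c-1)$ do recover $(x,y)$, since $2a^2+2c-1=4/(x^2-1)$ and $2b+1=4x/(x^2-1)$ (the paper uses a different but equally valid pair of formulas).

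The genuine gap is the surjectivity step, and you have identified it yourself: reducing to $u^2=1+s^2$ and $a^2=(u\pm\tfrac12)^2-1$ is correct (the discriminant computation is right), but "a rational parameterization ... and a M\"obius substitution identifies it with the model" is precisely the part that must be carried out, and it is not. Two things are missing. First, the sign ambiguity: with the standard parameterization $u=\frac{x^2+1}{x^2-1}$, $s=\frac{2x}{x^2-1}$, the choice $a^2=(u+\tfrac12)^2-1$ gives $a^2=\frac{(5x^2-1)(x^2+3)}{4(x^2-1)^2}$, i.e.\ the stated model, while the other sign gives $a^2=\frac{(3x^2+1)(5-x^2)}{4(x^2-1)^2}$, a different curve; your sketch does not say how the sign is controlled. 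This is resolved by defining the auxiliary quantities intrinsically from the triple: set $s:=b+\tfrac12$ (nonzero by Proposition \ref{cycles_prop}(2)) and $u:=f_c(a)+\tfrac12$; then $u^2=f_c(a)^2+f_c(a)+\tfrac14=\tfrac14-c=1+s^2$ automatically, and the relevant sign is forced because $a^2=f_c(a)-c=(u-\tfrac12)(u+\tfrac32)$. Second, surjectivity requires more than landing on a curve birational to $C$: you must produce a point whose image under $\phi$ is the \emph{given} triple. Taking $x:=(u+1)/s$ one verifies $u=\frac{x^2+1}{x^2-1}$ and $s=\frac{2x}{x^2-1}$, whence $c=-3/4-s^2$ and $b=s-1/2$ have exactly the stated forms, $a^2=\frac{(5x^2-1)(x^2+3)}{4(x^2-1)^2}$ so $y:=2a(x^2-1)$ lies on $C$ and $a=\frac{y}{2(x^2-1)}$, and finally the relations in the well-definedness step force $x(x^2-1)(x^2+3)\ne 0$. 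The paper outsources exactly this construction to Poonen's argument on p.~21 of \cite{poonen_prep}; without either that citation or the explicit substitution above, your proof of surjectivity is not complete.
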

\begin{figure}[h!]
\begin{center}\includegraphics[scale=0.5]{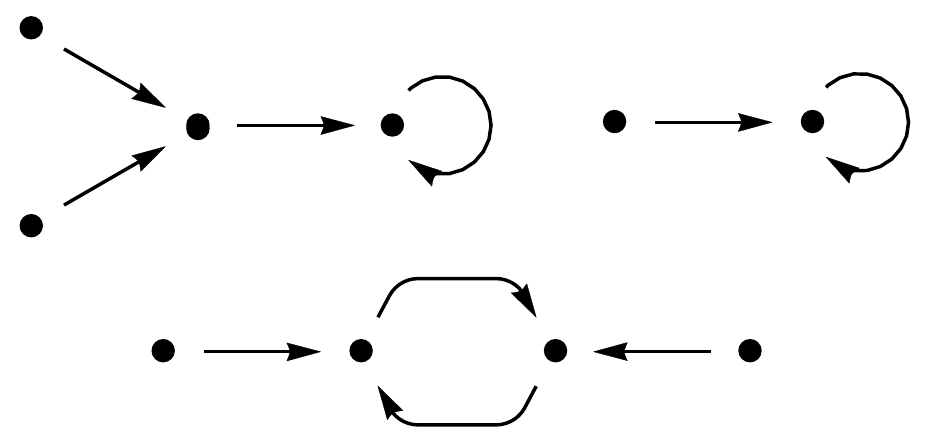}\end{center}
\caption{Graph type 10(2,1,1)b}
\end{figure}
\begin{proof}
Fix a number field $K$ and suppose that $(x,y)\in C(K)$ satisfies $x^2\ne 1$. Defining $a,b,c$ as in the lemma, it is straightforward to verify that $f_c^2(a)$ is fixed by $f_c$; that $b=f_c^2(b)$; and that 
\begin{equation}\label{10_211b_relations}
b-f_c(b)=\frac{4x}{x^2-1}\;\;,\;\;f_c(a)-f_c^2(a)=\frac{x^2+3}{x^2-1}.
\end{equation}
From these relations it follows that if $x(x^2+3)\ne 0$, then $b$ is a point of period 2 and $a$ is of type $1_2$ for $f_c$. Hence, $\phi$ gives a well-defined map.

To see that $\phi$ is surjective, suppose that $a,b,c\in K$ are such that $a$ is a point of type $1_2$ and $b$ is a point of period 2 for the map $f_c$. Then an argument given in \cite[21]{poonen_prep} shows that there exists a point $(x,y) \in C(K)$ with $x(x^2-1)\ne 0$ such that $\phi(x,y)=(a,b,c)$. Furthermore, the relations \eqref{10_211b_relations} imply that we must have $x^2+3\ne 0$.  To see that $\phi$ is injective, one can verify that if $\phi(x,y)=(a,b,c)$, then
\[x=-\frac{1+2f_c^2(b)}{1+2f_c^2(a)}\;\;,\;\;y=2a(x^2-1).\qedhere\]
\end{proof}

\begin{rem} As shown in \cite[21]{poonen_prep}, the curve $C$ is birational over $\Q$ to the elliptic curve 15a8 in Cremona's tables \cite{cremona}.
\end{rem} 
	
\begin{prop}
There are infinitely many real (resp. imaginary) quadratic fields $K$ containing an element $c$ for which $G(f_c,K)$ admits a subgraph of type {\rm 10(2,1,1)b}. 
\end{prop}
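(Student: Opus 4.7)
The plan is to follow the same strategy employed for graphs 8(1,1)b, 8(2)b, and especially 10(2,1,1)a, but exploiting a simplification specific to this curve. Let $p(x) = (5x^2 - 1)(x^2 + 3)$ be the polynomial appearing on the right-hand side of the defining equation of $C$. In contrast to the quartic $5x^4 - 8x^3 + 6x^2 + 8x + 5$ encountered in \S\ref{10_211a_section}, the polynomial $p(x)$ takes both positive and negative values on $\R$; for instance, $p(1) = 16 > 0$ while $p(0) = -3 < 0$. Consequently Lemma \ref{poly_lemma} immediately supplies infinitely many real quadratic fields of the form $K_r = \Q(\sqrt{p(r)})$ by restricting $r$ to a rational interval on which $p > 0$, and infinitely many imaginary quadratic fields of this form by restricting $r$ to a rational interval on which $p < 0$. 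No secondary passage through a Weierstrass model of the birationally equivalent elliptic curve 15a8 will be required, in contrast with Proposition \ref{10_211_charac}.

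For each such $r$ with $r \notin \{0, \pm 1\}$, the point $(r, \sqrt{p(r)}) \in C(K_r)$ automatically satisfies $r(r^2-1)(r^2+3) \ne 0$, since $r^2 + 3$ never vanishes. Lemma \ref{10_211b_curve} then yields an element $c \in K_r$ together with a point $a \in K_r$ of type $1_2$ and a point $b \in K_r$ of period $2$ for the map $f_c$.

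To guarantee that $G(f_c, K_r)$ contains a full subgraph of type 10(2,1,1)b, one must further verify the genericity conditions $a \ne 0$, $b \cdot f_c(b) \ne 0$, and $c(4c-1) \ne 0$: the first provides two distinct preimages $\pm a$ of the fixed point $f_c(a)$; the second provides each element of the 2-cycle with a preimage outside the cycle; the third ensures that $f_c$ has two distinct fixed points, each with a preimage other than itself. Imitating the computation culminating in \eqref{10_211a_exceptions}, one substitutes the formulae from Lemma \ref{10_211b_curve} to express the product $a \cdot b \cdot f_c(b) \cdot c(4c-1)$ as a rational function in $r$ whose numerator is a polynomial in $r$ with only finitely many rational roots. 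Thus, restricting $r$ to avoid these finitely many rational exceptions within each chosen interval, we still obtain infinitely many valid pairs $(K_r, c)$ of each signature. The main obstacle that required real effort for graph 10(2,1,1)a — namely producing imaginary quadratic fields through an auxiliary elliptic curve — simply does not arise here, so the argument reduces to a routine nonvanishing verification.
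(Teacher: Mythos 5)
Your proposal is correct and follows essentially the same route as the paper: apply Lemma \ref{poly_lemma} to $p(x)=(5x^2-1)(x^2+3)$ (which takes both signs, so both real and imaginary fields arise directly, with no detour through a Weierstrass model as in the 10(2,1,1)a case), then invoke Lemma \ref{10_211b_curve} and check the nondegeneracy product $a\cdot b\cdot f_c(b)\cdot c(4c-1)$. The only cosmetic difference is that the paper computes this product explicitly as a rational function of $r$ whose numerator factors, namely $(5r^2-1)(r^2+3)^2(3r^2+1)(r^2+1)^2(r^2-4r-1)(r^2+4r-1)$, none of whose factors has a rational root, so no exceptional values of $r$ need to be excluded at all, whereas you merely discard finitely many exceptions — which suffices equally well for the infinitude claim.
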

\begin{proof}
Let $p(x)=(5x^2-1)(x^2+3)\in\Q[x]$. Applying Lemma \ref{poly_lemma} to the polynomial $p(x)$ we obtain infinitely many real (resp. imaginary) quadratic fields of the form $K_r=\Q(\sqrt{p(r)})$ with $r\in\Q$. For every such field $K_r$ with $r\ne 0$ there is a point $(r,\sqrt{p(r)})\in C(K_r)$ which necessarily satisfies $r(r^2-1)(r^2+3)\ne 0$; hence, by Lemma \ref{10_211b_curve} there is an element $c\in K_r$ such that $f_c$ has a point $a\in K_r$ of type $1_2$ and a point $b\in K_r$ of period 2. In order to conclude that $G(f_c,K_r)$ contains a subgraph of type 10(2,1,1)b we need the additional condition that $ac(4c-1)b\cdot f_c(b)\ne 0$. Indeed, the condition $a\ne 0$ ensures that $f_c(a)$ has two distinct preimages, while the condition $c(4c-1)\ne 0$ guarantees that $f_c$ has two distinct fixed points, each of which has a preimage different from itself; the condition $b\cdot f_c(b)\ne 0$ ensures that $b$ and $f_c(b)$ each have two distinct preimages. Now, one can check that 
\[a^2c(4c-1)b\cdot f_c(b)=\frac{(5r^2-1)(r^2+3)^2(3r^2+1)(r^2+1)^2(r^2-4r-1)(r^2+4r-1)}{16(r^2-1)^8},\]
so the condition $ac(4c-1)b\cdot f_c(b)\ne 0$ is automatically satisfied since $r\in\Q$.
\end{proof}


\subsection{Graph 10(3,1,1)}\label{10_311_section}
\begin{lem}\label{10_311_curve} Let $C/\Q$ be the affine curve of genus 2 defined by the equation
\[y^2 = F_{18}(x):=x^6+2x^5+5x^4+10x^3+10x^2+4x+1.\]
Consider the rational map $\phi: C \dashrightarrow \mathbb{A}^3 = \Spec \Q[a,b,c]$ given by
\[a=\frac{x^3+2x^2+x+1}{2x(x+1)}\;\;,\;\;b=\frac{1}{2} + \frac{y}{2x(x+1)}\;\;,\;\;c=-\frac{ x^6 + 2x^5 + 4x^4 + 8x^3 + 9x^2 + 4x + 1}{4x^2(x+1)^2}.\] 
For every number field $K$, the map $\phi$ induces a bijection from the set $\{(x,y)\in C(K): x(x+1)(x^2+x+1)\neq 0\}$ to the set of all triples $(a,b,c)\in K^3$ such that $a$ and $b$ are points of periods $3$ and $1$, respectively, for the map $f_c$.
\end{lem}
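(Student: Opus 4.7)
The plan is to follow exactly the template established in the preceding lemmas of this section (e.g., Lemmas \ref{8_4_curve} and \ref{10_211a_curve}): verify the well-definedness of $\phi$ by checking that the relevant dynamical identities hold whenever $x(x+1)(x^2+x+1) \neq 0$, establish surjectivity via Proposition \ref{cycles_prop}, and conclude injectivity by exhibiting explicit inverse formulas. Since the hard geometric content (the $3$-cycle parameterization) is already packaged in Proposition \ref{cycles_prop}(3), everything else should reduce to routine polynomial identities.

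For well-definedness, note that the expressions for $a$ and $c$ coincide with those in Proposition \ref{cycles_prop}(3); provided $x(x+1)(x^2+x+1) \neq 0$, that proposition guarantees $a$ has period exactly $3$ under $f_c$. For the claim that $b$ is a fixed point, I would verify the identity $(b-1/2)^2 = 1/4 - c$. Using $b - 1/2 = y/(2x(x+1))$ and $y^2 = F_{18}(x)$, this reduces to
\[
\frac{F_{18}(x)}{4x^2(x+1)^2} \; = \; \frac{1}{4} + \frac{x^6+2x^5+4x^4+8x^3+9x^2+4x+1}{4x^2(x+1)^2},
\]
and upon clearing denominators this becomes the elementary polynomial identity $x^2(x+1)^2 + (x^6+2x^5+4x^4+8x^3+9x^2+4x+1) = F_{18}(x)$, which is immediate by expanding $x^2(x+1)^2 = x^4 + 2x^3 + x^2$.

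For surjectivity, fix a triple $(a,b,c) \in K^3$ with $a$ of period $3$ and $b$ a fixed point for $f_c$. Proposition \ref{cycles_prop}(3) produces an $x \in K$ with $x(x+1)(x^2+x+1)\neq 0$ realizing $a$ and $c$ via the stated formulas. Proposition \ref{cycles_prop}(1), applied to the fixed point $b$, produces $u \in K$ with $b = u + 1/2$ and $c = 1/4 - u^2$. Setting $y = 2x(x+1)u$ gives $y^2 = 4x^2(x+1)^2\bigl(1/4 - c\bigr) = F_{18}(x)$ by the identity just verified, so $(x,y) \in C(K)$ with $\phi(x,y) = (a,b,c)$. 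Injectivity then follows from the explicit inverse
\[
x \; = \; a + f_c(a) \; = \; a + a^2 + c, \qquad y \; = \; (2b-1)\,x(x+1);
\]
the formula for $x$ is verified by the calculation $(x^3+2x^2+x+1)+(x^3-x-1) = 2x^2(x+1)$, so that $a + f_c(a) = 2x^2(x+1)/(2x(x+1)) = x$ in the parameterization of Proposition \ref{cycles_prop}(3), while the formula for $y$ is immediate from the definition of $b$.

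I do not anticipate any serious obstacle: the substantive input (parameterization of quadratic polynomials with a $3$-cycle and a fixed point) is already contained in Proposition \ref{cycles_prop}, and the remainder is purely computational verification of rational identities, entirely parallel to the arguments given in Sections \ref{8_4_section}--\ref{10_211b_section}. The only small subtlety is checking that the exclusion $x(x+1)(x^2+x+1)\neq 0$ matches exactly the degeneracy locus on both sides of the bijection, but this is transparent from Proposition \ref{cycles_prop}(3).
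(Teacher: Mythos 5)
Your proposal follows the paper's own proof almost step for step: the fixed-point verification via $(b-\tfrac12)^2=\tfrac14-c$, reducing to the identity $x^2(x+1)^2+(x^6+2x^5+4x^4+8x^3+9x^2+4x+1)=F_{18}(x)$, is correct; the surjectivity argument combining parts (1) and (3) of Proposition \ref{cycles_prop} and setting $y=2x(x+1)u$ is exactly the paper's; and the inverse formulas $x=a^2+a+c$, $y=x(x+1)(2b-1)$ are the ones the paper records, with your cancellation $(x^3+2x^2+x+1)+(x^3-x-1)=2x^2(x+1)$ justifying the first.

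The one genuine gap is in well-definedness of the period-$3$ claim. You write that Proposition \ref{cycles_prop}(3) ``guarantees $a$ has period exactly $3$'' whenever $x(x+1)(x^2+x+1)\neq 0$, but that proposition is stated only in the forward direction: it asserts that \emph{if} $f_c$ has a $K$-rational point of period $3$, \emph{then} a parameter $x$ with the stated nonvanishing exists and the formulas hold. It does not assert the converse you are using, namely that every $(x,y)\in C(K)$ with $x(x+1)(x^2+x+1)\neq 0$ yields a point $a$ of exact period $3$ for the corresponding $f_c$. That converse is true but must be checked: one verifies directly the identity $f_c^3(a)=a$ and the relation $a-f_c(a)=\frac{x^2+x+1}{x(x+1)}$, so that the hypothesis $x^2+x+1\neq 0$ rules out $a$ being a fixed point (since the exact period divides $3$, excluding period $1$ suffices); this is precisely the ``routine calculation'' the paper performs, and it is also where the exclusion locus you call a ``small subtlety'' is actually accounted for. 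The same caveat applies, more mildly, to your use of $f_c(a)=\frac{x^3-x-1}{2x(x+1)}$ in the injectivity step: quoted from the ``Moreover'' clause of Proposition \ref{cycles_prop}(3) it is only available under the forward hypothesis, so you should present it as a directly verified rational-function identity in $x$ (which it is). With these verifications added, your argument coincides with the paper's proof.
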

\begin{figure}[h!]
\begin{center}\includegraphics[scale=0.5]{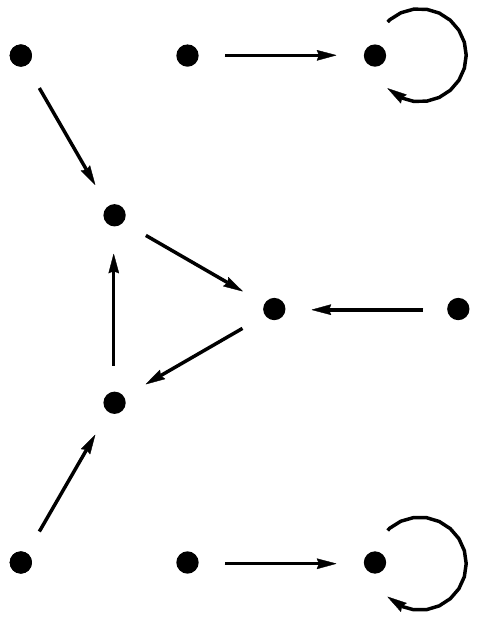}\end{center}
\caption{Graph type 10(3,1,1)}
\end{figure}
\begin{proof} Fix a number field $K$ and suppose that $(x,y)\in C(K)$ satisfies $x(x+1)\ne 0$. Defining $a,b,c$ as in the lemma, it is a routine calculation to verify that $b$ is a fixed point for $f_c$; that $f_c^3(a)=a$; and that
\begin{equation}\label{10_311_relations}
a-f_c(a)=\frac{x^2+x+1}{x(x+1)}.
\end{equation}
It follows from these relations that if $x^2+x+1\ne 0$, then $a$ has period 3 under $f_c$. Hence, $\phi$ gives a well-defined map.

To see that $\phi$ is surjective, suppose that $a,b,c\in K$ are such that $a$ and $b$ are points of periods $3$ and $1$, respectively, for the map $f_c$. Applying Proposition \ref{cycles_prop} we see that there are elements $\rho\in K$ and $x\in K\setminus\{0,-1\}$ such that 
\begin{equation}\label{10_311_bc}
b=1/2+\rho\;\;,\;\; c=1/4-\rho^2
\end{equation}
and 
\begin{equation}\label{10_311_ac}
a=\frac{x^3+2x^2+x+1}{2x(x+1)}\;\;,\;\;c=-\frac{ x^6 + 2x^5 + 4x^4 + 8x^3 + 9x^2 + 4x + 1}{4x^2(x+1)^2}.
\end{equation}
Equating the two expressions for $c$ given in \eqref{10_311_bc} and \eqref{10_311_ac} and letting $y= 2x(x+1)\rho$ we obtain \[y^2 = x^6+2x^5+5x^4+10x^3+10x^2+4x+1.\] Thus, we have a point $(x,y)\in C(K)$ with $x(x+1)\ne 0$ such that $\phi(x,y)=(a,b,c)$. Furthermore, the relations \eqref{10_311_relations} imply that $x^2+x+1\ne 0$.  To see that $\phi$ is injective, one can verify that if $\phi(x,y)=(a,b,c)$, then
\[x=a^2+a+c\;\;,\;\;y=x(x+1)(2b-1).\qedhere\]
\end{proof}

\begin{rem} As noted in \S\ref{modular_genus2_section}, the curve $C$ is birational over $\Q$ to the modular curve $X_1(18)$.
\end{rem}

\begin{thm} Let $K$ be a quadratic field. Suppose that there exists an element $c\in K$ such that $G(f_c,K)$ has a subgraph of type {\rm 10(3,1,1)}. Then there is a rational number $x\notin\{0,-1\}$ such that 
\begin{equation}\label{10_311_c} c=-\frac{ x^6 + 2x^5 + 4x^4 + 8x^3 + 9x^2 + 4x + 1}{4x^2(x+1)^2}
\end{equation} and $K=\Q(\sqrt{F_{18}(x)})$. In particular, $K$ is a real quadratic field.
\end{thm}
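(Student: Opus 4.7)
The plan is to combine the parameterization in Lemma~\ref{10_311_curve} with the classification of quadratic points on $X_1(18)$ from Theorem~\ref{131618_quad}, and finish with an elementary positivity check on $F_{18}$.

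From the hypothesis that $G(f_c,K)$ contains a subgraph of type 10(3,1,1), the polynomial $f_c$ has a $K$-rational point of period $3$ and a $K$-rational fixed point. By Lemma~\ref{10_311_curve} these data produce a point $P=(x,y)\in C(K)$ satisfying $x(x+1)(x^2+x+1)\ne 0$, with $c$ given by the stated formula. It then suffices to show that $x\in\Q$ and that $F_{18}(x)>0$.

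For the first assertion, I would rule out the two alternatives. First, $P\notin C(\Q)$: by Ogg's count used in the proof of Theorem~\ref{131618_quad} we have $\#X_1(18)(\Q)=6$, and the identities $F_{18}(0)=F_{18}(-1)=1$ show that these six points are the two at infinity together with $(0,\pm 1)$ and $(-1,\pm 1)$, none of which satisfies $x(x+1)\ne 0$. Hence $P$ is a quadratic point on $C$. Second, $P$ is not a non-obvious quadratic point: by Theorem~\ref{131618_quad}~(3), each such point has $x$ a primitive cube root of unity, which violates $x^2+x+1\ne 0$. Therefore $P$ is an obvious quadratic point, so $x\in\Q$; and then necessarily $y\notin\Q$ (otherwise $P\in C(\Q)$ with $x\notin\{0,-1\}$, already excluded), giving $K=\Q(y)=\Q(\sqrt{F_{18}(x)})$.

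For the real-quadratic conclusion, I would verify that $F_{18}>0$ on $\R$ via the two polynomial identities
\begin{align*}
F_{18}(x) &= (x^3+x^2+2x+1)^2 + 4x^2(x+1), \\
F_{18}(x) &= (x^3+x^2+2x+3)^2 - 8(x+1),
\end{align*}
which are manifestly nonnegative on $[-1,\infty)$ and $(-\infty,-1]$ respectively; brief checks at $x=0$ and $x=-1$ confirm that the two summands never vanish simultaneously, so $F_{18}$ has no real root. The only genuinely substantive ingredient is Theorem~\ref{131618_quad}, whose proof already uses Lemma~\ref{genus2_quad_pts} together with the known structure of $J_1(18)(\Q)$; everything specific to the graph 10(3,1,1) then reduces to the above bookkeeping and the elementary positivity check on $F_{18}$.
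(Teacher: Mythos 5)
Your argument is correct and follows essentially the same route as the paper: invoke Lemma \ref{10_311_curve} to get a point $(x,y)\in C(K)$ with $x(x+1)(x^2+x+1)\ne 0$, rule out rational points of $C$ using the six known rational points of $X_1(18)$, rule out non-obvious quadratic points via Theorem \ref{131618_quad}(3) since those have $x^2+x+1=0$, and conclude $x\in\Q$, $K=\Q(\sqrt{F_{18}(x)})$ with $F_{18}>0$ on $\R$. Your explicit identities $F_{18}(x)=(x^3+x^2+2x+1)^2+4x^2(x+1)=(x^3+x^2+2x+3)^2-8(x+1)$ are a nice concrete substitute for the paper's unproved positivity assertion, and they check out.
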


\begin{proof} By Lemma \ref{10_311_curve} there is a point $(x,y)\in C(K)$ with $x(x+1)(x^2+x+1)\ne 0$ such that $c$ is given by \eqref{10_311_c}. We claim that $(x,y)$ cannot be a rational point on $C$. Indeed, $C$ is an affine model of the curve $X_1(18)$, which has exactly six rational points (two of them at infinity). Therefore, $C$ has four rational points, and it is easy to see that they are $(0,\pm 1)$ and $(-1,\pm 1)$.  Thus, if $(x,y)\in C(\Q)$, then either $x=0$ or $x=-1$; however, this contradicts the assumption that $x(x+1)\ne 0$. Therefore, $(x,y)$ is a quadratic point on $C$. Since $x^2+x+1\ne 0$,  it follows from Theorem \ref{131618_quad} that $x$ must be a rational number. In particular, since $(x,y)$ is a quadratic point, then $K=\Q(x,y)=\Q(y)=\Q(\sqrt{F_{18}(x)})$. The fact that $K$ is a real quadratic field now follows from the observation that the polynomial function $F_{18}:\R\to\R$ induced by $F_{18}(x)$ only takes positive values.
\end{proof}

\begin{prop}\label{10_311_real} There are infinitely many (real) quadratic fields $K$ containing an element $c$ for which $G(f_c,K)$ admits a subgraph of type {\rm 10(3,1,1)}.
\end{prop}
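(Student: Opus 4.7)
The plan is to adapt the template used for the earlier infinite-family results (e.g.\ the propositions for graphs 8(4), 10(2,1,1)a, and 10(2,1,1)b): produce infinitely many quadratic points on the parameterizing curve $C$ of Lemma \ref{10_311_curve} by applying Lemma \ref{poly_lemma}, and then verify that the combinatorial data of the preperiodic graph is preserved outside a finite exceptional set.

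Concretely, I would first apply Lemma \ref{poly_lemma} to the polynomial $F_{18}(x)$. Since the induced function $F_{18}:\R\to\R$ only takes positive values (as noted in the proof of the preceding theorem), the lemma yields infinitely many \emph{real} quadratic fields of the form $K_r=\Q(\sqrt{F_{18}(r)})$ with $r\in\Q$. For each such $r$ satisfying $r(r+1)(r^2+r+1)\ne 0$, the point $\bigl(r,\sqrt{F_{18}(r)}\bigr)$ lies in $C(K_r)$, so Lemma \ref{10_311_curve} produces an element $c\in K_r$ together with a point $a\in K_r$ of period 3 and a fixed point $b\in K_r$ for $f_c$.

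To upgrade this output to a subgraph of type 10(3,1,1), I would check that (i) the two fixed points of $f_c$ are distinct and each admits a preimage distinct from itself, equivalently $c(4c-1)\ne 0$; and (ii) each of the three points in the orbit of $a$ admits a preimage distinct from itself, equivalently $a\cdot f_c(a)\cdot f_c^2(a)\ne 0$. The key observation is that Proposition \ref{cycles_prop}(3) expresses $a$, $f_c(a)$, $f_c^2(a)$, and $c$ as rational functions of the single variable $x$, with no $y$-dependence. Consequently both (i) and (ii) become polynomial conditions on the rational parameter $x=r$ alone.

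The step requiring real attention --- and essentially the only obstacle --- is the elementary factorization check showing that the cubic and sextic numerators arising have no rational roots (a quick rational-root-theorem calculation at $x=\pm 1$), and that the locus $4c-1=0$ reduces to $F_{18}(x)=0$, which is automatically excluded since $F_{18}(r)$ is a non-square rational by construction of $K_r$. With these checks in place, (i) and (ii) hold for every admissible $r$, and Lemma \ref{poly_lemma} then supplies infinitely many distinct real quadratic fields among the $K_r$, completing the argument.
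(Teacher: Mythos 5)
Your proposal is correct and follows essentially the same route as the paper: apply Lemma \ref{poly_lemma} to $F_{18}$, pull back the resulting quadratic points through Lemma \ref{10_311_curve}, and verify the nonvanishing conditions $a\cdot f_c(a)\cdot f_c^2(a)\ne 0$ and $c(4c-1)\ne 0$, which (as you note) are conditions on the rational parameter $r$ alone. The only cosmetic difference is that the paper disposes of $c(4c-1)\ne 0$ by observing directly that $c<0$, whereas you argue via the rational-root check on the sextic numerator and the identity $4c-1=-F_{18}(x)/(x^2(x+1)^2)$; both verifications are immediate and equivalent.
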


\begin{proof}
Applying Lemma \ref{poly_lemma} to the polynomial $F_{18}(x)$ we obtain infinitely many (real) quadratic fields of the form $K_r=\Q(\sqrt{F_{18}(r)})$ with $r\in\Q$. For every such field there is a point $(r,\sqrt{F_{18}(r)})\in C(K_r)$ which necessarily satisfies $r(r+1)(r^2+r+1)\ne 0$; hence, by Lemma \ref{10_311_curve} there is an element $c\in K_r$ such that $f_c$ has points $a, b\in K_r$ of periods 3 and 1, respectively. In order to conclude that $G(f_c, K_r)$ contains a subgraph of type 10(3,1,1) we need the additional condition that $a\cdot f_c(a)\cdot f_c^2(a)\cdot c(4c-1)\ne 0$. Indeed, the condition $c(4c-1)\ne 0$ guarantees that $f_c$ has two distinct fixed points, each of which has a preimage different from itself; and the condition $a\cdot f_c(a)\cdot f_c^2(a)\ne 0$ ensures that every point in the orbit of $a$ has two distinct preimages. The expression for $c$ given in Lemma \ref{10_311_curve} implies that $c<0$, so in fact the condition $c(4c-1)\ne 0$ is automatically satisfied. Furthermore, one can verify that
\[-a\cdot f_c(a)\cdot f_c^2(a)=\frac{(r^3+2r^2+r+1)(r^3-r-1)(r^3+2r^2+3r+1)}{8r^3(r+1)^3},\]

so $a\cdot f_c(a)\cdot f_c^2(a)\ne 0$ since $r\in\Q$.
\end{proof}

\begin{rem}
Our search in \S\ref{quad_prep_comp} failed to produce an example of a graph of type 10(3,1,1), though the previous proposition suggests that there are infinitely many such examples. The reason for this is that, even for rational parameters $x$ of moderate size, the discriminant of the field $K=\Q(\sqrt{F_{18}(x)})$ may be large, and the complexity of the rational function defining $c$ forces the height of $c$ to be large as well, thus placing the pair $(K,c)$ outside of our search range. However, we expected this graph to occur infinitely times over quadratic fields, since Poonen \cite[15]{poonen_prep} had showed that the curve parametrizing maps with a fixed point and a point of period 3 is the modular curve $X_1(18)$, which is hyperelliptic and therefore has infinitely many quadratic points. We obtain one instance of the graph by taking $x = 2$ in \eqref{10_311_c}; this leads to the pair $(K,c) = (\Q(\sqrt{337}), -301/144)$. A computation of preperiodic points using the algorithm developed in \S\ref{prep_algorithm} shows that, indeed, the graph $G(f_c,K)$ for this pair $(K,c)$ is of type 10(3,1,1).
\end{rem}


\subsection{Graph 10(3,2)}\label{10_32_section}
\begin{lem}\label{10_32_curve} Let $C/\Q$ be the affine curve of genus 2 defined by the equation
\[y^2 = F_{13}(x) := x^6 + 2x^5 + x^4 + 2x^3 + 6x^2 + 4x +1.\]
Consider the rational map $\phi: C \dashrightarrow \mathbb{A}^3 = \Spec \Q[a,b,c]$ given by
\[a=\frac{x^3+2x^2+x+1}{2x(x+1)}\;\;,\;\;b=-\frac{1}{2} + \frac{y}{2x(x+1)}\;\;,\;\;c=-\frac{ x^6 + 2x^5 + 4x^4 + 8x^3 + 9x^2 + 4x + 1}{4x^2(x+1)^2}.\] 
For every number field $K$, the map $\phi$ induces a bijection from the set \[\{(x,y)\in C(K): xy(x+1)(x^2+x+1)\ne 0\}\] to the set of all triples $(a,b,c)\in K^3$ such that $a$ and $b$ are points of periods $3$ and $2$, respectively, for the map $f_c$.
\end{lem}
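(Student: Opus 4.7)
The plan is to mimic the structure of the proof of Lemma \ref{10_311_curve}, replacing the parameterization of a fixed point (Proposition \ref{cycles_prop}(1)) by the parameterization of a point of period 2 (Proposition \ref{cycles_prop}(2)). There are three things to verify: (i) the map $\phi$ is well-defined on the stated domain, (ii) $\phi$ surjects onto the target set, and (iii) $\phi$ is injective.

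For well-definedness, fix $(x,y)\in C(K)$ with $x(x+1)\ne 0$, set $(a,b,c)=\phi(x,y)$, and compute directly. The identities $f_c^3(a)=a$ and $f_c^2(b)=b$ follow from straightforward algebraic manipulation using the defining equation $y^2=F_{13}(x)$; the structure of $a$ and $c$ is exactly that of Proposition \ref{cycles_prop}(3), and a short calculation gives $a-f_c(a)=(x^2+x+1)/(x(x+1))$, so the assumption $x^2+x+1\ne 0$ forces $a$ to have period exactly $3$. For $b$, one computes $b-f_c(b)=b+1/2-(-f_c(b)-1/2)$; since $b+1/2=y/(2x(x+1))$ and a direct check gives $f_c(b)+1/2=-y/(2x(x+1))$, the difference $b-f_c(b)=y/(x(x+1))$ is nonzero precisely when $y\ne 0$.

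For surjectivity, let $(a,b,c)$ be a triple with $a$ of period $3$ and $b$ of period $2$ under $f_c$. Proposition \ref{cycles_prop}(3) produces an $x\in K$ with $x(x+1)(x^2+x+1)\ne 0$ such that $a$ and $c$ are given by the stated formulas, and Proposition \ref{cycles_prop}(2) produces a $t\in K^{\ast}$ with $b=t-1/2$ and $c=-3/4-t^2$. Equating the two expressions for $c$ and clearing denominators, one obtains
\[
4x^2(x+1)^2 t^2 = x^6+2x^5+4x^4+8x^3+9x^2+4x+1-3x^2(x+1)^2 = F_{13}(x).
\]
Setting $y=2x(x+1)t$ thus gives a point $(x,y)\in C(K)$ with $xy(x+1)\ne 0$ satisfying $\phi(x,y)=(a,b,c)$.

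For injectivity, one verifies by direct computation (parallel to the proof of Lemma \ref{10_311_curve}) that $a^2+a+c=x$ and that $2b+1=y/(x(x+1))$, giving the explicit inverse
\[
x=a^2+a+c,\qquad y=x(x+1)(2b+1).
\]
I do not expect a serious obstacle: every step reduces to routine polynomial algebra, and the only substantive identity—namely that eliminating $c$ between the two parameterizations yields precisely $F_{13}(x)$—is a single one-line computation. The novelty relative to Lemma \ref{10_311_curve} is simply the appearance of the sextic $F_{13}(x)$ in place of $F_{18}(x)$, together with the extra exclusion $y\ne 0$ needed to ensure $b$ has period exactly $2$ rather than $1$.
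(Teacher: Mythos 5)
Your proposal is correct and follows essentially the same route as the paper: verify well-definedness via the relations $a-f_c(a)=(x^2+x+1)/(x(x+1))$ and $b-f_c(b)=y/(x(x+1))$, obtain surjectivity by equating the two expressions for $c$ coming from Proposition \ref{cycles_prop}(2) and (3) (your one-line identity $-c-\tfrac34=\frac{F_{13}(x)}{4x^2(x+1)^2}$ is exactly the elimination the paper performs), and exhibit the same explicit inverse $x=a^2+a+c$, $y=x(x+1)(2b+1)$ for injectivity. The only cosmetic difference is that you draw $x^2+x+1\ne 0$ and $y\ne 0$ directly from the statements of Proposition \ref{cycles_prop} rather than from the displayed relations, which is an equivalent justification.
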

\begin{figure}[h!]
\begin{center}\includegraphics[scale=0.5]{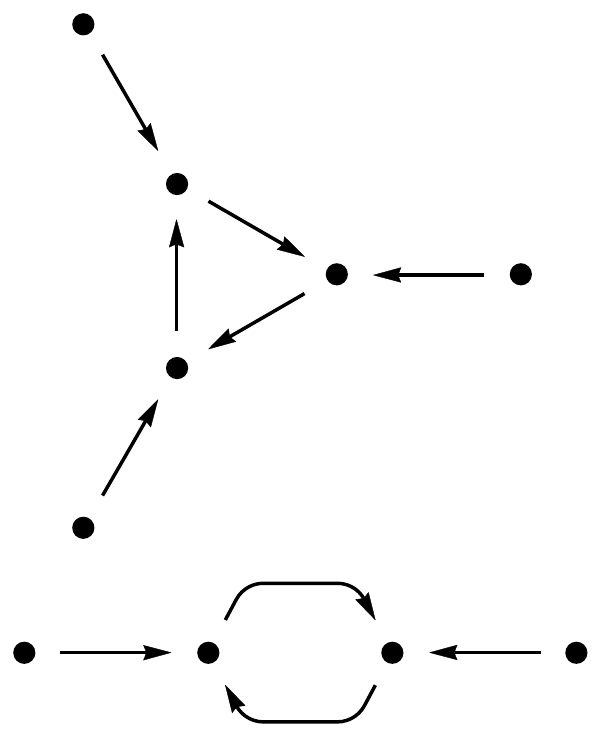}\end{center}
\caption{Graph type 10(3,2)}
\end{figure}
\begin{proof} Fix a number field $K$ and suppose that $(x,y)\in C(K)$ satisfies $x(x+1)\ne 0$. Defining $a,b,c$ as in the lemma, it is straightforward to verify that $f_c^3(a)=a$, $f_c^2(b)=b$, and
\begin{equation}\label{10_32_relations}
a-f_c(a)=\frac{x^2+x+1}{x(x+1)}\;\;,\;\;b-f_c(b)=\frac{y}{x(x+1)}.
\end{equation}
It follows from these relations that if $y(x^2+x+1)\ne 0$, then $a$ has period 3 under $f_c$ and $b$ has period 2. Hence, $\phi$ gives a well-defined map.

To see that $\phi$ is surjective, suppose that $a,b,c\in K$ are such that $a$ and $b$ are points of periods $3$ and $2$, respectively, for the map $f_c$. 
Applying Proposition \ref{cycles_prop} we see that there are elements $\sigma\in K$ and $x\in K\setminus\{0,-1\}$ such that 
 \begin{equation}\label{10_32_bc}
 c=-3/4-\sigma^2\;\;,\;\; b=-1/2+\sigma
 \end{equation}
and
\begin{equation}\label{10_32_ac} a=\frac{x^3+2x^2+x+1}{2x(x+1)}\;\;,\;\;c=-\frac{ x^6 + 2x^5 + 4x^4 + 8x^3 + 9x^2 + 4x + 1}{4x^2(x+1)^2}.
\end{equation}
Equating the two expressions for $c$ given in \eqref{10_32_bc} and \eqref{10_32_ac} and letting $y= 2x(x+1)\sigma$ we obtain \[y^2 = x^6 + 2x^5 + x^4 + 2x^3 + 6x^2 + 4x +1.\] Thus, we have a point $(x,y)\in C(K)$ with $x(x+1)\ne 0$ such that $\phi(x,y)=(a,b,c)$. Furthermore, the relations \eqref{10_32_relations} imply that $y(x^2+x+1)\ne 0$.  To see that $\phi$ is injective, one can verify that if $\phi(x,y)=(a,b,c)$, then
\[x=a^2+a+c\;\;,\;\;y=x(x+1)(2b+1).\qedhere\]
\end{proof}

\begin{rem} As noted in \S\ref{modular_genus2_section}, the curve $C$ is birational over $\Q$ to the modular curve $X_1(13)$.
\end{rem}

\begin{thm}\label{10_32_charac} Let $K$ be a quadratic field. Suppose that there exists an element $c\in K$ such that $G(f_c,K)$ has a subgraph of type {\rm 10(3,2)}. Then there is a rational number $x\notin\{0,-1\}$ such that 
\begin{equation}\label{10_32_c}c=-\frac{ x^6 + 2x^5 + 4x^4 + 8x^3 + 9x^2 + 4x + 1}{4x^2(x+1)^2}
\end{equation} and $K=\Q(\sqrt{F_{13}(x)})$. In particular, $K$ is a real quadratic field.
\end{thm}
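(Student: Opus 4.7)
The plan is to mirror the argument already used in the 10(3,1,1) characterization, exploiting the fact that the parameterizing curve here is again a genus 2 modular curve whose quadratic points are completely understood. First I would invoke Lemma~\ref{10_32_curve}: any triple $(a,b,c) \in K^3$ realizing a 10(3,2)-subgraph (with $a$ of period 3 and $b$ of period 2) pulls back to a point $(x,y) \in C(K)$ with $xy(x+1)(x^2+x+1) \ne 0$, and this point already delivers the stated formula for $c$ via $\phi$. So the entire theorem reduces to showing that the $x$-coordinate of this point lies in $\Q$.

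The next step is to rule out that $(x,y)$ is itself $\Q$-rational. Since $C$ is birational over $\Q$ to $X_1(13)$, which has exactly six rational points, and since the model $y^2 = F_{13}(x)$ places two of them at infinity, the four affine rational points are visibly $(0,\pm 1)$ and $(-1,\pm 1)$ (as $F_{13}(0) = F_{13}(-1) = 1$). All of these satisfy $x(x+1) = 0$, contradicting the hypothesis from the lemma. Hence $(x,y)$ must be a genuine quadratic point on $C$.

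The key step is then to apply Theorem~\ref{131618_quad}(1), which asserts that every quadratic point on $X_1(13)$ is obvious, i.e., has rational $x$-coordinate. This forces $x \in \Q$; then $y \notin \Q$ (else $(x,y)$ would be rational), so $K = \Q(x,y) = \Q(y) = \Q(\sqrt{F_{13}(x)})$, as desired. This is the only non-trivial input, and it is supplied entirely by the preparatory work on the modular curve $X_1(13)$.

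The final claim that $K$ is real quadratic reduces to showing $F_{13}(x) > 0$ for every $x \in \R$. I would verify this via the identity
\[ F_{13}(x) = (x^3 + x^2 + 1)^2 + 4x(x+1). \]
For $x \notin (-1,0)$ both summands are nonnegative and do not vanish simultaneously (since $x^3 + x^2 + 1 = 1 \ne 0$ at $x = 0,-1$). For $x \in (-1,0)$ we have $x^2(x+1) > 0$, so $x^3+x^2+1 > 1$, whence $(x^3+x^2+1)^2 > 1 \ge -4x(x+1)$. Thus $F_{13}(x) > 0$ throughout $\R$, and $\Q(\sqrt{F_{13}(x)})$ is real quadratic whenever $F_{13}(x)$ is not a rational square. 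I expect no serious obstacle: the entire argument is a direct transcription of the 10(3,1,1) proof, substituting Theorem~\ref{131618_quad}(1) for the corresponding statement about $X_1(18)$ and replacing $F_{18}$ with $F_{13}$.
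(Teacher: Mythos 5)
Your proposal is correct and follows essentially the same route as the paper: reduce via Lemma \ref{10_32_curve} to a point on $C$, rule out rational points using the six known points of $X_1(13)$, invoke Theorem \ref{131618_quad}(1) to force $x \in \Q$, and conclude $K = \Q(\sqrt{F_{13}(x)})$ with $F_{13} > 0$ on $\R$. Your explicit identity $F_{13}(x) = (x^3+x^2+1)^2 + 4x(x+1)$ is a nice verification of the positivity claim the paper merely asserts.
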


\begin{proof} By Lemma \ref{10_32_curve} there is a point $(x,y)\in C(K)$ with $xy(x+1)(x^2+x+1)\ne 0$ such that $c$ is given by \eqref{10_32_c}. We claim that $(x,y)$ cannot be a rational point on $C$. Indeed, $C$ is an affine model of the curve $X_1(13)$, which has exactly six rational points (two of them at infinity). Therefore, $C$ has four rational points, and it is easy to see that they are $(0,\pm 1)$ and $(-1,\pm 1)$.  Thus, if $(x,y)\in C(\Q)$, then either $x=0$ or $x=-1$; however, this contradicts the assumption that $x(x+1)\ne 0$. Therefore, $(x,y)$ is a quadratic point on $C$, so it follows from Theorem \ref{131618_quad} that $x$ must be a rational number. In particular, since $(x,y)$ is a quadratic point, then $K=\Q(x,y)=\Q(y)=\Q(\sqrt{F_{13}(x)})$. The fact that $K$ is a real quadratic field now follows from the observation that the polynomial function $F_{13}:\R\to\R$ induced by $F_{13}(x)$ only takes positive values.
\end{proof}

\begin{prop}\label{10_32_real} There are infinitely many (real) quadratic fields $K$ containing an element $c$ for which $G(f_c,K)$ admits a subgraph of type {\rm 10(3,2)}.
\end{prop}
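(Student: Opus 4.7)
The plan is to mirror the argument used for Proposition \ref{10_311_real} in the preceding subsection, with $F_{13}(x)$ replacing $F_{18}(x)$. First, apply Lemma \ref{poly_lemma} to $F_{13}(x)$ to obtain an infinite family of quadratic fields of the form $K_r = \Q(\sqrt{F_{13}(r)})$ with $r \in \Q$. As already observed in the proof of Theorem \ref{10_32_charac}, the polynomial function $F_{13}\colon\R\to\R$ only takes positive values, so every field in this family is real quadratic; in particular, this also means that $F_{13}(r)>0$ and hence $y := \sqrt{F_{13}(r)} \ne 0$ for every $r\in\Q$.

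Next, for each such $r \notin \{0,-1\}$, observe that the point $(r,y)\in C(K_r)$ satisfies the hypotheses of Lemma \ref{10_32_curve}: indeed, $r^2+r+1\ne 0$ because it has no real roots, while $r(r+1)\ne 0$ by the choice of $r$, and $y\ne 0$ by the previous paragraph. Lemma \ref{10_32_curve} then supplies an element $c\in K_r$ together with a point $a\in K_r$ of period 3 and a point $b\in K_r$ of period 2 for $f_c$.

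What remains is to check that $G(f_c,K_r)$ actually contains a subgraph of type 10(3,2), which is an additional non-degeneracy requirement: each of the five periodic points must admit two distinct preimages under $f_c$ (so that the five ``tail'' points outside the two cycles all exist and are distinct from the cycle points). Since the preimages of a point $p$ under $f_c$ are $\pm\sqrt{p-c}$, and one of the two preimages automatically lies in the cycle, the condition to verify is
\[ a\cdot f_c(a)\cdot f_c^2(a)\cdot b\cdot f_c(b) \ne 0. \]
Using the explicit expressions for $f_c(a)$ and $f_c^2(a)$ from Proposition \ref{cycles_prop}(3), the product $a\cdot f_c(a)\cdot f_c^2(a)$ is a rational function of $r$ whose numerator factors as the product of the three cubics $r^3+2r^2+r+1$, $r^3-r-1$, and $r^3+2r^2+3r+1$; each of these has no rational root by the rational root theorem, so this product is nonzero for every $r\in\Q$. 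For the 2-cycle, Proposition \ref{cycles_prop}(2) gives $f_c(b)=-1-b$, so $b\cdot f_c(b)=0$ would reduce to the polynomial equation $r^6 + 2r^5 + 5r^2 + 4r + 1 = 0$, which also has no rational solutions. Hence the subgraph of type 10(3,2) genuinely appears in $G(f_c,K_r)$ for every $r\in\Q\setminus\{0,-1\}$.

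The only real obstacle is organizing the routine but slightly tedious non-vanishing verification in the final step; no new Diophantine input is required beyond Lemma \ref{poly_lemma} and the parameterizations of Proposition \ref{cycles_prop}. The fact that only real quadratic fields arise, in contrast with many earlier cases, is forced by $F_{13}>0$ on $\R$, paralleling the situation for graph 10(3,1,1).
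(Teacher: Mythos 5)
Your proof is correct and takes essentially the same route as the paper: apply Lemma \ref{poly_lemma} to $F_{13}$, invoke Lemma \ref{10_32_curve} to produce $c$, $a$, $b$ over $K_r$, and check the nonvanishing of $a\cdot f_c(a)\cdot f_c^2(a)\cdot b\cdot f_c(b)$ using the same three cubic factors and the same sextic obstruction (your equation $r^6+2r^5+5r^2+4r+1=0$ is exactly the paper's condition $F_{13}(r)=r^2(r+1)^2$ rearranged). The only cosmetic difference is that you make the ``one can verify'' steps explicit via the rational root theorem.
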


\begin{proof}
Applying Lemma \ref{poly_lemma} to the polynomial $F_{13}(x)$ we obtain infinitely many (real) quadratic fields of the form $K_r=\Q(\sqrt{F_{13}(r)})$ with $r\in\Q$. For every such field there is a point $(r,\sqrt{F_{13}(r)})\in C(K_r)$ which necessarily satisfies $r(r+1)(r^2+r+1)F_{13}(r)\ne 0$; hence, by Lemma \ref{10_32_curve} there is an element $c\in K_r$ such that $f_c$ has points $a, b\in K_r$ of periods 3 and 2, respectively. In order to conclude that $G(f_c, K_r)$ contains a subgraph of type 10(3,2) we need the additional condition that all points in the orbits of $a$ and $b$ are nonzero, as this will ensure that every such point has two distinct preimages. One can check that
\[-a\cdot f_c(a)\cdot f_c^2(a)=\frac{(r^3+2r^2+r+1)(r^3-r-1)(r^3+2r^2+3r+1)}{8r^3(r+1)^3},\]
so all points in the orbit of $a$ are nonzero since $r\in\Q$. Now, using Lemma \ref{10_32_curve} we see that if $b\cdot f_c(b)=0$, then
\[r^6 + 2r^5 + r^4 + 2r^3 + 6r^2 + 4r +1=r^2(r+1)^2.\]

However, one can verify that this equation has no rational solution. Therefore, both points in the orbit of $b$ must be nonzero.
\end{proof}


\subsection{Graph 12(2)}\label{12_2_section}
Our search described in \S\ref{quad_prep_comp} produced the pairs \[(K,c)=\left(\Q(\sqrt{2}), -\frac{15}{8}\right)\;\;\text{and}\;\; (K,c)=\left(\Q(\sqrt{57}),-\frac{55}{48}\right)\] for which the graph $G(f_c,K)$ is of type 12(2). We will show here that, in addition to these known pairs, there are at most four other pairs $(K,c)$ giving rise to this graph structure.
\begin{lem}\label{12_2_curve} Let $C/\Q$ be the affine curve of genus 5 defined by the equations
\begin{equation}\label{12_2_curve_equations}
\begin{cases}
& y^2 =  2(x^4+2x^3-2x+1)\\
& z^2 = 2(x^3+x^2-x+1).
\end{cases}
\end{equation}
Consider the rational map $\phi: C \dashrightarrow \mathbb{A}^3 = \Spec \Q[a,s,c]$ given by
\begin{equation*} a=\frac{z}{x^2-1}\;,\;s=\frac{y}{x^2-1}\;,\;c=-\frac{x^4+2x^3+2x^2-2x+1}{(x^2-1)^2}\;. \end{equation*}
For every number field $K$, the map $\phi$ induces a bijection from the set \[\{(x,y,z)\in C(K):x(x^2-1)(x^2+4x-1)(x^2 + 2x - 1)\ne 0\}\] to the set of all triples $(a,s,c)\in K^3$ such that $a$ is a point of type $2_3$ for the map $f_c$ and $s$ is a point of type $2_2$ satisfying $f_c^2(s)\ne f_c^3(a)$.
\end{lem}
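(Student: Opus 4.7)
The plan is to exploit the fact that the curve $C$ in this lemma is essentially the fiber product over the $x$-line of the two curves appearing in Lemmas \ref{8_2a_curve} and \ref{8_2b_curve}, and that the expression defining $c$ is identical in all three lemmas. Thus the proof reduces to invoking these two earlier results in a compatible way.

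For well-definedness, fix $(x,y,z) \in C(K)$ satisfying the exclusion conditions, and set $(a,s,c)$ as in the statement. The equation $z^2 = 2(x^3+x^2-x+1)$ together with $a = z/(x^2-1)$ matches the setup of Lemma \ref{8_2b_curve} (with ``$p$'' equal to $a$), so $a$ is a point of type $2_3$ for $f_c$. A direct computation using $z^2 = 2(x^3+x^2-x+1)$ yields $f_c(a) = a^2 + c = -(x^2+1)/(x^2-1)$, which is precisely the auxiliary type $2_2$ point ``$a$'' appearing in Lemma \ref{8_2a_curve}. Since $y^2 = 2(x^4+2x^3-2x+1)$ and $s = y/(x^2-1)$, the hypotheses of Lemma \ref{8_2a_curve} are satisfied with its ``$a$'' equal to $f_c(a)$ and its ``$b$'' equal to $s$, so $s$ is of type $2_2$ for $f_c$. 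The relations in that lemma give $f_c^2(s) = f_c^3(f_c(a)) = f_c^4(a)$, and the relation $f_c^4(a) - f_c^3(a) = (x^2+4x-1)/(x^2-1)$ from Lemma \ref{8_2b_curve} is nonzero by the exclusion on $x$, so $f_c^2(s) \ne f_c^3(a)$.

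For surjectivity, start with a triple $(a,s,c) \in K^3$ satisfying the dynamical hypotheses. Applying Lemma \ref{8_2b_curve} to the type $2_3$ point $a$ produces elements $x, z \in K$ with $x(x^2-1) \ne 0$, $z^2 = 2(x^3+x^2-x+1)$, $a = z/(x^2-1)$, and $c$ given by the stated formula; this also forces $f_c(a) = -(x^2+1)/(x^2-1)$. Since $f_c^2(f_c(a)) = f_c^3(a) \ne f_c^2(s)$ by hypothesis, the pair $\bigl(f_c(a), s\bigr)$ satisfies the hypotheses of Lemma \ref{8_2a_curve}. That lemma then yields some $x' \in K$ with $x'(x'^2-1) \ne 0$ and $y \in K$ with $y^2 = 2(x'^4+2x'^3-2x'+1)$ such that $f_c(a) = -(x'^2+1)/(x'^2-1)$ and $c = -(x'^4+2x'^3+2x'^2-2x'+1)/(x'^2-1)^2$. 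The first identity forces $x'^2 = x^2$, and checking the second expression for $c$ rules out $x' = -x$ (that would require $x(x^2-1) = 0$), so $x' = x$. Thus $(x,y,z) \in C(K)$ satisfies $\phi(x,y,z) = (a,s,c)$, and the remaining exclusion conditions follow from the relations in the two earlier lemmas.

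Injectivity is immediate by combining the inverse formulas from the two lemmas: $x = (f_c(a)-1)/f_c^2(a)$, together with $y = s(x^2-1)$ and $z = a(x^2-1)$, recovers $(x,y,z)$ uniquely from $(a,s,c)$. The main subtlety is the compatibility step in surjectivity, namely showing that the two independent applications of Lemmas \ref{8_2b_curve} and \ref{8_2a_curve} produce the same $x$-coordinate; this is where the exclusion $x(x^2-1) \ne 0$ is essential to distinguish $x$ from $-x$ via the formula for $c$.
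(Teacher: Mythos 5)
Your proof is correct, but it is organized differently from the paper's. You treat $C$ as the fiber product over the $x$-line of the curves in Lemmas \ref{8_2a_curve} and \ref{8_2b_curve} and reduce both directions to those two lemmas; the price is the gluing step in surjectivity, where the two independent parameters $x$ (from Lemma \ref{8_2b_curve} applied to $a$) and $x'$ (from Lemma \ref{8_2a_curve} applied to $(f_c(a),s)$) must be shown equal. Your argument for that step is sound: $f_c(a)=-(x^2+1)/(x^2-1)=-(x'^2+1)/(x'^2-1)$ forces $x'^2=x^2$ since $t\mapsto -(t+1)/(t-1)$ is injective, and equating the two formulas for $c$ under $x'=-x$ forces $4x(x^2-1)=0$, which is excluded. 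The paper avoids this reconciliation entirely: it verifies well-definedness by directly computing the orbit relations for $a$ and $s$, and in the surjectivity direction it applies the Poonen-type parametrization only once, to the pair of type-$2_2$ points $(f_c(a),s)$ (after invoking uniqueness of the $2$-cycle), and then recovers the second defining equation $z^2=2(x^3+x^2-x+1)$ algebraically from $a^2=f_c(a)-c$, so only one parameter $x$ ever appears. Your route buys modularity (everything is quoted from the two earlier lemmas, in the same spirit as the paper's own treatment of graph 12(2,1,1)a), while the paper's buys a shorter surjectivity argument. One presentational caveat: a couple of the ``relations'' you quote, such as $f_c^2(b)=f_c^3(a)$ in Lemma \ref{8_2a_curve} and $f_c^4(p)-f_c^3(p)=(x^2+4x-1)/(x^2-1)$ in Lemma \ref{8_2b_curve}, are established inside the proofs of those lemmas rather than in their statements; this is not a gap, since both facts also follow immediately from the type conditions together with the uniqueness of the $2$-cycle (Proposition \ref{cycles_prop}(2)), but you should either cite them that way or redo the one-line computations.
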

\begin{figure}[h!]
\begin{center}\includegraphics[scale=0.5]{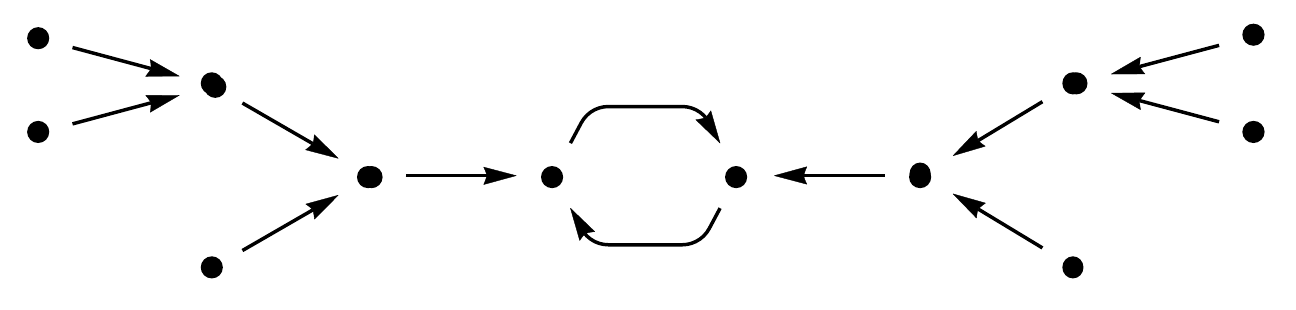}\end{center}
\caption{Graph type 12(2)}
\end{figure}
\begin{proof} Fix a number field $K$ and suppose that $(x,y,z)\in C(K)$ is a point with $x^2\ne 1$. Defining $a,s,c\in K$ as in the lemma, it is a straightforward calculation to verify that $f_c^3(a)=f_c^5(a)$, $f^4_c(a)=f_c^2(s)$, $f_c(s)-f_c^2(s)=1$, and
\begin{equation}\label{12_2_relations}
f_c^4(a)-f_c^3(a)=\frac{x^2+4x-1}{x^2-1}\;\;,\;\;f_c^4(a)-f_c^2(a)=\frac{4x}{x^2-1}\;\;,\;\;f_c(s)-f_c^3(s)=\frac{2(x^2+2x-1)}{x^2-1}.
\end{equation}
It follows from these relations that if $x(x^2+4x-1)(x^2 + 2x - 1)\ne 0$, then $a$ is a point of type $2_3$ and $s$ is of type $2_2$ for $f_c$, and moreover that $f_c^2(s)\ne f_c^3(a)$. Hence, $\phi$ gives a well-defined map.

To see that $\phi$ is surjective, suppose that $a,s,c\in K$ are such that $a$ is a point of type $2_3$ for the map $f_c$ and $s$ is a point of type $2_2$ satisfying $f_c^2(s)\ne f_c^3(a)$. Let $r=f_c(a)$, so that $r$ is of type $2_2$. Since $f_c$ can have only one 2-cycle, the points $f_c^2(r)$ and $f_c^2(s)$ must form a 2-cycle. Then the argument given in \cite[20]{poonen_prep} shows that there is an element $x\in K\setminus\{0, \pm 1\}$ such that

\begin{equation}\label{12_2_crsvalues} c=-\frac{x^4+2x^3+2x^2-2x+1}{(x^2-1)^2}\;,\; r=-\frac{x^2+1}{x^2-1}\;,\;s^2=\frac{2(x^4+2x^3-2x+1)}{(x^2-1)^2}\;.
\end{equation}
Since $a^2=r-c$, this implies that \[a^2=\frac{2(x^3+x^2-x+1)}{(x^2-1)^2}.\]
Letting $y=s(x^2-1)$ and $z=a(x^2-1)$ we obtain a point $(x,y,z)\in C(K)$ with $x(x^2-1)\ne 0$ and $\phi(x,y,z)=(a,s,c)$. Furthermore, the relations \eqref{12_2_relations} imply that $(x^2+4x-1)(x^2+2x-1)\ne 0$.  To see that $\phi$ is injective, one can verify that if $\phi(x,y,z)=(a,s,c)$, then
\[x=\frac{f_c(a)-1}{f_c^2(a)}\;\;,\;\;y=s(x^2-1)\;\;,\;\;z=a(x^2-1).\qedhere\]
\end{proof}

\begin{lem}\label{12_2_aux_curve_pts} Let $X/\Q$ be the hyperelliptic curve of genus 3 defined by the equation
\[w^2=x^7 + 3x^6 + x^5 - 3x^4 + x^3 + 3x^2 - 3x + 1.\] 
Then $X(\Q)$ contains the points $\infty, (-1, \pm 2), (0,\pm 1), (1,\pm 2)$ and at most 8 other points.
\end{lem}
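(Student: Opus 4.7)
The plan is to establish $\#X(\Q) \le 15$, so that the seven listed rational points leave room for at most eight further points. The approach is to apply the Lorenzini--Tucker bound (Theorem~\ref{lorenzini_tucker_bound}) with $p = 3$ and $d = 2$, which, as I will verify, yields exactly the bound $15$.

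I would begin by verifying that the seven listed points lie on $X$: the identities $F(-1) = F(1) = 4 = 2^2$ and $F(0) = 1$ are immediate, and the point at infinity exists uniquely on the smooth projective model since $\deg F = 7$ is odd. Next I would confirm good reduction at $p = 3$ by computing
\[
F(x) \equiv x^7 + x^5 + x^3 + 1 \pmod 3
\qquad\text{and}\qquad
F'(x) \equiv x^4 (x-1)(x+1) \pmod 3.
\]
Since $F(0) \equiv F(1) \equiv F(-1) \equiv 1 \pmod 3$, the roots of $\bar F'$ in $\overline{\F_3}$ (which already lie in $\F_3$) are not roots of $\bar F$, so $\bar F$ is separable and the model $w^2 = F(x)$ is smooth over $\F_3$. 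A short point count then gives $\#\calX(\F_3) = 7$: each $x \in \F_3$ satisfies $F(x) \equiv 1 \pmod 3$, a nonzero square, so each contributes two affine points, to which one adds the single point at infinity.

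Next I would verify the numerical hypotheses of Theorem~\ref{lorenzini_tucker_bound}: for $p = 3$, $d = 2$, and $g = 3$ one has $p > d$ and $p^d = 9 > 2g - 1 + d = 7$. Provided the rank $r$ of $\Jac(X)(\Q)$ satisfies $r < g = 3$, the theorem then yields
\[
\#X(\Q) \le \#\calX(\F_3) + \frac{p-1}{p-d}(2g-2) = 7 + 2 \cdot 4 = 15,
\]
which is exactly the desired bound. Note that neither Coleman's bound (Theorem~\ref{coleman_bound}) nor Stoll's bound (Theorem~\ref{stoll_bound}) appears to give $\le 15$ directly at any available prime, so it is really the flexibility of the Lorenzini--Tucker constant $\frac{p-1}{p-d}(2g-2)$ with a small prime $p$ that makes the argument work.

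The main obstacle is thus to establish the rank bound $r \le 2$. I would do this by 2-descent on the Jacobian of $X$ using Magma's \texttt{RankBound} function (mentioned in the computational preliminaries of Section~\ref{classification}); this is the standard tool for bounding Mordell--Weil ranks of hyperelliptic Jacobians over $\Q$. Assuming \texttt{RankBound} returns $r \le 2$, the proof is complete. If the descent only gave a weaker bound, one would need to refine via a higher-order descent or combine Chabauty--Coleman at a larger prime with a Mordell--Weil sieve, but the numerics above strongly suggest that a straightforward 2-descent will suffice.
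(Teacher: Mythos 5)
Your proposal is correct and follows essentially the same route as the paper: a search confirming the seven listed points, the rank bound $r=2<g=3$ for $\Jac(X)(\Q)$ obtained via Magma's 2-descent, good reduction at $p=3$ with $\#\calX(\F_3)=7$, and the Lorenzini--Tucker bound with $p=3$, $d=2$ giving $\#X(\Q)\le 15$. Your added verifications (separability of $F$ mod $3$, the point count, and the numerical hypotheses $p>d$, $p^d>2g-1+d$) are accurate and simply make explicit what the paper leaves to computation.
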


\begin{proof} Using the Magma function \texttt{Points} we search for rational points on $X$ of height at most $10^5$ and obtain the points listed above. Using the \texttt{RankBounds} function we find that the group $\Jac(X)(\Q)$ has rank 2; we may therefore apply the method of Chabauty and Coleman to bound the number of rational points on $X$. The prime 3 is of good reduction for $X$, and Magma's \texttt{Points} function yields $\#X(\F_{3})=7.$ Applying the Lorenzini-Tucker bound (Theorem \ref{lorenzini_tucker_bound}) with $p=3$, $d=2$ we obtain $\#X(\Q) \le 15$. Since we have already listed 7 rational points, we conclude that there are at most 8 additional rational points on $X$.
\end{proof}

\begin{thm}\label{12_2_points} With $C$ as in Lemma \ref{12_2_curve} we have the following:
\begin{enumerate}
\item $C(\Q)=\{(\pm 1,\pm 2,\pm 2)\}$.
\item If $(x,y,z)$ is a quadratic point on $C$ with $x\in\Q$, then there exists $w\in\Q$ such that $(x,w)\in X(\Q)$, where $X$ is the curve defined in Lemma \ref{12_2_aux_curve_pts}. Moreover, $x\ne\pm 1$.
\item The quadratic points $(x,y,z)$ on $C$ with $x\notin\Q$ are the points $(x,\pm(4x+10),\pm(2x+4))$ with $x^2-2x-7=0$, which are defined over the field $\Q(\sqrt 2)$, and the points $(x,\pm(24x-10),\pm(6x-4))$ with $x^2-16x+7=0$, which are defined over the field $\Q(\sqrt{57})$.
\end{enumerate}
\end{thm}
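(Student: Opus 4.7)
Parts (1) and (2) follow the template of Theorem~\ref{10_11a_points}. For Part~(1) I would project $C$ onto its second defining equation $z^2=2(x^3+x^2-x+1)$, which was already identified in \S\ref{8_11b_section} with the Cremona curve $11a3$: it has Mordell--Weil rank $0$ and only four affine rational points $(\pm 1,\pm 2)$. Thus any $(x,y,z)\in C(\Q)$ has $x=\pm 1$, and substituting back into both equations of $C$ gives $y^2=z^2=4$, so $C(\Q)=\{(\pm 1,\pm 2,\pm 2)\}$.

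For Part~(2), suppose $(x,y,z)\in C(K)$ is quadratic with $x\in\Q$. If $z\in\Q$, then $(x,z)\in\{(\pm 1,\pm 2)\}$, forcing $y\in\Q$ and a rational point on $C$ --- contradiction. If $y\in\Q$ and $z\notin\Q$, then $(x,y)$ is rational on the quartic $y^2=2(x^4+2x^3-2x+1)$, birational to the rank-$0$ curve $40a3$ (see \S\ref{8_2a_section}); a Magma enumeration (or the Mordell--Weil data of $40a3$) shows the only rational affine points are $(\pm 1,\pm 2)$, for which $z^2=4$ and so $z\in\Q$, a contradiction again. Hence $y,z\notin\Q$ and $\Q(y)=\Q(z)=K$, so $yz\in\Q$. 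Setting $w:=yz/2\in\Q$, one computes directly
\[
w^2=(x^4+2x^3-2x+1)(x^3+x^2-x+1)=x^7+3x^6+x^5-3x^4+x^3+3x^2-3x+1,
\]
so $(x,w)\in X(\Q)$. Finally $x=\pm 1$ would force $y^2=z^2=4$, giving a rational point on $C$ and contradicting the quadratic assumption; so $x\ne\pm 1$.

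For Part~(3), assume $x$ is quadratic with $K=\Q(x)$, so that $y,z\in K$. Applying Lemma~\ref{ellcrv_quad_pts} to the cubic $z^2=2(x^3+x^2-x+1)$ produces $v\in\Q$ and $(x_0,z_0)\in\{(\pm 1,\pm 2)\}$ with
\[
x^2+\tfrac{2x_0-v^2+2}{2}\,x+\tfrac{2x_0^2+v^2x_0+2x_0-2z_0v-2}{2}=0,
\]
and $z=z_0+v(x-x_0)$. To impose the quartic equation I would write $y=p+qx$ with $p,q\in\Q$ (possible since $y\in K$), square, and reduce $y^2=2(x^4+2x^3-2x+1)$ modulo the quadratic above. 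Equating the coefficients of $1$ and $x$ gives the system $p^2-\beta q^2=A$ and $2pq-\alpha q^2=B$, where $\alpha,\beta$ are the coefficients of that quadratic and $A,B$ are explicit polynomial functions of $v$ (depending on $x_0,z_0$). Eliminating $p,q$ reduces solvability to the single rational-square condition $w^2=A^2-AB\alpha+B^2\beta$, which defines (for each of the four choices of $(x_0,z_0)$) an affine curve in the $(v,w)$-plane. I would then use Magma to enumerate the rational points on each of these four curves, and for every such point form the resulting quadratic polynomial in $x$, discarding the reducible cases (which fall under Parts (1)--(2)); the surviving polynomials should be exactly $x^2-2x-7$ (over $\Q(\sqrt 2)$) and $x^2-16x+7$ (over $\Q(\sqrt{57})$), and then computing $y=p+qx$ and $z=z_0+v(x-x_0)$ should recover the explicit $(y,z)$ listed in the theorem.

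The main obstacle is that Lemma~\ref{ellcrv_quad_pts} applies only to cubic Weierstrass equations, so one cannot match two quadratic polynomials for $x$ as in Theorem~\ref{10_11a_points}; the rational-square reformulation replaces this by a collection of four auxiliary curves whose rational points govern the answer. Showing that each contributes only the expected rational points is the delicate step, and will likely require a 2-descent or Chabauty-style argument of the kind used in Lemma~\ref{12_2_aux_curve_pts}.
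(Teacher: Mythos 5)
Your parts (1) and (2) are correct and in substance identical to the paper's: rationality of points on the two component curves (11a3 and 40a3, both of rank $0$) gives (1), and in (2) your observation that $w:=yz/2\in\Q$ satisfies $w^2=(x^4+2x^3-2x+1)(x^3+x^2-x+1)=x^7+3x^6+x^5-3x^4+x^3+3x^2-3x+1$ is exactly the paper's "same squarefree part" argument.

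Part (3), however, has a genuine gap, and you have named it yourself without resolving it. Writing $y=p+qx$, reducing modulo the minimal polynomial $x^2+\alpha x+\beta$ of $x$ (with $\alpha,\beta$ quadratic in $v$ by Lemma \ref{ellcrv_quad_pts}), and taking norms gives the condition $w^2=A^2-\alpha AB+\beta B^2$, where $A,B$ have degree $6$ in $v$; so for each of the four choices of $(x_0,z_0)$ you obtain a curve $w^2=P(v)$ with $\deg P$ about $14$, i.e.\ a hyperelliptic curve of genus roughly $6$. Determining the rational points on these four curves is not a Magma enumeration and is not obviously amenable to Chabauty either (you would need rank bounds on genus-$6$ Jacobians with no a priori guarantee that the rank is smaller than the genus); it is a harder Diophantine problem than the one you set out to solve, and your "the surviving polynomials should be exactly $x^2-2x-7$ and $x^2-16x+7$" is a hope, not a proof. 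A smaller issue: the norm condition is necessary but not sufficient for $A+Bx$ to be a square in $K$, so "reduces solvability to" overstates it (harmless for an enumeration, but it should not be phrased as an equivalence).

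The paper avoids any auxiliary positive-genus curve by exploiting that the quartic $y^2=2(x^4+2x^3-2x+1)$ is itself birational to the rank-zero elliptic curve 40a3: it passes to the Weierstrass model $Y^2=X^3-2X+1$ by an explicit change of variables with $x=(X^2+2Y)/(X^2-4X+2)$, and then splits on whether $X$ is rational. If $X\in\Q$, substituting $y=X(x-1)^2-2x^2$ produces a second quadratic satisfied by $x$ with coefficients in $\Q(X)$; if $X\notin\Q$, Lemma \ref{ellcrv_quad_pts} applies to the cubic in $X$ with the finitely many base points of 40a3 and again yields a quadratic for $x$ in one rational parameter. In every case, comparing that quadratic with the one coming from the $z$-equation gives a zero-dimensional system in two rational unknowns, solved outright by \texttt{RationalPoints}. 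To complete your argument along the paper's lines you should replace the parametrization $y=p+qx$ by this second application of Lemma \ref{ellcrv_quad_pts} (after passing to the Weierstrass model), rather than attempting rational-point determinations on the four norm curves.
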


\begin{proof} Let $C_1$ be the hyperelliptic curve of genus 1 defined by equation $y^2 = 2(x^4+2x^3-2x+1)$. Since $C_1$ has a rational point --- for instance, the point (1,2) --- it is an elliptic curve; in fact, as mentioned in \S\ref{8_2a_section}, $C_1$ is the elliptic curve with Cremona label 40a3, which has exactly four rational points. Hence, $\#C_1(\Q)=4$, and we easily find the four points: $C_1(\Q)=\{(\pm 1,\pm 2)\}$. Similarly, let $C_2$ be the hyperelliptic curve defined by the equation $z^2 = 2(x^3+x^2-x+1)$. Then $C_2$ is the elliptic curve 11a3, which has exactly five rational points. Hence, $C_2(\Q)$ consists of the point at infinity and four other points, which are easily found to be the points $(\pm 1, \pm 2)$. Knowing $C_1(\Q)$ and $C_2(\Q)$ we conclude, in particular, that $C(\Q)=\{(\pm 1,\pm 2,\pm 2)\}$.

Suppose now that $(x,y,z)\in C(\qbar)$ satisfies $[\Q(x,y,z):\Q]=2$, and let $K=\Q(x,y,z)$.

{\bf Case 1:} $x\in\Q$. We cannot have $x=\pm 1$, since this would imply that $y=\pm 2$ and $z=\pm 2$, contradicting the assumption that $(x,y,z)$ is a quadratic point on $C$. It follows that $y\notin \Q$, since having $x,y\in\Q$ would imply that $x=\pm 1$. By the same argument, $z\notin\Q$. Since both $y$ and $z$ are quadratic, then $K=\Q(y)=\Q(z)$, so the numbers $2(x^4+2x^3-2x+1)$ and $2(x^3 + x^2 - x + 1)$ must have the same squarefree part; hence their product is a square, so there is a rational number $w$ such that 
\[w^2= x^7 + 3x^6 + x^5 - 3x^4 + x^3 + 3x^2 - 3x + 1.\] This proves part (2) of the theorem.

{\bf Case 2:} $x$ is quadratic. By Lemma \ref{ellcrv_quad_pts} applied to the equation $z^2 = 2(x^3 + x^2 - x + 1)$, there exist a rational number $w$ and a point $(x_0,z_0)\in\{(\pm 1,\pm 2)\}$ such that 
\begin{equation}\label{12_2_xpoly1} x^2 + \frac{2x_0-w^2+2}{2}x + \frac{2x_0^2+w^2x_0+2x_0-2z_0w-2}{2}= 0.
\end{equation}

We will now deduce a different expression for the minimal polynomial of $x$. Making the change of variables 

\begin{equation}\label{12_2_chv}
X = \frac{2x^2 + y}{(x - 1)^2}\;,\;Y = \frac{3x^3 + 3x^2 + 2xy - 3x + 1}{(x - 1)^3}
\end{equation}
satisfying
\begin{equation}\label{12_2_X} x=\frac{X^2 + 2Y}{X^2 - 4X + 2},
\end{equation}
the system \eqref{12_2_curve_equations} becomes
\[
\begin{cases}
& Y^2 = X^3-2X+1\\
 & z^2 = 2(x^3 + x^2 - x + 1).
 \end{cases}
\]

Suppose that $X\in\Q$. Substituting $y=X(x-1)^2-2x^2$, the equation $y^2 - 2(x^4 + 2x^3 - 2x + 1)=0$ becomes
\[(x-1)^2\left((X^2 - 4X + 2)x^2 - 2X^2x + X^2 - 2\right)=0,\] so we must have
\begin{equation}\label{12_2_xpoly2}
x^2 - \frac{2X^2}{X^2 - 4X + 2}x + \frac{X^2 - 2}{X^2 - 4X + 2} = 0.
\end{equation}

Comparing \eqref{12_2_xpoly1} and \eqref{12_2_xpoly2} we arrive at the system
\[
\begin{cases}
& (2x_0-w^2+2)(X^2-4X+2)=-4X^2\\
 & (2x_0^2+w^2x_0+2x_0-2z_0w-2)(X^2-4X+2) = 2(X^2-2).
 \end{cases}
\] 

For every choice of point $(x_0,z_0)\in\{(\pm 1,\pm 2)\}$, the above system defines a 0-dimensional scheme in the $(X,w)$ plane over $\Q$, whose rational points we compute using the Magma function \texttt{RationalPoints}. For every solution $(X,w)$ we then check whether the polynomial \eqref{12_2_xpoly2} is irreducible. With $(x_0,z_0) =  (-1, 2)$ and $(X,w) =  (1/2, 2)$ we obtain $x^2 - 2x - 7=0$; with $(x_0,z_0) =  (1, 2)$ and $(X,w) =  (4, 6)$ we obtain $x^2 - 16x + 7=0$. All other solutions lead to one of these two equations.

Suppose now that $X\notin\Q$. By Lemma \ref{ellcrv_quad_pts} applied to the equation $Y^2 = X^3 - 2X + 1$, there exist a rational number $v$ and a point $(X_0,Y_0)\in\{(0,\pm 1), (1,0)\}$ such that
\begin{equation}\label{12_2_Xpoly} X^2+(X_0-v^2)X + X_0^2+v^2X_0-2Y_0v-2= 0\;\;\mathrm{and}\;\; Y=Y_0+v(X-X_0).\end{equation}

The point $(0,-1)$ is excluded in this case because \eqref{12_2_X} would imply that $x=\frac{v}{v-2}\in\Q$. If $(X_0,Y_0)=(0,1)$, then \eqref{12_2_X} and \eqref{12_2_Xpoly} imply that 
\begin{equation}\label{12_2_xpoly_01}x^2-\frac{4(v+1)}{v^2-2}x-1=0,\end{equation} and if $(X_0,Y_0)=(1,0)$, then \begin{equation}\label{12_2_xpoly_10}x^2 - \frac{4x}{v^2 - 2v - 1} - \frac{v^2 + 2v - 1}{v^2 - 2v - 1}=0.\end{equation}

Suppose that $(X_0,Y_0)=(0,1)$. Comparing \eqref{12_2_xpoly1} and \eqref{12_2_xpoly_01} we obtain the system
\[
\begin{cases}
& (2x_0-w^2+2)(v^2-2)=-8(v+1)\\
 & 2x_0^2+w^2x_0+2x_0-2z_0w= 0.
 \end{cases}
\]

For each choice of point $(x_0,z_0)\in\{(\pm 1,\pm 2)\}$ we solve the above system for $v$ and $w$, and find that $v=-1$.  But then \eqref{12_2_xpoly_01} implies that $x=\pm 1$, which is a contradiction.

Now suppose that $(X_0,Y_0)=(1,0)$. Comparing \eqref{12_2_xpoly1} and \eqref{12_2_xpoly_10} we obtain the system
\[
\begin{cases}
& (2x_0-w^2+2)(v^2-2v-1)=-8\\
 & (2x_0^2+w^2x_0+2x_0-2z_0w-2)(v^2-2v-1) = -2(v^2+2v-1).
 \end{cases}
\]

For each choice of point $(x_0,z_0)\in\{(\pm 1,\pm 2)\}$ we consider the above system and find that it has no rational solutions. 

The assumption that $X\notin\Q$ has led in every case to a contradiction, so we conclude that $X$ must be rational. Hence, the analysis done earlier shows that either $x^2 - 2x - 7=0$ or $x^2 - 16x + 7=0$. Assuming $x^2 - 2x - 7=0$, the system \eqref{12_2_curve_equations} can be solved to obtain $y=\pm(4x+10)$, $z=\pm(2x+4)$. Thus, we have found the points $(x,\pm(4x+10),\pm(2x+4))\in C(K)$, where $K=\Q(x)=\Q(\sqrt{2})$. Now assume that $x^2 - 16x + 7=0$. Then we solve the system \eqref{12_2_curve_equations} and find that $y=\pm(24x-10)$, $z=\pm(6x-4)$. Thus, we have found the points $(x,\pm(24x-10),\pm(6x-4))\in C(K)$, where $K=\Q(x)=\Q(\sqrt{57})$. This proves part (3) of the theorem.
\end{proof}

\begin{cor} In addition to the known pairs $(\Q(\sqrt{2}), -15/8)$ and $(\Q(\sqrt{57}),-55/48)$ there are at most four pairs $(K,c)$, with $K$ a quadratic number field and $c\in K$, for which $G(f_c,K)$ contains a graph of type {\rm 12(2)}. Moreover, for every such pair we must have $c\in\Q$.
\end{cor}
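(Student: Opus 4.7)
The plan is to combine Lemma \ref{12_2_curve}, which parametrizes type 12(2) subgraphs by points on $C$, with the description of the quadratic points on $C$ provided by Theorem \ref{12_2_points}. Suppose $(K,c)$ is a pair for which $G(f_c,K)$ contains a subgraph of type 12(2). Lemma \ref{12_2_curve} yields a point $(x,y,z)\in C(K)$ satisfying $x(x^2-1)(x^2+4x-1)(x^2+2x-1)\ne 0$, with $c$ expressed as an explicit rational function of $x$ alone. Since the only rational points of $C$ are $(\pm 1,\pm 2,\pm 2)$ by Theorem \ref{12_2_points}(1), all of which violate $x^2\ne 1$, the point $(x,y,z)$ must be quadratic over $\Q$.

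I would then split into cases according to whether $x$ is itself rational. If $x\notin\Q$, Theorem \ref{12_2_points}(3) provides a complete list: either $x^2-2x-7=0$ with $K=\Q(\sqrt 2)$, or $x^2-16x+7=0$ with $K=\Q(\sqrt{57})$. Substituting into the formula for $c$ recovers precisely the two known pairs $(\Q(\sqrt 2),-15/8)$ and $(\Q(\sqrt{57}),-55/48)$, so this case contributes no new solutions.

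If $x\in\Q$, then $c\in\Q$ is immediate from the formula, which already establishes the final assertion of the corollary. By Theorem \ref{12_2_points}(2) there exists $w\in\Q$ with $(x,w)\in X(\Q)$, where $X$ is the genus 3 hyperelliptic curve of Lemma \ref{12_2_aux_curve_pts}. Among the 7 known rational points on $X$, the point at infinity has no $x$-coordinate and each of the remaining six has $x\in\{-1,0,1\}$, all of which are excluded by the non-degeneracy condition $x(x^2-1)\ne 0$ (the factors $x^2+4x-1$ and $x^2+2x-1$ automatically have no rational roots). Thus any new pair must arise from one of the at most 8 additional rational points whose existence Lemma \ref{12_2_aux_curve_pts} does not preclude. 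The key observation is that the hyperelliptic involution $(x,w)\mapsto(x,-w)$ acts on $X(\Q)$, and its fixed points correspond to rational roots of the defining polynomial $x^7+3x^6+x^5-3x^4+x^3+3x^2-3x+1$; by the rational root theorem the only candidates are $\pm 1$, both of which evaluate to $4$. Hence none of the 8 potential extra points is fixed, so they partition into at most 4 involution-orbits, each contributing a single rational $x$-value. Since the pair $(K,c)$ is determined by $x$ alone (with $K=\Q(\sqrt{2(x^3+x^2-x+1)})$), this bounds the number of new pairs by 4.

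The main obstacle to sharpening ``at most four'' to an exact count is a sharp determination of $X(\Q)$. Even though $\Jac(X)$ has Mordell--Weil rank 2 (strictly less than the genus 3), the Lorenzini--Tucker bound applied at $p=3$ leaves a gap of 8 points, and a more refined Chabauty--Coleman argument combined with a Mordell--Weil sieve would likely be needed to confirm that the 7 known points exhaust $X(\Q)$.
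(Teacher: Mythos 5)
Your proof is correct and takes essentially the same route as the paper: parametrize the configuration via Lemma \ref{12_2_curve}, use Theorem \ref{12_2_points} to split into the quadratic-$x$ case (which recovers exactly the two known pairs, with $c\in\Q$) and the rational-$x$ case (which forces $(x,w)\in X(\Q)$ and $c\in\Q$), and then bound the new values of $x$ by four using Lemma \ref{12_2_aux_curve_pts}. Your hyperelliptic-involution/rational-root observation simply makes explicit the step the paper leaves implicit when passing from ``at most $8$ additional points on $X$'' to ``at most four options for $x$.''
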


\begin{proof} Suppose that $(K,c)$ is such a pair. By Lemma \ref{12_2_curve} there is a point $(x,y,z)\in C(K)$ with $x(x^2-1)\ne 0$ such that 
\begin{equation}\label{12_2_c_param}
c=-\frac{x^4+2x^3+2x^2-2x+1}{(x^2-1)^2}.
\end{equation} 
We cannot have $(x,y,z)\in C(\Q)$ since this would imply, by Theorem \ref{12_2_points} part (1), that $x\in\{\pm 1\}$. Hence, $(x,y,z)$ is a quadratic point on $C$. Suppose that $x$ is quadratic. Then Theorem \ref{12_2_points} implies that either $x^2-2x-7=0$ or $x^2-16x+7=0$. In the former case we have $K=\Q(x)=\Q(\sqrt 2)$, and \eqref{12_2_c_param} implies that $c=-15/8$; in the latter case have $K=\Q(x)=\Q(\sqrt {57})$ and \eqref{12_2_c_param} implies that $c=-55/48$. Thus, we recover the two pairs listed in the corollary. Note also that $c\in\Q$ in this case.

Consider now the case where $x\in\Q$ (and hence $c\in\Q$, by \eqref{12_2_c_param}). By Theorem \ref{12_2_points} there is a rational number $w$ such that $(x,w)\in X(\Q)$. Since $x\notin\{0,\pm 1\}$, then Lemma \ref{12_2_aux_curve_pts} implies that there are at most four options for $x$. Each value of $x$ determines the number $c$ by \eqref{12_2_c_param} and the field $K$ by \eqref{12_2_curve_equations}. This gives at most four options for the pair $(K,c)$.  
\end{proof}


\subsection{Graph 12(2,1,1)a}\label{12_211a_section}
Our search described in \S\ref{quad_prep_comp} produced the pair \[(K,c)=\left(\Q(\sqrt{17}), -\frac{13}{16}\right)\] for which the graph $G(f_c,K)$ is of type 12(2,1,1)a. We will show here that this is the only such pair $(K,c)$ with $K$ a quadratic number field and $c\in K$.

\begin{lem}\label{12_211a_curve} Let $C/\Q$ be the affine curve of genus 5 defined by the equations
\begin{equation}\label{12_211a_curve_equations}
\begin{cases}
& y^2 =  2(x^4+2x^3-2x+1)\\
& z^2 = 5x^4+8x^3+6x^2-8x+5.
\end{cases}
\end{equation}
Consider the rational map $\phi: C \dashrightarrow \mathbb{A}^4 = \Spec \Q[r,s,p,c]$ given by
\begin{equation*} r=-\frac{x^2+1}{x^2-1}\;,\;s=\frac{y}{x^2-1}\;,\;p=\frac{1}{2}+\frac{z}{2(x^2-1)}\;,\;c=-\frac{x^4+2x^3+2x^2-2x+1}{(x^2-1)^2}\;. \end{equation*}
For every number field $K$, the map $\phi$ induces a bijection from the set \[\{(x,y,z)\in C(K):x(x^2-1)(x^2+4x-1)(x^2+2x-1)\ne 0\}\] to the set of all tuples $(r,s,p,c)\in K^4$ such that $p$ is a fixed point of the map $f_c$ and $r,s$ are points of type $2_2$ for $f_c$ satisfying $f_c^2(r)\ne f_c^2(s)$.
\end{lem}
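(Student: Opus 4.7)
The plan is to recognize $C$ as essentially the fiber product of the curve parameterizing graphs of type 8(2)a (which controls the 2-cycle together with its two type-$2_2$ preimages $r$ and $s$) with an auxiliary quadratic condition forcing $f_c$ to admit a $K$-rational fixed point $p$. Accordingly, my approach reduces every verification to combining Lemma \ref{8_2a_curve} with Proposition \ref{cycles_prop}(1). The formulas for $r$, $s$, $c$ in the lemma are exactly those appearing in Lemma \ref{8_2a_curve}, while the formula for $p$ is the one dictated by Proposition \ref{cycles_prop}(1) applied with fixed-point parameter $t = z/(2(x^2-1))$, so this identification will make the proof essentially a bookkeeping exercise.

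For well-definedness, I would fix $(x,y,z) \in C(K)$ with $x(x^2-1)(x^2+4x-1)(x^2+2x-1) \ne 0$. The first defining equation $y^2 = 2(x^4+2x^3-2x+1)$ places us in the hypothesis of Lemma \ref{8_2a_curve}, which immediately yields that $r$ and $s$ are points of type $2_2$ for $f_c$ satisfying $f_c^2(r) \ne f_c^2(s)$. To see that $p = 1/2 + t$ with $t = z/(2(x^2-1))$ is a fixed point of $f_c$, I would verify that the required identity $c = 1/4 - t^2$ is equivalent, after clearing denominators, to the relation $z^2 = 5x^4 + 8x^3 + 6x^2 - 8x + 5$, which is precisely the second defining equation of $C$. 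Injectivity is then immediate from the explicit inverse formulas $x = (r-1)/(r^2+c)$ and $y = s(x^2-1)$ already established in Lemma \ref{8_2a_curve}, together with $z = (2p-1)(x^2-1)$ read off from the formula for $p$.

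For surjectivity, I would start with $(r,s,p,c) \in K^4$ satisfying the stated conditions and apply Lemma \ref{8_2a_curve} to obtain $(x,y) \in K^2$ satisfying the first defining equation of $C$ with $x(x^2-1)(x^2+4x-1)(x^2+2x-1) \ne 0$ and $\phi(x,y) = (r,s,c)$. Since $p$ is a fixed point of $f_c$, Proposition \ref{cycles_prop}(1) supplies $t \in K$ with $p = 1/2 + t$ and $c = 1/4 - t^2$; setting $z = 2t(x^2-1)$ and matching the two expressions for $c$ then forces $z^2 = 5x^4 + 8x^3 + 6x^2 - 8x + 5$, so $(x,y,z) \in C(K)$ and $\phi(x,y,z) = (r,s,p,c)$. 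There is no genuine obstacle in this argument: Lemma \ref{8_2a_curve} absorbs the entire type-$2_2$ analysis, and the only new content is the short algebraic identity translating the fixed-point condition into the second defining equation of $C$.
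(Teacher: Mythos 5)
Your proposal is correct and follows essentially the same route as the paper: well-definedness and the $2_2$-analysis are delegated to Lemma \ref{8_2a_curve}, the fixed point $p$ is handled via Proposition \ref{cycles_prop}(1) with the identity $c=1/4-t^2$ translating into the second defining equation of $C$, and injectivity/surjectivity use the same inverse formulas $x=(r-1)/(r^2+c)$, $y=s(x^2-1)$, $z=(2p-1)(x^2-1)$. No gaps.
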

\begin{figure}[h!]
\begin{center}\includegraphics[scale=0.5]{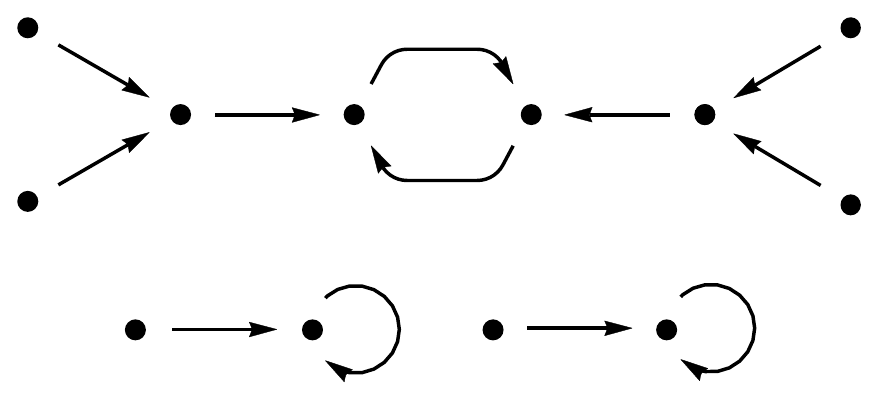}\end{center}
\caption{Graph type 12(2,1,1)a}
\end{figure}
\begin{proof} Fix a number field $K$ and suppose that $(x,y,z)\in C(K)$ is a point satisfying $x^2\ne 1$. Defining $r,s,p,c\in K$ as in the lemma, it is a simple calculation to verify that $p$ is a fixed point of the map $f_c$; moreover, Lemma \ref{8_2a_curve}  implies that if $x(x^2+4x-1)(x^2+2x-1)\ne 0$, then $r$ and $s$ are points of type $2_2$ for $f_c$ with $f_c^2(r)\ne f_c^2(s)$. Hence, $\phi$ gives a well-defined map.

To see that $\phi$ is surjective, suppose that $r,s,p,c\in K$ are such that $p$ is a fixed point of the map $f_c$ and $r,s$ are points of type $2_2$ for $f_c$ satisfying $f_c^2(r)\ne f_c^2(s)$. By Lemma \ref{8_2a_curve} there are elements $x,y\in K$ satisfying $y^2 =  2(x^4+2x^3-2x+1)$ such that $x(x^2-1)(x^2+4x-1)(x^2+2x-1)\ne 0$ and
\[r= -\frac{x^2+1}{x^2-1} \;\;,\;\; s= \frac{y}{x^2-1} \;\;,\;\; c=- \frac{x^4 + 2x^3 + 2x^2 -2x + 1}{(x^2 - 1)^2}.\]

Now, since $p$ is a fixed point for $f_c$, Proposition \ref{cycles_prop} implies that there is an element $\rho\in K$ such that
\[c=1/4-\rho^2 \;\;\mathrm{and}\;\; p=1/2 +\rho.\]
Equating the two expressions for $c$ given above and letting $z=2\rho(x^2-1)$, we obtain the relation $z^2 = 5x^4+8x^3+6x^2-8x+5$. Thus, we have a point $(x,y,z)\in C(K)$ with $x(x^2-1)(x^2+4x-1)(x^2+2x-1)\ne 0$ and $\phi(x,y,z)=(r,s,p,c)$.  To see that $\phi$ is injective, one can verify that if $\phi(x,y,z)=(r,s,p,c)$, then
\[x=\frac{r-1}{r^2+c}\;\;,\;\;y=s(x^2-1)\;\;,\;\;z=(2p-1)(x^2-1).\qedhere\]
\end{proof}

\begin{thm}\label{12_211a_points} With $C$ as in Lemma \ref{12_211a_curve} we have the following:
\begin{enumerate}
\item $C(\Q)=\{(\pm 1,\pm 2,\pm 4)\}$.
\item If $K$ is a quadratic field different from $\Q(\sqrt{5})$ and $\Q(\sqrt{17})$, then $C(K)=C(\Q)$.
\item For $K=\Q(\sqrt 5)$, $C(K)\setminus C(\Q)=\{(x,\pm(4x-2),\pm 8x): x^2+4x-1=0\}$.
\item For $K=\Q(\sqrt{17})$, $C(K)\setminus C(\Q)$ consists of the points $(-3,\pm 2\sqrt{17},\pm 4\sqrt{17}), (1/3,\pm 2\sqrt{17}/9,\pm 4\sqrt{17}/9),$ and the points $(x,\pm 10x,\pm(16x-4))$ with $x^2+8x-1=0$.
\end{enumerate}
\end{thm}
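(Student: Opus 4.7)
The strategy is to parallel the proof of Theorem \ref{12_2_points}. Write $C_1 : y^2 = 2(x^4+2x^3-2x+1)$ and $C_2 : z^2 = 5x^4+8x^3+6x^2-8x+5$ for the two factor curves of \eqref{12_211a_curve_equations}. As observed in \S\ref{8_2a_section}, $C_1$ is the rank-zero elliptic curve 40a3, and the substitution $x \mapsto -x$ carries $C_2$ to the curve of \S\ref{10_211a_section}, which is the rank-zero elliptic curve 17a4. Direct computation gives $C_1(\Q) = \{(\pm 1, \pm 2)\}$ and $C_2(\Q) = \{(\pm 1, \pm 4)\}$, and intersecting yields part (1): $C(\Q) = \{(\pm 1, \pm 2, \pm 4)\}$. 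Now fix a quadratic point $(x,y,z) \in C(\qbar)$ and let $K = \Q(x,y,z)$; the plan is to split on whether $x$ is rational.

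\textbf{Case $x \in \Q$.} If $y \in \Q$, then $(x,y) \in C_1(\Q)$ forces $x = \pm 1$, whence $z^2 = 16$ and $z \in \Q$, contradicting the quadraticity of $(x,y,z)$; the same argument rules out $z \in \Q$. Thus both $y$ and $z$ are quadratic and $K = \Q(y) = \Q(z)$, so the product of the two defining quartics is a rational square, producing a rational point $(x,w)$ on the genus-$3$ hyperelliptic curve
\[X : w^2 = 2(x^4+2x^3-2x+1)(5x^4+8x^3+6x^2-8x+5).\]
Its right-hand side $P(x)$ satisfies $P(x) = x^8 P(-1/x)$, so $X$ carries the involution $\tau : (x,w) \mapsto (-1/x, w/x^4)$, and Riemann--Hurwitz (using that $\tau$ has four geometric fixed points, over $x = \pm i$) gives a quotient $X/\langle \tau\rangle$ of genus $1$. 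A quick search exhibits twelve rational points on $X$, namely those with $x \in \{\pm 1, -3, 1/3\}$, grouped into six $\tau$-orbits; note that $\tau$ swaps $x = -3$ with $x = 1/3$. The plan is to compute the quotient elliptic curve with Magma's \texttt{CurveQuotient}, check that its Mordell--Weil rank is $0$ (or $1$, in which case Chabauty--Coleman applies), and conclude that $X(\Q)$ consists of exactly these twelve points. Lifting back: $x = \pm 1$ recovers only rational points on $C$, while $x = -3$ and $x = 1/3$ each yield four quadratic points defined over $\Q(\sqrt{17})$, namely the rational-$x$ points listed in part (4).

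\textbf{Case $x \notin \Q$.} Compose the birational maps from \S\ref{8_2a_section} and \S\ref{10_211a_section} (the latter after $x \mapsto -x$) to obtain $C_1 \dashrightarrow E_1 : Y_1^2 = X_1^3 - 2X_1 + 1$ and $C_2 \dashrightarrow E_2 : Y_2^2 = X_2^3 - 11X_2 + 6$, with $x$ expressed as an explicit rational function of $(X_i, Y_i)$ in each case. Since $x$ is quadratic, each $X_i$ is either rational or quadratic over $\Q$. For each of the four subcases, Lemma \ref{ellcrv_quad_pts} applied to $E_i$ (when $X_i$ is quadratic) or a direct substitution into the birational formula (when $X_i \in \Q$) produces a quadratic polynomial in $\Q[x]$ that annihilates $x$, parametrized by a rational number and a point of $E_i(\Q)$. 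Equating the two resulting polynomials for $x$ yields a system of polynomial equations defining a zero-dimensional scheme over $\Q$; I would enumerate the rational solutions using Magma's \texttt{RationalPoints} function over the finitely many choices of base points and check irreducibility of the resulting polynomial for $x$. The expected survivors are $x^2 + 4x - 1 = 0$ (forcing $K = \Q(\sqrt 5)$) and $x^2 + 8x - 1 = 0$ (forcing $K = \Q(\sqrt{17})$); the corresponding values of $y$ and $z$ are then recovered from \eqref{12_211a_curve_equations}, producing the remaining points in parts (3) and (4) and establishing part (2). The main obstacle is Case 1: a brute \texttt{RankBound} on $X$ may exceed its genus, so everything hinges on successfully analyzing the quotient $X/\langle \tau\rangle$ and, if that quotient has positive rank, deploying a Chabauty--Coleman or Mordell--Weil sieve argument to pin down $X(\Q)$ exactly.
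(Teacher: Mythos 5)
Your proposal follows essentially the same route as the paper: part (1) via the rank-zero curves 40a3 and 17a4; the rational-$x$ case via the genus-$3$ curve $w^2 = 2(x^4+2x^3-2x+1)(5x^4+8x^3+6x^2-8x+5)$ and its involution $(x,w)\mapsto(-1/x,w/x^4)$ with genus-$1$ quotient (the paper identifies it as the elliptic curve 17a2, which has exactly four rational points); and the quadratic-$x$ case by passing to $Y^2=X^3-2X+1$ and $T^2=S^3-11S+6$, applying Lemma \ref{ellcrv_quad_pts} in each rational/quadratic subcase, and solving the resulting zero-dimensional schemes to isolate $x^2+4x-1=0$ and $x^2+8x-1=0$. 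One small correction: $X$ has eight rational points, not twelve (the four $x$-values $\pm 1, -3, 1/3$ each give two $w$-values, and there are no rational points at infinity since the leading coefficient $10$ is not a square), forming four $\tau$-orbits; this is exactly consistent with the quotient 17a2 having four rational points, so the bound $\#X(\Q)\le 8$ closes the argument as in the paper.
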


\begin{proof} As shown in the proof of Theorem \ref{12_2_points}, the only rational solutions to the equation $y^2 = 2(x^4+2x^3-2x+1)$ are $(x,y)=(\pm 1,\pm 2)$. Let $C_2$ be the hyperelliptic curve of genus 1 defined by the equation  $z^2 = 5x^4+8x^3+6x^2-8x+5$. Making the change of variables $x\mapsto -x$, it follows from the proof of Proposition \ref{10_211_charac} that $C$ is isomorphic to the elliptic curve 17a4 in Cremona's tables, which has exactly four rational points; hence, $C_2$ has four rational points, and these are easily found to be $(\pm 1, \pm 4)$. Knowing all rational solutions to each of the defining equations for $C$, we deduce that $C(\Q)=\{(\pm 1,\pm 2,\pm 4)\}$.

Suppose now that $(x,y,z)\in C(\qbar)$ satisfies $[\Q(x,y,z):\Q]=2$, and let $K=\Q(x,y,z)$.

{\bf Case 1:} $x\in\Q$. We cannot have $x=\pm 1$, since this would imply that $y=\pm 2$ and $z=\pm 4$, contradicting the assumption that $(x,y,z)$ is a quadratic point on $C$. It follows that $y\notin \Q$, since having $x,y\in\Q$ would imply that $x=\pm 1$. By a similar argument, $z\notin\Q$. Since $y$ and $z$ are both quadratic, we must have $K=\Q(y)=\Q(z)$, so the numbers $2(x^4+2x^3-2x+1)$ and $5x^4+8x^3+6x^2-8x+5$ must have the same squarefree part, and hence their product is a square; thus, there is a rational number $w$ such that
\begin{equation}\label{12_211a_hyper} w^2=10x^8 + 36x^7 + 44x^6 - 12x^5 - 44x^4 + 12x^3 + 44x^2 - 36x + 10.
\end{equation} 
Let $X$ be the hyperelliptic curve of genus 3 defined by \eqref{12_211a_hyper}. Searching for rational points on $X$ using Magma's \texttt{Points} function we obtain the 8 points $(\pm1, \pm 8), (-3,\pm 136), (1/3,\pm 136/81)$; we will show that these are all the rational points on $X$. To see this, note that $X$ has an involution given by $(x,w)\mapsto (-1/x,w/x^4)$. The quotient of $X$ by this involution is the elliptic curve \[E :\; v^2 + uv + v = u^3 - u^2 - 6u - 4.\] The quotient map $X\to E$ of degree 2 is given by 
\begin{align*}
u &= \frac{2x^4+6x^3+4x^2-6x-w+2}{(x^2-1)^2},\\
v &= \frac{-6x^6 - 24x^5 - 22x^4 + 16x^3 + 2x^2w + 22x^2 + 4xw - 24x - 2w + 6}{x^6 - 3x^4 + 3x^2 - 1}.
\end{align*}
The curve $E$ is the elliptic curve 17a2 in Cremona's tables, and has exactly four rational points. It follows that $X$ can have at most 8 rational points, and since we have already listed 8 points in $X(\Q)$, these must be all.

Returning to \eqref{12_211a_hyper}, we have a point $(x,w)\in X(\Q)$ with $x\ne\pm 1$; we must then have $x=-3$ or $x=1/3$. Taking $x=-3$ we obtain by \eqref{12_211a_curve_equations} that $y^2=68$ and $z^2=272$; if $x=1/3$, then $y^2=68/81$ and $z^2=272/81$. Thus, we have shown that the only quadratic points on $C$ having a rational $x$-coordinate are the points $(-3,\pm 2\sqrt{17},\pm 4\sqrt{17})$ and $(1/3,\pm 2\sqrt{17}/9,\pm 4\sqrt{17}/9)$.

{\bf Case 2:} $x$ is quadratic. Starting with the system \eqref{12_211a_curve_equations}, we make the change of variables 

\begin{align}\label{12_11a_chv}
X &= \frac{2x^2 + y}{(x - 1)^2}\;,\;Y = \frac{3x^3 + 3x^2 + 2xy - 3x + 1}{(x - 1)^3}\\
S &= \frac{5x^2 + 2x + 2z + 1}{(x - 1)^2}\;,\; T = \frac{2(7x^3 + 9x^2 + 3xz - 3x + z + 3)}{(x - 1)^3} 
\end{align}

satisfying

\begin{equation}\label{12_211a_XS}\frac{X^2 + 2Y}{X^2 - 4X + 2} = x = \frac{S^2 + 2S + 4T + 1}{S^2 - 10S + 5} \end{equation}

to obtain the equations
\[
\begin{cases}
& Y^2 = X^3 - 2X + 1\\
 &T^2 = S^3 - 11S + 6.
 \end{cases}
\]

The idea of the proof is to use these two equations together with Lemma \ref{ellcrv_quad_pts} to find the minimal polynomials of $X$ and $S$, and then using \eqref{12_211a_XS} to find the minimal polynomial of $x$. It may occur that $X$ or $S$ are rational rather than quadratic, so we must consider this possibility. If $X\in\Q$, then using the relation $y=X(x-1)^2-2x^2$, the equation
\[y^2 -  2(x^4+2x^3-2x+1)=0\] becomes
\[(x-1)^2(X^2-4X+2)\left(x^2 - \frac{2X^2}{X^2 - 4X + 2}x + \frac{X^2 - 2}{X^2 - 4X + 2}\right)=0.\]

Since $x\notin\Q$ and the equation $X^2-4X+2=0$ has no rational solution, this implies that 
\begin{equation}\label{12_211a_xpoly1}x^2 - \frac{2X^2}{X^2 - 4X + 2}x + \frac{X^2 - 2}{X^2 - 4X + 2} = 0.
\end{equation}
Similarly, if $S\in\Q$, then using the relation $z=\frac{1}{2}[S(x-1)^2-5x^2-2x-1]$, the equation
\[z^2 -(5x^4+8x^3+6x^2-8x+5)=0\] becomes
\[\frac{1}{4}(x-1)^2(S^2-10S+5)\left(x^2 - \frac{2S^2 + 4S + 2}{S^2 - 10S + 5}x + \frac{S^2 - 2S - 19}{S^2 - 10S + 5}\right) = 0.\]
Since $x\notin\Q$ and the equation $S^2-10S+5=0$ has no rational solution, this implies that
\begin{equation}\label{12_211a_xpoly2}x^2 - \frac{2S^2 + 4S + 2}{S^2 - 10S + 5}x + \frac{S^2 - 2S - 19}{S^2 - 10S + 5} = 0.\end{equation} 

Now, if $X$ is quadratic, then by Lemma \ref{ellcrv_quad_pts} applied to the equation $Y^2 = X^3 - 2X + 1$, there is a rational number $v$ and a point $(X_0,Y_0)\in\{(0,\pm 1), (1,0)\}$ such that
\begin{equation}\label{12_211a_Xpoly} X^2+(X_0-v^2)X + X_0^2+v^2X_0-2Y_0v-2= 0\;,\; Y=Y_0+v(X-X_0).\end{equation}

The point $(0,-1)$ is excluded in this case because \eqref{12_211a_XS} would imply that $x=\frac{v}{v-2}\in\Q$. If $(X_0,Y_0)=(0,1)$, then \eqref{12_211a_XS} and \eqref{12_211a_Xpoly} imply that 
\begin{equation}\label{12_211a_xpoly_01}x^2-\frac{4(v+1)}{v^2-2}x-1=0.\end{equation} If instead  $(X_0,Y_0)=(1,0)$, then \begin{equation}\label{12_211a_xpoly_10}x^2 - \frac{4x}{v^2 - 2v - 1} - \frac{v^2 + 2v - 1}{v^2 - 2v - 1}=0.\end{equation}   
 
Similarly, if $S\notin\Q$, then by Lemma \ref{ellcrv_quad_pts} applied to the equation $T^2 = S^3 - 11S + 6$, there is a rational number $w$ and a point $(S_0,T_0)\in\{(-1,\pm 4), (3,0)\}$ such that
\begin{equation}\label{12_211a_Spoly} S^2 + (S_0-w^2)S + S_0^2+w^2S_0-2T_0w-11= 0\;,\; T=T_0+w(S-S_0).\end{equation}

The point $(-1,-4)$ is excluded because it would lead to $x=\frac{w+1}{w-3}\in\Q$. If $(S_0,T_0)=(3,0)$, then \eqref{12_211a_XS} and \eqref{12_211a_Spoly} imply that  \begin{equation}\label{12_211a_xpoly_30}x^2 - \frac{8x}{w^2 - 4w - 1} - \frac{w^2 + 4w - 1}{w^2 - 4w - 1}=0,\end{equation} and if $(S_0,T_0)=(-1,4)$, then \begin{equation}\label{12_211a_xpoly_14}x^2 - \frac{8w + 8}{w^2 - 5}x - 1=0.\end{equation}

We now split the proof into four cases, according to whether $X$ and $S$ are rational or quadratic.

{\bf Case 2a:} $X,S\in\Q$. We simultaneously have relations \eqref{12_211a_xpoly1} and \eqref{12_211a_xpoly2}.  Comparing these equations we obtain the system 
\[
\begin{cases}
& X^2(S^2-10S+5)=(X^2-4X+2)(S^2+2S+1)\\
& (X^2-2)(S^2-10S+5)=(X^2-4X+2)(S^2-2S-19),
\end{cases}
\]

whose rational solutions are $(X,S)= (0, -1)$ and $(X,S) = (1, 3)$. However, both solutions lead to $x=\pm 1$ by applying \eqref{12_211a_xpoly1} and \eqref{12_211a_xpoly2}. We conclude that $X$ and $S$ cannot both be rational.

{\bf Case 2b:} $X\in\Q, S\notin\Q$. We have \eqref{12_211a_xpoly1} and \eqref{12_211a_Spoly}. If $(S_0,T_0)=(3,0)$, then we compare \eqref{12_211a_xpoly1} and \eqref{12_211a_xpoly_30} to arrive at the system 
\[
\begin{cases}
& X^2(w^2-4w-1)=4(X^2-4X+2) \\
& (X^2-2)(w^2-4w-1)=-(X^2-4X+2)(w^2+4w-1),
\end{cases}
\]
whose only rational solution is $(X,w)=(1,1)$. However, when $w=1$, \eqref{12_211a_xpoly_30} becomes $(x+1)^2=0$, a contradiction.

 If $(S_0,T_0)=(-1,4)$, then we compare \eqref{12_211a_xpoly1} and \eqref{12_211a_xpoly_14} to conclude that $X=0$ or 2. Then \eqref{12_211a_xpoly1} becomes $x^2-1=0$ (a contradiction) or $x^2+4x-1=0$.
 
{\bf Case 2c:} $X\notin\Q, S\in\Q$. In this case we have \eqref{12_211a_xpoly2} and \eqref{12_211a_Xpoly}. If $(X_0,Y_0)=(0,1)$, then we compare \eqref{12_211a_xpoly2} and \eqref{12_211a_xpoly_01} to conclude that $S=7$ and $x^2+8x-1=0$.

If $(X_0,Y_0)=(1,0)$, then we compare \eqref{12_211a_xpoly2} and \eqref{12_211a_xpoly_10} to arrive at the system
\[
\begin{cases}
& 2(S^2-10S+5)=(v^2-2v-1)(S+1)^2 \\
& -(v^2+2v-1)(S^2-10S+5)=(v^2-2v-1)(S^2-2S-19),
\end{cases}
\]
whose only rational solution is $(S,v)=(3,1)$. However, if $v=1$, then \eqref{12_211a_xpoly_10} becomes $(x+1)^2=0$, a contradiction.

{\bf Case 2d:} $X,S\notin\Q$. We have \eqref{12_211a_Xpoly} and \eqref{12_211a_Spoly}. 

\begin{itemize}
\item If $(X_0,Y_0)=(1,0)$ and $(S_0,T_0)=(3,0)$, then comparing \eqref{12_211a_xpoly_10} and  \eqref{12_211a_xpoly_30} we find that $v=w=\pm 1$. But then \eqref{12_211a_xpoly_10} implies that $x=\pm 1$, a contradiction.
\item If $(X_0,Y_0)=(1,0)$ and $(S_0,T_0)=(-1,4)$, then we compare \eqref{12_211a_xpoly_10} and  \eqref{12_211a_xpoly_14} to see that $ v=0$ and $x^2+4x-1=0$.
\item If $(X_0,Y_0)=(0,1)$ and $(S_0,T_0)=(3,0)$, then we compare \eqref{12_211a_xpoly_01} and  \eqref{12_211a_xpoly_30} to conclude that $w=0$, and therefore $x^2+8x-1=0$.
\item If $(X_0,Y_0)=(0,1)$ and $(S_0,T_0)=(-1,4)$, then comparing \eqref{12_211a_xpoly_01} and  \eqref{12_211a_xpoly_14} we obtain \[(v+1)(w^2-5)=2(w+1)(v^2-2).\] Let $E\subset\P^2$ be the projective closure of the curve defined by this equation. Then $E$ is a nonsingular plane cubic with at least four rational points, namely the affine point $(-1,-1)$ and three points at infinity. Using Magma we find that $E$ is the elliptic curve with Cremona label 17a4, which has exactly 4 rational points. It follows that $(v,w)=(-1,-1)$ is the only affine point on $E$. But then \eqref{12_211a_xpoly_01} becomes $x^2-1=0$, which is a contradiction.
\end{itemize}

In all cases that have not led to a contradiction we concluded that either $x^2+4x-1=0$ or $x^2+8x-1=0$. If $x^2+4x-1=0$, then \eqref{12_211a_curve_equations} implies that $y=\pm (4x-2)$ and $z=\pm 8x$. If $x^2+8x-1=0$, then $y=\pm 10x$ and $z=\pm(16x-4)$. Thus, we have determined all quadratic points on $C$ having a quadratic $x$-coordinate, and now the theorem follows immediately.
\end{proof}

\begin{cor}\label{12_211a} Let $K$ be a quadratic field and let $c\in K$. Suppose that $G(f_c,K)$ contains a graph of type {\rm 12(2,1,1)a}. Then $c=-13/16$ and $K=\Q(\sqrt{17})$.
\end{cor}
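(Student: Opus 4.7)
The plan is to apply Lemma \ref{12_211a_curve} to produce a point $(x,y,z) \in C(K)$ with $x(x^2-1)(x^2+4x-1)(x^2+2x-1) \neq 0$ such that
\[ c = -\frac{x^4+2x^3+2x^2-2x+1}{(x^2-1)^2}. \]
The entire argument then reduces to enumerating the allowable points $(x,y,z)$ using the complete classification of quadratic points on $C$ furnished by Theorem \ref{12_211a_points}, and computing $c$ in each case.

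First I would rule out rational points: by Theorem \ref{12_211a_points}(1), every point in $C(\Q)$ has $x = \pm 1$, which violates the coprimality condition on $x^2-1$. Hence $(x,y,z)$ must be a quadratic point on $C$. The quadratic points defined over $\Q(\sqrt{5})$ listed in Theorem \ref{12_211a_points}(3) all satisfy $x^2+4x-1=0$ and are therefore excluded. By Theorem \ref{12_211a_points}(2), the only remaining possibility is $K = \Q(\sqrt{17})$, and by part (4) the candidates for $x$ are $x = -3$, $x = 1/3$, or $x$ a root of $x^2+8x-1$.

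It then remains to evaluate $c$ in each of these three cases. For $x = -3$ a direct substitution yields $c = -13/16$, and the same holds for $x = 1/3$; the coincidence reflects the fact that the rational function defining $c$ is invariant under the involution $x \mapsto -1/x$. For the final case I would use $x^2 = 1-8x$ to reduce $x^3 = 65x-8$ and $x^4 = 65-528x$, which give numerator $52(1-8x) = 52x^2$ and denominator $(x^2-1)^2 = 64x^2$, so once again $c = -13/16$. A brief check confirms that the coprimality conditions $x^2+4x-1 \neq 0$ and $x^2+2x-1 \neq 0$ hold in all three cases (using $x^2+4x-1 = -4x$ and $x^2+2x-1 = -6x$ in the last case).

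There is no substantive obstacle here: the hard work has already been carried out in Theorem \ref{12_211a_points}, so this corollary is essentially a finite enumeration. The only mild surprise is that three apparently unrelated sources of quadratic points on $C$ all collapse to the single parameter value $c = -13/16$; this is explained a posteriori by the $x \mapsto -1/x$ symmetry and by the fact that both $x = -3$ and $x^2+8x-1=0$ correspond to the same pair $(K,c)$ on the base of the parameterization.
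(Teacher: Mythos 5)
Your proposal is correct and follows essentially the same route as the paper: invoke Lemma \ref{12_211a_curve} to get a point of $C(K)$ in the allowed locus with $c$ given by the parameterizing formula, then use Theorem \ref{12_211a_points} to force $K=\Q(\sqrt{17})$ and $x\in\{-3,\,1/3\}$ or $x^2+8x-1=0$, and evaluate $c=-13/16$ in each case. Your explicit reduction via $x^2=1-8x$ (giving numerator $52x^2$ and denominator $64x^2$) simply fills in the computation the paper leaves to the reader.
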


\begin{proof} By Lemma \ref{12_211a_curve}, there is a point $(x,y,z)\in C(K)$ with $(x^2-1)(x^2+4x-1)\ne 0$ such that 
\begin{equation}\label{12_211a_cvalue}c=-\frac{x^4+2x^3+2x^2-2x+1}{(x^2-1)^2}.
\end{equation} It follows from Theorem \ref{12_211a_points} that $K=\Q(\sqrt{17})$ and that $x$ is either $-3,1/3,$ or a quadratic number satisfying $x^2+8x-1=0$. In all three cases, \eqref{12_211a_cvalue} implies that $c=-13/16$. 
\end{proof}


\subsection{Graph 12(2,1,1)b}\label{12_211b_section}
Our search described in \S\ref{quad_prep_comp} produced the pairs \[(K,c)=\left(\Q(\sqrt{-7}), -\frac{5}{16}\right)\;\; \text{and}\;\; \left(\Q(\sqrt{33}),-\frac{45}{16}\right)\] for which the graph $G(f_c,K)$ is of type 12(2,1,1)b. We will show here that, in addition to these known pairs, there are at most four other pairs $(K,c)$ giving rise to this graph structure.

\begin{lem}\label{12_211b_curve} Let $C/\Q$ be the affine curve of genus 5 defined by the equations
\begin{equation}\label{12_211b_curve_equations}
\begin{cases}
&y^2 = -3x^4+14x^2+5 \\
&z^2 = 2(x^3 + x^2 - x + 1).
\end{cases}
\end{equation}
Consider the rational map $\phi: C \dashrightarrow \mathbb{A}^3 = \Spec \Q[p,q,c]$ given by
\begin{equation*} 
p = \frac{y+1-x^2}{2(x^2-1)} \;,\; q = \frac{z}{x^2-1} \;,\; c = \frac{-2(x^2+1)}{(x^2-1)^2}.
\end{equation*}
For every number field $K$, the map $\phi$ induces a bijection from the set \[\{(x,y,z)\in C(K):y(x^2-1)\ne 0\}\] to the set of all tuples $(p,q,c)\in K^3$ such that $p$ is a point of period 2 and $q$ is a point of type $1_3$ for the map $f_c$.
\end{lem}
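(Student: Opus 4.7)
The plan is to follow the template established by the earlier lemmas in this section, noting that the curve $C$ is essentially the fiber product (over the $(x,c)$--line) of the period-2 condition coming from Proposition \ref{cycles_prop}(2) with the type-$1_3$ condition coming from Lemma \ref{8_11b_curve}. More precisely, the equation $y^2=-3x^4+14x^2+5$ arises from equating $c=-2(x^2+1)/(x^2-1)^2$ with $c=-3/4-\sigma^2$ where $\sigma=y/(2(x^2-1))$, while the equation $z^2=2(x^3+x^2-x+1)$ is exactly the equation for the parameterizing curve of graph 8(1,1)b.

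For well-definedness, I would fix $(x,y,z)\in C(K)$ with $y(x^2-1)\ne 0$ and set $\sigma:=y/(2(x^2-1))$, so that $p=\sigma-1/2$ and $\sigma\ne 0$. A direct calculation using $y^2=-3x^4+14x^2+5$ shows that $c=-3/4-\sigma^2$, and Proposition \ref{cycles_prop}(2) then yields that $p$ is a point of period $2$ for $f_c$. Simultaneously, since the pair $(x,z)$ satisfies $z^2=2(x^3+x^2-x+1)$ and $x^2\ne 1$, Lemma \ref{8_11b_curve} guarantees that $q=z/(x^2-1)$ is a point of type $1_3$ for $f_c$ with this same value of $c$.

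For surjectivity, given $(p,q,c)\in K^3$ with $p$ of period $2$ and $q$ of type $1_3$ for $f_c$, Proposition \ref{cycles_prop}(2) furnishes a nonzero $\sigma\in K$ with $p=\sigma-1/2$ and $c=-3/4-\sigma^2$, and Lemma \ref{8_11b_curve} furnishes $(x,z)\in K^2$ with $x^2\ne 1$, $z^2=2(x^3+x^2-x+1)$, $q=z/(x^2-1)$, and $c=-2(x^2+1)/(x^2-1)^2$. Equating the two expressions for $c$ and setting $y:=2\sigma(x^2-1)$ yields $y^2=-3x^4+14x^2+5$ and $y\ne 0$, so $(x,y,z)\in C(K)$ and $\phi(x,y,z)=(p,q,c)$. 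For injectivity, the inverse formulas $x=f_c(q)/f_c^2(q)$ (from the proof of Lemma \ref{8_11b_curve}), $z=q(x^2-1)$, and $y=(2p+1)(x^2-1)$ recover $(x,y,z)$ uniquely from $(p,q,c)$.

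There is no substantive obstacle here; the argument is a mechanical combination of two previously established parameterizations, and all that is required is careful bookkeeping to ensure the two conditions on $c$ match up to produce the stated quartic equation in $x$. The only small subtlety is checking that the condition $y\ne 0$ in the domain corresponds exactly to the nondegeneracy condition $\sigma\ne 0$ needed to invoke Proposition \ref{cycles_prop}(2); this is immediate from $y=2\sigma(x^2-1)$ and $x^2\ne 1$.
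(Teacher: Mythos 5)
Your proposal is correct and takes essentially the same route as the paper: the paper likewise obtains surjectivity by combining Proposition \ref{cycles_prop}(2) with Poonen's parameterization of type-$1_3$ points (the content of Lemma \ref{8_11b_curve}), equates the two expressions for $c$ to get $y^2=-3x^4+14x^2+5$, and uses the same inverse formulas $x=f_c(q)/f_c^2(q)$, $y=(2p+1)(x^2-1)$, $z=q(x^2-1)$ for injectivity, while for well-definedness it verifies directly that $f_c^2(p)=p$ and $p-f_c(p)=y/(x^2-1)$. The only small caution is that Proposition \ref{cycles_prop}(2) is stated as the forward implication only, so in your well-definedness step the claim that $p=\sigma-1/2$ has period $2$ when $c=-3/4-\sigma^2$ and $\sigma\ne 0$ should be backed by the one-line computation $f_c(p)=-\sigma-1/2$, $f_c^2(p)=p$ (which is exactly what the paper checks).
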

\begin{figure}[h!]
\begin{center}\includegraphics[scale=0.5]{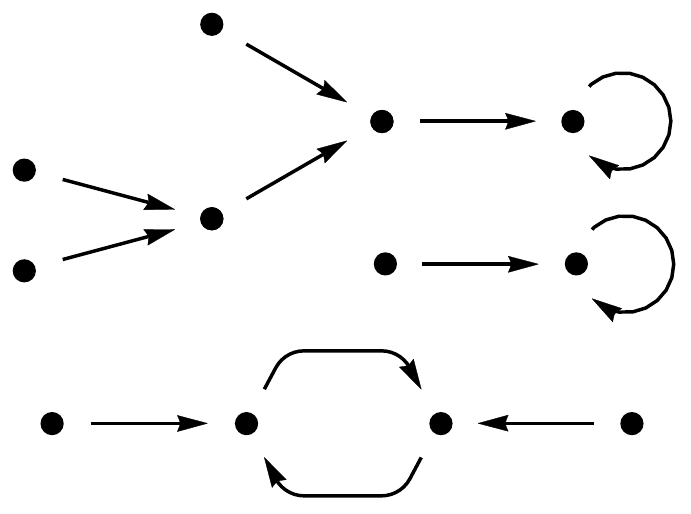}\end{center}
\caption{Graph type 12(2,1,1)b}
\end{figure}
\begin{proof} Fix a number field $K$ and suppose that $(x,y,z)\in C(K)$ is a point with $x^2\ne 1$. Defining $p,q,c\in K$ as in the lemma, it is a simple calculation to verify that $q$ is a point of type $1_3$ for $f_c$ and that
\begin{equation}\label{12_211b_relations}
f_c^2(p)=p\;\;,\;\;p-f_c(p)=\frac{y}{x^2-1}.
\end{equation}
 It follows that if $y\ne 0$, then $p$ has period 2 under $f_c$. Hence, $\phi$ gives a well-defined map.

To see that $\phi$ is surjective, suppose that $p,q,c\in K$ are such that $p$ is a point of period 2 and $q$ is a point of type $1_3$ for $f_c$. By Proposition \ref{cycles_prop}, there is an element $\sigma\in K$ such that
\begin{equation}\label{12_211b_cpvalues}
p = \sigma - 1/2 \;\;\; \mathrm{and} \;\;\; c = -3/4 - \sigma^2.
\end{equation} 
Since $q$ is of type $1_3$, an argument given in \cite[22]{poonen_prep} shows that there is an element $x\in K\setminus\{\pm 1\}$ such that
\begin{equation}\label{12_211b_cqvalues}
q^2 = \frac{2(x^3+x^2-x+1)}{(x^2-1)^2} \;\;\; \mathrm{and} \;\;\; c = \frac{-2(x^2+1)}{(x^2-1)^2}.
\end{equation}
Letting $z=q(x^2-1)$ we then have $z^2 = 2(x^3 + x^2 - x + 1)$. Equating the expressions for $c$ given in \eqref{12_211b_cpvalues} and \eqref{12_211b_cqvalues} we obtain $4\sigma^2(x^2-1)^2=-3x^4+14x^2+5$; hence, letting $y=2\sigma(x^2-1)$ we arrive at the equation $y^2=-3x^4+14x^2+5$. Thus, we have a point $(x,y,z)\in C(K)$ satisfying $x^2\ne 1$ and $\phi(x,y,z)=(p,q,c)$. Moreover, the relations \eqref{12_211b_relations} imply that $y\ne 0$.  To see that $\phi$ is injective, one can verify that if $\phi(x,y,z)=(p,q,c)$, then
\[x=\frac{f_c(q)}{f_c^2(q)}\;\;,\;\;y=(2p+1)(x^2-1)\;\;,\;\;z=q(x^2-1).\qedhere\]
\end{proof}

\begin{lem}\label{12_211b_aux_curve_pts} Let $X/\Q$ be the hyperelliptic curve of genus 3 defined by the equation
\[s^2=-6x^7 - 6x^6 + 34x^5 + 22x^4 - 18x^3 + 38x^2 - 10x + 10.\] 
Then $X(\Q)$ contains the points $\infty, (\pm1, \pm 8), (-3,\pm 56), (1/3,\pm 88/27)$ and at most 8 other points.
\end{lem}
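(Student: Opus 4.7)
The strategy is to mirror the proof of Lemma~\ref{12_2_aux_curve_pts}: first produce the list of rational points by a small-height search, then invoke a Chabauty--Coleman-type bound to control the total number of rational points on $X$. A direct substitution shows that the nine points $\infty$, $(\pm 1, \pm 8)$, $(-3, \pm 56)$, $(1/3, \pm 88/27)$ lie on $X$, and a Magma search of height up to $10^5$ (using the \texttt{Points} function) should return exactly these.

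Next, I would apply Magma's \texttt{RankBounds} function, which implements Stoll's 2-descent, to bound the rank $r$ of $\Jac(X)(\Q)$. The aim is to show that $r<g=3$, so that Chabauty--Coleman machinery applies; in practice one expects $r \le 2$, in parallel with the curve treated in Lemma~\ref{12_2_aux_curve_pts}. Granted a rank bound $r \le 2$, I would fix an integral model for $X$ and choose a prime $p$ of good reduction, then compute $\# X(\F_p)$ by a direct enumeration. Finally I would apply whichever of the three bounds available --- Coleman (Theorem~\ref{coleman_bound}), Lorenzini--Tucker (Theorem~\ref{lorenzini_tucker_bound}), or Stoll (Theorem~\ref{stoll_bound}) --- is tightest for the chosen $p$.

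To reach the claimed bound of at most $9+8=17$ rational points, one needs, for example, either $\#X(\F_p) \le 13$ at a prime $p > \max(2g, 2r+2)$ of good reduction (invoking Coleman or Stoll, each of which contributes an additive term of $2g-2 = 4$ or $2r=4$), or $\#X(\F_3) \le 9$ combined with the Lorenzini--Tucker bound at $p=3$, $d=2$, whose additive correction $\frac{p-1}{p-d}(2g-2)$ equals $8$. The main obstacle is whether 2-descent actually pins the rank down to $r\le 2$: if the 2-Selmer rank overshoots the actual rank, one has to produce enough independent points of $\Jac(X)(\Q)$ to certify a matching lower bound, or perform a higher descent. A secondary concern is the leading coefficient $-6$, which means one should verify good reduction at the chosen prime on an appropriate integral model rather than the naive one; but this is a routine check once such a model is fixed.
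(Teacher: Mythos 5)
Your overall strategy is the same as the paper's: a height search with \texttt{Points}, a rank bound from \texttt{RankBounds}, and then one of the Chabauty--Coleman-type bounds at a prime of good reduction. The actual data, however, are less favorable than your numerology assumes: the rank of $\Jac(X)(\Q)$ is $1$ (not $2$), the paper works at $p=13$, and there $\#X(\F_{13})=16$, so Stoll's bound gives only $\#X(\Q)\le 16+2\cdot 1=18$ --- one more than the target of $17$. Your proposed ways of reaching $17$ directly are therefore not verified to be available: you would need a good prime with $\#X(\F_p)\le 15$ (given $r=1$), which is not what happens at the prime used, and your fallback of Lorenzini--Tucker at $p=3$ is particularly doubtful because $3$ divides the leading coefficient $-6$ (indeed every coefficient is even as well), so $3$ is not a prime of good reduction for any obvious model; you flag the leading coefficient but do not resolve this.

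The missing ingredient is a parity argument, which is how the paper closes the gap from $18$ to $17$: the model has odd degree $7$, so there is exactly one (rational) point at infinity, fixed by the hyperelliptic involution, and the septic has no rational root, so $X$ has no affine rational point with $s=0$; hence the affine rational points come in pairs $(x,\pm s)$ and $\#X(\Q)$ is odd, forcing $\#X(\Q)\le 17$. With the nine listed points this yields at most $8$ others. Without this observation your argument, as written, does not reach the stated bound.
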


\begin{proof} Using the Magma function \texttt{Points} we search for rational points on $X$ of height at most $10^5$ and obtain the points listed above. Using the \texttt{RankBounds} function we find that the group $\Jac(X)(\Q)$ has rank 1; we may therefore apply the method of Chabauty and Coleman to bound the number of rational points on $X$. The prime 13 is of good reduction for $X$, and we compute $\#X(\F_{13})=16.$ Applying Stoll's bound (Theorem \ref{stoll_bound}) we obtain $\#X(\Q) \le 18$. Since $X$ has no rational point with $s = 0$, the number of rational points must be odd, and therefore $\#X(\Q) \le 17$. Since we have already listed 9 rational points, we conclude that there are at most 8 additional rational points on $X$.
\end{proof}

\begin{thm}\label{12_211b_points} With $C$ as in Lemma \ref{12_211b_curve} we have the following:
\begin{enumerate}
\item $C(\Q)=\{(\pm 1,\pm 4,\pm 2)\}$.
\item If $(x,y,z)$ is a quadratic point on $C$, then $x\in\Q\setminus\{\pm 1\}$. Moreover, there exists $s\in\Q$ such that $(x,s)\in X(\Q)$, where $X$ is the curve defined in Lemma \ref{12_211b_aux_curve_pts}.
\end{enumerate}
\end{thm}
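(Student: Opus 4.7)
The plan is to follow the template established by Theorems \ref{10_11a_points}, \ref{12_2_points}, and \ref{12_211a_points}. For part (1), I would determine the rational points on each of the two defining curves separately. The cubic equation $z^2=2(x^3+x^2-x+1)$ cuts out the elliptic curve 11a3, already shown in \S\ref{10_11a_section} to have affine $\Q$-rational points $(\pm1,\pm2)$. For the quartic $y^2=-3x^4+14x^2+5$, the obvious rational point $(1,4)$ lets me compute a Weierstrass model $E_1$ over $\Q$; I would then identify its Cremona label to bound $\#E_1(\Q)$ and verify that the four affine $\Q$-rational points are exactly $(\pm1,\pm4)$. Intersecting the $x$-coordinates gives $C(\Q)=\{(\pm1,\pm4,\pm2)\}$.

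For part (2), given a quadratic point $(x,y,z)\in C(\qbar)$ with field of definition $K$, I split into cases based on whether $x\in\Q$. If $x\in\Q$, then neither $y$ nor $z$ can lie in $\Q$ (else $(x,y)$ or $(x,z)$ would be a rational point on $C_1$ or $C_2$, forcing $x=\pm 1$ and $(x,y,z)\in C(\Q)$), and hence $K=\Q(y)=\Q(z)$. Writing $K=\Q(\sqrt d)$, the relations $y^2,z^2\in\Q$ with $y,z\notin\Q$ give $y=\alpha\sqrt d$ and $z=\beta\sqrt d$ for some $\alpha,\beta\in\Q^\ast$. Setting $s:=yz=\alpha\beta d\in\Q$ yields
\[ s^2 = y^2 z^2 = 2(-3x^4+14x^2+5)(x^3+x^2-x+1) = -6x^7 - 6x^6 + 34x^5 + 22x^4 - 18x^3 + 38x^2 - 10x + 10, \]
so $(x,s)\in X(\Q)$, and $x\neq\pm 1$ since otherwise $y,z\in\Q$.

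The remaining task is to rule out $x$ quadratic over $\Q$. Mimicking the strategy of Theorem \ref{12_2_points} Case~2, I would apply Lemma \ref{ellcrv_quad_pts} twice. First, applied directly to $z^2=2(x^3+x^2-x+1)$ with each choice of base point $(x_0,z_0)\in\{(\pm1,\pm2)\}$, it yields a minimal polynomial for $x$ parameterized by a single rational variable $w$. Second, I would transform $y^2=-3x^4+14x^2+5$ birationally to the Weierstrass model $E_1$ over $\Q$ (using $(1,4)$) and apply Lemma \ref{ellcrv_quad_pts} to $E_1$; after translating back through the change of variables, this yields a second minimal polynomial for $x$ in terms of a rational parameter and a base point in $E_1(\Q)$. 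Comparing coefficients of the two minimal polynomials produces a zero-dimensional scheme in the auxiliary rational parameters, whose rational points I would enumerate using Magma's \texttt{RationalPoints}, then verify in each case that the resulting polynomial is either reducible (so $x\in\Q$, contradicting the case assumption) or inconsistent with $(x,y,z)$ lying on $C$.

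The principal obstacle is this final case analysis. Beyond computing the Weierstrass model of $E_1$ explicitly, one must also treat the possibility that the new variable on $E_1$ is itself quadratic rather than rational, which may force a further application of Lemma \ref{ellcrv_quad_pts} and a secondary elliptic-curve quotient, as happened in the proof of Theorem \ref{12_211a_points}. Coordinating these subcases while keeping the resulting polynomial systems tractable—and producing a clean dichotomy $x\in\Q\setminus\{\pm1\}$ versus contradiction—is the main technical bottleneck.
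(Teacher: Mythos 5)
Your proposal is correct and follows essentially the same route as the paper: the paper likewise identifies the quartic curve as (a model of) 15a8 and the cubic as 11a3 for part (1), uses the same squarefree-part/product-is-a-square argument with $s=yz$ landing on $X$ when $x\in\Q$, and rules out quadratic $x$ by applying Lemma \ref{ellcrv_quad_pts} to $z^2=2(x^3+x^2-x+1)$ while passing the quartic to a cubic model $Y^2=4X^3+5X^2+2X+1$, splitting on whether the new coordinate is rational or quadratic, and eliminating each resulting zero-dimensional system by machine computation. The subcase you flag as the technical bottleneck (the new variable itself being quadratic, requiring a second use of Lemma \ref{ellcrv_quad_pts}) is exactly how the paper's Case 2 proceeds, with every branch forcing $x=\pm 1$ and hence a contradiction.
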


\begin{proof} Let $C_1$ be the hyperelliptic curve of genus 1 defined by the equation $y^2 = -3x^4+14x^2+5$. Making the change of variables $(x,y)\mapsto (1/x,y/x^2)$ we see from \cite[21]{poonen_prep} that $C_1$ is isomorphic to the elliptic curve with Cremona label 15a8, which has exactly four rational points. Therefore, $C_1$ has four rational points, and these are easily found to be $(\pm 1,\pm 4)$. The curve $z^2 = 2(x^3 + x^2 - x + 1)$ is birational to the elliptic curve 11a3, which is the modular curve $X_1(11)$. This curve has five rational points, the nontrivial ones being $(\pm 1, \pm 2)$. Knowing all rational solutions to both of the equations defining $C$, we deduce that $C(\Q)=\{(\pm 1,\pm 4,\pm 2)\}$.

To prove part (2) of the theorem, suppose that $(x,y,z)\in C(\qbar)$ satisfies $[\Q(x,y,z):\Q]=2$, and let $K=\Q(x,y,z)$.

{\bf Case 1:} $x\in\Q$. We cannot have $x=\pm 1$, since this would imply that $y=\pm 4$ and $z=\pm 2$, contradicting the assumption that $(x,y,z)$ is a quadratic point on $C$. It follows that $y\notin \Q$, since having $x,y\in\Q$ would imply that $x=\pm 1$. By a similar argument, $z\notin\Q$. Therefore, $K=\Q(y)=\Q(z)$, so the numbers $ -3x^4+14x^2+5$ and $2(x^3 + x^2 - x + 1)$ must have the same squarefree part, and so their product is a square; hence, there is a rational number $s$ such that \[s^2=-6x^7 - 6x^6 + 34x^5 + 22x^4 - 18x^3 + 38x^2 - 10x + 10.\]
Thus, we have $(x,s)\in X(\Q)$.

{\bf Case 2:} $x$ is quadratic. By Lemma \ref{ellcrv_quad_pts} applied to the equation $z^2 = 2(x^3 + x^2 - x + 1)$, there is a rational number $w$ and a point $(x_0,z_0)\in\{(\pm 1,\pm 2)\}$ such that 
\begin{equation}\label{12_211b_xpoly1} x^2 + \frac{2x_0-w^2+2}{2}x + \frac{2x_0^2+w^2x_0+2x_0-2z_0w-2}{2}= 0.\end{equation}
With $x,y$ as in \eqref{12_211b_curve_equations} we make the change of variables 

\begin{equation}\label{12_11b_chv}
X = \frac{-x^2 + 4x + y + 1}{2(x - 1)^2}\;,\;Y = \frac{-3x^3 + 7x^2 + xy + 7x + 3y + 5}{2(x - 1)^3}
\end{equation} 

satisfying \begin{equation}\label{12_211b_X} x=\frac{Y + X^2 + 2X}{X^2 + X + 1} \end{equation}

to obtain the equations
\[
\begin{cases}
& Y^2 = 4X^3 + 5X^2 + 2X + 1\\
 & z^2 = 2(x^3 + x^2 - x + 1).
 \end{cases}
\]

Suppose that $X\in\Q$. Substituting $y=2X(x-1)^2+x^2-4x-1$ into the equation $y^2 = -3x^4+14x^2+5$ we obtain the following expression for the minimal polynomial of $x$:
\begin{equation}\label{12_211b_xpoly2}x^2 - \frac{2X^2 + 4X}{X^2 + X + 1}x + \frac{X^2 - X - 1}{X^2 + X + 1} = 0.\end{equation} 

Comparing \eqref{12_211b_xpoly1} and \eqref{12_211b_xpoly2} we arrive at the system
\[
\begin{cases}
& (2x_0-w^2+2)(X^2+X+1)=-4X(X+2)\\
 & (2x_0^2+w^2x_0+2x_0-2z_0w-2)(X^2+X+1) = 2(X^2-X-1).
 \end{cases}
\] 

For every choice of point $(x_0,z_0)\in\{(\pm 1,\pm 2)\}$ the above system defines a 0-dimensional scheme over $\Q$, whose rational points we compute using the Magma function \texttt{RationalPoints}. In every case we find that $X=-1$ or $X=0$. But then \eqref{12_211b_xpoly2} implies that $x=\pm 1$, which is a contradiction. Therefore, $X$ cannot be rational, so by Lemma \ref{ellcrv_quad_pts} applied to the equation $Y^2 = 4X^3 + 5X^2 + 2X + 1$, there is a rational number $v$ and a point $(X_0,Y_0)\in\{(0,\pm 1), (-1,0)\}$ such that 
\begin{equation}\label{12_211b_Xpoly} X^2 + \frac{4X_0-v^2+5}{4}X + \frac{4X_0^2+v^2X_0+5X_0-2Y_0v+2}{4}=0 \;,\; Y=Y_0+v(X-X_0).\end{equation}

The point $(X_0,Y_0)=(0,-1)$ is excluded because \eqref{12_211b_X} would imply that $x=\frac{v+3}{v-1}\in\Q$. If $(X_0,Y_0)=(0,1)$, then \eqref{12_211b_X} and \eqref{12_211b_Xpoly} imply that 
\begin{equation}\label{12_211b_xpoly_01} x^2-\frac{v^2+4v-1}{v^2-4v+7}=0,\end{equation} and if $(X_0,Y_0)=(-1,0)$, then 
\begin{equation}\label{12_211b_xpoly_10} x^2 - \frac{8v}{v^2+3}x - \frac{v^2-5}{v^2+3} = 0.\end{equation}

Suppose that $(X_0,Y_0)=(0,1)$. Comparing \eqref{12_211b_xpoly1} and \eqref{12_211b_xpoly_01} we obtain the system
\[
\begin{cases}
& 2x_0-w^2+2=0\\
 & (2x_0^2+w^2x_0+2x_0-2z_0w-2)(v^2-4v+7) = -2(v^2+4v-1).
 \end{cases}
\]

For each choice of point $(x_0,z_0)\in\{(\pm 1,\pm 2)\}$ we solve the above system for $v$ and $w$, and find that $v=1$. But then \eqref{12_211b_xpoly_01} implies that $x=\pm 1$, which is a contradiction.

Suppose now that $(X_0,Y_0)=(-1,0)$. Comparing \eqref{12_211b_xpoly1} and \eqref{12_211b_xpoly_10} we obtain the system
\[
\begin{cases}
& (2x_0-w^2+2)(v^2+3)=-16v\\
 & (2x_0^2+w^2x_0+2x_0-2z_0w-2)(v^2+3) = -2(v^2-5).
 \end{cases}
\]

For each choice of point $(x_0,z_0)\in\{(\pm 1,\pm 2)\}$ we solve the above system for $v$ and $w$, and find that $v=\pm 1$. But then \eqref{12_211b_xpoly_10} implies that $x=\pm 1$, a contradiction.

Since the assumption that $x\notin\Q$ has led in every case to a contradiction, we conclude that $x$ must be rational. The analysis done in Case 1 then proves the theorem.
\end{proof}

\begin{cor} In addition to the known pairs $(\Q(\sqrt{-7}), -5/16)$ and $(\Q(\sqrt{33}),-45/16)$ there are at most four pairs $(K,c)$, with $K$ a quadratic number field and $c\in K$, for which $G(f_c,K)$ contains a graph of type {\rm 12(2,1,1)b}. Moreover, for every such pair we must have $c\in\Q$.
\end{cor}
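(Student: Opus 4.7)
The plan is to follow the same template used for graph 12(2) in the preceding section, combining the parameterization of Lemma \ref{12_211b_curve} with the arithmetic input provided by Theorem \ref{12_211b_points} and Lemma \ref{12_211b_aux_curve_pts}. Suppose $(K,c)$ is a pair for which $G(f_c,K)$ contains a graph of type 12(2,1,1)b. Then Lemma \ref{12_211b_curve} furnishes a point $(x,y,z)\in C(K)$ with $y(x^2-1)\ne 0$ such that
\[ c = \frac{-2(x^2+1)}{(x^2-1)^2}. \]
Since every point in $C(\Q)=\{(\pm 1,\pm 4,\pm 2)\}$ has $x=\pm 1$, the condition $x^2\ne 1$ forces $(x,y,z)$ to be a quadratic point on $C$.

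Next I would invoke Theorem \ref{12_211b_points}(2), which tells us that any such quadratic point satisfies $x\in\Q\setminus\{\pm 1\}$ and that there exists $s\in\Q$ with $(x,s)\in X(\Q)$, where $X$ is the hyperelliptic curve of Lemma \ref{12_211b_aux_curve_pts}. Two consequences drop out immediately. First, since $x$ is rational, the formula above shows $c\in\Q$, which proves the ``moreover'' clause. Second, the pair $(K,c)$ is determined by $x$ alone: $c$ is given by the displayed formula, and $K=\Q(y)=\Q(\sqrt{-3x^4+14x^2+5})$ is read off from the defining equation of $C$.

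It then remains to count the admissible values of $x$. The rational points on $X$ listed in Lemma \ref{12_211b_aux_curve_pts} with $x\ne\pm 1$ are $(-3,\pm 56)$ and $(1/3,\pm 88/27)$. A direct substitution shows that $x=-3$ yields $c=-5/16$ and $K=\Q(\sqrt{-7})$, while $x=1/3$ yields $c=-45/16$ and $K=\Q(\sqrt{33})$; these are exactly the two known pairs. Any further pair must come from one of the at most $8$ additional rational points on $X$ allowed by Lemma \ref{12_211b_aux_curve_pts}. Since these points come in pairs $(x,\pm s)$ under the hyperelliptic involution and share the same $x$-coordinate (hence the same pair $(K,c)$), they contribute at most $4$ additional values of $x$, and therefore at most $4$ additional pairs $(K,c)$.

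I do not anticipate any real obstacle here, because all the substantive work has already been carried out: the parameterization of the graph by $C$, the classification of quadratic points on $C$, and the Chabauty–Coleman bound on $\#X(\Q)$. The proof amounts to bookkeeping: verify the two numerical identifications $(x,c,K)=(-3,-5/16,\Q(\sqrt{-7}))$ and $(1/3,-45/16,\Q(\sqrt{33}))$, and then use the pairing of rational points on $X$ under its hyperelliptic involution to turn the bound $\#X(\Q)\le 17$ into a bound on the number of possible $x$-values, and hence on the number of pairs $(K,c)$.
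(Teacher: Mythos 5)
Your proposal is correct and follows essentially the same route as the paper's own proof: parameterize via Lemma \ref{12_211b_curve}, rule out rational points on $C$, apply Theorem \ref{12_211b_points}(2) to force $x\in\Q$ (hence $c\in\Q$) and produce a point $(x,s)\in X(\Q)$, identify the known pairs from $x=-3$ and $x=1/3$, and convert the bound of at most $8$ extra points on $X$ into at most $4$ extra $x$-values via the sign pairing $(x,\pm s)$. The only difference is cosmetic: you make the involution-pairing step explicit, whereas the paper states the resulting bound of four $x$-values directly.
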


\begin{proof} Suppose that $(K,c)$ is such a pair. Since $f_c$ has a point of period 2 and a point of type $1_3$ in $K$, then by Lemma \ref{12_211b_curve} there is a point $(x,y,z)\in C(K)$ with $y(x^2-1)\ne 0$ such that 
\begin{equation}\label{12_211b_c_param}
c = \frac{-2(x^2+1)}{(x^2-1)^2}.
\end{equation} 
We cannot have $(x,y,z)\in C(\Q)$ since this would imply, by Theorem \ref{12_211b_points} part (1), that $x\in\{\pm 1\}$. Hence, $(x,y,z)$ is a quadratic point on $C$. By Theorem \ref{12_211b_points} part (2) we must therefore have $x\in\Q$, and thus $c\in\Q$. Moreover, there is a rational number $s$ such that $(x,s)\in X(\Q)$. It follows from Lemma \ref{12_211b_aux_curve_pts} that either $x\in\{-3,1/3\}$ or $x$ belongs to a list of at most 4 other rational numbers.
Setting $x=-3$ we obtain by \eqref{12_211b_c_param} and \eqref{12_211b_curve_equations} that $c=-5/16$ and $y^2=-112$, so $K=\Q(y)=\Q(\sqrt{-7})$. If $x=1/3$, then $c=-45/16$ and $y^2=176/27$, so $K=\Q(y)=\Q(\sqrt{33})$. Thus, we recover the two pairs $(K,c)$ listed in the statement of the corollary. 
If $x\notin\{-3,1/3\}$, then there are at most four options for $x$; each value of $x$ determines the number $c$ by \eqref{12_211b_c_param} and the field $K$ by  \eqref{12_211b_curve_equations}. This gives at most four options for the pair $(K,c)$.  
\end{proof}


\subsection{Graph 12(4)}\label{12_4_section}
Our search described in \S\ref{quad_prep_comp} produced a unique pair \[ (K,c) = \left(\Q(\sqrt{105}), -95/48\right) \] consisting of a quadratic field $K$ and an element $c\in K$ for which the graph $G(f_c, K)$ is of type 12(4). We will show here that, in addition to this known example, there are at most five other such pairs $(K,c)$.

\begin{lem}\label{12_4_curve} Let $C/\Q$ be the affine curve of genus 9 defined by the equations
\begin{equation}\label{12_4_curve_equations}
\begin{cases}
& y^2 = -x(x^2+1)(x^2-2x-1)\\
& z^2 = x(-x^6 + x^5 + 7x^4 + 10x^3 - 7x^2 + 5x + 1) - 2x(x - 1)(x + 1)^2y.
\end{cases}
\end{equation}

Consider the rational map $\varphi: C \dashrightarrow \mathbb{A}^2 = \Spec \Q[p,c]$ given by 
\[p=\frac{z}{2x(x^2-1)}\;\;,\;\;c=\frac{(x^2 - 4x - 1)(x^4 + x^3 + 2x^2 - x + 1)}{4x(x^2 - 1)^2}.\]
For every number field $K$, the map $\phi$ induces a bijection from the set \[\left\{(x,y,z)\in C(K): y(x^2-1)\left(y(x+1)+x(x-1)^2\right)\ne 0\right\}\] to the set of all pairs $(p,c)\in K^2$ such that $p$ is a point of type $4_2$ for the map $f_c$.
\end{lem}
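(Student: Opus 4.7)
The plan is to bootstrap Lemma \ref{8_4_curve}, which parameterizes period-4 points of $f_c$ via the curve $y^2=F_{16}(x)$, by pulling back one more step under $f_c$ to reach points of type $4_2$. Given $(x,y)$ satisfying $y^2=F_{16}(x)$ with $y(x^2-1)\ne 0$, Lemma \ref{8_4_curve} produces a period-4 point $\pi:=\tfrac{x-1}{2(x+1)}+\tfrac{y}{2x(x-1)}$ of $f_c$, with $c$ given by the displayed formula. The central identity is that the second defining equation of $C$ is equivalent, after clearing denominators, to $p^2=-\pi-c$, where $p:=z/[2x(x^2-1)]$. This identity forces $f_c(p)=-\pi$, so $p$ is a pull-back of the ``other'' (non-cycle) preimage of $f_c(\pi)$ under $f_c$, which places $p$ in the type $4_2$ stratum.

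In the forward direction, starting from $(x,y,z)\in C(K)$ with the stated nonvanishing conditions, the identity $p^2=-\pi-c$ gives $f_c(p)=-\pi$ and hence $f_c^2(p)=\pi^2+c=f_c(\pi)$, which has period $4$. Writing $\pi=[x(x-1)^2+(x+1)y]/[2x(x^2-1)]$ as a single fraction shows that $y(x+1)+x(x-1)^2\ne 0$ is precisely the condition $\pi\ne 0$, and therefore $f_c(p)=-\pi\ne \pi$. Since the two $f_c$-preimages of $f_c^2(p)$ are $\pm\pi$, with $\pi$ being the cycle predecessor of $f_c^2(p)$, the point $f_c(p)=-\pi$ lies outside the 4-cycle; thus $f_c(p)$ is of strict type $4_1$ and $p$ is of type $4_2$.

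For surjectivity, let $p$ be a type $4_2$ point of $f_c$ and set $\pi:=-f_c(p)$. Then $f_c(\pi)=f_c^2(p)$ has period $4$, and $f_c^5(p)$, as a preimage of $f_c^2(p)$, equals $\pm f_c(p)$; but $f_c^5(p)=f_c(p)$ would place $f_c(p)$ in the cycle, contradicting its type $4_1$ status. Hence $f_c^5(p)=-f_c(p)=\pi$, so $f_c^4(\pi)=\pi$. Lemma \ref{8_4_curve} then yields a unique pair $(x,y)$ with $y(x^2-1)\ne 0$ such that $\pi$ and $c$ arise from $(x,y)$ by the formulas there, and setting $z:=2x(x^2-1)p$ recovers the second equation of $C$ from $p^2=-\pi-c$. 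Moreover $\pi\ne 0$, for otherwise $f_c(p)=0$ would be the unique $f_c$-preimage of $f_c^2(p)=c$, hence in the cycle, once again contradicting $p$ being of type $4_2$; this rules out $y(x+1)+x(x-1)^2=0$. Injectivity is immediate: $p$ determines $c$ and $\pi=-(p^2+c)$, whence Lemma \ref{8_4_curve} recovers $(x,y)$ uniquely and $z=2x(x^2-1)p$.

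The one substantive computation is the verification that $p^2=-\pi-c$ rearranges to the second equation of $C$. Combining $-\pi-c$ over the common denominator $4x(x-1)^2(x+1)^2$ and equating with $p^2=z^2/[4x^2(x-1)^2(x+1)^2]$ yields, after clearing denominators,
\[
z^2 \;=\; -2x^2(x-1)^3(x+1) \;-\; 2x(x-1)(x+1)^2 y \;-\; x(x^2-4x-1)(x^4+x^3+2x^2-x+1).
\]
A routine expansion of the first and third terms and collection of coefficients reduces this to $x(-x^6+x^5+7x^4+10x^3-7x^2+5x+1)-2x(x-1)(x+1)^2 y$, matching the second defining equation of $C$. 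The correct identification of $\pi$ (as in Lemma \ref{8_4_curve}, rather than $f_c^i(\pi)$ for $i=1,2,3$) is what produces the $y$-coefficient $-2x(x-1)(x+1)^2$, and this matching of coefficients is the main place where a sign error could propagate.
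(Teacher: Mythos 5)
Your proposal is correct and follows essentially the same route as the paper: use Lemma \ref{8_4_curve} to produce the period-4 point $\pi$, verify that the second defining equation of $C$ amounts to $f_c(p)=-\pi$, use $\pi\ne 0$ (equivalently $y(x+1)+x(x-1)^2\ne 0$) to place $-\pi$ strictly outside the 4-cycle, and reverse the construction via $z=2x(x^2-1)p$ for surjectivity and injectivity. Your expansion verifying $z^2=-2x^2(x-1)^3(x+1)-2x(x-1)(x+1)^2y-x(x^2-4x-1)(x^4+x^3+2x^2-x+1)$ matches the stated equation, so the computation is sound.
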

\begin{figure}[h!]
\begin{center}\includegraphics[scale=0.5]{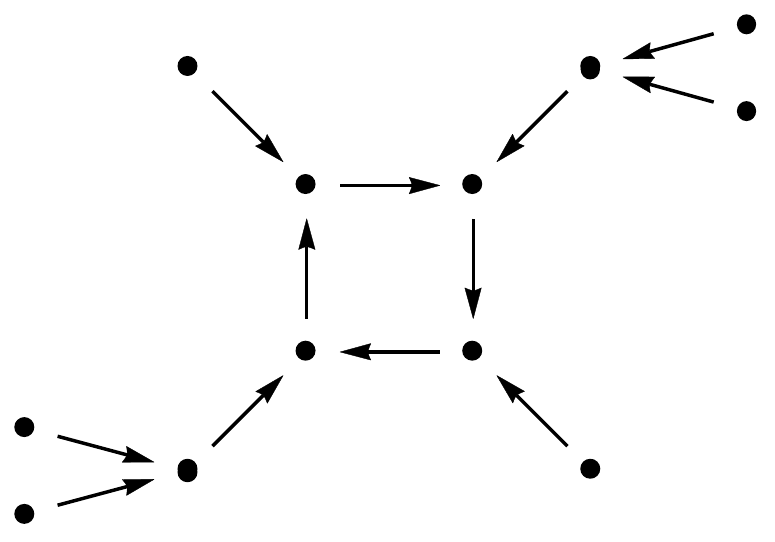}\end{center}
\caption{Graph type 12(4)}
\end{figure}
\begin{proof} Fix a number field $K$, suppose that $(x,y,z)\in C(K)$ is a point satisfying $y(x^2-1)\ne 0$, and define $p,c\in K$ as in the lemma. From Lemma \ref{8_4_curve} it follows that the point \[r:=\frac{x-1}{2(x+1)} + \frac{y}{2x(x-1)}\] has period 4 under $f_c$. Now, it is a simple calculation to verify that $f_c(p)=-r$; in order to conclude that $p$ is of type $4_2$ for $f_c$ it is therefore enough to show that $-r$ is of type $4_1$. Since $r$ has period 4, $-r$ will necessarily be of type $4_1$ as long as $r \ne -r$. We have
\begin{equation}\label{12_4_rzero}
r=0\Longleftrightarrow y(x+1)+x(x-1)^2=0,
\end{equation}
so if we assume that $y(x+1)+x(x-1)^2\ne 0$, then it follows that $p$ is of type $4_2$ for $f_c$. This proves that $\phi$ gives a well-defined map.

To see that $\phi$ is surjective, suppose that $p,c\in K$ are such that $p$ is a point of type $4_2$ for the map $f_c$. Then $q:=p^2+c$ is a point of type $4_1$, so $r:=-q$ is a point of period 4. By Lemma \ref{8_4_curve} there are elements $x,y\in K$ satisfying $y^2=-x(x^2+1)(x^2-2x-1)$ and $y(x^2-1)\ne 0$ such that 
\[
c = \frac{(x^2 - 4x - 1)(x^4 + x^3 + 2x^2 - x + 1)}{4x(x-1)^2(x+1)^2} \mbox{\;,\;\;} r = \frac{x-1}{2(x+1)} + \frac{y}{2x(x-1)}.
\]
Clearing denominators in the equation 
\[-p^2-\frac{(x^2 - 4x - 1)(x^4 + x^3 + 2x^2 - x + 1)}{4x(x-1)^2(x+1)^2}= \frac{x-1}{2(x+1)} + \frac{y}{2x(x-1)}
\] and letting $z=2x(x^2-1)p$ we obtain $z^2 = x(-x^6 + x^5 + 7x^4 + 10x^3 - 7x^2 + 5x + 1) - 2x(x - 1)(x + 1)^2y$. Thus, we have a point $(x,y,z)\in C(K)$ with $y(x^2-1)\ne 0$ and $\phi(x,y,z)=(p,c)$. Furthermore, we must have $y(x+1)+x(x-1)^2\ne 0$ because otherwise \eqref{12_4_rzero} would imply that $q=r$, and hence that $q$ has period 4, a contradiction.  

To see that $\phi$ is injective, one can verify that if $\phi(x,y,z)=(p,c)$, then
\[x=\frac{f_c^4(p)+f_c^2(p)-1}{f_c^4(p)+f_c^2(p)+1}\;\;,\;\;y=-\frac{2x(x^2-1)f_c(p)+x(x-1)^2}{x+1}\;\;,\;\;z=2x(x^2-1)p.\qedhere\]
\end{proof}

\begin{lem}\label{12_4_aux_curve_pts} Let $X/\Q$ be the hyperelliptic curve of genus 5 defined by the equation 
\[w^2=x^{12} + 2x^{11} - 13x^{10} - 26x^9 + 67x^8 + 124x^7 + 26x^6 - 44x^5 + 179x^4 - 62x^3 - 5x^2 + 6x + 1.\]
Then $X(\Q)$ contains the points $\infty^{+}, \infty^{-}, (\pm 1,\pm 16), (0, \pm 1), (-3, \pm 368)$ and at most 10 other points.
\end{lem}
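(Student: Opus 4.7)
The plan is to follow the same search-rank-Chabauty template used in the proofs of Lemmas \ref{12_2_aux_curve_pts} and \ref{12_211b_aux_curve_pts}. First I would use Magma's \texttt{Points} function to enumerate all rational points on $X$ of height at most $10^5$, and confirm that the ten points $\infty^{\pm}$, $(\pm1,\pm16)$, $(0,\pm1)$, $(-3,\pm368)$ displayed in the statement are the only ones found. This establishes the ten exhibited rational points, so it remains to show $\#X(\Q)\le 20$.

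Next I would invoke Magma's \texttt{RankBound} function (which runs Stoll's 2-descent) to bound the rank $r$ of $\Jac(X)(\Q)$. Since Chabauty-Coleman requires $r<g=5$, the key numerical input is that the 2-descent return a bound $r\le 4$; experience with such parameterizing curves (and the observation that $X$ carries at least the obvious hyperelliptic involution, plus whatever further involutions the defining polynomial admits) suggests the actual rank is considerably smaller, likely $r\le 2$. Once $r$ is pinned down, I would pick a prime $p$ of good reduction that is as small as possible subject to the hypothesis of the chosen bound, compute $\#\calX(\F_p)$ by a direct point count, and then apply whichever of Theorems \ref{coleman_bound}, \ref{lorenzini_tucker_bound}, \ref{stoll_bound} yields the sharpest estimate. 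For instance, Coleman's bound with any prime $p>10$ of good reduction gives $\#X(\Q)\le \#\calX(\F_p)+8$, so a prime with $\#\calX(\F_p)\le 12$ already suffices; Stoll's bound $\#X(\Q)\le \#\calX(\F_p)+2r$, valid for $p>2r+2$, would be sharper still if the rank turns out to be small. Combining the bound $\#X(\Q)\le 20$ with the ten already-listed points completes the argument.

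The main obstacle is likely to be the rank computation. A naive 2-descent on the Jacobian of a genus-5 hyperelliptic curve can be expensive, and it is not guaranteed to return a tight bound; if \texttt{RankBound} returns a number $\ge 5$, Chabauty-Coleman is not directly applicable and one must work indirectly. In that case the natural fallback is to exploit the automorphism structure of $X$: the defining polynomial has visibly nontrivial symmetry, so using Magma's \texttt{CurveQuotient} function I would form the quotient of $X$ by a suitable involution to obtain a lower-genus curve (or a pair of such curves) whose Jacobians are isogeny factors of $\Jac(X)$, bound the ranks of those factors separately, and assemble a rank bound for $\Jac(X)$ from them. A secondary fallback, if the pure Chabauty-Coleman bound at a single prime is one or two points too weak, is to combine the method with a Mordell-Weil sieve via Magma's \texttt{Chabauty} function, as is done in the paper's treatment of graph 14(3,1,1); this typically tightens the bound enough to match the number of known points exactly, and would certainly suffice to secure the weaker inequality $\#X(\Q)\le 20$ needed here.
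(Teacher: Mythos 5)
Your overall strategy --- point search with \texttt{Points}, a rank bound from \texttt{RankBound}, then a Chabauty--Coleman-type bound --- is exactly the route the paper takes, but as written it does not secure the stated bound of at most 10 extra points, because of how the rank bound interacts with the prime-size hypotheses of the three theorems. The 2-descent bound actually obtained is $r\le 4$, not the $r\le 2$ you are hoping for, so Stoll's bound requires $p>2r+2=10$ and Coleman's requires $p>2g=10$; both then give only $\#X(\Q)\le\#\calX(\F_p)+8$ for some prime $p\ge 11$, and nothing guarantees a prime $p\ge 11$ of good reduction with $\#\calX(\F_p)\le 12$ --- for a genus-5 curve the expected count at $p=11$ or $13$ is already about $p+1\ge 12$. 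The only one of the three bounds that can exploit a smaller prime is Lorenzini--Tucker, and that is what the paper uses: with $p=7$, $d=2$ and $\#\calX(\F_7)=12$ one gets $\#X(\Q)\le 12+\tfrac{6}{5}\cdot 8<22$, hence $\#X(\Q)\le 21$ --- one point too many for the lemma.

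The missing ingredient is a parity argument: the degree-12 polynomial has no rational root, so $X$ has no rational Weierstrass point; the affine rational points therefore come in pairs $(x,\pm w)$, and together with $\infty^{+},\infty^{-}$ this forces $\#X(\Q)$ to be even, improving $21$ to $20$ and yielding exactly ``at most 10 other points.'' Without this observation (or a verified point count $\le 12$ at some good prime $p\ge 11$, or the Mordell--Weil sieve you mention only as a fallback), your argument stops at 11 possible extra points. So: same approach as the paper, but the closing step needs the parity trick that your proposal omits.
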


\begin{proof} Using the Magma function \texttt{Points} we search for rational points on $X$ of height at most $10^5$ and obtain the points listed above. Using the \texttt{RankBound} function we obtain an upper bound of 4 for the rank of $\Jac(X)(\Q)$; we are thus in a position to bound the number of rational points on $X$ using the method of Chabauty and Coleman. The prime 7 is of good reduction for $X$, and Magma's \texttt{Points} function yields $\#X(\F_{7})=12$. Applying the Lorenzini-Tucker bound (Theorem \ref{lorenzini_tucker_bound}) with $p=7$, $d=2$ we obtain \[\#X(\Q) \le \#X(\F_7) + \frac{6}{5}(8)=12+\frac{6}{5}(8) <22,\]
so $\#X(\Q)\le 21$. However, since $X$ has no rational point with $w = 0$, the number of rational points must be even, and therefore $\#X(\Q) \le 20$. Since we have already found 10 rational points, we conclude that there are at most 10 additional rational points on $X$.
\end{proof}

\begin{thm}\label{12_4_points} With $C$ as in Lemma \ref{12_4_curve} we have the following:
\begin{enumerate}
\item $C(\Q)=\{(0,0,0),(\pm 1,\pm 2,\pm 4)\}$.
\item If $(x,y,z)$ is a quadratic point on $C$, then $x\in\Q\setminus\{0,\pm 1\}$. Moreover, there exists $w\in\Q$ such that $(x,w)\in X(\Q)$, where $X$ is the curve defined in Lemma \ref{12_4_aux_curve_pts}.
\end{enumerate}
\end{thm}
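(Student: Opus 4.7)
The plan closely follows the template of Theorem~\ref{12_2_points} and Theorem~\ref{12_211b_points}. Part~(1) is a direct computation. The first defining equation is an affine model of $X_1(16)$, whose affine rational points are $(0,0)$ and $(\pm 1,\pm 2)$ (five points in all). Substituting each such $(x,y)$ into the second defining equation gives $z^2=0$ when $x=0$ and $z^2=16$ when $x=\pm 1$ (in the latter cases the coefficient of $y$ vanishes, so both signs of $y$ are compatible). This produces the nine rational points claimed.

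For part~(2), suppose $(x,y,z)\in C(\bar\Q)$ is quadratic over $\Q$. I first show $x\in\Q$. The first defining equation exhibits $(x,y)$ as an at-worst-quadratic point on the genus~$2$ model $y^2=F_{16}(x)$ of $X_1(16)$. If $x\notin\Q$, then $(x,y)$ is non-obvious, so by Theorem~\ref{131618_quad} it is one of $(\pm i,0)$ or $(1\pm\sqrt{2},0)$. In each case $y=0$, so the second defining equation reduces to $z^2=xP(x)$, where $P(x)=-x^6+x^5+7x^4+10x^3-7x^2+5x+1$. A short calculation in each of $\Q(i)$ and $\Q(\sqrt{2})$ shows this right-hand side is not a square there: for instance $iP(i)=4+16i$, and $(A+Bi)^2=4+16i$ forces $A^4-4A^2-64=0$, which has no rational root; the norm computation for $x=1+\sqrt 2$ produces $xP(x)=284+200\sqrt 2$ and leads analogously to the equation $A^4-284A^2+20000=0$, again with no rational root. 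The Galois-conjugate cases $x=-i$ and $x=1-\sqrt{2}$ follow by applying the nontrivial automorphism. Hence $x\in\Q$; the exclusion $x\notin\{0,\pm 1\}$ is then immediate from part~(1), since part~(1) shows every point of $C$ with such an $x$-coordinate is rational.

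To produce $w$, fix $x\in\Q\setminus\{0,\pm 1\}$. Since no such $x$ is the $x$-coordinate of a rational point on $X_1(16)$, the value $F_{16}(x)$ is not a rational square, so $y\notin\Q$ and $K=\Q(y)=\Q(\sqrt{F_{16}(x)})$. Writing $z=a+by$ with $a,b\in\Q$ and comparing the rational and irrational parts of $z^2=xP(x)-2x(x-1)(x+1)^2y$ yields
\[
a^2+b^2F_{16}(x)=xP(x), \qquad ab=-x(x-1)(x+1)^2.
\]
Since the right-hand side of the second equation is nonzero, both $a$ and $b$ are nonzero. Eliminating $b$ gives the quadratic
\[
a^4-xP(x)\,a^2+x^2(x-1)^2(x+1)^4F_{16}(x)=0
\]
in $a^2\in\Q$; its discriminant $x^2\bigl[P(x)^2-4(x-1)^2(x+1)^4F_{16}(x)\bigr]$ must therefore be a rational square. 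Since $x^2$ is itself a square, the bracketed polynomial is a rational square, and direct expansion identifies it with the degree-$12$ polynomial defining $X$. Setting $w$ equal to the relevant square root then yields $(x,w)\in X(\Q)$.

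The main obstacle is the ``$x$ quadratic'' step: without the explicit enumeration of non-obvious quadratic points on $X_1(16)$ supplied by Theorem~\ref{131618_quad}, a direct attack on $C$ would require a Chabauty or descent argument on the genus~$9$ curve itself, which would be far more involved. The final algebraic step is routine once the polynomial identity matching $P(x)^2+4x(x-1)^2(x+1)^4(x^2+1)(x^2-2x-1)$ with the defining polynomial of $X$ is verified.
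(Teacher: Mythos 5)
Your proposal is correct and follows essentially the same route as the paper: part (1) via the known rational points of $X_1(16)$, and part (2) by invoking Theorem \ref{131618_quad} to rule out quadratic $x$ (checking that $xP(x)$ is a non-square in $\Q(i)$ and $\Q(\sqrt{2})$, which the paper leaves as "one can check") and then extracting the rational point on $X$. The only cosmetic difference is the last step, where the paper takes $N_{K/\Q}$ of the second defining equation to get $w=N_{K/\Q}(z)/x$ directly, whereas you write $z=a+by$ and force the discriminant of the resulting quadratic in $a^2$ to be a square --- the same computation in disguise.
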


\begin{proof} As noted in \S\ref{modular_genus2_section}, the curve $y^2 = -x(x^2+1)(x^2-2x-1)$ is an affine model for the modular curve $X_1(16)$, which has exactly six rational points:
\[X_1(16)(\Q)=\{\infty,(0,0),(\pm 1,\pm 2)\}.\] It follows that if $(x,y,z)\in C(\Q)$, then $(x,y)=(0,0)$ or $(x,y)\in\{(\pm 1,\pm 2)\}$. Setting these values of $x,y$ in the system \eqref{12_4_curve_equations} and solving for $z$ we conclude that $C(\Q)=\{(0,0,0),(\pm 1,\pm 2,\pm 4)\}$.

Suppose now that $(x,y,z)\in C(\overline\Q)$ is a point with $[\Q(x,y,z):\Q]=2$, and let $K=\Q(x,y,z)$. We cannot have $x\in\{0,\pm 1\}$ since this would imply that $(x,y,z)\in C(\Q)$; it follows that $(x,y)$ cannot be a rational point on $X_1(16)$, and must therefore be quadratic. We claim that $x$ must be rational. Indeed, if $x\notin\Q$, then by Theorem \ref{131618_quad}, either $x^2=-1$ and $y=0$, or $x^2-2x-1=0$ and $y=0$. However, in both cases we find that the second equation in \eqref{12_4_curve_equations} has no solution in $K$. Thus, we conclude that $x$ must be a rational number different from $0,\pm 1$. Since $y^2\in\Q$ and $y\notin\Q$, the Galois conjugate of $y$ is $-y$. Hence, taking norms on both sides of  the second equation in \eqref{12_4_curve_equations} we obtain 
\[w^2=x^{12} + 2x^{11} - 13x^{10} - 26x^9 + 67x^8 + 124x^7 + 26x^6 - 44x^5 + 179x^4 - 62x^3 - 5x^2 + 6x + 1,\]
where $w=N_{K/\Q}(z)/x$.
\end{proof}

\begin{cor} In addition to the known pair $(\Q(\sqrt{105}),-95/48)$ there are at most five pairs $(K,c)$, with $K$ a quadratic number field and $c\in K$, for which $G(f_c,K)$ contains a graph of type {\rm 12(4)}. Moreover, for every such pair we must have $c\in\Q$.
\end{cor}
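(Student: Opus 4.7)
The plan is to follow the template established by the analogous corollaries in \S\ref{12_2_section} and \S\ref{12_211b_section}: use the parameterization of Lemma \ref{12_4_curve} to translate the hypothesis into a point on $C(K)$, rule out the rational points, apply Theorem \ref{12_4_points} to push the problem onto the auxiliary hyperelliptic curve $X$ of Lemma \ref{12_4_aux_curve_pts}, and then enumerate.

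More concretely, suppose $(K,c)$ is such a pair. Since $f_c$ has a $K$-rational point of type $4_2$, Lemma \ref{12_4_curve} produces $(x,y,z)\in C(K)$ satisfying $y(x^2-1)(y(x+1)+x(x-1)^2)\ne 0$ with $c$ given by the rational function of $x$ displayed there. I would then observe that none of the rational points on $C$ listed in Theorem \ref{12_4_points}(1) meets the non-vanishing requirement: the point $(0,0,0)$ has $y=0$, and each of $(\pm1,\pm2,\pm4)$ has $x^2=1$. Hence $(x,y,z)$ must be a quadratic point on $C$, and Theorem \ref{12_4_points}(2) forces $x\in\Q\setminus\{0,\pm1\}$ together with the existence of $w\in\Q$ such that $(x,w)\in X(\Q)$. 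Since the formula for $c$ is a rational function of $x$ alone with rational coefficients, this immediately yields $c\in\Q$, proving the second assertion.

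For the counting, Lemma \ref{12_4_aux_curve_pts} gives at most $20$ rational points on $X$. Of the $10$ listed rational points, the $x$-coordinates lie in $\{\infty,0,\pm 1,-3\}$, so only $x=-3$ is admissible; a direct substitution into the formula for $c$ and into the defining equation $y^2=-x(x^2+1)(x^2-2x-1)$ recovers precisely the known pair $(\Q(\sqrt{105}),-95/48)$. The remaining (at most) $10$ rational points on $X$ come in Galois-symmetric pairs $(x,\pm w)$ with $w\ne 0$, contributing at most $5$ additional distinct $x$-values. Each such $x$ determines $c$ uniquely via the displayed formula and $K=\Q(\sqrt{-x(x^2+1)(x^2-2x-1)})$ via the first defining equation of $C$, and so produces at most one additional pair $(K,c)$, giving at most $5$ beyond the known one.

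The genuine obstacle has already been discharged upstream in Lemma \ref{12_4_aux_curve_pts}, namely the Chabauty--Coleman bound on $\#X(\Q)$ for a genus-$5$ hyperelliptic curve whose Jacobian has rank at most $4$. Given that input, the corollary itself is pure bookkeeping; its only subtlety is the verification that the non-vanishing clause of Lemma \ref{12_4_curve} excludes every rational point of $C$, which is what allows us to conclude that $x$ must be a rational root of an equation controlled by $X(\Q)$ rather than being truly quadratic.
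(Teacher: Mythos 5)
Your proposal is correct and matches the paper's own proof essentially step for step: Lemma \ref{12_4_curve} to produce the point on $C(K)$, Theorem \ref{12_4_points} to rule out the rational points of $C$ and force $x\in\Q\setminus\{0,\pm 1\}$ (hence $c\in\Q$) together with $(x,w)\in X(\Q)$, and Lemma \ref{12_4_aux_curve_pts} plus the pairing of the at most ten unlisted points under $w\mapsto -w$ (legitimate because $X$ has no rational point with $w=0$) to obtain at most five new values of $x$, each determining at most one pair $(K,c)$, with $x=-3$ recovering $(\Q(\sqrt{105}),-95/48)$. The only nit is terminology: those pairs are hyperelliptic-involution pairs $(x,\pm w)$, not ``Galois-symmetric'' ones, since the points in question are rational.
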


\begin{proof} Suppose that $(K,c)$ is such a pair. Since $f_c$ has a point of type $4_2$ in $K$, then Lemma \ref{12_4_curve} implies that there is a point $(x,y,z)\in C(K)$ with $y(x^2-1)\ne 0$ such that 
\begin{equation}\label{12_4_c_param}
c=\frac{(x^2 - 4x - 1)(x^4 + x^3 + 2x^2 - x + 1)}{4x(x^2 - 1)^2}.
\end{equation} 
We cannot have $(x,y,z)\in C(\Q)$ since this would imply that $x\in\{0,\pm 1\}$. Hence, $(x,y,z)$ is a quadratic point on $C$. By Theorem \ref{12_4_points}, $x\in\Q$ and thus $c\in\Q$. Moreover, there is a rational number $w$ such that $(x,w)\in X(\Q)$. It follows from Lemma \ref{12_4_aux_curve_pts} that either $x=-3$ or $x$ belongs to a list of at most 5 other rational numbers. Setting $x = -3$ yields $c = -95/48$, and the system \eqref{12_4_curve_equations} becomes
\begin{equation*}
\begin{cases}
& y^2 = 420\\
& z^2 = 2256 - 96y.
\end{cases}
\end{equation*}
Hence, $y = \pm 2\sqrt{105}$ and $z=\pm(2y-24)$. In particular, $K = \Q(\sqrt{105})$, so we have recovered the known pair $(\Q(\sqrt{105}),-95/48)$. If $x\ne -3$, then there are at most five options for $x$; each value of $x$ determines the number $c$ by \eqref{12_4_c_param} and the field $K$ by  \eqref{12_4_curve_equations}. This gives at most five options for the pair $(K,c)$.  
\end{proof}


\subsection{Graph 12(4,2)}\label{12_42_section}

Our search described in \S\ref{quad_prep_comp} produced the pair \[ (K,c) = \left(\Q(\sqrt{-15}), -\frac{31}{48}\right) \] for which the graph $G(f_c,K)$ is of type 12(4,2). We will show here that in addition to this known example there is at most one other such pair $(K,c)$ consisting of a quadratic number field $K$ and an element $c\in K$.

\begin{lem}\label{12_42_curve} Let $C/\Q$ be the affine curve of genus 9 defined by the equations
\begin{equation}\label{12_42_curve_equations}
\begin{cases}
& y^2 = -x(x^2 + 1)(x^2 - 2x - 1) \\
& z^2=-x(x^6-3x^4-16x^3+3x^2-1).
\end{cases}
\end{equation}

Consider the rational map $\varphi: C \dashrightarrow \mathbb{A}^3 = \Spec \Q[a,b,c]$ given by 
\[a=\frac{z-x(x^2-1)}{2x(x^2-1)}\;\;,\;\;b = \frac{x-1}{2(x+1)} + \frac{y}{2x(x-1)}\;\;,\;\;c = \frac{(x^2 - 4x - 1)(x^4 + x^3 + 2x^2 - x + 1)}{4x(x^2-1)^2}.\]
For every number field $K$, the map $\phi$ induces a bijection from the set \[\{(x,y,z)\in C(K): yz(x^2-1)\ne 0\}\] to the set of all triples $(a,b,c)\in K^3$ such that $a$ is a point of period 2 for the map $f_c$ and $b$ is a point of period 4.
\end{lem}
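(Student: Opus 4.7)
The approach is to realize $C$ as a fiber product over the $c$-line of two simpler curves: one parameterizing points of period $4$ (which is Lemma \ref{8_4_curve}, giving the first equation $y^2 = -x(x^2+1)(x^2-2x-1)$), and one parameterizing points of period $2$ (which by Proposition \ref{cycles_prop}(2) amounts to requiring $-c - 3/4$ to be a nonzero square). The second equation of \eqref{12_42_curve_equations} is engineered precisely so that this square condition holds after clearing denominators. So the plan is to connect each of $b$ and $a$ to its respective cycle structure, then glue via the common parameter $c$.

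First, for well-definedness I would fix $(x,y,z) \in C(K)$ with $yz(x^2-1) \ne 0$, observe that the $(x,y)$-projection lies on the period-$4$ curve of Lemma \ref{8_4_curve} with $y(x^2-1) \ne 0$, and conclude directly from that lemma that $b$ is a point of period $4$ for $f_c$. For the point $a$, I would rewrite
\[ a + \tfrac{1}{2} \;=\; \frac{z - x(x^2-1)}{2x(x^2-1)} + \frac{1}{2} \;=\; \frac{z}{2x(x^2-1)}, \]
so $(a+1/2)^2 = z^2/(4x^2(x^2-1)^2)$. The main mechanical step is verifying the polynomial identity
\[ -c - \tfrac{3}{4} \;=\; \frac{-x^6 + 3x^4 + 16x^3 - 3x^2 + 1}{4x(x^2-1)^2}, \]
which, combined with the relation $z^2 = -x(x^6 - 3x^4 - 16x^3 + 3x^2 - 1)$ from the second equation, gives $(a+1/2)^2 = -c - 3/4$. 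Setting $\sigma = a+1/2$, this is exactly the relation $c = -3/4 - \sigma^2$ from Proposition \ref{cycles_prop}(2), and the hypothesis $z \ne 0$ forces $\sigma \ne 0$, which is the condition in that proposition ensuring period exactly $2$ rather than a fixed point.

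Second, for surjectivity I would start from $(a,b,c)$ with $a$ of period $2$ and $b$ of period $4$. Lemma \ref{8_4_curve} produces $(x,y) \in K^2$ with $y^2 = -x(x^2+1)(x^2-2x-1)$, $y(x^2-1) \ne 0$, and both $b$ and $c$ in the prescribed form. Proposition \ref{cycles_prop}(2) produces a nonzero $\sigma \in K$ with $a = \sigma - 1/2$ and $c = -3/4 - \sigma^2$. Setting $z := 2x(x^2-1)\sigma$, the same polynomial identity used above shows $z^2 = -x(x^6 - 3x^4 - 16x^3 + 3x^2 - 1)$, so $(x,y,z) \in C(K)$, and $yz(x^2-1) \ne 0$ because each factor is nonzero. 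Injectivity is then clear from the explicit inversion $x = $ (recoverable from $(b,c)$ via the inverse given in Lemma \ref{8_4_curve}), $y$ recoverable in the same way, and $z = 2x(x^2-1)(a+1/2)$.

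The only genuinely nontrivial step is the algebraic identity displaying $-c - 3/4$ as the claimed rational function of $x$, which is a routine but tedious expansion of $(x^2-4x-1)(x^4+x^3+2x^2-x+1) + 3x(x^2-1)^2$; I would do this expansion once and appeal to it at both the well-definedness and surjectivity steps. Everything else reduces to invoking Lemma \ref{8_4_curve} and Proposition \ref{cycles_prop}(2), which already encode the period-$4$ and period-$2$ parameterizations.
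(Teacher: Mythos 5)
Your proposal is correct and follows essentially the same route as the paper: the surjectivity and injectivity arguments (invoking Proposition \ref{cycles_prop} and Lemma \ref{8_4_curve}, equating the two expressions for $c$, and setting $z = x(x^2-1)(2a+1)$) are exactly the paper's, and your key identity $-c-\tfrac{3}{4} = -(x^6-3x^4-16x^3+3x^2-1)/\bigl(4x(x^2-1)^2\bigr)$ is precisely what makes the second defining equation appear. The only cosmetic difference is in well-definedness, where the paper checks $f_c^2(a)=a$, $f_c^4(b)=b$ and the difference relations directly, while you route the period-$4$ part through Lemma \ref{8_4_curve} and the period-$2$ part through the converse of Proposition \ref{cycles_prop}(2); that converse is not literally part of the statement, but it is a one-line check (immediate from the identity $\frac{f^2(p)-p}{f(p)-p}=p^2+p+c+1$ used in its proof, with $z\ne 0$ ruling out a fixed point), so this is not a gap.
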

\begin{figure}[h!]
\begin{center}\includegraphics[scale=0.5]{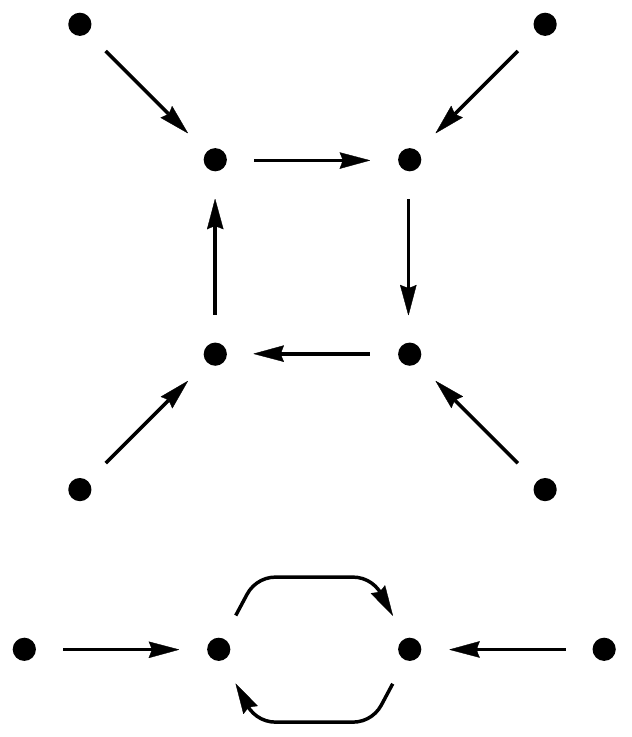}\end{center}
\caption{Graph type 12(4,2)}
\end{figure}
\begin{proof}
Fix a number field $K$ and suppose that $(x,y,z)\in C(K)$ is a point satisfying $x(x^2-1)\ne 0$. Defining $a,b,c\in K$ as in the lemma, it is a straightforward calculation to verify that $f_c^2(a)=a$, $f_c^4(b)=b$, and
\begin{equation}\label{12_42_relations}
a-f_c(a)=\frac{z}{x(x^2-1)}\;\;,\;\;b-f_c^2(b)=\frac{y}{x(x-1)}.
\end{equation}
It follows from these relations that if $yz\ne 0$, then $a$ is a point of period 2 for $f_c$ and $b$ is a point of period 4. Hence, $\phi$ gives a well-defined map.

To see that $\phi$ is surjective, suppose that $a,b,c\in K$ satisfy the conditions of the lemma. Since $a$ is a point of period 2 for $f_c$, then by Proposition \ref{cycles_prop} there is an element $\sigma\in K$ such that
\begin{equation}\label{12_42_acvalues}a=\sigma-1/2\;\;, \;\;c=-3/4-\sigma^2.
\end{equation}
Since $b$ is a point of period 4, then by Proposition \ref{cycles_prop} there are elements $x,y\in K$ with $y(x^2-1)\ne 0$ such that $y^2 = -x(x^2 + 1)(x^2 - 2x - 1)$ and 
\begin{equation}\label{12_42_bcvalues}b = \frac{x-1}{2(x+1)} + \frac{y}{2x(x-1)}\;\;,\;\;c = \frac{(x^2 - 4x - 1)(x^4 + x^3 + 2x^2 - x + 1)}{4x(x-1)^2(x+1)^2}.
\end{equation}
Equating the two expressions for $c$ given in \eqref{12_42_acvalues} and \eqref{12_42_bcvalues}, and setting $z=2x(x-1)(x+1)\sigma$, we obtain the relation $z^2=-x(x^6-3x^4-16x^3+3x^2-1)$. Thus, we have a point $(x,y,z)\in C(K)$ with $\phi(x,y,z)=(a,b,c)$ and $y(x^2-1)\ne 0$. Furthermore, the relations \eqref{12_42_relations} imply that $z\ne 0$.  To see that $\phi$ is injective, one can verify that if $\phi(x,y,z)=(a,b,c)$, then
\[x=\frac{1+b+f_c^2(b)}{1-b-f_c^2(b)}\;\;,\;\;y=\frac{2x(x^2-1)b-x(x-1)^2}{x+1}\;\;,\;\;z=x(x^2-1)(2a+1).\qedhere\]
\end{proof}

\begin{lem}\label{12_42_aux_curve_pts} Let $X/\Q$ be the hyperelliptic curve of genus 4 defined by the equation 
\[w^2=(x^2 + 1)(x^2 - 2x - 1)(x^6-3x^4-16x^3+3x^2-1).\]
Then $X(\Q)$ contains the twelve points 
\[\infty^+,\infty^-,(\pm 1,\pm 8),(0,\pm 1),(3,\pm 40),(-1/3,\pm 40/243)\] 
and at most four other points.
\end{lem}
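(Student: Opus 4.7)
The plan is to follow the template already established by Lemmas \ref{12_2_aux_curve_pts}, \ref{12_211b_aux_curve_pts}, and \ref{12_4_aux_curve_pts}: find the known rational points by direct search, bound the Mordell--Weil rank of $\Jac(X)$ via $2$-descent, and then apply one of Theorems \ref{coleman_bound}, \ref{lorenzini_tucker_bound}, \ref{stoll_bound} to produce an explicit Chabauty--Coleman bound on $\#X(\Q)$. First I would use Magma's \texttt{Points} function to confirm that the twelve listed points lie on $X$ and to verify that no further rational points occur below some moderate height bound (say $10^5$). Notice also that $f(x)=(x^2+1)(x^2-2x-1)(x^6-3x^4-16x^3+3x^2-1)$ satisfies $x^{10}f(-1/x)=f(x)$, so $X$ carries the involution $(x,w)\mapsto(-1/x,\,w/x^5)$, which exchanges the pairs $(3,\pm 40)\leftrightarrow(-1/3,\pm 40/243)$ and $\infty^+\leftrightarrow\infty^-$, $(1,\pm 8)\leftrightarrow(-1,\pm 8)$. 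This symmetry is consistent with the list being complete and provides a useful quotient for the rank computation in the next step.

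Next I would compute an upper bound for $\rank \Jac(X)(\Q)$ using Magma's \texttt{RankBound}. The genus is $g=4$, so we need $r<4$ to invoke Chabauty--Coleman; based on the ranks appearing in the analogous lemmas earlier in this section, one expects a bound in the range $r\le 2$ or $r\le 3$. If needed, the involution above decomposes $\Jac(X)$ up to isogeny into the Jacobian of the quotient curve $X/\langle\iota\rangle$ and its ``odd'' complement, which may simplify the $2$-descent or allow the rank to be attacked in pieces.

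Once an acceptable rank bound $r$ is in hand, I would choose a small prime $p$ of good reduction and count $\#X(\F_p)$, then apply whichever of the three theorems gives the sharpest estimate. For instance, the Lorenzini--Tucker bound with $d=2$ requires only $p\ge 5$ and yields $\#X(\Q)\le \#X(\F_p)+8$; Stoll's bound requires $p>2r+2$ and gives $\#X(\Q)\le \#X(\F_p)+2r$. A parity improvement is also available: the quadratic factor $x^2+1$ has no real root, $x^2-2x-1$ has discriminant $8$ which is not a square, and the rational root test applied to $x^6-3x^4-16x^3+3x^2-1$ rules out $x=\pm 1$ as roots (the values are $-16$ and $16$), so $f$ has no rational roots. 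Since $\deg f = 10$ is even, $X$ has no rational Weierstrass point, and the hyperelliptic involution acts freely on $X(\Q)$; hence $\#X(\Q)$ is even, and any odd upper bound may be decreased by one. The aim is to land at $\#X(\Q)\le 16$, which together with the twelve displayed points gives at most four additional rational points.

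The main obstacle is getting all of these ingredients to combine tightly. The rank bound from $2$-descent must come out strictly less than $4$, and then a prime $p$ must be located for which $\#X(\F_p)$ is small enough that the chosen Chabauty inequality closes at $16$ (or $17$, to be halved by parity). For a genus-$4$ curve this is not automatic, and it may be necessary to supplement the raw point count with a Mordell--Weil sieve, as is done in the \texttt{Chabauty} function cited at the end of \S\ref{analysis_prelims}. Failing that --- indeed this is one of the hyperelliptic curves that the authors defer to a sequel in \S\ref{intro_future} --- one can only hope for a weaker bound, which still suffices for the corollary on graph $12(4,2)$ as long as the gap between the number of known rational points and the Chabauty bound is at most four.
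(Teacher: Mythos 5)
Your plan is exactly the paper's argument: a point search of height up to $10^5$, a \texttt{RankBound} computation (which gives $\rank \Jac(X)(\Q)\le 2$), and then Stoll's bound at the good prime $p=11$, where $\#X(\F_{11})=12$, yielding $\#X(\Q)\le 12+2\cdot 2=16$ and hence at most four unlisted points. One tiny correction: the map $(x,w)\mapsto(-1/x,\,w/x^5)$ you invoke is an automorphism of order $4$ (its square is the hyperelliptic involution), not an involution, though this does not affect the argument.
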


\begin{proof} Using the Magma function \texttt{Points} we search for rational points on $X$ of height at most $10^5$ and obtain the points listed above. Using the \texttt{RankBound} function  we find that $\Jac(X)(\Q)$ has rank at most 2; thus, we may apply the method of Chabauty and Coleman to bound the number of rational points on $X$. The prime 11 is of good reduction for $X$, and we compute $\#X(\F_{11})=12$; Stoll's bound (Theorem \ref{stoll_bound}) then yields $\#X(\Q)\le 16$.
\end{proof}

\begin{rem} The curve $X$ from Lemma \ref{12_42_aux_curve_pts} has an automorphism of order 4 given by $(x,w)\mapsto (-1/x,w/x^5)$. Hence, if there is some rational point $(x_0,w_0)\in X(\Q)$ missing from the list above, then in fact there are exactly four of them, namely $(x_0,w_0), (-1/x_0,w_0/x_0^5), (x_0,-w_0)$, and $(-1/x_0,-w_0/x_0^5)$.
\end{rem}

\begin{thm}\label{12_42_points} With $C$ as in Lemma \ref{12_42_curve} we have the following:
\begin{enumerate}
\item $C(\Q)=\{(0,0,0), (\pm 1,\pm 2,\pm 4)\}$.
\item The quadratic points $(x,y,z)$ on $C$ with $x\notin\Q$ are the points $(x,0,\pm(6x+2))$ with $x^2-2x-1=0$, which are defined over the field $\Q(\sqrt 2)$.
\item If $(x,y,z)$ is a quadratic point on $C$ with $x\in\Q$, then there exists a rational number $w$ such that $(x,w)\in X(\Q)$, where $X$ is the curve defined in Lemma \ref{12_42_aux_curve_pts}. Moreover, $x\notin\{0,\pm 1\}$.
\end{enumerate}
\end{thm}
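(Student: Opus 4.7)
The plan is to prove the three parts in sequence, exploiting the structure of $C$ as a fiber product of two hyperelliptic curves over the $x$-line. For Part~(1), I would observe that the first equation in \eqref{12_42_curve_equations} cuts out (an affine model of) the modular curve $X_1(16)$, whose rational points are $\{\infty,(0,0),(\pm 1,\pm 2)\}$ as recalled in \S\ref{modular_genus2_section}. Substituting each affine $(x,y)$ into $z^2=-x(x^6-3x^4-16x^3+3x^2-1)$ and solving gives $z=0$ for $x=0$ and $z=\pm 4$ for $x=\pm 1$, yielding the claimed $C(\Q)$.

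For Part~(2), let $(x,y,z)$ be a quadratic point with $x\notin\Q$ and let $K=\Q(x,y,z)$. Then $(x,y)$ is a non-obvious quadratic point on $X_1(16)$, so Theorem~\ref{131618_quad} forces $y=0$ and either $x^2+1=0$ or $x^2-2x-1=0$. In each case I would reduce the right-hand side of the second equation modulo the minimal polynomial of $x$. When $x^2+1=0$, this reduction gives $z^2=16+8x$; the norm $N_{\Q(i)/\Q}(16+8x)=320$ is not a rational square, so no $z\in K$ exists and this sub-case is vacuous. When $x^2-2x-1=0$, the analogous reduction yields $z^2=96x+40=(6x+2)^2$, so $z=\pm(6x+2)$, producing the four quadratic points on $C$ listed in the statement, all defined over $\Q(\sqrt 2)$.

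For Part~(3), let $(x,y,z)$ be a quadratic point with $x\in\Q$, field of definition $K$, and nontrivial Galois automorphism $\sigma$. First, $x\notin\{0,\pm 1\}$, since otherwise both equations force $y,z\in\Q$ and the point is rational. Because $y^2,z^2\in\Q$ we have $\sigma(y)=\pm y$ and $\sigma(z)=\pm z$. The sub-case $\sigma(y)=y$ forces $y\in\Q$, so $(x,y)\in X_1(16)(\Q)$ pushes $x\in\{0,\pm 1\}$ and is excluded. In the main sub-case $\sigma(y)=-y,\sigma(z)=-z$ we write $y=\alpha\sqrt d,\ z=\beta\sqrt d$ with $K=\Q(\sqrt d)$; then $yz=\alpha\beta d\in\Q$, and setting $w:=yz/x\in\Q$ gives $w^2=y^2z^2/x^2=(x^2+1)(x^2-2x-1)(x^6-3x^4-16x^3+3x^2-1)$, i.e., $(x,w)\in X(\Q)$.

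The hard part is the remaining Galois sub-case $\sigma(y)=-y,\ \sigma(z)=z$, where $z\in\Q$ but $y\notin\Q$. Here $yz\notin\Q$ and the corresponding value of $P(x):=(x^2+1)(x^2-2x-1)(x^6-3x^4-16x^3+3x^2-1)$ is a rational non-square, so no $w$ fulfills the theorem and this sub-case must be empty. Equivalently, I must show that the genus-$3$ hyperelliptic curve $X':\ u^2=-x(x^6-3x^4-16x^3+3x^2-1)$ has no rational point with $x\notin\{0,\pm 1\}$. My plan is to handle $X'$ by the same pipeline used in Lemma~\ref{12_42_aux_curve_pts}: search for low-height rational points (expected list $\{\infty,(0,0),(\pm 1,\pm 4)\}$), bound the rank of $\Jac(X')(\Q)$ via Stoll's two-descent, and apply the Chabauty-Coleman bounds of Theorems~\ref{coleman_bound}--\ref{stoll_bound} at a prime of good reduction; if the resulting bound is loose, a Mordell-Weil sieve should close the gap. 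This is the step where technical trouble is most likely.
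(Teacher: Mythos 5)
Parts (1) and (2) of your argument are correct and essentially the paper's: part (2) in particular reduces modulo the minimal polynomial of $x$ exactly as needed, and your norm computation ($N_{\Q(i)/\Q}(16+8i)=320$, not a square) makes explicit a verification the paper leaves as ``one can check.''

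The genuine gap is in part (3), and you have located it yourself: the Galois sub-case $\sigma(y)=-y$, $\sigma(z)=z$, i.e.\ a quadratic point with $x\in\Q\setminus\{0,\pm1\}$ and $z\in\Q$. For the theorem as stated this case must be empty (as you note, $P(x)$ would then be a rational non-square), so ruling it out is not optional, and your proposal only sketches a plan --- point search, a 2-descent rank bound on the Jacobian of the genus-$3$ curve $X'\colon u^2=-x(x^6-3x^4-16x^3+3x^2-1)$, Chabauty--Coleman, and possibly a Mordell--Weil sieve --- without executing any of it. That plan is not guaranteed to go through (it is contingent on the rank bound being small enough for Chabauty, and on the resulting point-count bound matching the known points), so as written the proof is incomplete. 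The paper closes this case much more cheaply, and does so already while proving part (1): the curve $X'$ (the paper's $C_2$) carries the involution $(x,z)\mapsto(-1/x,\,z/x^4)$, and the quotient map
\[ r=\frac{1-x^2}{4x}\;,\qquad s=\frac{-4x^2-z}{8x^2} \]
exhibits $X'$ as a double cover of the elliptic curve $s^2+s=r^3$ (Cremona 27a3), which has rank $0$ and exactly three rational points. Hence $\#X'(\Q)\le 6$, and the six visible points $\{\infty,(0,0),(\pm1,\pm4)\}$ exhaust $X'(\Q)$; consequently $z\in\Q$ forces $x\in\{0,\pm1\}$, which you have already excluded. Replacing your Chabauty plan by this involution-quotient argument (or by any complete determination of $X'(\Q)$) repairs the proof; the remaining steps of your part (3), including the choice $w=yz/x$, coincide with the paper's.
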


\begin{proof} As noted in \S\ref{modular_genus2_section}, the curve defined by the equation $y^2 =-x(x^2 + 1)(x^2 - 2x - 1)$ is an affine model for the modular curve $X_1(16)$, which has exactly six rational points:
\begin{equation}\label{12_42_C1pts}
X_1(16)(\Q)= \{\infty,(0,0),(\pm 1,\pm 2)\}.
\end{equation}

Let $C_2$ denote the hyperelliptic curve defined by the equation $z^2=-x(x^6-3x^4-16x^3+3x^2-1)$. Note that $C_2$ has an involution given by $(x,z)\mapsto (-1/x,z/x^4)$. The quotient of $C_2$ by this involution is the elliptic curve $E$ defined by the Weierstrass equation $s^2+s=r^3$; the quotient map $C_2\to E$ of degree 2 is given by 
\[r=\frac{1-x^2}{4x}\;\;,\;\;s=\frac{-4x^2-z}{8x^2}.\] The elliptic curve $E$ is the curve 27a3 in Cremona's tables, and has exactly three rational points. It follows that $C_2$ has at most six rational points, and a search quickly yields six such points:
\begin{equation}\label{12_42_C2pts}
C_2(\Q)= \{\infty,(0,0),(\pm 1,\pm 4)\}.
\end{equation}
From \eqref{12_42_C1pts} and \eqref{12_42_C2pts} we deduce that $C(\Q)=\{(0,0,0), (\pm 1,\pm 2,\pm 4)\}$.

Suppose now that $(x,y,z)\in C(\overline\Q)$ is a point with $[\Q(x,y,z):\Q]=2$, and let $K=\Q(x,y,z)$.

{\bf Case 1:} $x$ is quadratic. In the terminology of \S\ref{quad_pts_on_crvs}, the point $(x,y)$ is then a non-obvious quadratic point on $X_1(16)$, so Theorem \ref{131618_quad} implies that either $x^2+1=0$ or $x^2-2x-1=0$. If $x^2+1=0$, one can check that the equation $z^2=-x(x^6-3x^4-16x^3+3x^2-1)$ has no solution $z\in K$; hence, this cannot occur. Assuming $x^2-2x-1=0$, we can solve the system \eqref{12_42_curve_equations} to obtain $y=0$, $z=\pm(6x+2)$. Thus, we have shown that the only quadratic points on $C$ having a quadratic $x$-coordinate are the points $(x,0,\pm(6x+2))$ with $x^2-2x-1=0$.

{\bf Case 2:} $x\in\Q$. We cannot have $x\in\{0,\pm 1\}$ since this would imply that $y\in\{0,\pm 2\}$ and $z\in\{0,\pm 4\}$, contradicting the assumption that $(x,y,z)$ is a quadratic point on $C$; hence, $x(x^2-1)\ne 0$. It follows that $y\notin \Q$, since having $x,y\in\Q$ would imply that $x\in\{0,\pm 1\}$, by \eqref{12_42_C1pts}. Similarly, \eqref{12_42_C2pts} implies that $z\notin\Q$. Thus, we have $K=\Q(y)=\Q(z)$, so the rational numbers $-x(x^2 + 1)(x^2 - 2x - 1)$ and $-x(x^6-3x^4-16x^3+3x^2-1)$ must have the same squarefree part; hence, their product is a square, so there is a rational number $v$ such that 
\[v^2=x^2(x^2 + 1)(x^2 - 2x - 1)(x^6-3x^4-16x^3+3x^2-1).\] Letting $w=v/x$ we then have $(x,w)\in X(\Q)$.
\end{proof}

\begin{cor} In addition to the known pair $(\Q(\sqrt{-15}),-31/48)$ there is at most one pair $(K,c)$, with $K$ a quadratic number field and $c\in K$, for which $G(f_c,K)$ contains a graph of type {\rm 12(4,2)}. Moreover, for every such pair we must have $c\in\Q$.
\end{cor}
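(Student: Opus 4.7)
The plan is to run exactly the same template as in the preceding corollaries: use Lemma \ref{12_42_curve} to translate instances of graph 12(4,2) over quadratic fields into quadratic points on $C$, then invoke Theorem \ref{12_42_points} and Lemma \ref{12_42_aux_curve_pts} to enumerate the possibilities.

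Suppose $(K,c)$ is a pair with $K$ quadratic and $G(f_c,K)\supset$ 12(4,2). Then $f_c$ has a $K$-rational point of period $2$ and one of period $4$, so by Lemma \ref{12_42_curve} there is a point $(x,y,z)\in C(K)$ with $yz(x^2-1)\ne 0$ such that
\[c=\frac{(x^2-4x-1)(x^4+x^3+2x^2-x+1)}{4x(x^2-1)^2}.\]
Every point of $C(\Q)$ listed in Theorem \ref{12_42_points}(1) has $y=0$, $z=0$, or $x^2-1=0$, so $(x,y,z)$ must be a genuine quadratic point on $C$. I would then split on whether $x\in\Q$.

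If $x\notin\Q$, Theorem \ref{12_42_points}(2) forces $(x,y,z)=(x,0,\pm(6x+2))$ with $x^2-2x-1=0$; but then $y=0$, contradicting $yz\ne 0$. Hence $x\in\Q$, which immediately gives $c\in\Q$ from the formula above, proving the second assertion of the corollary. By Theorem \ref{12_42_points}(3) there is $w\in\Q$ with $(x,w)\in X(\Q)$ and $x\notin\{0,\pm1\}$, so Lemma \ref{12_42_aux_curve_pts} restricts $x$ either to the explicitly known values $\{3,-1/3\}$ or to one of at most $4$ additional rational points on $X$.

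The key observation that collapses these possibilities into a single additional pair is that the rational function defining $c$ is invariant under $x\mapsto -1/x$: a direct substitution shows
\[\frac{((-1/x)^2-4(-1/x)-1)\bigl((-1/x)^4+(-1/x)^3+2(-1/x)^2-(-1/x)+1\bigr)}{4(-1/x)((-1/x)^2-1)^2}=\frac{(x^2-4x-1)(x^4+x^3+2x^2-x+1)}{4x(x^2-1)^2},\]
since the numerator polynomial $x^4+x^3+2x^2-x+1$ is palindromic. Evaluating at $x=3$ gives $c=-31/48$, and then the system \eqref{12_42_curve_equations} yields $y^2=-60$, so $K=\Q(\sqrt{-15})$, recovering the known pair; the companion value $x=-1/3$ yields the same $(K,c)$. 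Likewise, the remark after Lemma \ref{12_42_aux_curve_pts} says the hypothetical extra rational points on $X$ form a single orbit $\{(x_0,\pm w_0),(-1/x_0,\pm w_0/x_0^5)\}$ under the order-$4$ automorphism, so they contribute only the two values $x_0$ and $-1/x_0$, which give a common value of $c$ and hence a single additional pair $(K,c)$. The main obstacle in carrying this out rigorously is verifying the $x\mapsto -1/x$ invariance cleanly (routine but must be done), and noting that the field $K$ is determined by $c$ together with the defining equations of $C$, so a single orbit under $x\mapsto -1/x$ produces a single pair $(K,c)$, giving the bound of at most one additional pair beyond $(\Q(\sqrt{-15}),-31/48)$.
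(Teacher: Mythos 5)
Your argument follows essentially the same route as the paper's proof: the same reduction via Lemma \ref{12_42_curve}, the same use of Theorem \ref{12_42_points} to exclude rational points and points with quadratic $x$-coordinate (where you are in fact more explicit than the paper, since the $y=0$ contradiction is left implicit there), the same appeal to Lemma \ref{12_42_aux_curve_pts} and its remark, and the same key observation that $c$ is invariant under $x\mapsto -1/x$, which you verify correctly.

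The one step that is not adequately justified as written is your closing assertion that ``the field $K$ is determined by $c$ together with the defining equations of $C$,'' from which you conclude that the orbit $\{x_0,-1/x_0\}$ contributes a single pair $(K,c)$. That principle is false in general: here $K=\Q(y)$ with $y^2=-x(x^2+1)(x^2-2x-1)$, so $K$ is determined by the rational value of $x$, and two distinct rational $x$-values yielding the same $c$ could a priori generate different quadratic fields; without excluding this you only obtain a bound of two additional pairs, not one, so the point is genuinely load-bearing. The repair is the short computation the paper supplies at exactly this spot: substituting $-1/x_0$ for $x_0$ multiplies $-x(x^2+1)(x^2-2x-1)$ by the square factor $x_0^{-6}$, so its squarefree part, and hence $\Q(y)$, is the same for $x_0$ and $-1/x_0$. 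With that one verification added (and noting that your ``palindromic'' claim should really be the identity $x^4p(-1/x)=p(x)$ for $p(x)=x^4+x^3+2x^2-x+1$, which is what your displayed computation actually uses), your proof is complete and coincides with the paper's.
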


\begin{proof} Suppose that $(K,c)$ is such a pair. By Lemma \ref{12_42_curve} there is a point $(x,y,z)\in C(K)$ with $y(x^2-1)\ne 0$ such that 
\begin{equation}\label{12_42_cvalue}c = \frac{(x^2 - 4x - 1)(x^4 + x^3 + 2x^2 - x + 1)}{4x(x-1)^2(x+1)^2}.
\end{equation}
It follows from Theorem \ref{12_42_points} that there is a rational number $w$ such that $(x,w)\in X(\Q)$; in particular, $c\in\Q$. We consider two possibilities: either $X(\Q)$ consists of the 12 points listed in  Lemma \ref{12_42_aux_curve_pts}, or $\#X(\Q)=16$ (see the remark following the lemma). 

Suppose first that $\#X(\Q)=12$. Then we must have $x=3$ or $x=-1/3$, since the values $x=0,\pm 1$ are not allowed. If $x=3$, then \eqref{12_42_cvalue} yields $c=-31/48$ and we have $K=\Q(y)$ with $y^2 = -60$, so $K=\Q(\sqrt{-15})$. Thus, we recover the pair $(K,c)$ in the statement of the corollary. With $x=-1/3$ we again obtain $c=-31/48$ and $K=\Q(\sqrt{-15})$.

Suppose now that $\#X(\Q)=16$. Then there are rational numbers $x_0$ and $w_0$ such that $X(\Q)$ consists of the twelve points listed in Lemma \ref{12_42_aux_curve_pts} and the four points 
\[(x_0,w_0), (-1/x_0,w_0/x_0^5), (x_0,-w_0), (-1/x_0,-w_0/x_0^5).\]
We have already determined what $K$ and $c$ must be if $x\in\{3,-1/3\}$, so it remains only to consider the case where $(x,w)$ is one of the four points listed above. Each one of these points may lead via \eqref{12_42_curve_equations} and \eqref{12_42_cvalue} to a new pair $(K,c)$; however, we make the following observations: first, the expression for $c$ given in \eqref{12_42_cvalue} is invariant under the change of variables $x\mapsto -1/x$, so that all four points will yield the same value of $c$. Second, we know that $K=\Q(y)$ and $y^2=-x(x^2 + 1)(x^2 - 2x - 1)$. One can check that, regardless of whether $x=x_0$ or $x=-1/x_0$, the field $K$ remains the same, since the value of $-x(x^2 + 1)(x^2 - 2x - 1)$ only changes by a factor of $x_0^6$ when we substitute $x=x_0$ and $x=-1/x_0$. Hence, the case $\#X(\Q)=16$ will yield at most one other possibility for the pair $(K,c)$.
\end{proof}


\subsection{Graph 14(2,1,1)}\label{14_211_section}
Our search described in \S\ref{quad_prep_comp} produced the pair \[ (K,c) = \left(\Q(\sqrt{17}), -\frac{21}{16}\right) \] for which the graph $G(f_c,K)$ is of type 14(2,1,1). We will show here that this is the only such pair $(K,c)$ consisting of a quadratic number field $K$ and an element $c \in K$.

\begin{lem}\label{14_211_curve}
Let $C/\Q$ be the affine curve of genus 5 defined by the equations
	\begin{equation}\label{14_211_curve_equations}
		\begin{cases}
			& y^2 = 2(x^3 + x^2 - x + 1)\\
			& z^2 = 2x(x^3 + x^2 + x - 1).
		\end{cases}
	\end{equation}
Consider the rational map $\phi : C \dashrightarrow \mathbb{A}^3 = \Spec \Q[a,b,c]$ given by
	\[ a = \frac{y}{x^2 - 1}\ , \ b = \frac{z}{x^2 - 1}\ , \ c = -\frac{x^4 + 2x^3 + 2x^2 - 2x + 1}{(x^2 - 1)^2}.\]
For every number field $K$, the map $\phi$ induces a bijection from the set
	\[ \{ (x,y,z) \in C(K) : x(x^4-1)(x^2 + 4x - 1) \ne 0 \} \]
to the set of all triples $(a,b,c) \in K^3$ such that $a$ and $b$ are points of type $2_3$ for the map $f_c$ satisfying $f_c^2(a)=f_c^2(b)$ and $f_c(a) \ne f_c(b)$.
\end{lem}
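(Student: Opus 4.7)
My plan is to follow the same template used for the earlier parameterization lemmas, in particular Lemmas \ref{8_2b_curve} and \ref{10_11a_curve}, proving in turn that $\phi$ is well-defined, surjective, and injective.

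For well-definedness, observe that the first defining equation $y^2 = 2(x^3+x^2-x+1)$ together with the formula for $c$ match exactly the parameterization of Lemma \ref{8_2b_curve} applied to the pair $(a,c)$, with $a$ playing the role of $p$. That lemma immediately yields that $a$ is of type $2_3$ for $f_c$ under the condition $x(x^2-1)(x^2+4x-1) \ne 0$. A short calculation gives $f_c(a) = -(x^2+1)/(x^2-1)$, and substituting the second defining equation $z^2 = 2x(x^3+x^2+x-1)$ into $b^2 + c$ produces precisely $(x^2+1)/(x^2-1) = -f_c(a)$. This simultaneously forces $f_c^2(b) = f_c^2(a)$, and makes $f_c(b) \ne f_c(a)$ equivalent to $f_c(a) \ne 0$, i.e.\ $x^2 + 1 \ne 0$. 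Promoting $b$ from ``type at most $2_3$'' to ``type exactly $2_3$'' then requires that neither $f_c(b) = -f_c(a)$ nor $b$ itself lie in the $2$-cycle $\{f_c^3(a), f_c^4(a)\}$. Using the identity $f_c^3(a) + f_c^4(a) = -1$ (valid for any $2$-cycle of $f_c$), each of the potential degeneracies $-f_c(a) \in \{f_c^3(a), f_c^4(a)\}$ reduces to a low-degree vanishing condition on $x$, namely a factor of $x(x^2-1)(x+1)$ or $(x-1)$, both excluded by the hypothesis.

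For surjectivity, start with any triple $(a,b,c) \in K^3$ satisfying the stated conditions. Apply Lemma \ref{8_2b_curve} to $(a,c)$ to produce $(x,y) \in K^2$ with $x(x^2-1)(x^2+4x-1) \ne 0$, with $y^2 = 2(x^3+x^2-x+1)$, and with $a$, $c$ given by the required formulas. The hypotheses $f_c^2(b) = f_c^2(a)$ and $f_c(b) \ne f_c(a)$ imply $f_c(b) = -f_c(a)$, so $b^2 = -a^2 - 2c$. Substituting the known expressions for $a^2$ and $c$ and setting $z = b(x^2-1)$ gives, after routine simplification, the desired relation $z^2 = 2x(x^3+x^2+x-1)$, so that $(x,y,z) \in C(K)$ satisfies $\phi(x,y,z) = (a,b,c)$. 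The additional factor $x^2+1$ in the non-vanishing hypothesis is recovered from the condition $f_c(a) \ne 0$. Injectivity is then immediate from the inversion formulas $x = (f_c(a) - 1)/f_c^2(a)$ and $y = a(x^2-1)$ provided by Lemma \ref{8_2b_curve}, together with $z = b(x^2-1)$.

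The main obstacle, if any, is the first step: verifying that $b$ is of type \emph{exactly} $2_3$, rather than being periodic or of smaller preperiodic type. This amounts to checking a short list of algebraic identities of the form $-f_c(a) = f_c^i(a)$ or $b = f_c^i(a)$ for $i \in \{3, 4\}$; each such equation translates to a polynomial vanishing condition on $x$ already covered by the factors of $x(x^4-1)(x^2+4x-1)$. Once these degeneracies are ruled out, the rest of the proof is bookkeeping essentially identical to that carried out for Lemma \ref{10_11a_curve}.
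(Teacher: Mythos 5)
Your proposal is correct and follows essentially the same route as the paper: the paper verifies the same identities ($f_c(b)=-f_c(a)$, hence $f_c^2(b)=f_c^2(a)$, and $b^2=-a^2-2c$ for surjectivity) and uses the identical inversion formulas, merely re-deriving the type-$2_3$ parameterization of $a$ directly (citing Poonen) where you invoke Lemma \ref{8_2b_curve} as a black box. One small remark: your ``promotion'' checks for $b$ are redundant, since $f_c^2(b)=f_c^2(a)$ is of type $2_1$ (not periodic) and $f_c^3(b)=f_c^3(a)$ lies in the $2$-cycle, so $b$ is automatically of type exactly $2_3$.
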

\begin{figure}[h!]
\begin{center}\includegraphics[scale=0.5]{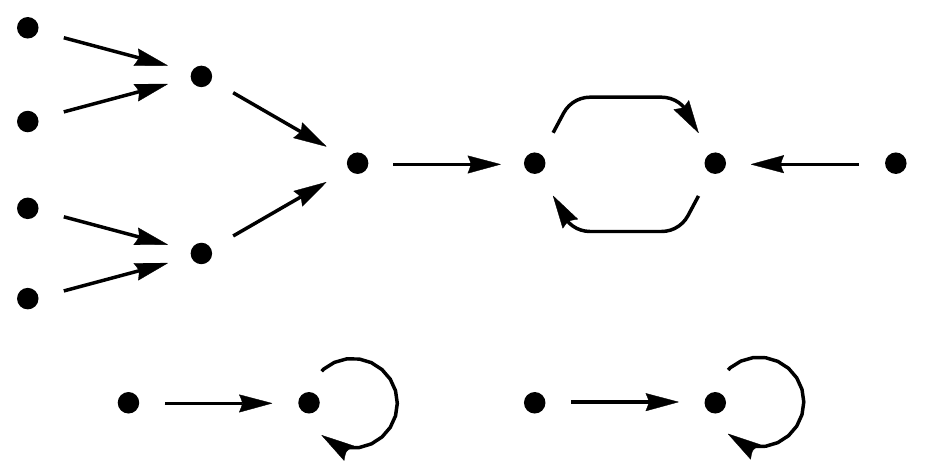}\end{center}
\caption{Graph type 14(2,1,1)}
\end{figure}
\begin{proof} Fix a number field $K$ and suppose that $(x,y,z)\in C(K)$ is a point with $x^2\ne 1$. Defining $a,b,c\in K$ as in the lemma, it is a straightforward calculation to verify $f_c^2(a)=f_c^2(b)$, $f_c^3(a)=f_c^5(a)$ and 
\begin{equation}\label{14_211_relations}
f_c(b)-f_c(a)=\frac{2(x^2+1)}{x^2-1}\;\;,\;\;f_c^4(a)-f_c^3(a)=\frac{x^2+4x-1}{x^2-1}\;\;,\;\;f_c^4(a)-f_c^2(a)=\frac{4x}{x^2-1}.
\end{equation}

It follows from these relations that if $x(x^2+1)(x^2+4x-1)\ne 0$, then $a$ and $b$ are points of type $2_3$ for $f_c$ such that $f_c^2(a)=f_c^2(b)$ and $f_c(a) \ne f_c(b)$. Hence, $\phi$ gives a well-defined map.

To see that $\phi$ is surjective, suppose that $a,b,c\in K$ satisfy the conditions of the lemma. Then an argument given in \cite[23]{poonen_prep} shows that there is an element $x\in K\setminus\{-1,0,1\}$ such that 
\begin{equation}\label{14_211_cparam}
 c = -\frac{x^4 + 2x^3 + 2x^2 - 2x + 1}{(x^2 - 1)^2} \;\;\;\mathrm{and}\;\;\; a^2 = \frac{2(x^3+x^2-x+1)}{(x^2-1)^2}.
\end{equation}
Since $f_c(b)=-f_c(a)$, then $b^2=-a^2-2c$, so using \eqref{14_211_cparam} we obtain 
\begin{equation}\label{14_211_bparam}b^2=\frac{2x(x^3+x^2+x-1)}{(x^2-1)^2}.
\end{equation}
Letting $y=a(x^2-1)$ and $z=b(x^2-1)$ we see that $(x,y,z)\in C(K)$ and $\phi(x,y,z)=(a,b,c)$. Furthermore, the relations \eqref{14_211_relations} imply that necessarily $x(x^2+1)(x^2+4x-1)\ne 0$.

 To see that $\phi$ is injective, one can verify that if $\phi(x,y,z)=(a,b,c)$, then
\[x=\frac{f_c(a)-1}{f_c^2(a)}\;\;,\;\;y=a(x^2-1)\;\;,\;\;z=b(x^2-1).\qedhere\]
\end{proof}

\begin{lem}\label{14_211_aux_curve_pts} Let $X/\Q$ be the hyperelliptic curve of genus 3 defined by the equation 
\[s^2= x(x^3 + x^2 + x - 1)(x^3 + x^2 - x + 1).\]
Then $X(\Q)=\{(0,0), (\pm 1, \pm 2), \infty\}$.
\end{lem}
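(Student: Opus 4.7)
The plan is to determine $X(\Q)$ by the Chabauty--Coleman method, in the spirit of the analogous genus-$3$ lemmas earlier in this section (cf.\ Lemmas \ref{12_2_aux_curve_pts}, \ref{12_211b_aux_curve_pts}, and \ref{12_4_aux_curve_pts}).

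First I would verify by direct substitution that each of the six listed points lies on $X$. To show these are all, I would then invoke Magma's \texttt{RankBound} function, which implements Stoll's $2$-descent on the Jacobian of a hyperelliptic curve, to obtain an upper bound on the rank $r$ of $\Jac(X)(\Q)$. Since $g(X) = 3$, Chabauty--Coleman applies as long as $r \le 2$; and in fact the rank is at least $1$, because $X$ admits the non-hyperelliptic involution $\iota\colon (x, s) \mapsto (-1/x, s/x^4)$---arising from the identity $x^8 f(-1/x) = f(x)$ for $f(x) = x(x^3+x^2+x-1)(x^3+x^2-x+1)$---whose quotient $E := X/\langle\iota\rangle$ is the elliptic curve $w^2 = u^3 + 2u^2 + 4u + 4$ (with $u = x - 1/x$ and $w = s/x^2$), and $E(\Q)$ is infinite: starting from the rational point $(0,2)$, the chord--tangent construction produces $(-1,-1)$, $(8,-26)$, $(9/4,47/8)$, and so on.

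Second, assuming $r \le 2$ is obtained from the $2$-descent, I would apply one of Coleman's, Lorenzini--Tucker's, or Stoll's bounds (Theorems \ref{coleman_bound}, \ref{lorenzini_tucker_bound}, \ref{stoll_bound}) at a prime of good reduction. The discriminant of $f$ is nonzero modulo $3$, $5$, $7$ (the two cubic factors of $f$ are irreducible and coprime modulo each of these primes, and share no root with $x$), and a direct count yields $\#X(\F_3) = 6$, $\#X(\F_5) = 10$, $\#X(\F_7) = 8$. Stoll's bound at $p=7$, for instance, gives $\#X(\Q) \le 8 + 2r \le 12$.

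The main obstacle is that no such bound will by itself reduce the upper estimate to $6$. The expected route to close the gap is to combine the $p$-adic Chabauty analysis with a Mordell--Weil sieve at auxiliary primes, in the Bruin--Stoll paradigm used elsewhere in the paper (cf.\ \S\ref{14_311_section} and Magma's \texttt{Chabauty} routine). As an alternative, since the covering $X \to E$ has degree $2$, the rational points on $X$ correspond exactly to points $(u,w) \in E(\Q)$ for which $u^2+4$ is a rational square, and Bruin's elliptic Chabauty method applied to this covering should enumerate them explicitly; in either approach I would expect the six listed points to exhaust $X(\Q)$.
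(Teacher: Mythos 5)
Your computations are accurate as far as they go: the involution $(x,s)\mapsto(-1/x,\,s/x^4)$ does descend to the elliptic curve $E\colon w^2=u^3+2u^2+4u+4$ via $u=x-1/x$, $w=s/x^2$, and $(0,2)$ is indeed a point of infinite order on $E$ (its reductions mod $3$ and $5$ give groups of coprime orders $5$ and $9$, so the torsion is trivial). But the proposal does not prove the lemma, and the gap is not cosmetic. The rank bound $r\le 2$ for $\Jac(X)(\Q)$ is assumed rather than established; even granting it, you concede that the Coleman, Lorenzini--Tucker, and Stoll bounds (Theorems \ref{coleman_bound}, \ref{lorenzini_tucker_bound}, \ref{stoll_bound}) only yield $\#X(\Q)\le 10$ or $12$, which does not reach the required $6$; and the two proposed remedies --- a Mordell--Weil sieve in the Bruin--Stoll style, or elliptic Chabauty applied to the degree-$2$ cover $X\to E$ --- are left entirely unexecuted, ending with the expectation that the six listed points exhaust $X(\Q)$. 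That is a plan whose decisive step is missing, not a proof; moreover the elliptic Chabauty suggestion is not even set up (the condition ``$u^2+4$ is a square'' just reparametrizes $X$ itself, and $E$ has rank $1$ over $\Q$, so some further quadratic base change and a rank computation over that field would be needed before Bruin's method applies).

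The idea you are missing is to use the \emph{other} lift of $x\mapsto-1/x$: composing your involution with the hyperelliptic involution gives $(x,s)\mapsto(-1/x,\,-s/x^4)$, whose quotient is not elliptic but the genus-$2$ curve
\[ Y\colon\; r^2=2u^5+2u^4+4u^3+3u^2+2u+1, \qquad u=\frac{x^2-1}{2x},\;\; r=\frac{s(x^2+1)}{4x^3}. \]
Since $\Jac(X)\sim E\times\Jac(Y)$ and, as you showed, $E$ already carries positive rank, the only hope for a clean argument is that $\Jac(Y)(\Q)$ has rank $0$ --- and this is exactly what a $2$-descent (Magma's \texttt{RankBound}) confirms, after which $Y(\Q)=\{(0,\pm1),\infty\}$ is computed directly (e.g.\ with \texttt{Chabauty0}). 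Because $X\to Y$ has degree $2$, this gives $\#X(\Q)\le 2\,\#Y(\Q)=6$, matching the six listed points, with no Chabauty--Coleman bound on $X$ itself, no Mordell--Weil sieve, and no elliptic Chabauty needed. Your correct observation that the elliptic quotient has infinite Mordell--Weil group should have been the cue to examine the complementary quotient, where the rank-$0$ abelian surface lives; without it, none of the counting bounds you invoke can close the gap from $10$ (or $12$) down to $6$.
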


\begin{proof}
The curve $X$ has an involution given by $(x,s)\mapsto (-1/x,-s/x^4)$; the quotient of $X$ by this involution is the curve $Y$ of genus 2 defined by the equation
 \[r^2=2u^5 + 2u^4 + 4u^3 + 3u^2 + 2u + 1,\] and the quotient map $X\to Y$ of degree 2 is given by
 \[u=\frac{x^2-1}{2x}\;,\;r=\frac{s(x^2+1)}{4x^3}.\]
 Using the Magma function \texttt{RankBound} we find that $\Jac(Y)(\Q)$ has rank 0, so we can easily obtain the rational points on $Y$ (for instance, using the function \texttt{Chabauty0}). Carrying out this calculation,  we obtain \[Y(\Q)=\{(0,\pm 1),\infty\}.\] Since $Y$ has three rational points, then $X$ can have at most six rational points, and we have already listed six points. 
\end{proof}

\begin{thm}\label{14_211_points} With $C$ as in Lemma \ref{14_211_curve} we have the following:
\begin{enumerate}
\item $C(\Q)=\{(\pm 1,\pm 2,\pm 2)\}$.
\item If $K$ is a quadratic field different from $\Q(\sqrt{2})$ and $\Q(\sqrt{17})$, then $C(K)=C(\Q)$.
\item For $K=\Q(\sqrt 2)$, $C(K)\setminus C(\Q)=\{(0,\pm\sqrt 2,0)\}$.
\item For $K=\Q(\sqrt{17})$, $C(K)\setminus C(\Q)=\{(x,\pm(4x+2),\pm(12x+2)): x^2-8x-1=0\}$.
\end{enumerate}
\end{thm}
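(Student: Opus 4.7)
The plan is to split the analysis into three cases: rational points, quadratic points with rational $x$-coordinate, and quadratic points with quadratic $x$-coordinate.

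First, I would determine $C(\Q)$. The first defining equation cuts out $C_1$, identified in \S\ref{8_11b_section} as the modular curve $X_1(11)$, with $C_1(\Q) = \{\infty, (\pm 1, \pm 2)\}$. The second equation cuts out a quartic $C_2$, but the substitution $(t,w) = (-1/x,\, z/x^2)$ transforms it into the very same Weierstrass form $w^2 = 2(t^3+t^2-t+1)$ as $C_1$. Pulling $C_1(\Q)$ back gives affine rational points $\{(0,0), (\pm 1, \pm 2)\}$ on $C_2$, and the common $x$-values are $\pm 1$, yielding $C(\Q) = \{(\pm 1, \pm 2, \pm 2)\}$ and proving part (1).

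Next, for a quadratic point $(x,y,z)$ with $x \in \Q$, I would split on the rationality of $y$ and $z$. If $y \in \Q$, then $(x,y) \in C_1(\Q)$ forces $x = \pm 1$, whence $z = \pm 2 \in \Q$ and $(x,y,z) \in C(\Q)$, contradicting the hypothesis. If $z \in \Q$, then $(x,z) \in C_2(\Q)$ forces $x \in \{0, \pm 1\}$; only $x=0$ is genuinely new, giving $z = 0$, $y = \pm\sqrt 2$ and the quadratic points $(0, \pm\sqrt 2, 0) \in C(\Q(\sqrt 2))$. If neither $y$ nor $z$ is rational, then both generate the same quadratic field $K$ (both are purely quadratic since their squares lie in $\Q$), so $s := yz/2 \in \Q$ satisfies $s^2 = x(x^3+x^2-x+1)(x^3+x^2+x-1)$, i.e.\ $(x,s) \in X(\Q)$ with $X$ as in Lemma \ref{14_211_aux_curve_pts}. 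That lemma forces $x \in \{0, \pm 1\}$, each of which has already been ruled out.

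Finally, if $x \notin \Q$, then $K = \Q(x)$. Applying Lemma \ref{ellcrv_quad_pts} to the first equation gives $v \in \Q$ and $(x_0, y_0) \in \{(\pm 1, \pm 2)\}$ with $y = y_0 + v(x-x_0)$ and an explicit minimal polynomial $(\ast)$ for $x$. Since $x \ne 0$, the change of variables $t = -1/x$, $w = z/x^2$ sends $(x,z) \in C_2$ to a quadratic point $(t,w) \in C_1(K)$ with $t \notin \Q$, and a second application of Lemma \ref{ellcrv_quad_pts} produces $v' \in \Q$ and $(t_0, w_0) \in \{(\pm 1, \pm 2)\}$ with $w = w_0 + v'(t - t_0)$ and an analogous minimal polynomial for $t$; reversing the substitution gives a second minimal polynomial $(\ast\ast)$ for $x$. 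Equating the coefficients of $(\ast)$ and $(\ast\ast)$ yields, for each of the $4 \times 4 = 16$ choices of $(x_0, y_0, t_0, w_0)$, a zero-dimensional subscheme of the $(v,v')$-plane. I would enumerate its rational points using Magma's \texttt{RationalPoints} function and, for each, test irreducibility of $(\ast)$. The expected output is that the only admissible minimal polynomial is $x^2 - 8x - 1 = 0$, coming from $(x_0, y_0) = (t_0, w_0) = (-1, 2)$ with $v = v' = -4$ and its sign variants; this places $K = \Q(\sqrt{17})$, and substituting into \eqref{14_211_curve_equations} then forces $y = \pm(4x+2)$ and $z = \pm(12x+2)$, proving part (4). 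Parts (2) and (3) follow from combining all cases.

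The hard part will be the mechanical $16$-case analysis of the $(v,v')$ systems, analogous to the proof of Theorem \ref{12_211a_points}. A useful simplification is that the isomorphism $(x,z) \leftrightarrow (-1/x,\, z/x^2)$ between $C_2$ and $C_1$ induces an $(x_0,y_0,v) \leftrightarrow (t_0,w_0,v')$ symmetry that effectively halves the case count; nevertheless, one must verify carefully that no rational solution giving an irreducible $(\ast)$ produces a genuinely new quadratic $x$-coordinate beyond the one listed.
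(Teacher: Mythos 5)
Your proposal is correct and follows essentially the same route as the paper: the same identification of both defining equations with $X_1(11)$, the same auxiliary curve $X$ of Lemma \ref{14_211_aux_curve_pts} for quadratic points with rational $x$, and the same double application of Lemma \ref{ellcrv_quad_pts} (once to $y$, once to $z$ via $(x,z)\mapsto(-1/x,z/x^2)$) followed by the 16-case \texttt{RationalPoints} computation — equating your two minimal polynomials of $x$ is exactly the paper's system $ae+d=0$, $be-1=0$. The only nit is that in the subcase where neither $y$ nor $z$ is rational, $x=0$ is not ``already ruled out'' but is eliminated because it would force $z=0\in\Q$, contradicting that subcase's hypothesis, which is precisely how the paper disposes of it.
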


\begin{proof} 
Let $C_1$ be the elliptic curve defined by the equation $y^2 = 2(x^3 + x^2 - x + 1)$. As shown in the proof of Theorem \ref{12_2_points}, 
\begin{equation}\label{14_211_C1pts}
C_1(\Q)=\{\infty,(\pm 1, \pm 2)\},
\end{equation} 
since $C_1$ is the modular curve $X_1(11)$. Now let $C_2$ be the hyperelliptic curve of genus 1 defined by the equation $z^2 = 2x(x^3 + x^2 + x - 1)$. Since $C_2$ has a rational point --- for instance, the point (0,0) --- it is an elliptic curve; in fact, $C_2$ is also the curve $X_1(11)$, as one can check that the map $(x,z)\mapsto(-1/x,z/x^2)$ defines a birational morphism from $C_2$ to $C_1$. We conclude that $C_2$ has exactly five rational points, and these are easily found:
\begin{equation}\label{14_211_C2pts}
C_2(\Q)=\{(0,0),(\pm 1,\pm 2)\}.
\end{equation}
From \eqref{14_211_C1pts} and \eqref{14_211_C2pts} we deduce, in particular, that $C(\Q)=\{(\pm 1,\pm 2,\pm 2)\}$.

Suppose now that $(x,y,z)\in C(\overline\Q)$ is a point with $[\Q(x,y,z):\Q]=2$, and let $K=\Q(x,y,z)$.

{\bf Case 1:} $x\in\Q$. We cannot have $x=\pm 1$, since this would imply that $y=\pm 2$ and $z=\pm 2$, contradicting the assumption that $(x,y,z)$ is a quadratic point on $C$. It follows that $y\notin \Q$, since having $x,y\in\Q$ would imply that $x=\pm 1$, by \eqref{14_211_C1pts}. If $z\in\Q$, then \eqref{14_211_C2pts} implies that $x=z=0$, and then $y^2= 2(x^3 + x^2 - x + 1)=2$. Thus, we obtain the quadratic points $(0,\pm\sqrt 2,0)$. 

Suppose now that $z\notin\Q$. Then $K=\Q(y)=\Q(z)$, so the rational numbers $ 2(x^3 + x^2 - x + 1)$ and $2x(x^3 + x^2 + x - 1)$ must have the same squarefree part; hence, their product is a square, so there is a rational number $s$ such that
\[s^2= x(x^3 + x^2 + x - 1)(x^3 + x^2 - x + 1).\]
Since $x\ne\pm 1$, Lemma \ref{14_211_aux_curve_pts} implies that $x=0$, so $z^2=2x(x^3 + x^2 + x - 1)=0$ and therefore $z=0\in\Q$, a contradiction. Hence, the case $z\notin\Q$ cannot occur.

Thus, we have shown that the only quadratic points on $C$ having a rational $x$-coordinate are the points $(0,\pm\sqrt 2,0)$.
 
{\bf Case 2:} $x$ is quadratic. Let $t^2+at+b\in\Q[t]$ be the minimal polynomial of $x$. By Lemma \ref{ellcrv_quad_pts} applied to the equation $y^2 = 2(x^3 + x^2 - x + 1)$, there exist a rational number $v$ and a point $(x_0,y_0)\in\{(\pm 1,\pm 2)\}$ such that
\begin{equation}\label{14_211_xpoly}a=\frac{2x_0-v^2+2}{2}\;\;,\;\;b=\frac{2x_0^2+v^2x_0+2x_0-2y_0v-2}{2}.\end{equation}
We now use the relation $z^2 = 2x(x^3 + x^2 + x - 1)$ to obtain a different expression for $a$ and $b$. Defining $p:=-1/x$ and $q:=z/x^2$ we have $q^2 = 2(p^3 + p^2 - p + 1)$ with $p$ quadratic, so again by Lemma \ref{ellcrv_quad_pts}, there exist a rational number $w$ and a point $(p_0,q_0)\in\{(\pm 1,\pm 2)\}$ such that the minimal polynomial of $p$ has the form $t^2+dt+e\in\Q[t]$, where
\begin{equation}\label{14_211_ppoly}d=\frac{2p_0-w^2+2}{2}\;\;,\;\; e=\frac{2p_0^2+w^2p_0+2p_0-2q_0w-2}{2}.
\end{equation}
Since $x$ and $p$ are related by $p=-1/x$, it is easy to see that $a=-d/e$ and $b=1/e$. Hence, we arrive at the system
	\begin{equation}\label{14_211_scheme}
		\begin{cases}
			& ae+d=0\\
			& be-1=0,
		\end{cases}
	\end{equation}
where $a,b,d,e$ are given by \eqref{14_211_xpoly} and \eqref{14_211_ppoly}. For each choice of the points $(x_0,y_0)$ and $(p_0,q_0)$, the system \eqref{14_211_scheme} defines a 0-dimensional scheme $S$ in $(v,w)$-plane over $\Q$, so its set of rational points can be determined. There are a total of 16 choices of pairs of points; for each pair we use the Magma function \texttt{RationalPoints} to determine all rational points on the corresponding scheme $S$, and we discard those rational points that lead to a polynomial $t^2+at+b$ that is reducible. The result of this computation is that in every case when a rational point on one of the 16 schemes $S$ leads to an irreducible polynomial $t^2+at+b$, this polynomial is $t^2-8t-1$; hence, this must be the minimal polynomial of $x$. Knowing that $x^2-8x-1=0$, the system \eqref{14_211_curve_equations} can now be solved to obtain $y=\pm(4x+2)$ and $z=\pm(12x+2)$.

We conclude that every quadratic point on $C$ having a quadratic $x$-coordinate is defined over the field $K=\Q(\sqrt{17})$ and has the form $(x,\pm(4x+2),\pm(12x+2))$ where $x^2-8x-1=0$. The theorem now follows immediately.
\end{proof}

\begin{cor}\label{14_211} Let $K$ be a quadratic field, and let $c\in K$. Suppose that $G(f_c,K)$ contains a graph of type {\rm 14(2,1,1)}. Then $c=-21/16$ and $K=\Q(\sqrt{17})$.
\end{cor}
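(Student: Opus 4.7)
The plan is to combine Lemma \ref{14_211_curve}, which parameterizes all instances of the graph substructure in question, with Theorem \ref{14_211_points}, which classifies the quadratic points on the parameterizing curve $C$. Since $G(f_c,K)$ contains a subgraph of type 14(2,1,1), in particular $f_c$ has two points $a,b\in K$ of type $2_3$ satisfying $f_c^2(a)=f_c^2(b)$ and $f_c(a)\ne f_c(b)$. By Lemma \ref{14_211_curve} there is then a point $(x,y,z)\in C(K)$ with $x(x^4-1)(x^2+4x-1)\ne 0$ such that
\[ c = -\frac{x^4 + 2x^3 + 2x^2 - 2x + 1}{(x^2 - 1)^2}. \]

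Next, I would argue that $(x,y,z)$ must be a quadratic point on $C$. Indeed, by Theorem \ref{14_211_points} part (1) every rational point on $C$ has $x=\pm 1$, which is excluded by the nonvanishing condition $x^4-1\ne 0$. So $[K:\Q]=2$ and $(x,y,z)$ is one of the quadratic points listed in parts (3) and (4) of Theorem \ref{14_211_points}. The points coming from $K=\Q(\sqrt 2)$ are $(0,\pm\sqrt 2,0)$, which violate $x\ne 0$; hence this possibility is ruled out. Therefore we are in case (4) of Theorem \ref{14_211_points}, so $K=\Q(\sqrt{17})$ and $x$ satisfies $x^2-8x-1=0$.

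Finally I would substitute $x^2=8x+1$ into the expression for $c$ to obtain the numerical value. Using $x^2-1=8x$, $x^3=65x+8$, and $x^4=528x+65$, one computes
\[ x^4+2x^3+2x^2-2x+1 = 672x+84 = 84(8x+1), \qquad (x^2-1)^2 = 64(8x+1), \]
so that $c=-84/64 = -21/16$, as desired. None of the steps present a real obstacle, since all the serious work has been done in establishing Theorem \ref{14_211_points}; the only mild care required is checking that the obstruction $x=0$ arising from the $\Q(\sqrt 2)$-points is genuinely forced by the nonvanishing hypotheses of Lemma \ref{14_211_curve}, and performing the modular arithmetic in $\Z[x]/(x^2-8x-1)$ correctly.
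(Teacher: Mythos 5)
Your proposal is correct and follows essentially the same route as the paper: invoke Lemma \ref{14_211_curve} to produce a point of $C(K)$ with the stated nonvanishing conditions, use Theorem \ref{14_211_points} to rule out the rational and $\Q(\sqrt 2)$ points and force $K=\Q(\sqrt{17})$ with $x^2-8x-1=0$, and then substitute into the formula for $c$. Your arithmetic in $\Z[x]/(x^2-8x-1)$ giving $c=-84/64=-21/16$ is correct, and your explicit elimination of the $x=0$ points is just a more detailed spelling-out of the step the paper leaves implicit.
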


\begin{proof} By Lemma \ref{14_211_curve}, there is a point $(x,y,z)\in C(K)$ with $x(x^2-1)\ne 0$ such that 
\begin{equation}\label{14_211_cvalue}c=-\frac{x^4+2x^3+2x^2-2x+1}{(x^2-1)^2}.
\end{equation} It follows from Theorem \ref{14_211_points} that $K=\Q(\sqrt{17})$ and that $x\in K$ satisfies $x^2-8x-1=0$. Using \eqref{14_211_cvalue} we then obtain $c=-21/16$. 
\end{proof} 


\subsection{Graph 14(3,1,1)}\label{14_311_section}
Our search described in \S\ref{quad_prep_comp} produced the pair \[ (K,c) = \left(\Q(\sqrt{33}), -\frac{29}{16}\right) \] for which the graph $G(f_c,K)$ is of type 14(3,1,1). We will show here that this is the only such pair $(K,c)$ consisting of a quadratic number field $K$ and an element $c\in K$.

\begin{lem}\label{14_311_curve}
Let $C/\Q$ be the affine curve of genus 9 defined by the equations
	\begin{equation}\label{14_311_curve_equations}
		\begin{cases}
			& y^2 = x^6 + 2x^5 + 5x^4 + 10x^3 + 10x^2 + 4x + 1\\
			& z^2 = x^6 - 2x^4 + 2x^3 + 5x^2 + 2x + 1.
		\end{cases}
	\end{equation}
Consider the rational map $\phi : C \dashrightarrow \mathbb{A}^3 = \Spec \Q[a,b,c]$ given by
	\[ a = \frac{y+x^2+x}{2x(x+1)}\ , \ b = \frac{z}{2x(x+1)}\ , \ c = -\frac{x^6+2x^5+4x^4+8x^3+9x^2+4x+1}{4x^2(x+1)^2}.\]
For every number field $K$, the map $\phi$ induces a bijection from the set
	\[ \{ (x,y,z) \in C(K) : x(x+1)(x^2+x+1)(x^3 + 2x^2 + x + 1) \ne 0 \} \]
to the set of all triples $(a,b,c) \in K^3$ such that $a$ is a fixed point for  the map $f_c$ and $b$ is a point of type $3_2$.
\end{lem}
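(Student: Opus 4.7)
My plan is to follow the template established by Lemmas \ref{10_311_curve} and \ref{10_32_curve}, splitting into a forward direction, surjectivity, and injectivity. First, I would rewrite $a = \tfrac{1}{2} + y/(2x(x+1))$; the first equation $y^2 = F_{18}(x)$ together with the given formula for $c$ is then exactly the fixed-point parameterization from Lemma \ref{10_311_curve}, so $a^2 + c = a$ holds automatically. The main content is therefore the analysis of $b$, which is expected to have type $3_2$.

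For the forward direction, I would invoke Proposition \ref{cycles_prop}(3) to identify the three period-3 points of $f_c$,
\[
p_3 = \frac{x^3+2x^2+x+1}{2x(x+1)},\quad q_3 = \frac{x^3-x-1}{2x(x+1)},\quad r_3 = -\frac{x^3+2x^2+3x+1}{2x(x+1)},
\]
with $p_3 \to q_3 \to r_3 \to p_3$. The central calculation is a polynomial identity: after clearing denominators one verifies
\[
-p_3 - c \;=\; \frac{x^6 - 2x^4 + 2x^3 + 5x^2 + 2x + 1}{4x^2(x+1)^2},
\]
so that the second defining equation of $C$ becomes $b^2 = -p_3 - c$, i.e.\ $f_c(b) = -p_3$. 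Hence $f_c^2(b) = f_c(-p_3) = f_c(p_3) = q_3$ lies in the 3-cycle, so $b$ is preperiodic of preperiod at most $2$ into the cycle. The excluded factor $x^3+2x^2+x+1$ is the numerator of $p_3$, so $p_3 \ne 0$, forcing $-p_3 \ne p_3$; combined with $x \ne 0, -1$ one checks easily that $-p_3 \notin \{p_3,q_3,r_3\}$, so $f_c(b)$ is not periodic and $b$ has exact type $3_2$.

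For surjectivity, suppose $(a,b,c) \in K^3$ is given with $a$ a fixed point and $b$ of type $3_2$. The two $f_c$-preimages of $f_c^2(b)$ are $\pm f_c(b)$; exactly one lies in the 3-cycle, and it must be $-f_c(b)$, since $f_c(b)$ itself has type $3_1$. Applying Proposition \ref{cycles_prop}(3) with $-f_c(b)$ playing the role of $p_3$ produces $x \in K$ with $x(x+1)(x^2+x+1) \ne 0$, the required formula for $c$, and $-f_c(b) = (x^3+2x^2+x+1)/(2x(x+1))$. Then $b^2 = f_c(b) - c = -p_3 - c$, and by the same identity used in the forward direction this equals $z^2/(4x^2(x+1)^2)$ with $z^2 = x^6-2x^4+2x^3+5x^2+2x+1$. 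For $a$, Proposition \ref{cycles_prop}(1) writes $a = \tfrac{1}{2} + \alpha$, $c = \tfrac{1}{4} - \alpha^2$; equating with the expression for $c$ coming from the 3-cycle side gives $\alpha^2 = F_{18}(x)/(4x^2(x+1)^2)$, so $y := 2\alpha x(x+1)$ satisfies $y^2 = F_{18}(x)$ and $a = (y+x^2+x)/(2x(x+1))$. The needed exclusion $x^3+2x^2+x+1 \ne 0$ is free: its failure would force $-f_c(b) = p_3 = 0 = -p_3 = f_c(b)$, placing $f_c(b)$ into the cycle.

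Injectivity follows from the explicit inverse $x = f_c^2(b) - f_c(b)$ (the identity $p_3 + q_3 = x$ applied to $-f_c(b)$ and $f_c^2(b)$), together with $y = (2a-1)x(x+1)$ and $z = 2b\,x(x+1)$. The only step that is not routine bookkeeping is the degree-six polynomial identity $-p_3 - c = z^2/(4x^2(x+1)^2)$, which must be expanded and compared term by term; this is the main obstacle in the sense of computation, but presents no conceptual difficulty.
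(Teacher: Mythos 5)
Your proof is correct and follows essentially the same route as the paper: verify the forward direction by the key polynomial identity (equivalent to the paper's relations $f_c^2(b)-f_c^3(b)$ and $f_c^4(b)-f_c(b)$), obtain surjectivity from Proposition \ref{cycles_prop}, and invert via $x=f_c^2(b)-f_c(b)$, $y=(2a-1)x(x+1)$, $z=2bx(x+1)$, exactly as in the paper. The only cosmetic difference is that you re-derive the type-$3_2$ parameterization from the preimage argument ($-f_c(b)$ is the periodic preimage of $f_c^2(b)$) where the paper cites the discussion in Poonen's paper, and your verifications of the exclusions $x^3+2x^2+x+1\ne 0$ and $-p_3\notin\{p_3,q_3,r_3\}$ check out.
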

\begin{figure}[h!]
\begin{center}\includegraphics[scale=0.5]{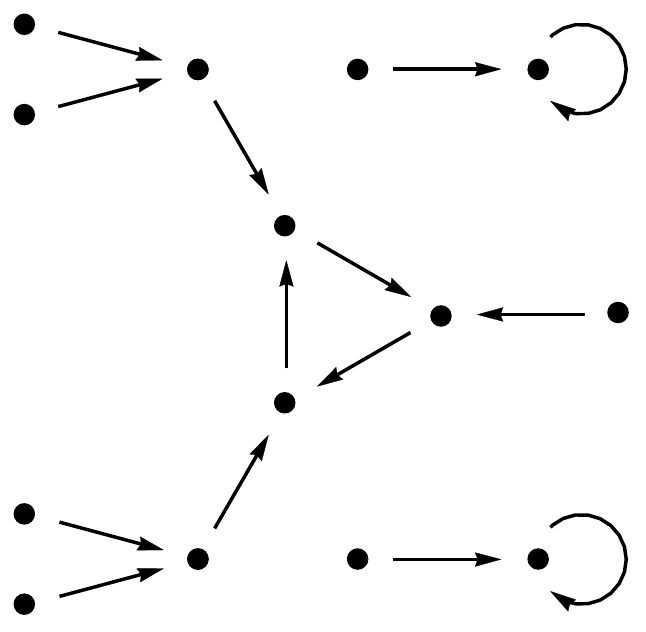}\end{center}
\caption{Graph type 14(3,1,1)}
\end{figure}
\begin{proof} Fix a number field $K$ and suppose that $(x,y,z)\in C(K)$ is a point satisfying $x(x+1)\ne 0$. Defining $a,b,c\in K$ as in the lemma, it is a routine calculation to verify that $f_c(a)=a$, $f_c^5(b)=f_c^2(b)$, and
\begin{equation}\label{14_311_relations}
f_c^2(b)-f_c^3(b)=\frac{x^2+x+1}{x+1}\;\;,\;\;f_c^4(b)-f_c(b)=\frac{x^3 + 2x^2 + x + 1}{x^2 + x}.
\end{equation}
It follows that if $(x^2+x+1)(x^3 + 2x^2 + x + 1)\ne 0$, then $b$ is a point of type $3_2$ for $f_c$. Hence, $\phi$ gives a well-defined map.

To see that $\phi$ is surjective, suppose that $a,b,c\in K$ satisfy the conditions of the lemma. Since $a$ is a fixed point for $f_c$, then by Proposition \ref{cycles_prop} there is an element $\rho\in K$ such that 
\begin{equation}\label{14_311_acvalues}a=\rho+1/2\;\;, \;\;c=1/4-\rho^2.\end{equation}
Since $b$ is a point of type $3_2$ for $f_c$, it follows from the discussion in \cite[23-24]{poonen_prep} that there is an element $x\in K\setminus\{-1,0\}$ such that
\begin{equation}\label{14_311_bcvalues} b^2=\frac{x^6-2x^4+2x^3+5x^2+2x+1}{4x^2(x+1)^2}\;\;,\;\;c = -\frac{x^6+2x^5+4x^4+8x^3+9x^2+4x+1}{4x^2(x+1)^2}.
\end{equation}
Letting $z=2x(x+1)b$ we then have the relation $z^2 = x^6 - 2x^4 + 2x^3 + 5x^2 + 2x + 1$. Equating the two expressions for $c$ given in \eqref{14_311_acvalues} and \eqref{14_311_bcvalues}, and defining $y=2x(x+1)\rho$, we obtain the equation $y^2 = x^6 + 2x^5 + 5x^4 + 10x^3 + 10x^2 + 4x + 1$. Note that 
\[a=\frac{1}{2}+\rho=\frac{1}{2}+\frac{y}{2x(x+1)}=\frac{y+x^2+x}{2x(x+1)}.\] 
Thus, we have a point $(x,y,z)\in C(K)$ such that $\phi(x,y,z)=(a,b,c)$ and $x(x+1)\ne 0$. Furthermore, \eqref{14_311_relations} implies that $(x^2+x+1)(x^3 + 2x^2 + x + 1)\ne 0$.

 To see that $\phi$ is injective, one can verify that if $\phi(x,y,z)=(a,b,c)$, then
\[x=f_c^2(b)-f_c(b)\;\;,\;\;y=x(x+1)(2a-1)\;\;,\;\;z=2x(x+1)b.\qedhere\]
\end{proof}

\begin{lem}\label{14_311_aux_curve_pts} Let $X/\Q$ be the hyperelliptic curve defined by the equation 
\[33v^2=x^6 - 2x^4 + 2x^3 + 5x^2 + 2x + 1.\]
Then $X(\Q)=\{(-2,\pm 1)\}$. 
\end{lem}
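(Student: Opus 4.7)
The plan is to apply the Chabauty--Coleman method to the genus $2$ hyperelliptic curve $X$. First I would verify that $(-2,\pm 1) \in X(\Q)$ by direct substitution: at $x=-2$ the right-hand side equals $64 - 32 - 16 + 20 - 4 + 1 = 33$, so $33v^2 = 33$ gives $v = \pm 1$. A search via Magma's \texttt{Points} function over rationals of bounded height then recovers these two points and no others, making them the conjecturally complete list.

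To convert this into a proof, I would bound the Mordell--Weil rank $r$ of $\Jac(X)(\Q)$ by calling Magma's \texttt{RankBound}, which performs a $2$-descent in the style of Stoll \cite{stoll_2descent}. The expectation is that $r \le 1 < 2 = g$, placing us in the situation where Chabauty--Coleman applies. If $r = 0$, the command \texttt{Chabauty0} would directly return $X(\Q)$. If $r = 1$, I would use \texttt{Chabauty} (which implements the Bruin--Stoll combination of Chabauty--Coleman with a Mordell--Weil sieve \cite{bruin-stoll}), supplying the divisor class of $(-2,1) - (-2,-1)$ as a candidate generator of the free part of $\Jac(X)(\Q)$ modulo torsion.

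For the Chabauty step itself, I would choose a small prime $p$ of good reduction for $X$ (e.g., $p = 5$ or $p = 7$) and count points on the reduction $\mathcal{X}(\F_p)$. Applied to this concrete $X$, Theorems \ref{coleman_bound}, \ref{lorenzini_tucker_bound}, or \ref{stoll_bound} should give a numerical upper bound on $\#X(\Q)$ which is close to $2$; if the bound does not immediately equal $2$, combining the $p$-adic annihilator of the Coleman integrals with a Mordell--Weil sieve over a few auxiliary primes should eliminate all residue classes in $\mathcal{X}(\F_p)$ beyond those containing $(-2,\pm 1)$.

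The main obstacle I anticipate is the rank-$1$ case: closing the gap between the Chabauty--Coleman numerical bound and the true count of $2$ can require a nontrivial Mordell--Weil sieve computation, and getting the sieve to succeed depends on having an explicit generator of the free part of $\Jac(X)(\Q)$ rather than just a rank bound. If \texttt{RankBound} only produces an upper bound without an explicit generator, I would need to exhibit an independent divisor class in $\Jac(X)(\Q)$ of infinite order (again, the difference $(-2,1) - (-2,-1)$, or perhaps a divisor supported on a quadratic point of $X$) and verify that it generates the free part up to finite index, before invoking the sieve.
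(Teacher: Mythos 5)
Your route is the same one the paper takes: bound the rank of $\Jac(X)(\Q)$, then run the Bruin--Stoll \texttt{Chabauty} routine seeded with the divisor class $D$ of $(-2,1)-(-2,-1)$. In fact the rank is exactly $1$ (the paper uses both \texttt{RankBounds} and \texttt{TorsionSubgroup} to get $\Jac(X)(\Q)\cong\Z$), so your \texttt{Chabauty0} contingency never arises; the substitution check at $x=-2$ and the point search are fine.

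The one place your proposal stops short is precisely the step that carries all the substance of the paper's proof: you say you would ``need to verify'' that $D$ generates the free part of $\Jac(X)(\Q)$ (at least up to an index the sieve can tolerate), but you do not say how, and without that input the rank-$1$ Chabauty--Mordell--Weil sieve argument is not complete. The paper closes this gap with Stoll's saturation method \cite[\S 7]{stoll_height_const2}, working on the model $y^2=33(x^6-2x^4+2x^3+5x^2+2x+1)$: Magma's \texttt{Height} gives $\hat h(D)=4.917\ldots$, so $D$ has infinite order (confirming rank $1$); if $D=[n]Q$ with $n\ge 2$ then $\hat h(Q)\le \hat h(D)/4<1.23$, and the bound $h(P)-\hat h(P)\le 3.349$ from \texttt{HeightConstant} forces $h(Q)<4.58$; enumerating all points of $J(\Q)$ of naive height at most $4.6$ (Stoll's \texttt{j-points} program, via \texttt{Points}) returns only the trivial point, so $D$ is not divisible. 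Since $J(\Q)\cong\Z$ and $D$ is a nondivisible point of infinite order, it generates $J(\Q)$, and then \texttt{Chabauty} applied to $D$ and the point $(-2,-33)$ returns exactly the two known points, i.e.\ $X(\Q)=\{(-2,\pm 1)\}$ on your model. So your plan is viable, but to be a proof it must include this (or an equivalent) generator verification, and note that you also need the torsion computation, not just \texttt{RankBound}, to know $J(\Q)\cong\Z$.
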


\begin{proof} Using the Magma functions \texttt{RankBounds} and \texttt{TorsionSubgroup} we find that $\Jac(X)(\Q)\cong\Z$; thus, we may apply the method of Chabauty and Coleman to bound the number of rational points on $X$. We will in fact be able to determine all rational points on $X$ by using the method outlined in \cite[\S 4.4]{bruin-stoll}, which is available in Magma via the function \texttt{Chabauty}. This function will use a Mordell-Weil sieve and Chabauty's method in order to compute $X(\Q)$. The required input is a rational point on $X$ and a generator for $J(\Q)$, where $J=\Jac(X)$. For purposes of working with Magma it will be convenient to use a slightly different equation for $X$, namely $y^2=33(x^6 - 2x^4 + 2x^3 + 5x^2 + 2x + 1)$; we will then show that the only rational points on this curve are the points $P_0=(-2,33)$ and $P_1=(-2,-33)$.

Let $D\in J(\Q)$ be the divisor class of $P_1-P_0$; we claim that $D$ generates $J(\Q)$. Assuming this for the moment, we use the \texttt{Chabauty} function with input $D\in J(\Q)$ and $P_1\in X(\Q)$ to obtain as output the complete list of rational points on $X$, which consists of the two points we have listed. 

In order to show that $D$ generates $J(\Q)$ we will use the method proposed by Stoll in \cite[\S 7]{stoll_height_const2}. Let $h$ and $\hat h$ denote the naive and canonical heights on $J(\Q)$, respectively. (See \cite[335]{flynn-smart} for the definitions.) Using Magma's \texttt{Height} function we find that $\hat h(D)=4.91745...$. In particular, $D$ has infinite order in $J(\Q)$. We claim that $D$ is not divisible in $J(\Q)$. Suppose that it is, and let $Q\in J(\Q)$ be a point such that $[n]Q=D$ for some $n>1$. Then $n^2\hat h(Q)=\hat h(D)=4.91745...$, so $\hat h(Q)<4.92/4=1.23$. The \texttt{HeightConstant} function applied to $J$ yields the number $c=3.34897...$ with the property that $h(P)-\hat h(P)\le c$ for every point $P\in J(\Q)$. In particular, we have $h(Q)< 1.23 + c < 4.58$. Using Stoll's \texttt{j-points} program, which is available in Magma via the function \texttt{Points}, we compute all points in $J(\Q)$ whose naive height does not exceed 4.6. The result is that there is only one such point, namely the trivial point. Thus, we conclude that $Q=0$, and hence $D=[n]Q=0$, a contradiction. This shows that $D$ is not divisible in $J(\Q)$, and since it has infinite order, it must therefore generate $J(\Q)$. This completes the proof.
\end{proof}

\begin{thm}\label{14_311_points} With $C$ as in Lemma \ref{14_311_curve} we have the following:
\begin{enumerate}
\item $C(\Q)=\{(0,\pm 1,\pm 1),(-1,\pm 1,\pm 1)\}$.
\item If $K$ is a quadratic field different from $\Q(\sqrt{33})$, then $C(K)=C(\Q)$.
\item For $K=\Q(\sqrt{33})$, $C(K)\setminus C(\Q)=\{(1,\pm\sqrt{33},\pm 3), (-2,\pm\sqrt{33},\pm\sqrt{33})\}$.
\end{enumerate}
\end{thm}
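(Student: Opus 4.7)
The plan is to split the analysis according to whether $x$ lies in $\Q$ or is quadratic, writing $P(x)=x^6+2x^5+5x^4+10x^3+10x^2+4x+1$ (so $y^2=P(x)$ is the affine model of $X_1(18)$ fixed in \S\ref{modular_genus2_section}) and $Q(x)=x^6-2x^4+2x^3+5x^2+2x+1$. For the rational points, the projection $(x,y,z)\mapsto(x,y)$ sends $C(\Q)$ into $X_1(18)(\Q)=\{(0,\pm 1),(-1,\pm 1),\infty^{\pm}\}$, so $x\in\{0,-1\}$; since $Q(0)=Q(-1)=1$, both yield $z=\pm 1$, producing the eight points in~(1). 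Next, suppose $(x,y,z)\in C(K)$ is a quadratic point with $x\notin\Q$. Then $(x,y)$ is a non-obvious quadratic point on $X_1(18)$, so Theorem~\ref{131618_quad} gives $x\in\{\omega,\omega^2\}$ with $\omega^3=1$ and $K=\Q(\sqrt{-3})$. Using $\omega^3=1$ and $\omega^2=-1-\omega$ one finds $Q(\omega)=-1-5\omega$, with norm $N_{\Q(\sqrt{-3})/\Q}(Q(\omega))=21$. Since $21$ is not a rational square, $Q(\omega)$ is not a square in $\Q(\sqrt{-3})$, so no $z\in K$ solves $z^2=Q(\omega)$, and this case contributes no points.

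It remains to analyze $(x,y,z)\in C(K)\setminus C(\Q)$ with $x\in\Q$. Having $y\in\Q$ would force $x\in\{0,-1\}$ and then $z\in\Q$, so $y\notin\Q$ and $K=\Q(\sqrt{P(x)})$ with $P(x)$ not a rational square. If $z\in\Q$, then $(x,z)\in C_2(\Q)$ for the genus-$2$ hyperelliptic curve $C_2:z^2=Q(x)$. I plan to bound $\operatorname{rank}\Jac(C_2)(\Q)\le 1$ via Magma's \texttt{RankBound} and apply the Bruin--Stoll Chabauty--sieve method (Magma's \texttt{Chabauty}, exactly as in Lemma~\ref{14_311_aux_curve_pts}) to establish
\[ C_2(\Q)=\{\infty^{\pm},(0,\pm 1),(-1,\pm 1),(1,\pm 3)\}. \]
Imposing that $P(x)$ not be a rational square leaves only $x=1$ (since $P(0)=P(-1)=1$ while $P(1)=33$), yielding the four points $(1,\pm\sqrt{33},\pm 3)$ over $K=\Q(\sqrt{33})$.

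Finally, if $z\notin\Q$ as well, then $K=\Q(y)=\Q(z)$ forces $P(x)Q(x)\in\Q^{*2}$, so $(x,yz)$ is a rational point on the genus-$5$ hyperelliptic curve $V:v^2=P(x)Q(x)$, which is the quotient of $C$ by the involution $(y,z)\mapsto(-y,-z)$. The hard part will be determining $V(\Q)$: genus~$5$ is beyond direct Chabauty unless the Mordell--Weil rank is small. Since $V$ dominates quadratic twists of both $X_1(18)$ and $C_2$, I expect $\Jac(V)$ to split up to isogeny into factors of rank at most~$1$, after which Chabauty combined with a Mordell--Weil sieve should give $V(\Q)=\{\infty^{\pm},(0,\pm 1),(-1,\pm 1),(-2,\pm 33)\}$; requiring that neither $P(x)$ nor $Q(x)$ be a rational square then isolates $x=-2$, producing the four points $(-2,\pm\sqrt{33},\pm\sqrt{33})$. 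As a potentially simpler alternative, the factorization $P(x)-Q(x)=x(x+1)(x+2)(2x^2+x+1)$---vanishing at the coincidence values $x=0,-1,-2$ where $P(x)=Q(x)$---may permit a direct argument that the only nontrivial common squarefree part of $P(x)$ and $Q(x)$ achievable for $x\in\Q$ is $33$, reducing this subcase to the rational points on the twisted curve $33v^2=Q(x)$ of Lemma~\ref{14_311_aux_curve_pts}, which are already known to be $\{(-2,\pm 1)\}$.
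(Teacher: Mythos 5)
Your treatment of part (1), of the case $x\notin\Q$ (the norm argument $N_{\Q(\sqrt{-3})/\Q}(Q(\omega))=21\notin\Q^{*2}$ is a clean way to make the paper's ``one can check'' explicit), and of the subcase $x\in\Q$, $z\in\Q$ (which the paper handles by citing Poonen's determination of the rational points of $z^2=Q(x)$ rather than redoing a Chabauty computation) all track the paper's proof correctly. The genuine gap is in the hardest subcase, $x\in\Q$ with both $y$ and $z$ quadratic. Your primary plan --- determine $V(\Q)$ for the genus-$5$ curve $V: v^2=P(x)Q(x)$ --- rests on the assertion that $V$ ``dominates quadratic twists of both $X_1(18)$ and $C_2$,'' which is not true as stated: for a point of $V$ the twisting parameter is the (varying) squarefree part of $P(x)$, so there is no fixed twist of $y^2=P(x)$ or $z^2=Q(x)$ that $V$ maps to, and no argument is given that $\Jac(V)$ has small rank or that a Chabauty/Mordell--Weil sieve computation on it would succeed. ``I expect \dots should give'' is not a proof, and this is exactly the step where all the content lies.

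Your fallback sketch is much closer to what the paper actually does, but it too omits the decisive steps. Writing $K=\Q(\sqrt d)$ with $d$ the common squarefree part of $P(x)$ and $Q(x)$, one must (i) show that any prime $p\mid d$ forces $x,u,v\in\Z_{(p)}$ and then $P(x)\equiv Q(x)\equiv 0\pmod p$, hence $p\mid \operatorname{Res}(P,Q)=4521=3\cdot 11\cdot 137$; (ii) use $P>0$ on $\R$ and $d\neq 1$ to get $d\in\{3,11,33,137,411,1507,4521\}$; and (iii) eliminate every value except $d=33$ by checking that the pair of twists $du^2=P(x)$, $dv^2=Q(x)$ fails to have $2$-adic or $3$-adic points. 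Your observation that $P(x)-Q(x)=x(x+1)(x+2)(2x^2+x+1)$ is correct and could be developed into a substitute for (i), but by itself it does not bound $d$ (a prime dividing $d$ could a priori divide the factor $2x^2+x+1$ at the unknown $x$, so one still needs a resultant-type computation), and nothing in your proposal performs the elimination (iii). Only after $d=33$ is pinned down does the reduction to $33v^2=Q(x)$ and Lemma \ref{14_311_aux_curve_pts} give $x=-2$ and the points $(-2,\pm\sqrt{33},\pm\sqrt{33})$; as written, that reduction is unjustified.
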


\begin{proof} As noted in \S\ref{modular_genus2_section}, the curve $y^2 = x^6 + 2x^5 + 5x^4 + 10x^3 + 10x^2 + 4x + 1$ is an affine model for the modular curve $X_1(18)$, which has exactly six rational points, namely $\infty^+,\infty^-,(0,\pm 1),(-1,\pm 1)$. The curve $z^2 = x^6 - 2x^4 + 2x^3 + 5x^2 + 2x + 1$ was studied by Poonen in \cite[\S4]{poonen_prep}, where it is shown that its only affine rational points are $(-1,\pm 1),(0,\pm 1)$, and $(1,\pm 3)$. Thus, we know all rational solutions to each of the defining equations of $C$, and we conclude that $C(\Q)=\{(0,\pm 1,\pm 1),(-1,\pm 1,\pm 1)\}$.

Suppose now that $(x,y,z)\in C(\overline\Q)$ is a point with $[\Q(x,y,z):\Q]=2$, and let $K=\Q(x,y,z)$.

{\bf Case 1:} $x$ is quadratic. In the terminology of \S\ref{quad_pts_on_crvs}, the point $(x,y)$ is then a non-obvious quadratic point on $X_1(18)$, so Theorem \ref{131618_quad} implies that $x$ is a primitive cube root of unity, and in particular $K=\Q(\sqrt{-3})$. However, in this case one can check that the equation $z^2 = x^6 - 2x^4 + 2x^3 + 5x^2 + 2x + 1$ has no solution $z\in K$. This is a contradiction, so we conclude that $x$ cannot be quadratic.

{\bf Case 2:} $x\in\Q$. We cannot have $y\in\Q$, since this would imply that $x\in\{-1,0\}$, which in turn implies that $z=\pm 1$, contradicting the assumption that $(x,y,z)$ is a quadratic point on $C$. If $z\in\Q$, then $x\in\{-1,0,1\}$; however $x$ cannot equal $-1$ or 0, since this would imply that $y=\pm 1\in\Q$. Therefore, if $z\in\Q$, then $x=1$. The system \eqref{14_311_curve_equations} can then be solved to obtain $y=\pm\sqrt{33}$, $z=\pm 3$. Thus, we have shown that the only quadratic points on $C$ having $z\in\Q$ are $(1,\pm\sqrt{33},\pm 3)$.

We assume henceforth that $z$ is quadratic. Define polynomials $f(t),g(t)\in\Q[t]$ by 
\begin{align*}
& f(t) = t^6 + 2t^5 + 5t^4 + 10t^3 + 10t^2 + 4t + 1,\\
& g(t) = t^6 - 2t^4 + 2t^3 + 5t^2 + 2t + 1.
\end{align*}
Since both $y$ and $z$ are quadratic, then $K=\Q(y)=\Q(z)$, so the rational numbers $f(x)$ and $g(x)$ must have the same squarefree part $d$; hence, there are rational numbers $u,v$ such that 
\begin{equation}\label{14_311_twists}
 \begin{cases}
	du^2 = f(x)\\
	dv^2 = g(x).
\end{cases}
\end{equation}
Suppose that $p$ is a prime number dividing $d$. The above equations imply that $u,v$, and $x$ all lie in the local ring $\Z_{(p)}$, so we may reduce \eqref{14_311_twists} modulo $p$ to obtain $f(x)\equiv g(x)\equiv 0\bmod p$. Hence, the polynomials $f(t)$ and $g(t)$ have a common root modulo $p$, so their resultant, which is $4521=3\cdot 11\cdot 137$, must be divisible by $p$. Therefore, $d$ can only be divisible by primes in the set $\{3,11,137\}$. Furthermore, the polynomial function $f:\R\to\R$ induced by $f(t)$ only takes positive values, so the first equation in \eqref{14_311_twists} implies that $d>0$. Finally, since $K=\Q(\sqrt d)$ is a quadratic field, we cannot have $d=1$. Therefore, we conclude that \[d\in\{3,11,137,33, 411,1507, 4521\}.\]
We proceed now to narrow down the possible values of $d$. For each number $d$ in the above set, we can check whether the hyperelliptic curves $du^2=f(x)$ and $dv^2=g(x)$ have points over all completions of $\Q$; the Magma function \texttt{HasPointsEverywhereLocally} can be used for this. Carrying out this computation we find that in all cases, except when $d=33$, at least one of these curves fails to have 2-adic or 3-adic points; in particular, for these values of $d$ the system \eqref{14_311_twists} cannot have a rational solution. Therefore, we must have $d=33$. It follows from \eqref{14_311_twists} that $(x,v)\in X(\Q)$, where $X$ is the curve defined in Lemma \ref{14_311_aux_curve_pts}, so the lemma implies that $x=-2$. The system \eqref{14_311_curve_equations} can then be solved to obtain $(x,y,z)=(-2,\pm\sqrt{33},\pm\sqrt{33})$. Thus, we have shown that the only quadratic points on $C$ are the points $(1,\pm\sqrt{33},\pm 3)$ and $(-2,\pm\sqrt{33},\pm\sqrt{33})$.
\end{proof}

\begin{cor} Let $K$ be a quadratic field and let $c\in K$. Suppose that $G(f_c,K)$ contains a graph of type {\rm 14(3,1,1)}. Then $c=-29/16$ and $K=\Q(\sqrt{33})$.
\end{cor}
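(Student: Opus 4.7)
The proof is a direct deduction from the two preceding results in this subsection, following exactly the template established by Corollary \ref{14_211} and the other ``unique pair'' corollaries.

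The plan is as follows. Since $G(f_c,K)$ contains a subgraph of type 14(3,1,1), the polynomial $f_c$ admits a fixed point $a\in K$ and a point $b\in K$ of type $3_2$. Applying Lemma \ref{14_311_curve}, one obtains a point $(x,y,z)\in C(K)$ satisfying $x(x+1)(x^2+x+1)(x^3+2x^2+x+1)\ne 0$ for which
\[
c \;=\; -\frac{x^6+2x^5+4x^4+8x^3+9x^2+4x+1}{4x^2(x+1)^2}. \tag{$\ast$}
\]

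Next I would argue that $(x,y,z)$ cannot be a rational point on $C$: by Theorem \ref{14_311_points}(1) every point of $C(\Q)$ has $x\in\{0,-1\}$, which is excluded by the condition $x(x+1)\ne 0$. Hence $(x,y,z)$ is a quadratic point on $C$, and Theorem \ref{14_311_points}(2)--(3) forces $K=\Q(\sqrt{33})$ with $(x,y,z)$ among the four points $(1,\pm\sqrt{33},\pm 3)$ and $(-2,\pm\sqrt{33},\pm\sqrt{33})$. In particular $x\in\{1,-2\}$, and in both cases one checks immediately that the factors $x(x+1)$, $x^2+x+1$, and $x^3+2x^2+x+1$ are nonzero, so that the nondegeneracy hypothesis of Lemma \ref{14_311_curve} is satisfied.

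It only remains to substitute $x=1$ and $x=-2$ into ($\ast$). Both substitutions make the numerator equal to $29$ and the denominator equal to $16$, yielding $c=-29/16$ in each case. This gives the asserted pair $(K,c)=(\Q(\sqrt{33}),-29/16)$.

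There is essentially no obstacle here; all of the work is contained in Theorem \ref{14_311_points}, whose proof reduces the determination of quadratic points on $C$ to the rational points of the auxiliary hyperelliptic curve of Lemma \ref{14_311_aux_curve_pts}. The only minor point worth emphasizing in the write-up is that the two admissible values $x=1$ and $x=-2$ collapse to a single value of $c$, so the pair $(K,c)$ is genuinely unique even though several quadratic points on $C$ parameterize it.
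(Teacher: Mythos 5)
Your proposal is correct and follows essentially the same route as the paper's own proof: invoke Lemma \ref{14_311_curve} to produce a point of $C(K)$ with $x(x+1)\ne 0$, rule out rational points via Theorem \ref{14_311_points}, conclude $K=\Q(\sqrt{33})$ with $x\in\{1,-2\}$, and substitute to get $c=-29/16$. The only difference is that you spell out the verification of the full nondegeneracy condition and the collapse of the two $x$-values to one $c$, which the paper leaves implicit.
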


\begin{proof} By Lemma \ref{14_311_curve} there is a point $P=(x,y,z)\in C(K)$ with $x(x+1)\ne 0$ such that 
\begin{equation}\label{14_311_cvalue}c = -\frac{x^6+2x^5+4x^4+8x^3+9x^2+4x+1}{4x^2(x+1)^2}.
\end{equation} 
Since $x\notin\{0,-1\}$, then $P$ cannot be a rational point on $C$. It follows from Theorem \ref{14_311_points} that $K=\Q(\sqrt{33})$ and that $x=1$ or $x=-2$. Substituting these values of $x$ into \eqref{14_311_cvalue} we obtain $c=-29/16$ in both cases.
\end{proof} 


\subsection{Graph 14(3,2)}\label{14_32_section}
Our search described in \S\ref{quad_prep_comp} found the pair
\[ (K,c) = \left(\Q(\sqrt{17}), -29/16\right) \]
for which the graph $G(f_c,K)$ is of type 14(3,2). We will show here that this is the only such pair $(K,c)$ consisting of a quadratic number field $K$ and an element $c\in K$.

\begin{lem}\label{14_32_curve}
Let $C/\Q$ be the affine curve of genus 9 defined by the equations
	\begin{equation}\label{14_32_curve_equations}
		\begin{cases}
			& y^2 = x^6 + 2x^5 + x^4 + 2x^3 + 6x^2 + 4x + 1\\
			& z^2 = x^6 - 2x^4 + 2x^3 + 5x^2 + 2x + 1.
		\end{cases}
	\end{equation}
Consider the rational map $\phi : C \dashrightarrow \mathbb{A}^3 = \Spec \Q[a,b,c]$ given by
	\[ a = \frac{y-x^2-x}{2x(x+1)}\ , \ b = \frac{z}{2x(x+1)}\ , \ c = -\frac{x^6+2x^5+4x^4+8x^3+9x^2+4x+1}{4x^2(x+1)^2}.\]
For every number field $K$, the map $\phi$ induces a bijection from the set
	\[ \{ (x,y,z) \in C(K) : xy(x+1)(x^2+x+1)(x^3 + 2x^2 + x + 1) \ne 0 \} \]
to the set of all triples $(a,b,c) \in K^3$ such that $a$ is a point of period 2 and $b$ is a point of type $3_2$ for  the map $f_c$.
\end{lem}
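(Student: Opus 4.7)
My plan is to mirror the structure of Lemma \ref{14_311_curve} almost verbatim, swapping the role of the fixed point for a period-2 point; the ingredients are all in hand and the only new item to check is an algebraic identity linking the two expressions for $c$. As in the previous lemmas, I will verify three things in order: that $\phi$ is well-defined, that $\phi$ is surjective, and that $\phi$ is injective, with the second being the only step that needs real work.

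For well-definedness, I will fix $(x,y,z)\in C(K)$ satisfying the stated nonvanishing conditions and compute $f_c(a)$, $f_c^2(a)$, $f_c^3(b)$, $f_c^6(b)$ directly from the definitions. A routine expansion will show $f_c^2(a)=a$ and $f_c^6(b)=f_c^3(b)$, together with explicit rational-function formulas
\[
a-f_c(a)=\frac{y}{x(x+1)},\quad f_c^2(b)-f_c^3(b)=\frac{x^2+x+1}{x+1},\quad f_c^4(b)-f_c(b)=\frac{x^3+2x^2+x+1}{x^2+x}
\]
(the last two are exactly those appearing in Lemma \ref{14_311_curve}, since the construction of $b$ from $z$ is identical there). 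The condition $y\ne 0$ then forces $a$ to have exact period $2$, and the conditions $(x^2+x+1)(x^3+2x^2+x+1)\ne 0$ force $b$ to have exact type $3_2$.

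For surjectivity, suppose $(a,b,c)\in K^3$ with $a$ of period $2$ and $b$ of type $3_2$ for $f_c$. Proposition \ref{cycles_prop}(2) supplies $\sigma\in K^{\ast}$ with $a=\sigma-1/2$ and $c=-3/4-\sigma^2$, while the discussion in \cite[23--24]{poonen_prep} (used identically in Lemma \ref{14_311_curve}) supplies $x\in K\setminus\{0,-1\}$ with
\[
b^2=\frac{x^6-2x^4+2x^3+5x^2+2x+1}{4x^2(x+1)^2},\qquad c=-\frac{x^6+2x^5+4x^4+8x^3+9x^2+4x+1}{4x^2(x+1)^2}.
\]
Setting $z=2x(x+1)b$ immediately gives the second equation of \eqref{14_32_curve_equations}. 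The key calculation is then to equate the two expressions for $c$; after clearing denominators and setting $y=2x(x+1)\sigma$, I need to verify the polynomial identity
\[
4x^2(x+1)^2\sigma^2=(x^6+2x^5+4x^4+8x^3+9x^2+4x+1)-3x^2(x+1)^2=x^6+2x^5+x^4+2x^3+6x^2+4x+1,
\]
which is a single expansion and yields the first equation of \eqref{14_32_curve_equations}. A direct rewrite then gives $a=\sigma-1/2=(y-x^2-x)/(2x(x+1))$ as claimed, and the inequalities $y(x+1)(x^2+x+1)(x^3+2x^2+x+1)\ne 0$ are forced by the exactness of the periods of $a$ and $b$ via the formulas displayed above. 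Injectivity is then routine: from $\phi(x,y,z)=(a,b,c)$ one recovers $x=f_c^2(b)-f_c(b)$ from the second displayed relation, then $z=2x(x+1)b$ and $y=x(x+1)(2a+1)$.

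The main (and essentially only) obstacle is the polynomial identity that links the two parameterizations of $c$; once that identity is in place, every other step is a symbolic manipulation parallel to Lemma \ref{14_311_curve}. I do not expect any genuine difficulty, since the only change from graph 14(3,1,1) is replacing the fixed-point parameterization $(a,c)=(\rho+1/2,\,1/4-\rho^2)$ by the period-2 parameterization $(a,c)=(\sigma-1/2,\,-3/4-\sigma^2)$, which shifts the constant $1/4$ to $-3/4$ and accounts precisely for the change from $F_{18}(x)$ to $F_{13}(x)$ in the defining equation for $y$.
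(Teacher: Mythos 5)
Your proposal matches the paper's proof essentially step for step: the same three relations for well-definedness, the same use of Proposition \ref{cycles_prop}(2) and Poonen's parameterization of type-$3_2$ points for surjectivity (with the same polynomial identity producing $F_{13}$), and the same inversion formulas for injectivity. One small correction: in the well-definedness step you should verify $f_c^5(b)=f_c^2(b)$ (exactly as in Lemma \ref{14_311_curve}), not just $f_c^6(b)=f_c^3(b)$, since the latter only places $f_c^3(b)$ in a cycle and, together with your displayed nonvanishing relations, does not by itself rule out $b$ being of type $m_3$ rather than $3_2$.
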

\begin{figure}[h!]
\begin{center}\includegraphics[scale=0.5]{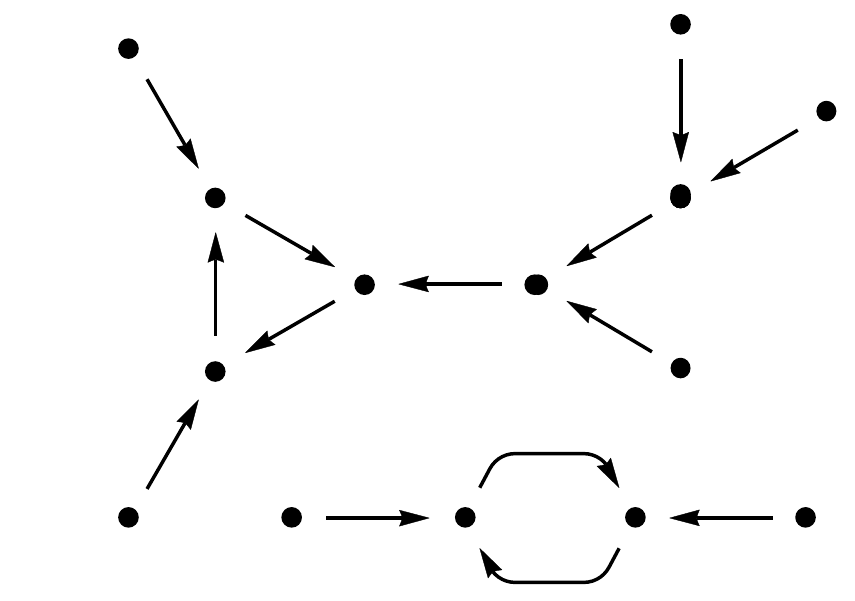}\end{center}
\caption{Graph type 14(3,2)}
\end{figure}
\begin{proof}
Fix a number field $K$ and suppose that $(x,y,z)\in C(K)$ is a point satisfying $x(x+1)\ne 0$. Defining $a,b,c\in K$ as in the lemma, it is a straightforward calculation to verify that $f_c^2(a)=a$, $f_c^5(b)=f_c^2(b)$, and 
\begin{equation}\label{14_32_relations}
a-f_c(a)=\frac{y}{x(x+1)}\;\;,\;\;f_c^2(b)-f_c^3(b)=\frac{x^2+x+1}{x+1}\;\;,\;\;f_c^4(b)-f_c(b)=\frac{x^3 + 2x^2 + x + 1}{x^2+x}.
\end{equation}
It follows from these relations that if $y(x^2+x+1)(x^3 + 2x^2 + x + 1)\ne 0$, then $a$ is a point of period 2 for $f_c$ and $b$ is a point of type $3_2$. Hence, $\phi$ gives a well-defined map.

To see that $\phi$ is surjective, suppose that $a,b,c\in K$ satisfy the conditions of the lemma. Since $a$ is a point of period 2 for $f_c$, Proposition \ref{cycles_prop} implies that there is an element $\sigma\in K$ such that
\begin{equation}\label{14_32_acvalues}a=\sigma-1/2\;\;, \;\;c=-3/4-\sigma^2.
\end{equation}
Since $b$ is a point of type $3_2$ for $f_c$, it follows from the discussion in \cite[23-24]{poonen_prep} that there is an element $x\in K\setminus\{-1,0\}$ such that
\begin{equation}\label{14_32_bcvalues} b^2=\frac{x^6-2x^4+2x^3+5x^2+2x+1}{4x^2(x+1)^2}\;\;,\;\;c = -\frac{x^6+2x^5+4x^4+8x^3+9x^2+4x+1}{4x^2(x+1)^2}.
\end{equation}
In particular, letting $z=2x(x+1)b$ we have the relation $z^2 = x^6 - 2x^4 + 2x^3 + 5x^2 + 2x + 1$. Equating the two expressions for $c$ given in \eqref{14_32_acvalues} and \eqref{14_32_bcvalues}, and defining $y=2x(x+1)\sigma$, we obtain the equation $y^2 = x^6 + 2x^5 + x^4 + 2x^3 + 6x^2 + 4x + 1$. Note that 
\[a=\sigma-\frac{1}{2}=\frac{y}{2x(x+1)}-\frac{1}{2}=\frac{y-x^2-x}{2x(x+1)}.\] Thus, we have a point $(x,y,z)\in C(K)$ such that $\phi(x,y,z)=(a,b,c)$ and $x(x+1)\ne 0$. Finally, the relations \eqref{14_32_relations} imply that $y(x^2+x+1)(x^3 + 2x^2 + x + 1)\ne 0$.

 To see that $\phi$ is injective, one can verify that if $\phi(x,y,z)=(a,b,c)$, then
\[x=f_c^2(b)-f_c(b)\;\;,\;\;y=x(x+1)(2a+1)\;\;,\;\;z=2x(x+1)b.\qedhere\]
\end{proof}

\begin{thm}\label{14_32_points} With $C$ as in Lemma \ref{14_32_curve} we have the following:
\begin{enumerate}
\item $C(\Q)=\{(0,\pm 1,\pm 1),(-1,\pm 1,\pm 1)\}$.
\item If $K$ is a quadratic field different from $\Q(\sqrt{17})$, then $C(K)=C(\Q)$.
\item For $K=\Q(\sqrt{17})$, $C(K)\setminus C(\Q)=\{(1,\pm\sqrt{17},\pm 3)\}$.
\end{enumerate}
\end{thm}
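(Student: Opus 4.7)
The plan is to follow the template of Theorem \ref{14_311_points} very closely. Write $f(t) := t^6 + 2t^5 + t^4 + 2t^3 + 6t^2 + 4t + 1$ and $g(t) := t^6 - 2t^4 + 2t^3 + 5t^2 + 2t + 1$. For part (1), the first equation in \eqref{14_32_curve_equations} defines an affine model of $X_1(13)$ (see \S\ref{modular_genus2_section}), whose only affine rational points are $(0,\pm 1)$ and $(-1,\pm 1)$; the second equation is the same one analyzed by Poonen in \cite[\S4]{poonen_prep}, whose affine rational points are $(-1,\pm 1)$, $(0,\pm 1)$, and $(1,\pm 3)$. Intersecting the two sets of allowed $x$-values forces $x\in\{0,-1\}$, which yields the eight rational points on $C$ listed in (1).

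For (2) and (3), let $P=(x,y,z)\in C(\qbar)$ be a quadratic point and put $K=\Q(x,y,z)$. If $x\notin\Q$, then $x$ is quadratic and $(x,y)$ is a non-obvious quadratic point on $X_1(13)$, contradicting Theorem \ref{131618_quad}(1); hence $x\in\Q$. Next, if $y\in\Q$, then $(x,y)$ is a rational point on $X_1(13)$, forcing $x\in\{0,-1\}$; a direct calculation gives $g(0)=g(-1)=1$, so $z=\pm 1\in\Q$ and $P\in C(\Q)$, contradicting that $P$ is quadratic. Hence $y\notin\Q$.

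Now suppose $z\in\Q$. Then $(x,z)$ is a rational point on the second curve, so $x\in\{-1,0,1\}$; the first two values again force $y\in\Q$, so $x=1$, giving $y^2=f(1)=17$ and $z^2=g(1)=9$. We recover precisely the four points $(1,\pm\sqrt{17},\pm 3)$ defined over $\Q(\sqrt{17})$ listed in (3). The remaining subcase is that both $y$ and $z$ are quadratic, in which case $K=\Q(y)=\Q(z)$, so there exist a squarefree integer $d\ne 1$ and rationals $u,v$ with $du^2=f(x)$ and $dv^2=g(x)$. Imitating the argument in the proof of Theorem \ref{14_311_points}, every prime dividing $d$ divides $\operatorname{res}(f,g)$, and a real-analytic check that $f(t)>0$ for all $t\in\R$ forces $d>0$; this reduces $d$ to an explicit finite list of candidates.

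The main obstacle is to eliminate each remaining candidate for $d$. I would proceed exactly as in \S\ref{14_311_section}: for each $d$, use Magma's \texttt{HasPointsEverywhereLocally} on the pair $du^2=f(x)$, $dv^2=g(x)$ and expect that $2$-adic or $3$-adic obstructions eliminate all but one or two values. For each survivor, the equation $du^2=f(x)$ (or $dv^2=g(x)$) defines a hyperelliptic curve of genus $2$ whose rational points must be determined; using \texttt{RankBounds} together with Magma's \texttt{Chabauty} function, supplemented if necessary by Stoll's height-constant method from \cite{stoll_height_const2} to verify that a chosen divisor generates the Mordell--Weil group (as in Lemma \ref{14_311_aux_curve_pts}), this can be carried out. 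The expected outcome is that no surviving $d$ yields a quadratic point on $C$ beyond those already obtained in the $z\in\Q$ subcase, completing the proofs of (2) and (3).
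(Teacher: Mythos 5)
Your proposal follows the paper's proof almost step for step: part (1) via the known rational points on $X_1(13)$ and on Poonen's curve $z^2=g(x)$, the exclusion of quadratic $x$ via Theorem \ref{131618_quad}(1), the observation that $y\notin\Q$, the subcase $z\in\Q$ producing exactly $(1,\pm\sqrt{17},\pm 3)$, and the reduction of the subcase with $z$ quadratic to a pair of twists $du^2=f(x)$, $dv^2=g(x)$ with $d>0$, $d\neq 1$, supported on primes dividing $\operatorname{res}(f,g)$. The only substantive difference is in the endgame, which you leave as an ``expected outcome'' rather than carry out: the paper computes $\operatorname{res}(f,g)=4321=29\cdot 149$, so $d\in\{29,149,4321\}$; the values $d=29$ and $d=149$ are eliminated by the absence of $2$-adic points on the twists, and for the survivor $d=4321$ no Chabauty computation is needed at all --- Magma's \texttt{RankBound} and \texttt{TorsionSubgroup} show $\Jac(X)(\Q)$ is trivial for $X\colon 4321u^2=f(x)$, and since $X$ has no rational Weierstrass point, any two rational points would give a nontrivial divisor class, so $X(\Q)=\emptyset$, a contradiction. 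Thus the heavier machinery you budget for (Chabauty plus a Mordell--Weil sieve and Stoll's height-constant argument, as in Lemma \ref{14_311_aux_curve_pts}) turns out to be unnecessary here; your plan is sound and would have sufficed, but as written the decisive elimination of the candidate $d$'s is asserted rather than proved, and that is the step the paper actually supplies.
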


\begin{proof}
As noted in the proof of Theorem \ref{10_32_charac}, the only rational solutions to the equation $y^2 = x^6 + 2x^5 + x^4 + 2x^3 + 6x^2 + 4x + 1$ are $(0,\pm 1)$ and $(-1,\pm 1)$, since the hyperelliptic curve defined by this equation is the modular curve $X_1(13)$. The curve $z^2 = x^6 - 2x^4 + 2x^3 + 5x^2 + 2x + 1$ was studied in \cite[\S4]{poonen_prep}, where it is shown that its only affine rational points are $(-1,\pm 1), (0,\pm 1)$, and $(1,\pm 3)$. We conclude that $C(\Q)=\{(0,\pm 1,\pm 1),(-1,\pm 1,\pm 1)\}$.

Suppose now that $(x,y,z)\in C(\overline\Q)$ is a point with $[\Q(x,y,z):\Q]=2$, and let $K=\Q(x,y,z)$. We claim that $x$ must be rational. Indeed, if $x$ were quadratic, then $(x,y)$ would be a non-obvious quadratic point on $X_1(13)$, and such points do not exist, by Theorem \ref{131618_quad}; hence, $x\in\Q$. We cannot have $y\in\Q$, since this would imply that $x\in\{-1,0\}$, which in turn implies that $z=\pm 1$, thus contradicting the assumption that $(x,y,z)$ is a quadratic point on $C$; hence, $y\notin\Q$. 

{\bf Case 1:} $z\in\Q$. Since $x\in\Q$, then $x\in\{-1,0,1\}$. However, $x$ cannot equal $-1$ or 0 since this would imply that $y=\pm 1\in\Q$; therefore, $x=1$. The system \eqref{14_32_curve_equations} can then be solved to obtain $y=\pm\sqrt{17}$, $z=\pm 3$. Thus, we obtain the quadratic points $(1,\pm\sqrt{17},\pm 3)$.

{\bf Case 2:} $z$ is quadratic. Define polynomials $f(t),g(t)\in\Q[t]$ by 
\begin{align*}
& f(t) = t^6 + 2t^5 + t^4 + 2t^3 + 6t^2 + 4t + 1,\\
& g(t) = t^6 - 2t^4 + 2t^3 + 5t^2 + 2t + 1.
\end{align*}
Since both $y$ and $z$ are quadratic, then $K=\Q(y)=\Q(z)$, so the rational numbers $f(x)$ and $g(x)$ must have the same squarefree part $d$; hence, there are rational numbers $u,v$ such that 
\begin{equation}\label{14_32_twists}
 \begin{cases}
	du^2 = f(x)\\
	dv^2 = g(x).
\end{cases}
\end{equation}
The polynomial function $f:\R\to\R$ induced by $f(t)$ only takes positive values, so the first equation in \eqref{14_32_twists} implies that $d>0$. Moreover, since $K=\Q(\sqrt d)$ is a quadratic field, we cannot have $d=1$. Let $p$ be a prime number dividing $d$. The above equations imply that $u,v$, and $x$ all lie in the local ring $\Z_{(p)}$, so we may reduce \eqref{14_32_twists} modulo $p$ to obtain $f(x)\equiv g(x)\equiv 0\bmod p$. Hence, the polynomials $f(t)$ and $g(t)$ have a common root modulo $p$, so their resultant, which is $4321=29\cdot 149$, must be divisible by $p$. Therefore, $d$ can only be divisible by primes in the set $\{29,149\}$, and so \[d\in\{29,149,4321\}.\] For the values $d=29$ and 149 one can check that the hyperelliptic curves $du^2 = f(x)$ and $dv^2 = g(x)$ have no 2-adic point, and hence have no rational point; therefore, $d$ must equal 4321.

Let $X$ be the hyperelliptic curve defined by the equation $4321u^2=f(x)$. Using the Magma functions \texttt{RankBound} and \texttt{TorsionSubgroup} we find that $\Jac(X)(\Q)$ is trivial. Since $X$ has no rational Weierstrass point, the number of rational points on $X$ must be even. However, any two points in $X(\Q)$ would yield a nontrivial point in $\Jac(X)(\Q)$, so we must have $X(\Q)=\emptyset$. This is a contradiction since $(x,u)\in X(\Q)$. 

Since the assumption that $z$ is quadratic has led to a contradiction, we conclude that this case cannot occur, so the analysis done in the case $z\in\Q$ shows that the only quadratic points on $C$ are $(1,\pm\sqrt{17},\pm 3)$.
\end{proof}

\begin{cor}\label{14_32} Let $K$ be a quadratic field and let $c\in K$. Suppose that $G(f_c,K)$ contains a graph of type {\rm 14(3,2)}. Then $c=-29/16$ and $K=\Q(\sqrt{17})$.
\end{cor}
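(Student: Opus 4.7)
The plan is to combine the parameterization from Lemma \ref{14_32_curve} with the complete determination of quadratic points on $C$ given by Theorem \ref{14_32_points}, in direct analogy to the proofs of the preceding corollaries such as \ref{14_211} and the corollary following Theorem \ref{14_311_points}.

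First I would invoke Lemma \ref{14_32_curve}: since $G(f_c,K)$ contains a subgraph of type 14(3,2), the map $f_c$ has both a point of period 2 and a point of type $3_2$ in $K$, so the lemma produces a point $(x,y,z) \in C(K)$ satisfying $xy(x+1)(x^2+x+1)(x^3+2x^2+x+1) \neq 0$ and
\[
c = -\frac{x^6+2x^5+4x^4+8x^3+9x^2+4x+1}{4x^2(x+1)^2}.
\]
In particular, $x(x+1) \neq 0$, so $(x,y,z)$ cannot be one of the rational points of $C$ listed in Theorem \ref{14_32_points}(1); hence $(x,y,z)$ is a quadratic point on $C$.

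Next I would apply Theorem \ref{14_32_points}(2)--(3): the only quadratic points on $C$ are the four points $(1, \pm\sqrt{17}, \pm 3)$, which are defined over $K = \Q(\sqrt{17})$. Therefore $K = \Q(\sqrt{17})$ and $x = 1$. Substituting $x = 1$ into the formula for $c$ yields
\[
c = -\frac{1+2+4+8+9+4+1}{4 \cdot 1 \cdot 4} = -\frac{29}{16},
\]
which completes the proof. There is no genuine obstacle here, since all of the work has already been done in establishing Theorem \ref{14_32_points}; the corollary is a direct readout of that theorem via the parameterization.
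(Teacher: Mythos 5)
Your proposal is correct and follows the same route as the paper: use Lemma \ref{14_32_curve} to produce a point $(x,y,z)\in C(K)$ with $x(x+1)\ne 0$, conclude from Theorem \ref{14_32_points} that the point is quadratic and hence one of $(1,\pm\sqrt{17},\pm 3)$ over $\Q(\sqrt{17})$, and substitute $x=1$ to get $c=-29/16$. The arithmetic and the logic match the paper's proof exactly.
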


\begin{proof} By Lemma \ref{14_32_curve} there is a point $P=(x,y,z)\in C(K)$ with $x(x+1)\ne 0$ such that 
\begin{equation}\label{14_32_cvalue}c = -\frac{x^6+2x^5+4x^4+8x^3+9x^2+4x+1}{4x^2(x+1)^2}.
\end{equation} 
Since $x\notin \{0,-1\}$, Theorem \ref{14_32_points} implies that $P$ cannot be a rational point on $C$ and is therefore quadratic. It follows from Theorem \ref{14_32_points} that $K=\Q(\sqrt{17})$ and $P\in\{(1,\pm\sqrt{17},\pm 3)\}$. In particular, $x=1$, so \eqref{14_32_cvalue} yields $c=-29/16$.
\end{proof}


\section{Computation of preperiodic points}\label{data_gathering}
We explain in this section the method by which we gathered the data summarized in Appendices \ref{graph_pictures} and \ref{graph_data}. The idea is to systematically choose a collection of number fields of fixed degree, and a list of quadratic polynomials defined over these fields, and then compute the rational preperiodic points for all the chosen polynomials. More precisely, the steps we take for gathering data on preperiodic points over number fields of a fixed degree $n$ are the following:

\begin{enumerate}[itemsep = 1mm]
\item Choose a bound $D$ and find all number fields $K$ of degree $n$ whose discriminant satisfies $|\Delta_K|\leq D$. 
\item Choose a bound $B$ and for each number field $K$ from the previous step, find all elements $c\in K$ such that $H_K(c)\leq B$. Here, $H_K$ denotes the relative height function on $K$ (see \S\ref{prep_height_bound} below).
\item For each number field $K$, and for all $c\in K$ from the previous step, determine the set $\PrePer(f_c,K)$.
\end{enumerate} 

The list of fields from step (1) is known to be finite (see \cite[\S III.2]{neukirch}) and can be obtained for small $n$ and $D$, for instance, from Jones's online database \cite{jones}. For step (2) we use the main algorithm discussed in \cite{doyle-krumm} for listing elements of bounded height in a given number field. The method used to carry out step (3) will be developed in \S\ref{julia_sets} - \S\ref{prep_algorithm} below. Finally, in \S\ref{quad_prep_comp} we describe a specific computation done following the above steps in the case of quadratic fields.

\begin{rem}Hutz \cite{hutz_wiki,hutz} has designed a suite of algorithms for studying arithmetic dynamics in Sage, including an alternate approach to computing preperiodic points for morphisms of projective space over $\Q$. 
\end{rem}

\subsection{Filled Julia sets}\label{julia_sets}
In this section we include proofs of the theoretical results upon which the algorithms in \S\ref{prep_algorithm} are based. Let $K$ be a number field and let $c\in K$. The properties of the filled Julia sets of the map $f_c$ at places of $K$ can be used to deduce bounds on the heights of preperiodic points for $f_c$, as well as to create simple tests for eliminating many points $P\in K$ as possible elements of the set $\PrePer(f_c,K)$. In some cases these tests can even show that $f_c$ has no (finite) $K$-rational preperiodic point at all --- see Lemmas \ref{finiteobstructions} and \ref{infobstructions} in \textsection\ref{prep_algorithm}. Further discussion of filled Julia sets, along with proofs of some closely related results, may be found in \cite[\textsection 6]{call-goldstine}.

Let $M_K$ denote the set of nontrivial places of $K$, and let $M_K^{\infty}$ and $M_K^0$ denote the sets of archimedean (infinite) places and non-Archimedean (finite) places, respectively. For each $v \in M_K^{\infty}$ there is an embedding $\sigma_v : K \hookrightarrow \C$ such that $|x|_v = |\sigma_v(x)|_{\C}$,  where $|\;|_{\C}$ is the usual complex absolute value. For each $v \in M_K^0$ there is a maximal ideal $\p$ in the ring of integers $\O_K$ such that $|x|_v = (N(\p))^{-\ord_{\p}(x)/(e(\p)f(\p))}$, where $N(\p)$, $e(\p)$, and $f(\p)$ are the norm, ramification index, and residue degree of $\p$, respectively. When it is clear from the context which place $v$ is being considered, we write the corresponding absolute value simply as $|\;|$.

For a place $v \in M_K$, we denote by $K_v$ the completion of $K$ with respect to $v$, and we denote by $\C_v$ the completion of an algebraic closure of $K_v$. For $a \in \C_v$ and $r \in \R_{\ge 0}$, we denote by $D(a,r)$ the closed disk (in $\C_v$) of radius $r$ centered at $a$. For all subsequent uses of the notation $D(a,r)$, it will be clear which place $v$ is being considered.

For $c \in K_v$, we define the {\it filled Julia set} of $f_c$ at $v$ to be the set
\[ \K_{v,c} := \{x \in \C_v : \{f_c^n(x)\}_{n \ge 0} \mbox{ is a bounded set in } \C_v\}.\]
The following properties of $\K_{v,c}$ may be deduced immediately from the definition:

\begin{itemize}[itemsep = 1.5mm]
\item The filled Julia set $\K_{v,c}$ is totally invariant under $f_c$; that is, $f_c^{-1}(\K_{v,c}) = \K_{v,c} = f_c(\K_{v,c})$.
\item The set $\K_{v,c}$ is invariant under the map $z \mapsto -z$.
\item All of the preperiodic points for $f_c$ --- with the exception of the fixed point at $\infty$ --- lie in $\K_{v,c}$.
\end{itemize}

It is this third property which motivates our consideration of filled Julia sets, as it implies that 
\[ \PrePer(f_c,K) \subseteq \bigcap_{v \in M_K} (\K_{v,c}\cap K).\]

(Equality actually holds; the reverse containment may be proved using the theory of canonical heights.)
In particular, we can in many cases show that, for some $v \in M_K$, the $v$-adic filled Julia set of $f_c$ has an empty intersection with $K$, in which case we may conclude that $f_c$ has no $K$-rational preperiodic point.

We collect now a series of results giving precise bounds on filled Julia sets.

\subsection*{Infinite places} Recall that if $v$ is an infinite place of $K$, there is a corresponding embedding $\sigma_v:K\hookrightarrow\C$ which extends to an embedding $\sigma_v:K_v\hookrightarrow\C$. The image of $K_v$ is either $\R$ or $\C$, depending on whether $\sigma_v$ embeds $K$ into $\R$. For convenience of notation, throughout this section we will denote by $|\;|$ the absolute value corresponding to $v$, given by $|x|=|\sigma_v(x)|_{\C}$, and we identify $K_v$ with its image under $\sigma_v$.

	\begin{prop}\label{infjulia}
	Let $v$ be an infinite place of the number field $K$, and let $c \in K_v \subseteq \C$. Then
	\[ \K_{v,c} \subseteq D\left(0, \frac{1}{2} + \sqrt{\frac{1}{4} + |c|}\right). \]
	\end{prop}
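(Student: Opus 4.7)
The plan is to show that the complement of the closed disk $D(0,R)$, where $R = \frac{1}{2} + \sqrt{\frac{1}{4}+|c|}$, lies in the escape set of $f_c$, i.e., every $z \in \C$ with $|z| > R$ has an orbit $\{f_c^n(z)\}$ that tends to infinity. This immediately gives $\K_{v,c} \subseteq D(0,R)$ by definition of the filled Julia set.

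The crucial observation is that $R$ is the positive real root of the quadratic $t^2 - t - |c| = 0$, so $R$ satisfies the identity $R^2 = R + |c|$. In particular, $R \geq 1$ (since $R \geq \tfrac{1}{2} + \tfrac{1}{2}$). I would then argue as follows: for any $z$ with $|z| > R$, write $|z| = R + \delta$ with $\delta > 0$. Using the triangle inequality in the form $|z^2 + c| \geq |z|^2 - |c|$ together with the identity $R^2 = R + |c|$, a direct expansion gives
\[
|f_c(z)| \;\geq\; (R+\delta)^2 - |c| \;=\; R^2 + 2R\delta + \delta^2 - |c| \;=\; R + 2R\delta + \delta^2.
\]
Since $R \geq 1$, this yields $|f_c(z)| \geq R + 2\delta + \delta^2 > (R+\delta) + \delta = |z| + \delta$. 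In particular $|f_c(z)| > R$, so the argument can be applied again to $f_c(z)$ in place of $z$, now with a strictly larger $\delta$.

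The iteration step is the only mildly delicate point: induction on $n$ shows that if we set $\delta_0 = \delta$ and $\delta_{n+1} = |f_c^{n+1}(z)| - R$, then the estimate above gives $\delta_{n+1} \geq \delta_n + \delta$, hence $|f_c^n(z)| \geq R + (n+1)\delta \to \infty$. This proves $z \notin \K_{v,c}$. I do not expect any real obstacle; the only thing to be careful about is extracting the inequality $R \geq 1$ to pass from the factor $2R\delta$ to $2\delta$, and choosing the formulation of the inductive step so that the constant $\delta$ does not shrink with $n$.
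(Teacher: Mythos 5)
Your proof is correct and follows essentially the same strategy as the paper: show that any point outside the disk of radius $R = \frac{1}{2}+\sqrt{\frac{1}{4}+|c|}$ has $|f_c(z)| \ge |z| + \delta$ for a fixed $\delta > 0$, and conclude by induction that the orbit escapes to infinity. The paper packages the fixed increment as an $\epsilon$ absorbed into the square root rather than via the identity $R^2 = R + |c|$ and $R \ge 1$, but this is only a difference in bookkeeping, not in the argument.
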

	
	\begin{proof}
	Suppose $|x| > \frac{1}{2} + \sqrt{\frac{1}{4} + |c|}$, and choose $\epsilon > 0$ such that 
		\[
			|x| > \frac{1}{2} + \sqrt{\frac{1}{4} + |c| + \epsilon}.
		\] Then $|x|^2 - |c| > |x| + \epsilon$, so $|f_c(x)|> |x| + \epsilon$. By induction, we have $|f_c^n(x)| > |x| + n\epsilon$, so $|f_c^n(x)| \to \infty$. Therefore $x \not \in \K_{v,c}$.
	\end{proof}

	In the case that $K_v = \R$ it may happen that $\K_{v,c}$ does not intersect $K_v$. More precisely, we have the following trichotomy:
	\begin{prop}\label{realjulia}
	Let $v$ be a real place of the number field $K$, and suppose $c \in K_v = \R$.
	\begin{enumerate}[itemsep = 1.6mm]
	\item\label{item1: realjulia} If $c > \frac{1}{4}$, then $\K_{v,c} \cap \R = \emptyset$.
	\item\label{item2: realjulia} If $-2 \le c \le \frac{1}{4}$, then $\K_{v,c} \cap \R = [-a, a]$, where $a = \frac{1}{2} + \sqrt{\frac{1}{4} - c}$ is the larger of the two real fixed points of $f_c$.
	\item\label{item3: realjulia} If $c < -2$, then $-a-c \ge 0$, and
	\[ \K_{v,c} \cap \R \subseteq [-a, -\sqrt{-a-c}] \cup [\sqrt{-a-c}, a]. \]
	\end{enumerate}
	\end{prop}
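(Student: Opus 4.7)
The three cases will be handled by simple direct analysis of the real dynamics of $f_c$, separating the questions of containment in $\K_{v,c}\cap\mathbb{R}$ into (a) escape to infinity under iteration and (b) forward invariance of a suitable interval.

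\textbf{Case \ref{item1: realjulia}.} Here I would observe that for every $x\in\mathbb{R}$,
\[
f_c(x)-x = x^2-x+c = (x-\tfrac{1}{2})^2 + (c-\tfrac{1}{4}) \ge c-\tfrac{1}{4} =: \delta > 0 .
\]
Setting $x_n=f_c^n(x)$, induction gives $x_n \ge x+n\delta$, so the orbit is unbounded and therefore no real $x$ lies in $\K_{v,c}$.

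\textbf{Case \ref{item2: realjulia}.} Since $a$ is a fixed point, $a^2+c=a$, and $a\ge 1/2>0$. My plan is to prove that the interval $[-a,a]$ is forward invariant under $f_c$, which together with boundedness forces $[-a,a]\subseteq\K_{v,c}\cap\mathbb{R}$. For $|x|\le a$ we have $f_c(x)\le a^2+c=a$, so the upper bound is immediate. For the lower bound I must check $f_c(x)\ge -a$, i.e.\ $c\ge -a$. When $c\ge -1/2$ this is trivial since $-a<0\le c+1/2-1/2$; when $-2\le c<-1/2$, squaring the desired inequality $-(c+1/2)\le\sqrt{1/4-c}$ reduces it to $c(c+2)\le 0$, which holds on this range. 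Conversely, if $|x|>a$, then $x^2+c>a^2+c=a>0$, so $f_c(x)>a$, and iterating the estimate
\[
f_c(y)-a = y^2+c-a = (y-a)(y+a)+ (a^2+c-a) - \dots
\]
— more cleanly, for $y>a\ge 1/2$ one has $f_c(y)-a=y^2-a^2=(y-a)(y+a)\ge (y-a)(2a)\ge (y-a)$, so the orbit grows without bound. Thus $\K_{v,c}\cap\mathbb{R}=[-a,a]$.

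\textbf{Case \ref{item3: realjulia}.} First I check $-a-c\ge 0$: squaring $-c-1/2\ge\sqrt{1/4-c}$ yields $c(c+2)\ge 0$, valid since $c<-2$. Next, the bound of Proposition \ref{infjulia} gives $\K_{v,c}\cap\mathbb{R}\subseteq [-a,a]$, since $|c|=-c$ makes $\tfrac{1}{2}+\sqrt{\tfrac14+|c|}=a$. Finally, if $|x|<\sqrt{-a-c}$, then $f_c(x)=x^2+c<-a$, placing $f_c(x)$ outside $[-a,a]$; by total invariance of $\K_{v,c}$, $x\notin\K_{v,c}$. This excludes the open interval $(-\sqrt{-a-c},\sqrt{-a-c})$ from $\K_{v,c}\cap\mathbb{R}$, yielding the stated containment.

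The only mildly delicate step is verifying that $[-a,a]$ really is forward invariant in Case \ref{item2: realjulia} across the full parameter range $-2\le c\le 1/4$, since the sign of $c$ forces a small case split when checking $c\ge -a$; everything else is essentially immediate from the definitions and from Proposition \ref{infjulia}.
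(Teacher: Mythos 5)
Your overall strategy is the same as the paper's, but one step does not hold up as written: the escape argument for $|x|>a$ in Case (2). From $f_c(y)-a=(y-a)(y+a)\ge (y-a)\cdot 2a\ge y-a$ you conclude that ``the orbit grows without bound,'' but the final inequality only says the gap $y_n-a$ is nondecreasing, which is compatible with a bounded orbit; and even the stronger intermediate bound, iterated to $f_c^n(y)-a\ge (2a)^n(y-a)$, degenerates exactly at $c=\tfrac14$, where $2a=1$. So as stated the containment $\K_{v,c}\cap\R\subseteq[-a,a]$ is not established at the endpoint $c=\tfrac14$, and never follows from the last inequality alone. The repair is one line: if $y_0>a\ge\tfrac12$, then $y_0+a>1$, the gap is nondecreasing so $y_n\ge y_0$, hence each step multiplies the gap by $y_n+a\ge y_0+a>1$ and $f_c^n(y_0)-a\ge (y_0-a)(y_0+a)^n\to\infty$; alternatively use the paper's device of picking $\epsilon>0$ with $|x|>\tfrac12+\sqrt{\tfrac14-c+\epsilon}$, which gives the linear growth $f_c^n(x)>|x|+n\epsilon$. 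A smaller slip in the same case: your ``trivial'' verification of $c\ge-a$ when $c\ge-\tfrac12$ reads $-a<0\le c$, which is false for $-\tfrac12\le c<0$; the correct trivial reason is $-a\le-\tfrac12\le c$ since $a\ge\tfrac12$.

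Everything else is sound and runs parallel to the paper: Case (1) is identical; the forward invariance of $[-a,a]$ in Case (2) is the same computation (the paper checks $-a\le c$ by a direct chain of inequalities rather than your squaring/case split, but both work); and in Case (3) your use of Proposition \ref{infjulia}, noting that $|c|=-c$ makes the radius equal to $a$, is a tidy alternative to the paper's appeal to the case-(2) escape argument, while your explicit verification of $-a-c\ge 0$ supplies a detail the paper leaves implicit.
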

	
	\begin{proof} 
		We start with \eqref{item1: realjulia}. For any $x \in \R$ we have
			\[ f_c(x) = \left(x - \frac{1}{2}\right)^2 + x + \left(c - \frac{1}{4}\right) \ge x + \left(c - \frac{1}{4}\right). \]
		It follows by induction that $f_c^n(x) \ge x + n\left(c - \frac{1}{4}\right)$, and since $c > \frac{1}{4}$ this grows without bound as $n \to \infty$.
		
		Next we prove \eqref{item2: realjulia}. Suppose $|x| > a$. Choosing $\epsilon > 0$ such that $|x| > \frac{1}{2} + \sqrt{\frac{1}{4} - c + \epsilon}\;$ we see that
			\begin{align*}
			|x| - \frac{1}{2} > \sqrt{\frac{1}{4} - c + \epsilon}
				&\implies x^2 - |x| + \frac{1}{4} > \frac{1}{4} - c + \epsilon\\
				&\implies f_c(x) = x^2 + c > |x| + \epsilon.
			\end{align*}
		By induction it follows that $f_c^n(x) > |x| + n\epsilon$ for all $n\geq 1$, so $|f_c^n(x)| \to \infty$. Therefore $\K_{v,c} \subseteq [-a,a]$.
		
		 Now suppose that $|x| \le a$. Then
		\begin{equation}\label{eq1}
		0 \le x^2 \le a^2 \implies c \le x^2 + c \le a^2 + c \implies c \le f_c(x) \le f_c(a) = a.
		\end{equation}
		We now claim that $-a \le c$. Indeed,
		\begin{align}\label{eq2}
	-2 \le c \le \frac{1}{4} &\implies 0 \le \frac{1}{4} - c \le \frac{9}{4} \notag \\
		&\implies \sqrt{\frac{1}{4} - c} \le \frac{3}{2} \\
		&\implies -1 \le \frac{1}{2} - \sqrt{\frac{1}{4} - c} = 1 - a \notag \\
		&\implies -a \le a - a^2 = c. \notag
		\end{align}
		Combining \eqref{eq1} and \eqref{eq2} shows that if $|x| \le a$, then $|f_c(x)| \le a$, and therefore $|f_c^n(x)| \le a$ for all $n \ge 1$. It follows that $[-a,a] \subseteq \K_{v,c} \cap \R$, which completes the proof of statement \eqref{item2: realjulia}.
		
		Finally, we prove \eqref{item3: realjulia}. If $|x| < \sqrt{-a-c}$, then $x^2 < -a - c$, and hence $f_c(x) < -a$. Therefore $|f_c(x)| > a$, and the argument used to prove statement \eqref{item2: realjulia} shows that $|f_c(x)| \to \infty$. 
	\end{proof}
	
\subsection*{Finite places} As in the case of infinite places, for finite places $v$ of $K$ we can bound the filled Julia set $\K_{v,c}$ and in some cases give obstructions to having $K_v$-rational points in it. 

	\begin{prop}\label{finitefj}
	Let $v$ be a finite place of the number field $K$, and let $c \in K_v$.
	\begin{enumerate}[itemsep = 1.6mm]
	\item\label{Item1:finitefj} If $|c| \le 1$, then $\K_{v,c} = D(0,1)$.
	\item\label{Item2:finitefj} If $|c| > 1$, then $\K_{v,c} \subseteq \left\{x \in \C_v : |x| = |c|^{1/2}\right\}$.
	\end{enumerate}
	\end{prop}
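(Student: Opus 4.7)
The key tool is the non-archimedean nature of $|\cdot|_v$: for any $a,b \in \C_v$ one has $|a+b|_v \le \max(|a|_v, |b|_v)$, with equality whenever $|a|_v \ne |b|_v$. Applied to $f_c(x) = x^2 + c$ this rigidifies the dynamics completely, reducing everything to a short case analysis based on the comparison between $|x|_v^2$ and $|c|_v$, followed by induction on the iterate count. I expect no real obstacle; the whole proposition should fall out of the ultrametric inequality.

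For part (1), assume $|c|_v \le 1$. First I would check that $D(0,1)$ is forward invariant under $f_c$: if $|x|_v \le 1$ then $|f_c(x)|_v \le \max(|x|_v^2, |c|_v) \le 1$, so by induction every iterate $f_c^n(x)$ lies in $D(0,1)$, yielding $D(0,1) \subseteq \K_{v,c}$. For the reverse inclusion, suppose $|x|_v > 1$. Then $|x^2|_v = |x|_v^2 > |x|_v \ge 1 \ge |c|_v$, so strict comparison in the ultrametric inequality forces $|f_c(x)|_v = |x|_v^2$. Iterating gives $|f_c^n(x)|_v = |x|_v^{2^n}$, which tends to infinity, so $x \notin \K_{v,c}$.

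For part (2), assume $|c|_v > 1$ and let $x \in \C_v$ with $|x|_v \ne |c|_v^{1/2}$; I will show the orbit is unbounded. If $|x|_v > |c|_v^{1/2}$, then $|x^2|_v > |c|_v$, so $|f_c(x)|_v = |x|_v^2 > |x|_v > |c|_v^{1/2}$, and the same argument applies to $f_c(x)$; by induction $|f_c^n(x)|_v = |x|_v^{2^n} \to \infty$. If instead $|x|_v < |c|_v^{1/2}$, then $|x^2|_v < |c|_v$, so the equality case of the strong triangle inequality forces $|f_c(x)|_v = |c|_v > |c|_v^{1/2}$; one further iterate puts us in the previous case, and the orbit escapes. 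In either case $x \notin \K_{v,c}$.

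The only point requiring any care — and it is not really an obstacle — is that in part (2) a single application of $f_c$ need not exhibit escape: a point with $|x|_v < |c|_v^{1/2}$ maps to one on the sphere $|y|_v = |c|_v$ rather than one of strictly larger absolute value than its predecessor. One must therefore take two iterates before the monotone growth sets in. Beyond tracking this, the argument is a clean bookkeeping exercise with the ultrametric inequality, and no deeper input is needed.
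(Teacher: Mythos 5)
Your proof is correct and follows essentially the same route as the paper: forward invariance of $D(0,1)$ via $|x^2+c|\le\max\{|x|^2,|c|\}$ when $|c|\le 1$, and the ultrametric equality $|f_c(x)|=|x|^2$ (resp.\ $|f_c(x)|=|c|$) to force escape when $|x|$ is too large (resp.\ too small), with induction giving $|f_c^n(x)|=|x|^{2^n}\to\infty$. The two-iterate subtlety you flag in part (2) is handled identically in the paper by reducing the case $|x|<|c|^{1/2}$ to the case $|x|>|c|^{1/2}$ after one application of $f_c$.
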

	
	\begin{proof}
	We begin with the proof of \eqref{Item1:finitefj}. If $|x| \le 1$, then $|f_c(x)| = |x^2 + c| \le \max\{|x|^2, |c|\} \le 1$. On the other hand, if $|x| > 1$, then $|f_c(x)| = |x|^2 > 1$. By induction we have $|f_c^n(x)| = |x|^{2^n} $, so $|f_c^n(x)| \to \infty$.
	
	For the proof of \eqref{Item2:finitefj}, if $|x| > |c|^{1/2}$, then $|f_c(x)| = |x|^2 > |c| > |c|^{1/2}$. By induction, $|f_c^n(x)| = |x|^{2^n}$, so $|f_c^n(x)| \to \infty$. If instead we have $|x| < |c|^{1/2}$, then $|f_c(x)| = |c| > |c|^{1/2}$, and we reduce to the previous case to show that $|f_c^n(x)| \to \infty$. 
	\end{proof}
	
	\begin{cor}\label{valuegroup}
	Let $v$ be a finite place of the number field $K$. Suppose $c \in K_v$ is such that $|c| > 1$ and $|c| \not \in |K_v^{\times}|^2$. Then $\K_{v,c} \cap K_v = \emptyset$.
	\end{cor}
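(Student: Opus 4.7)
The plan is to deduce this immediately from Proposition~\ref{finitefj}(2). First I would take an arbitrary $x \in \K_{v,c} \cap K_v$ and seek a contradiction. Since $|c| > 1$, Proposition~\ref{finitefj}(2) applies and gives $|x| = |c|^{1/2}$, equivalently $|x|^2 = |c|$.

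Next I would observe that because $x \in K_v$, we have either $x = 0$ (which is impossible, since $|x|^2 = |c| > 1 > 0$) or $x \in K_v^{\times}$, in which case $|x| \in |K_v^{\times}|$. Squaring then gives $|c| = |x|^2 \in |K_v^{\times}|^2$, which directly contradicts the standing hypothesis that $|c| \notin |K_v^{\times}|^2$. Therefore no such $x$ exists, and $\K_{v,c} \cap K_v = \emptyset$.

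There is no real obstacle here: the corollary is a one-line consequence of part~(2) of the preceding proposition together with the trivial fact that absolute values of nonzero elements of $K_v$ land in the value group $|K_v^{\times}|$. The only thing worth flagging in the write-up is the degenerate case $x = 0$, which is ruled out by $|c| > 1$; otherwise the argument is purely a matter of comparing $|x|^2$ with an element assumed to lie outside $|K_v^{\times}|^2$.
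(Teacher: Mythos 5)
Your argument is correct and is exactly the intended deduction: the paper states this as an immediate corollary of Proposition \ref{finitefj}(2) without further proof, and your one-line argument (any $x \in \K_{v,c} \cap K_v$ satisfies $|x|^2 = |c|$, forcing $|c| \in |K_v^{\times}|^2$ since $x \neq 0$) is precisely what is implicit there. Flagging the degenerate case $x = 0$ is fine but unnecessary; nothing further is needed.
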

	
	If we assume that $v$ is a finite place that lies above an odd prime in $\Z$, we can improve the statement of Corollary \ref{valuegroup} by weakening the hypothesis that $|c| \not \in |K_v^{\times}|^2$.
	
	\begin{prop}\label{oddprime}
	Let $v$ be a finite place of the number field $K$ corresponding to a maximal ideal $\p \subset \O_K$ that lies above an odd rational prime. Suppose $c \in K_v$ is such that $|c| > 1$ and $-c$ is not a square in $K_v$. Then $\K_{v,c} \cap K_v = \emptyset$.
	\end{prop}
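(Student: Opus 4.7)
The goal is to show that under the hypotheses, no element of $K_v$ can lie in the filled Julia set $\K_{v,c}$. The starting point is Proposition \ref{finitefj}\eqref{Item2:finitefj}: since $|c| > 1$, every point of $\K_{v,c}$ has absolute value exactly $|c|^{1/2}$. My plan is to argue by contradiction: I will assume some $x \in K_v \cap \K_{v,c}$ exists and derive from this a square root of $-c$ in $K_v$.

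First I would note that $f_c(x) = x^2 + c$ also lies in $\K_{v,c}$ by $f_c$-invariance, so $|x^2 + c| = |c|^{1/2}$. Since $|c| > 1$ we have $|c|^{1/2} < |c| = |x^2|$, so the ultrametric inequality is strict, forcing
\[
\left| 1 + \frac{c}{x^2} \right| < 1.
\]
In other words, $u := -c/x^2$ is an element of $\O_{K_v}^{\times}$ congruent to $1$ modulo the maximal ideal $\mathfrak{m}_v$.

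The second step is to extract a square root of $u$ from $\O_{K_v}$ via Hensel's lemma applied to $T^2 - u$. This is exactly where the hypothesis that $\p$ lies above an odd rational prime enters: the derivative $2T$ is a unit at $T = 1$, so Hensel's lemma produces $\tilde u \in \O_{K_v}^{\times}$ with $\tilde u^2 = u$. Then $-c = (x \tilde u)^2$ is a square in $K_v$, contradicting the hypothesis on $c$.

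There is no real obstacle here; the only subtlety is making sure that Hensel's lemma applies, which is precisely why one needs residue characteristic different from $2$. If $\p$ lay above $2$, the naive Hensel lift would fail because $f'(1) = 2$ would lie in $\mathfrak{m}_v$, and a more delicate analysis (with a stronger congruence of $u$ to $1$) would be required --- this is why the proposition strengthens Corollary \ref{valuegroup} only at odd primes.
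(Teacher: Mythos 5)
Your proof is correct and follows essentially the same route as the paper: both arguments use $|f_c(x)| = |c|^{1/2} < |x^2|$ to produce a unit congruent to $1$ modulo the maximal ideal (you take $u = -c/x^2$ directly, the paper factors out a uniformizer first, which is only a cosmetic difference) and then apply Hensel's lemma, valid because the residue characteristic is odd, to conclude that $-c$ is a square.
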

	
	\begin{proof}
	Assume for the sake of a contradiction that there exists some $x \in \K_{v,c} \cap K_v$. The invariance of the filled Julia set implies that $f_c(x) \in \K_{v,c}$ as well, so by Proposition \ref{finitefj} we have $|x^2 + c| = |c|^{1/2}$, and therefore 
	\begin{equation}\label{eq4}
	\left| \frac{x^2}{-c} - 1 \right| = \frac{1}{|c|^{1/2}} < 1.
	\end{equation}
	Let $\pi$ be a uniformizer for the valuation ring $R_v \subset K_v$, and write $-c = \pi^r u$, where $r < 0$ is an integer and $u \in R_v^{\times}$. Similarly, write $x = \pi^s w$, with $s \in \Z$ and $w \in R_v^{\times}$. Then \eqref{eq4} implies that $\pi^{2s-r}\frac{w^2}{u} \in R_v^{\times}$, so that $r = 2s$. We can therefore rewrite \eqref{eq4} as $|w^2 - u| < 1$. Since $\p$ lies above an odd prime, it follows by Hensel's Lemma that $u$ must be a square in $K_v$, so $-c = \pi^{2s}u$ is a square as well.
	\end{proof}
	
	\begin{rem}
	The same statement does not necessarily hold for places $v$ lying above 2. For example, suppose $K = \Q$ and $c = -\frac{3}{4}$. Then $-c$ is not a square in $\Q_2$, but the map $f_c$ admits $\pm \frac{1}{2}$ as preperiodic points, and therefore the 2-adic filled Julia set of $f_c$ contains $\Q_2$-rational points.
	\end{rem}


\subsection{Height bounds on preperiodic points}\label{prep_height_bound}
Continue with the notation from \S\ref{julia_sets}. The {\it relative height} function on $K$ is the map $H_K:K\to\R_{\geq 1}$ defined by \[ H_K(x) = \prod_{v \in M_K} \max\{|x|_v, 1\}^{n_v}, \]
where $n_v = [K_v : \Q_v]$ is the local degree of $K$ at the place $v$. 

 It is a well-known fact (see \cite[\S3.1]{silverman_dynamics}) that for any bound $B$, the set $\{x\in K:H_K(x)\leq B\}$ is finite. Fix an element $c\in K$. The standard way of showing that the set $\PrePer(f_c,K)$ is finite is to prove the existence of a constant $B(K,c)$ such that for all points $P\in \PrePer(f_c,K)$, $H_K(P)\leq B(K,c)$. Since any set of points of bounded height is finite, this observation yields a finite search space for determining all preperiodic points for $f_c$. The standard technique using canonical heights yields the weak bound $B(K, c) = 2^{[K:\Q]} H_K(c)$. (See, e.g.,  the proofs of Theorems 3.11 and 3.20 in \cite{silverman_dynamics}.) Our analysis of filled Julia sets in the previous section allows us to improve this bound significantly, and therefore make the search space substantially smaller when the height of $c$ is large.


\begin{thm}\label{preperheight}
Let $K$ be a number field and let $c \in K$. For all points $P \in \PrePer(f_c,K)$ we have
\begin{equation}\label{eq:heightbound2}
H_K(P) \le \left(\frac{1 + \sqrt{5}}{2}\right)^{[K:\Q]} H_K(c)^{1/2}.
\end{equation}
\end{thm}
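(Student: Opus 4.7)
The plan is as follows. Since $P$ is preperiodic for $f_c$, it lies in $\K_{v,c} \cap K_v$ for every place $v \in M_K$. Thus, the strategy is to use the explicit descriptions of filled Julia sets from \S\ref{julia_sets} to bound $|P|_v$ at each place, then multiply these local bounds (with the appropriate local degrees $n_v$ as exponents) to obtain a bound on $H_K(P)$.

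The key local estimate I would establish is the pointwise inequality
\[
\max\{|P|_v, 1\} \;\le\; \kappa_v \cdot \max\{|c|_v, 1\}^{1/2},
\]
where $\kappa_v = \frac{1+\sqrt{5}}{2}$ if $v$ is archimedean and $\kappa_v = 1$ if $v$ is non-archimedean. At a finite place, Proposition \ref{finitefj} gives $|P|_v \le 1$ when $|c|_v \le 1$ and $|P|_v = |c|_v^{1/2}$ when $|c|_v > 1$, so both cases yield $\max\{|P|_v, 1\} \le \max\{|c|_v,1\}^{1/2}$ immediately. At an infinite place, Proposition \ref{infjulia} gives $|P|_v \le \tfrac{1}{2} + \sqrt{\tfrac{1}{4}+|c|_v}$, and this already exceeds $1$, so one needs to verify
\[
\tfrac{1}{2} + \sqrt{\tfrac{1}{4} + |c|_v} \;\le\; \tfrac{1+\sqrt{5}}{2}\cdot\max\{|c|_v,1\}^{1/2}.
\]
When $|c|_v \le 1$ the right-hand side is $\tfrac{1+\sqrt{5}}{2}$, and the bound is clear since $\sqrt{\tfrac{1}{4}+|c|_v} \le \sqrt{5/4}$. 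When $|c|_v > 1$, setting $t = |c|_v^{1/2} > 1$, squaring reduces the claim to $2(1+\sqrt{5})\,t^2 \ge 2(1+\sqrt{5})\,t$, which holds precisely because $t \ge 1$; note equality holds at $t=1$, which explains why the golden ratio appears and is sharp.

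With these local inequalities in hand, taking the product over all $v \in M_K$ weighted by $n_v$ yields
\[
H_K(P) \;\le\; \prod_{v \in M_K^\infty}\!\left(\tfrac{1+\sqrt 5}{2}\right)^{n_v}\!\!\cdot\prod_{v \in M_K}\max\{|c|_v,1\}^{n_v/2} \;=\; \left(\tfrac{1+\sqrt 5}{2}\right)^{\sum_{v\mid\infty} n_v}\! H_K(c)^{1/2},
\]
and the desired bound follows from $\sum_{v \in M_K^\infty} n_v = [K:\Q]$.

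The only real obstacle is the elementary but delicate archimedean inequality; the appearance of $\tfrac{1+\sqrt{5}}{2}$ is dictated by the identity $\phi^2 = \phi + 1$, which forces the bound to be tight exactly at $|c|_v = 1$. Everything else is a routine product-formula computation.
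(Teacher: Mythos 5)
Your proposal is correct and follows essentially the same route as the paper: both use Propositions \ref{infjulia} and \ref{finitefj} to bound $|P|_v$ locally and then multiply over all places, with the golden ratio arising from the archimedean comparison of $\tfrac{1}{2}+\sqrt{\tfrac{1}{4}+|c|_v}$ against $\max\{|c|_v,1\}^{1/2}$. The only (cosmetic) difference is that you verify this archimedean inequality by a direct case split and squaring, whereas the paper maximizes the ratio function $y\mapsto\bigl(\tfrac{1}{2}+\sqrt{\tfrac{1}{4}+y^2}\bigr)/\max\{y,1\}$, which attains its maximum $\tfrac{1+\sqrt 5}{2}$ at $y=1$.
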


\begin{proof}
Let $P$ be a preperiodic point for $f_c$. Then $P \in \K_{v,c}$ for all places $v$ of $K$. Applying Proposition \ref{infjulia} to the infinite places of $K$ yields
\begin{equation}\label{infabsbound}
|P|_v \le \frac{1}{2} + \sqrt{\frac{1}{4} + |c|_v}
\end{equation}
for all $v \in M_K^{\infty}$. Similarly, for finite places $v$ we use Proposition \ref{finitefj} to obtain
\begin{equation}\label{finabsbound}
|P|_v \le 1 \mbox{\;\;if\;\;} |c|_v \le 1 \mbox{ \ \ \ and \ \ \ } |P|_v = |c|_v^{1/2} \mbox{\;\;if\;\;} |c|_v > 1
\end{equation}
for all $v \in M_K^0$. By combining \eqref{infabsbound} and \eqref{finabsbound}, we arrive at the following upper bound for $H_K(P)$:
\begin{align*}
H_K(P) &= \prod_{v \in M_K} \max\{|P|_v,1\}^{n_v} \\
&\le \prod_{v \in M_K^{\infty}} \left( \frac{1}{2} + \sqrt{\frac{1}{4} + |c|_v} \right)^{n_v} \cdot \prod_{v \in M_K^0} \max\{|c|_v^{1/2}, 1\}^{n_v}\\
&= H_K(c)^{1/2} \prod_{v \in M_K^{\infty}} \left( \frac{\frac{1}{2} + \sqrt{\frac{1}{4} + |c|_v}}{\max\{|c|_v^{1/2}, 1\}} \right)^{n_v}.
\end{align*}
The maximum value of the function
\[ y \mapsto \frac{\frac{1}{2} + \sqrt{\frac{1}{4} + y^2}}{\max\{y,1\}} \mbox{ \ \ , \ \ } y \in [0,\infty) \]
is $\frac{1 + \sqrt{5}}{2}$ (attained at $y = 1$). Therefore,
\[ H_K(P) \le H_K(c)^{1/2} \cdot \prod_{v \in M_K^{\infty}} \left(\frac{1 + \sqrt{5}}{2}\right)^{n_v} = \left( \frac{1 + \sqrt{5}}{2}\right)^{[K:\Q]} H_K(c)^{1/2}. \qedhere \]
\end{proof}


\subsection{Algorithm for computing preperiodic points}\label{prep_algorithm}
Using results from the previous sections we discuss here a method for computing the set of $K$-rational preperiodic points for a collection of maps $f_c$ defined over a number field $K$. We let $\O_K$ denote the ring of integers in $K$. For convenience in computations we will work with the valuations $v(\cdot) = \ord_{\p}(\cdot)$ rather than their corresponding non-archimedean absolute values $|\;|_v$. 

We begin by combining the statements of Propositions \ref{finitefj} and \ref{oddprime} and rephrasing them in terms of these valuations on $K$, using also the fact that preperiodic points belong to every filled Julia set of $f_c$.

\begin{lem}\label{finiteobstructions}
Let $K$ be a number field and let $P,c \in K$.
	\begin{enumerate}[itemsep = 1.2mm]
	\item If $\ord_{\p}(P) < 0 \le \ord_{\p}(c)$ for some maximal ideal $\p$ in $\O_K$, then $P$ is not preperiodic for $f_c$.
	\item If $\ord_{\p}(c) < 0$ and $\ord_{\p}(P) \ne \frac{1}{2} \ord_{\p}(c)$ for some maximal ideal $\p$ in $\O_K$, then $P$ is not preperiodic for $f_c$. In particular, if $\ord_{\p}(c)$ is negative and odd for some maximal ideal $\p$, then $f_c$ has no finite $K$-rational preperiodic point.
	\item If there exists a maximal ideal $\p$ in $\O_K$, lying above an odd rational prime, such that $\ord_{\p}(c) < 0$ and $-c$ is not a square in $K_{\p}$, then $f_c$ has no finite $K$-rational preperiodic point.
	\end{enumerate}
\end{lem}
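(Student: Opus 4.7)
The plan is to reduce each part of the lemma to one of the filled Julia set results established in \S\ref{julia_sets}, using the observation that every preperiodic point for $f_c$ lies in $\K_{v,c}$ for every place $v$ of $K$. Thus it suffices in each case to exhibit a place $v$ (corresponding to the given $\p$) at which either $P \notin \K_{v,c}$ or $\K_{v,c} \cap K_v = \emptyset$. The whole exercise is essentially a dictionary translation between the non-archimedean absolute value $|\cdot|_v$ used in Propositions \ref{finitefj} and \ref{oddprime} and the discrete valuation $\ord_{\p}$ used here, together with the standing fact that the value group of $\ord_{\p}$ on $K$ (and on $K_v$) is $\Z$.

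For part (1), I would translate the hypothesis $0 \le \ord_{\p}(c)$ as $|c|_v \le 1$, so that Proposition \ref{finitefj}(\ref{Item1:finitefj}) gives $\K_{v,c} = D(0,1)$. Since $\ord_{\p}(P) < 0$ is equivalent to $|P|_v > 1$, the point $P$ lies outside $\K_{v,c}$ and so cannot be preperiodic for $f_c$.

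For part (2), the condition $\ord_{\p}(c) < 0$ means $|c|_v > 1$, and Proposition \ref{finitefj}(\ref{Item2:finitefj}) restricts the filled Julia set to the ``circle'' $\{x \in \C_v : |x|_v = |c|_v^{1/2}\}$, which in valuation language is exactly $\{x : \ord_{\p}(x) = \tfrac12\ord_{\p}(c)\}$. The hypothesis $\ord_{\p}(P) \ne \tfrac12 \ord_{\p}(c)$ therefore again places $P$ outside $\K_{v,c}$. For the ``in particular'' clause I would note that when $\ord_{\p}(c)$ is negative and odd, the value $\tfrac12\ord_{\p}(c)$ is a genuine half-integer, which cannot be achieved by any element of $K$ since $\ord_{\p}(K^{\times}) = \Z$; hence $\K_{v,c} \cap K = \emptyset$, ruling out any finite $K$-rational preperiodic point at all. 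Part (3) is an immediate application of Proposition \ref{oddprime}: the stated hypotheses are precisely those of that proposition, which yields $\K_{v,c} \cap K_v = \emptyset$ and therefore $\K_{v,c} \cap K = \emptyset$, so again $f_c$ can have no finite $K$-rational preperiodic point.

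Because each part is a direct restatement of an already-proved proposition, there is no substantial obstacle in the proof: the only subtle ingredient is the parity/value-group argument in the ``in particular'' clause of part (2), and even that is a one-line observation. The proof will therefore be short and essentially notational, serving mainly to package the analytic results of \S\ref{julia_sets} into a form convenient for the algorithmic tests of \S\ref{prep_algorithm}.
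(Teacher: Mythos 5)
Your proof is correct and follows exactly the route the paper takes: the paper introduces this lemma precisely as a rephrasing of Propositions \ref{finitefj} and \ref{oddprime} in valuation language, combined with the fact that every preperiodic point lies in $\K_{v,c}$ for all places $v$. Your dictionary between $|\cdot|_v$ and $\ord_{\p}$, and the value-group parity observation for the ``in particular'' clause of part (2), are the same ingredients the paper relies on (the latter also being the content of Corollary \ref{valuegroup}).
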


We also record the dynamical consequences of Propositions \ref{infjulia} and \ref{realjulia}.

\begin{lem}\label{infobstructions}
Let $K$ be a number field and let $P,c \in K$.
	\begin{enumerate}[itemsep = 1.2mm]
	\item If $|\sigma(P)| > \frac{1}{2} + \sqrt{\frac{1}{4} + |\sigma(c)|}\;$ for some $\sigma: K \hookrightarrow \C$, then $P$ is not preperiodic for $f_c$.
	\item If $\sigma(c) > \frac{1}{4}\;$ for some $\sigma : K \hookrightarrow \R$, then $f_c$ has no finite $K$-rational preperiodic point.
	\item Suppose $\sigma$ is a real embedding such that $\sigma(c) \le \frac{1}{4}$, and set $a = \frac{1}{2} + \sqrt{\frac{1}{4} - \sigma(c)}$. If $|\sigma(P)| > a$, then $P$ is not preperiodic for $f_c$.
	\item Suppose $\sigma$ is a real embedding such that $\sigma(c) < -2$, and set $a = \frac{1}{2} + \sqrt{\frac{1}{4} - \sigma(c)}$. If \[\sigma(P) \not \in \left[-a,-\sqrt{-a-\sigma(c)}\right] \cup \left[\sqrt{-a-\sigma(c)},a\right],\] then $P$ is not preperiodic for $f_c$.
	\end{enumerate}
\end{lem}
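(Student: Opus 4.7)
The plan is to reduce each part of the lemma to the corresponding statement about filled Julia sets already proved in Propositions \ref{infjulia} and \ref{realjulia}. The key observation is that for any embedding $\sigma : K \hookrightarrow \C$, one has $\sigma \circ f_c = f_{\sigma(c)} \circ \sigma$ as maps $K \to \C$, and hence $\sigma \circ f_c^n = f_{\sigma(c)}^n \circ \sigma$ for all $n \ge 0$. Consequently, if $P \in K$ is preperiodic for $f_c$, then $\sigma(P) \in \C$ is preperiodic for $f_{\sigma(c)}$, which forces $\sigma(P) \in \K_{v,\sigma(c)}$, where $v \in M_K^\infty$ is the archimedean place with $|x|_v = |\sigma(x)|$ for all $x \in K$.

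With this setup in hand, part (1) is just the contrapositive of the inclusion $\K_{v,\sigma(c)} \subseteq D\bigl(0, \tfrac{1}{2} + \sqrt{1/4 + |\sigma(c)|}\bigr)$ given by Proposition \ref{infjulia}: the hypothesis says precisely that $\sigma(P)$ lies outside this disk, so $\sigma(P) \notin \K_{v,\sigma(c)}$ and $P$ cannot be preperiodic. For parts (2)--(4), $\sigma$ is a real embedding, so $\sigma(P)$ and $\sigma(c)$ both lie in $\R$, and I would appeal to the trichotomy in Proposition \ref{realjulia} applied to the real filled Julia set $\K_{v,\sigma(c)} \cap \R$. Part (2) is case (1) of that proposition: if $\sigma(c) > 1/4$ then $\K_{v,\sigma(c)} \cap \R = \emptyset$, and hence no $P \in K$ (with $\sigma(P) \in \R$) can be preperiodic. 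Parts (3) and (4) follow identically by pulling back cases (2) and (3) of Proposition \ref{realjulia}, which describe $\K_{v,\sigma(c)} \cap \R$ as $[-a,a]$ (when $-2 \le \sigma(c) \le 1/4$) or as a subset of $[-a,-\sqrt{-a-\sigma(c)}] \cup [\sqrt{-a-\sigma(c)},a]$ (when $\sigma(c) < -2$), respectively.

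The only potential obstacle here is purely bookkeeping: one must take care that the embedding $\sigma$ in the statement of the lemma is correctly matched with the place $v$ governing the filled Julia set, and that the functorial identity $\sigma \circ f_c^n = f_{\sigma(c)}^n \circ \sigma$ is invoked before appealing to the Julia-set results. Since this identification is immediate from $f_c \in K[z]$ and from the definition of the archimedean absolute values on $K$, no real difficulty arises; the lemma is essentially a reformulation of Propositions \ref{infjulia} and \ref{realjulia} in a form convenient for algorithmic use.
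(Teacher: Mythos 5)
Your proof is correct and follows exactly the route the paper intends: the lemma is recorded there without a separate proof, precisely as the dynamical consequence of Propositions \ref{infjulia} and \ref{realjulia} together with the fact that preperiodic points lie in every filled Julia set, which is what your identity $\sigma \circ f_c^n = f_{\sigma(c)}^n \circ \sigma$ makes explicit. One small bookkeeping point: part (3) also allows $\sigma(c) < -2$, but in that range case (3) of Proposition \ref{realjulia} still gives $\K_{v,\sigma(c)} \cap \R \subseteq [-a,a]$, so your argument goes through unchanged.
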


\begin{rem}
In the case $\sigma(c) < -2$, we still have $\sigma(P) \in [-a, a]$ for all preperiodic points $P$. However, the set $[-a, -\sqrt{-a-\sigma(c)}] \cup [\sqrt{-a-\sigma(c)}, a]$ has length at most 4, making it much smaller than $[-a,a]$ for large values of $|\sigma(c)|$. Therefore, if $c \in K$ satisfies $\sigma(c) < -2$ for some real embedding $\sigma : K \hookrightarrow \R$, Lemma \ref{infobstructions} (4) provides a considerable reduction in our search space for $K$-rational preperiodic points for $f_c$. 

\end{rem}

We now give the details of a procedure for finding all the preperiodic points for a sample space of quadratic polynomials $f_c$ over an arbitrary number field $K$. To construct this sample space we fix a positive height bound $B$ and set
\[ \calC := \{c \in K : H_K(c) \le B\}. \]
We then take our sample space to be $\{f_c : c \in \calC\}$.

By Theorem \ref{preperheight}, any preperiodic point $P$ for a given map $f_c$ satisfies $H_K(P) \le \left(\frac{1 + \sqrt{5}}{2}\right)^{[K:\Q]} H_K(c)^{1/2}$. Therefore, the set
\[ \calP := \left\{P \in K \ : \ H_K(P) \le \left(\frac{1 + \sqrt{5}}{2}\right)^{[K:\Q]} B^{1/2} \right\} \]
contains all finite preperiodic points for \emph{all} of the maps in our sample space. The sets $\calC$ and $\calP$ can be computed using the main algorithm in \cite{doyle-krumm}. However, the output of this algorithm is modified in two ways:

\begin{itemize}[itemsep = 1.2mm]
\item We delete elements $c\in \calC$ for which we can check that $f_c$ has no $K$-rational preperiodic point, by performing the following steps:

\smallskip
\begin{enumerate}[itemsep = 1.2mm]
\item For each embedding $\sigma: K \hookrightarrow \R$, determine whether $\sigma(c) > \frac{1}{4}$. If this occurs for some $\sigma$, then $f_c$ has no preperiodic point (Lemma \ref{infobstructions}(2)), so we remove $c$ from $\calC$.
\item For each maximal ideal $\p$ such that $\ord_{\p}(c)<0$: if $\ord_{\p}(c)$ is odd, then $f_c$ has no preperiodic point (Lemma \ref{finiteobstructions}(2)), and $c$ is removed from $\calC$. If $\ord_{\p}(c)$ is even and $\p$ lies over an odd prime, determine whether $-c$ is a square in the completion $K_{\p}$. If it is not, then $f_c$ has no preperiodic point (Lemma \ref{finiteobstructions}(3)), so we remove $c$ from $\calC$.
\end{enumerate}

\smallskip
\noindent The last test can be done by working in the finite field $R_{\p} \big / \p R_{\p}$, where $R_{\p}$ is the valuation ring of $K_{\p}$: assuming that $\p$ is a prime of $\mathcal O_K$ lying over an odd rational prime, and that $\ord_{\p}(c)$ is negative and even, we let $\pi$ be a uniformizer in $R_{\p}$, and write $-c = \pi^{2s}u$ for some integer $s < 0$ and some unit $u \in R_{\p}^{\times}$. Then $-c$ is a square in $K_{\p}$ if and only if $u$ is a square in $R_{\p}$, which happens if and only if $u$ is a square modulo $\p R_{\p}$, by Hensel's Lemma.

\item We reorder the elements of $\calP$ by increasing height. 
For a given map $f_c$, every $K$-rational preperiodic point $P$ of $f_c$ satisfies $H_K(P) \le \left(\frac{1 + \sqrt{5}}{2}\right)^{[K:\Q]} H_K(c)^{1/2}$. Rather than searching through \emph{every} element $P \in \calP$ to determine whether $P$ is preperiodic for $f_c$, the ordering by height allows us to search through elements of $\calP$ until the height bound $\left(\frac{1 + \sqrt{5}}{2}\right)^{[K:\Q]} H_K(c)^{1/2}$ is exceeded, and then stop. 

\end{itemize}

Having modified the sets $\calC$ and $\calP$ as explained above, we proceed to determine, for every $c\in\calC$, the set of $K$-rational preperiodic points of the map $f_c$. 
First of all, we need not consider all points in $\calP$, but only those meeting the height bound of Theorem \ref{preperheight}. More importantly, Lemmas \ref{finiteobstructions} and \ref{infobstructions} give us a series of simple tests that can be applied to a point $P$ to conclude that it is not preperiodic. By doing this we quickly eliminate from consideration many elements from the list $\calP$, at which point the naive approach of iterating $f_c$ on each element of $\calP$ at most $\#\calP$ times becomes reasonable. This is, with small adjustments, the method provided in the algorithms below.

\begin{alg}[Preperiodicity tests]\label{ispreperiodic} \mbox{}\\
Input: A number field $K$, a number $c \in K$, and a point $P \in K$.\\
Output: The string "NO" or "MAYBE", depending on whether $P$ failed one of the tests and is therefore not preperiodic, or $P$ passed all of the tests and is therefore potentially preperiodic.

\begin{enumerate}[itemsep = 1.2mm]

\item For each prime ideal $\p$ such that $\ord_{\p}(P)<0$: if $\ord_{\p}(c) \geq 0$, return ``NO" (Lemma \ref{finiteobstructions}(1)).
\item For each prime ideal $\p$ such that $\ord_{\p}(c)<0$: if $\ord_{\p}(P) \ne \frac{1}{2}\ord_{\p}(c)$, return ``NO" (Lemma \ref{finiteobstructions}(2)).
\item For each embedding $\sigma: K \hookrightarrow \C$: if $|\sigma(P)| > \frac{1}{2} + \sqrt{\frac{1}{4} + |\sigma(c)|}$, return "NO" (Lemma \ref{infobstructions}(1)).
\item For each embedding $\sigma : K \hookrightarrow \R$: set $a = \frac{1}{2} + \sqrt{\frac{1}{4} - \sigma(c)}$.
	\begin{enumerate}[itemsep = 1.6mm]			
		\item If $|\sigma(P)| > a$, return ``NO" (Lemma \ref{infobstructions}(3)).
		\item If $\sigma(c) < -2$ and $|\sigma(P)| < \sqrt{-a-\sigma(c)}$, return ``NO" (Lemma \ref{infobstructions}(4)).
	\end{enumerate}		

\item If it has not yet been determined whether $P$ is preperiodic for $f_c$, return ``MAYBE".
\end{enumerate}
\end{alg}

The next algorithm goes through the list $\calC$ and, for each $c\in\calC$, uses Algorithm \ref{ispreperiodic} to find points from the list $\calP$ that are definitely not preperiodic for $f_c$. After this, a relatively small number of points remain whose preperiodicity has not been determined. Each of these is then classified by iterating a number of times which is not greater than the size of the list of undetermined points.

\begin{alg}[Determining all preperiodic points]\label{preperiodicpoints} \mbox{ \ }\\
	Input: A number field $K$, and lists $\calC$ and $\calP$ as described above.\\
	Output: A list whose elements are of the form $\{c, \{P_1,\ldots,P_m\}\}$ for all $c\in\calC$ such that $f_c$ has a $K$-rational preperiodic point, and $\{P_1,\ldots,P_m\} = \PrePer(f_c,K)$ 

\begin{enumerate}
	\item Create an empty list $\calL$.
	\item For each $c \in \calC$:
		\begin{enumerate}[itemsep = 1.5mm]
		\item Create two empty lists, $Y$ and $M$.
		\item For all $P\in\calP$ with $H_K(P) \le \left(\frac{1 + \sqrt{5}}{2}\right)^{[K:\Q]}H_K(c)^{1/2}$ : apply Algorithm \ref{ispreperiodic} to $P$ and $c$. If the output is ``MAYBE", append the point $P$ to the list $M$.
		\item While $M$ is nonempty:
			\begin{enumerate}[itemsep = 1.5mm]
			\item Let $P$ be the first element of $M$.
			\item Construct a list $\calI=\{P\}$ to contain iterates of $P$, and set $Q := f_c(P)$.
			\item While preperiodicity of $P$ has not been determined:
				\begin{enumerate}[itemsep = 1.5mm]
					\item if $Q\in Y$, then $P$ is preperiodic; append all elements of $\calI$ to the list $Y$ and delete them from $M$.
					\item If $Q \in \calI$, then $P$ is preperiodic; append all elements of $\calI$ to the list $Y$ and delete them from $M$.
				\item If $Q \not \in M$, then $P$ is not preperiodic; delete all elements of $\calI$ from $M$.
				\item If no action was taken in steps (A) through (C), append $Q$ to $\calI$, and replace $Q$ with $f_c(Q)$.
				\end{enumerate}
			\end{enumerate}			
		\item If $Y$ is nonempty, append $\{c,Y\}$ to the list $\calL$.	
		\end{enumerate}
	\item Return $\calL$.	
\end{enumerate}
\end{alg}

\begin{rem} The while loop in step 2c (iii) will necessarily terminate in at most $\# M$ steps because, for a given $P\in M$ the first $\#M$ iterates (together with $P$) determine preperiodicity: if $P$ is preperiodic, then either one of these iterates falls in $Y$ (and then the loop terminates in step A), or they all lie in $M$; but then at least two of them must be equal (and the loop terminates in step B). Similarly, if $P$ is not preperiodic, then none of these iterates lies in $Y$, and since they are all distinct, at least one is not in $M$ (hence the loop will terminate in step C). 
\end{rem}


\subsection{Application to quadratic fields}\label{quad_prep_comp}
The algorithms developed in the previous section give us a systematic way of computing preperiodic points for a sample space of quadratic polynomials defined over a given number field. In this section we apply those algorithms in the context of quadratic fields, following --- with minor modifications ---  the steps outlined at the beginning of \S\ref{data_gathering}.

For every quadratic field $K$ with discriminant satisfying $|\Delta_K|\leq 327$ (a total of 200 fields) we computed the set $\PrePer(f_c,K)$ for a collection of elements $c\in K$ chosen as follows: a bound $B_K$ was fixed, and using the main algorithm in \cite{doyle-krumm} we computed all elements $c\in K$ satisfying $H_K(c)\leq B_K$. Rather than choosing the same bound $B_K$ for every field $K$, we varied $B_K$ between 1,000 and 2,200, increasing together with the discriminant of $K$ in order to keep the quantity of $c$ values considered roughly uniform as $K$ varied. In addition we computed, for each field $K$, the set $\PrePer(f_c,K)$ for every $c\in\Q$ of {\it relative} height $\leq B_K'$, where $B_K'$ varied between $300^2$ and $600^2$. 

In total, the set $\PrePer(f_c,K)$ was computed for $256,588$ pairs $(K,c)$; this count does not include millions of values of $c$ for which it was determined by local methods that $f_c$ has no $K$-rational preperiodic point (other than the point at infinity). The resulting $256,588$ directed graphs were then classified by isomorphism to arrive at a list of 45 graphs. These graphs --- and one additional graph --- are displayed in Appendix~\ref{graph_pictures}, and a representative pair $(K,f_c)$ for each graph is given in Appendix \ref{graph_data}. 

There are three items appearing in the appendices which were not found by the computations described above: the graph 10(3,1,1); an example of the graph 10(2) over a real quadratic field; and an example of the graph 8(4) over an imaginary quadratic field. These examples occur over number fields of large enough discriminant, or for polynomials $f_c$ with the height of $c$ large enough, that they are beyond the search range used for our main computation. For more information we refer the reader to the discussion of these graphs in \S\ref{classification}.


\section{PCF maps and maps with a unique fixed point}\label{pcf_ufp}

In this section we determine all rational and quadratic algebraic numbers $c$ such that the polynomial $f_c$ has one of two properties: either it is postcritically finite, or it has a unique fixed point. Furthermore, for every such number $c$ we show that all possible graphs $G(f_c,K)$, with $K$ a quadratic field, appear in Appendix \ref{graph_pictures}. Recall that a rational function $\phi \in K(z)$ is called \textit{postcritically finite} (or PCF) if all of its critical points are preperiodic for $\phi$. In particular, a polynomial of the form $f(z) = z^2 + c$ is PCF if and only if 0 is preperiodic for $f(z)$.

We will need two preliminary results concerning heights of algebraic numbers. Recall that the \textit{absolute height} function $H:\qbar\to\R_{\geq 1}$ is defined by \[H(\alpha)=H_K(\alpha)^{1/[K:\Q]},\] where $K$ is any number field containing $\alpha$. Here $H_K$ is the relative height function on $K$ defined in \S\ref{prep_height_bound}. The following result is an immediate consequence of Theorem \ref{preperheight}.

\begin{lem}\label{prep_height_bound_lemma}
Let $K$ be a quadratic number field and let $c\in K$. If $x\in K$ is preperiodic for the map $f(z) = z^2 + c$, then
\[H(x) \leq \left(\frac{1+\sqrt{5}}{2}\right) \ H(c)^{1/2}.\]
\end{lem}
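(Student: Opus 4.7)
The plan is to apply Theorem \ref{preperheight} directly and translate the resulting inequality from relative heights into absolute heights using the definition $H(\alpha) = H_K(\alpha)^{1/[K:\Q]}$.

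First, I would note that since $K$ is quadratic, $[K:\Q] = 2$, so the two height functions are related by $H_K(\alpha) = H(\alpha)^2$ for every $\alpha \in K$. Applying Theorem \ref{preperheight} to the preperiodic point $x \in \PrePer(f_c, K)$ then yields
\[
H_K(x) \le \left(\frac{1+\sqrt{5}}{2}\right)^{2} H_K(c)^{1/2}.
\]
Substituting $H_K(x) = H(x)^2$ and $H_K(c)^{1/2} = H(c)$ and taking square roots of both sides gives the desired bound
\[
H(x) \le \left(\frac{1+\sqrt{5}}{2}\right) H(c)^{1/2}.
\]

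There is no genuine obstacle here; the only thing to keep straight is the bookkeeping between absolute and relative heights. The content of the lemma is entirely contained in Theorem \ref{preperheight}, and the proof amounts to substituting $[K:\Q]=2$ and extracting a square root.
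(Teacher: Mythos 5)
Your proposal is correct and matches the paper, which states the lemma is an immediate consequence of Theorem \ref{preperheight}: one substitutes $[K:\Q]=2$, uses $H_K(\alpha)=H(\alpha)^2$, and takes square roots exactly as you do. The height bookkeeping is also right, since $H_K(c)^{1/2}=H(c)$ turns the bound into $H(x)^2\le\bigl(\tfrac{1+\sqrt5}{2}\bigr)^2 H(c)$.
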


\begin{lem}\label{quad_bdd_height} Let $x\in\qbar$ be a quadratic algebraic integer such that $H(x)\leq B$. Then $x$ satisfies an equation \[x^2+a_1x+a_0=0,\] where $a_0, a_1$ are integers with $|a_0|\leq B^2$ and $|a_1|\leq 2B^2$.
\end{lem}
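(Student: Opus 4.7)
The plan is to write down the minimal polynomial of $x$ explicitly in terms of its Galois conjugate and then estimate the coefficients using the archimedean contributions to the height.

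First I would fix $K = \Q(x)$ (taking $K$ to be any quadratic field containing $x$ if $x \in \Q$) and let $x'$ denote the Galois conjugate of $x$. Then $x$ satisfies
\[
x^2 + a_1 x + a_0 = 0, \qquad a_1 = -(x+x'),\ a_0 = xx',
\]
and because $x$ is an algebraic integer, $a_0,a_1 \in \Z$. The task reduces to bounding $|xx'|$ and $|x+x'|$ archimedeanly.

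The key point is that since $x$ is an algebraic integer, $|x|_v \le 1$ at every finite place of $K$, so
\[
H_K(x) \;=\; \prod_{v \in M_K^\infty} \max\{|x|_v,1\}^{n_v}.
\]
I would now split into the two cases according to the signature of $K$. If $K$ is real quadratic, the two infinite places give $H_K(x)=\max\{|x|,1\}\cdot\max\{|x'|,1\}$, while if $K$ is imaginary quadratic, the single complex place (with $n_v=2$) gives $H_K(x)=\max\{|x|,1\}^2$ and $|x|=|x'|$. In both cases $H(x)^2 = H_K(x) \le B^2$, and in both cases one obtains the uniform inequality
\[
\max\{|x|,1\}\cdot\max\{|x'|,1\}\;\le\;B^2.
\]

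From this single estimate both bounds follow immediately. For $a_0$ one has $|a_0| = |x|\,|x'| \le \max\{|x|,1\}\cdot\max\{|x'|,1\} \le B^2$. For $a_1$ one uses $|x+x'|\le 2\max\{|x|,|x'|\} \le 2\max\{|x|,1\}\cdot\max\{|x'|,1\} \le 2B^2$. No step here is likely to be an obstacle; the only small subtlety is verifying the local-degree bookkeeping in the imaginary quadratic case (where $n_v=2$ is what forces the square in $H_K(x)=\max\{|x|,1\}^2$) and checking that the rational case $x\in\Q$ — handled by taking $x' = x$ so that the polynomial becomes $(T-x)^2$ — is covered by the same inequalities.
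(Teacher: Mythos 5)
Your proof is correct and takes essentially the same route as the paper's: both write the minimal polynomial as $(t-x)(t-x')$ with integer coefficients and bound $|a_0|=|x|\,|x'|$ and $|a_1|=|x+x'|$ by $\max\{|x|,1\}\max\{|x'|,1\}$ and twice that quantity, respectively, then control this product by the height bound. The only cosmetic difference is bookkeeping: the paper estimates $H_K(a_0)$ and $H_K(a_1)$ place by place and bounds them by $H_K(x)H_K(x')\le B^4$ (implicitly using $H(x')=H(x)$), whereas you use integrality to identify $H_K(x)$ with the archimedean product $\max\{|x|,1\}\max\{|x'|,1\}$ directly via the real/imaginary case split.
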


\begin{proof} Let $K=\Q(x)$. The minimal polynomial of $x$ has the form $m(t)=t^2+a_1t+a_0$ for some integers $a_0,a_1$. Factor $m(t)$ over $K$ as $m(t)=(t-x)(t-y)$. For every place $v$ of $K$ we have \[|a_0|_v=|x|_v|y|_v\leq\max(1,|x|_v)\cdot\max(1,|y|_v),\] so \[|a_0|^2=H_K(a_0)=\prod_{v|\infty}\max(1,|a_0|_v^{n_v})\leq\prod_{v|\infty}\max(1,|x|_v^{n_v})\cdot\max(1,|y|_v^{n_v})=H_K(x)H_K(y)\leq B^4\] and therefore $|a_0|\leq B^2$. Similarly, for every place $v$ we have \[|a_1|_v\leq|x|_v+|y|_v\leq2\cdot\max(|x|_v,|y|_v)\leq 2\cdot\max(1,|x|_v)\cdot\max(1,|y|_v),\] so \[|a_1|^2=H_K(a_1)=\prod_{v|\infty}\max(1,|a_1|_v^{n_v})\leq\prod_{v|\infty}2^{n_v}\cdot\max(1,|x|_v^{n_v})\cdot\max(1,|y|_v^{n_v})=2^2H_K(x)H_K(y),\] and therefore $|a_1|\leq 2B^2$.
\end{proof}

\subsection{PCF maps}\label{pcf} Using the above results, we now determine all rational and quadratic numbers $c$ such that the map $f_c$ is PCF. As a result of our analysis, we will be able to list all pairs $(K,c)$ consisting of a quadratic number field $K$ and an element $c\in K$ for which the graph $G(f_c,K)$ has an odd number of vertices.

\begin{prop}\label{pcf_maps_prop} Let $c\in\qbar$ satisfy $[\Q(c):\Q]\le 2$, and suppose that the map $f(z)=z^2+c$ is PCF. Then $c\in\{0,-1,-2,\pm i\}$, where $i$ denotes a square root of $-1$. Moreover, for every quadratic field $K$ containing $c$, the graph $G(f,K)$ is isomorphic to one of the graphs listed in Appendix \ref{graph_pictures}.
\end{prop}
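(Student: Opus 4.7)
The starting observation is that if $f_c$ is PCF, then $0$ is preperiodic for $f_c$, and hence so is every point on its forward orbit. In particular, $c = f_c(0)$ is a preperiodic point for $f_c$. Taking $K$ to be either $\Q$ or the (unique) quadratic field $\Q(c)$, Lemma~\ref{prep_height_bound_lemma} applied to the preperiodic point $c \in K$ yields
\[
H(c) \le \left(\frac{1+\sqrt5}{2}\right) H(c)^{1/2},
\]
so $H(c) \le \left(\frac{1+\sqrt5}{2}\right)^2 = \frac{3+\sqrt5}{2}$. Since $c$ is preperiodic for a monic polynomial with integer coefficients over $\Z[c]$, standard valuation arguments (equivalently, Lemma~\ref{finiteobstructions}(2) applied at every finite place of $\Q(c)$) show that $c$ must be an algebraic integer.

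If $c \in \Q$, then $c \in \Z$ and $|c| \le \frac{3+\sqrt5}{2} < 3$, leaving the five candidates $c \in \{-2,-1,0,1,2\}$. A direct computation of the orbit of $0$ excludes $c=1$ (orbit $0,1,2,5,\dots$ escapes) and $c=2$ (orbit $0,2,6,\dots$ escapes), while $c \in \{0,-1,-2\}$ each give PCF maps. If instead $c$ is quadratic, then by Lemma~\ref{quad_bdd_height} applied with $B = \frac{3+\sqrt5}{2}$ (so $B^2 = \frac{7+3\sqrt5}{2} < 7$ and $2B^2 < 14$), the minimal polynomial of $c$ has the form $t^2+a_1 t+a_0$ with $|a_0| \le 6$ and $|a_1| \le 13$. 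This gives a finite list of candidate polynomials, and for each one we check algorithmically whether either root $c$ makes the orbit of $0$ preperiodic, for example by iterating $f_c$ a bounded number of times (again using Lemma~\ref{prep_height_bound_lemma} to truncate). The outcome of this enumeration is that the only quadratic values are $c = \pm i$, whose orbits of $0$ are $0, \mp i, -1\mp i, \pm i, -1\mp i, \ldots$, entering a 2-cycle.

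For the second assertion, fix $c \in \{0,-1,-2,\pm i\}$ and a quadratic field $K$ containing $c$. The height bound of Lemma~\ref{prep_height_bound_lemma} gives an explicit bound on $H(x)$ for every $x \in \mathrm{PrePer}(f_c,K)$, which together with the algebraic integer constraint (and the constraint $x \in K$) reduces the determination of $G(f_c,K)$ to a finite search. The plan is to run this enumeration in each of the following cases: $c = 0$ (handling separately $K$ real, $K = \Q(i)$, $K = \Q(\sqrt{-3})$, and all remaining imaginary $K$, which differ only in the roots of unity they contain); $c = -1$ and $c = -2$ (again handled by cases based on whether $K$ contains certain extra preperiodic points forced by factorizations of the dynatomic polynomials $\Phi_n(z,c)$ for small $n$); and $c = \pm i$, for which $K = \Q(i)$ is forced. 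In every case the resulting graph is compared against the list in Appendix~\ref{graph_pictures}.

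The main obstacle is the quadratic enumeration step: mechanically, it is a finite search, but we must argue cleanly that the candidate set is small enough to be handled, and that the algebraic-integer plus height constraints together with a modest number of iterations of $f_c$ suffice to decide preperiodicity of $0$. Once past this bookkeeping, the verification that each of the finitely many resulting graphs appears in Appendix~\ref{graph_pictures} is routine.
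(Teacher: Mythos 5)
Your proposal is correct and follows essentially the same route as the paper: show $c$ is an algebraic integer of explicitly bounded height, enumerate the finitely many rational and quadratic candidates via Lemma \ref{quad_bdd_height}, eliminate all but $\{0,-1,-2,\pm i\}$ by iterating $0$ against the preperiodic height bound, and then determine $G(f,K)$ for each surviving $c$ by a finite height-and-integrality-bounded search, grouped according to the quadratic field $K$. The only cosmetic differences are that you obtain $H(c)\le\left(\frac{1+\sqrt5}{2}\right)^{2}$ by applying Lemma \ref{prep_height_bound_lemma} to $c$ itself, while the paper derives the sharper bound $H(c)\le 2$ from the archimedean filled-Julia-set estimates (your weaker bound merely enlarges the finite search), and your listed orbits for $c=\pm i$ have the two signs interchanged, a harmless slip since both orbits do enter a $2$-cycle as claimed.
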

\begin{proof} Let $K$ be a quadratic number field containing $c$. Since 0 is preperiodic for $f$, then also $c = f(0)$ is preperiodic. Hence, $c$ satisfies an equation $f^m(c) = f^n(c)$ for some $m < n$, so $c$ is integral over $\Z$. In particular, $|c|_v \leq 1$ for every non-Archimedean place $v$ of $K$. As $0$ is preperiodic and therefore in the filled Julia set $\mathcal{K}_{v,c}$ for each Archimedean place $v$, we find $|c|_v \leq 2$ (Lemma~\ref{infobstructions}(2,4)).

Hence, we have
\begin{equation}\label{odd_height_bound}
H(c) = \left(\prod_{v \mid \infty} \max\{1, |c|_v\}^{n_v} \right)^{1/2} \leq 2. 
\end{equation}

Moreover, since 0 is preperiodic for $f(z)$, then by Lemma \ref{prep_height_bound_lemma} we have the following for all $n\geq 0$:
\begin{equation}\label{0iterate} H\left(f^n(0)\right) \leq \left( \frac{1+\sqrt{5}}{2}\right) H(c)^{1/2}<2.29.\end{equation}

If $c\in\Q$, then \eqref{odd_height_bound} implies $c\in\{0,\pm 1,\pm 2\}$ and \eqref{0iterate} with $n=3$ eliminates $c=1$ and 2, so $c\in\{0,-1,-2\}$. If $c$ is quadratic, we claim that $c=\pm i$. Using Lemma \ref{quad_bdd_height} we compute the set $S$ of all quadratic algebraic integers of height at most 2. For every element $c\in S$ we then check whether \eqref{0iterate} is satisfied for $n=5$ in order to eliminate values of $c$. After this process, the only possibilities left for $c$ are $\pm i$; this proves the first part of the proposition.

For the second statement, we begin by computing the full list of rational and quadratic preperiodic points for the map $f(z)=z^2+c$ for each $c \in \{0, -1, -2, \pm i \}$. Note that, since $c$ is an algebraic integer, every preperiodic point for $f(z)$ must also be an algebraic integer.

Consider first the case $c=i$. Since $c\in K$, we must have $K=\Q(i)$. By Lemma \ref{prep_height_bound_lemma}, every element $x\in\PrePer(f,K)$ must satisfy $H(x)< 1.62$. Writing $x=a+bi$ with $a,b\in\Z$, we then have $a^2+b^2<2.63$, so $a,b\in\{0,\pm 1\}$. This shows that $\PrePer(f,K)\subseteq \{0,\pm 1,\pm i, \pm 1\pm i\}$. The numbers $0,\pm i, \pm(1-i)$ are easily seen to be preperiodic; the remaining numbers, namely $\pm 1$ and $\pm (1+i)$, are not preperiodic because their orbits under $f$ are not contained in the set $\{0,\pm 1,\pm i, \pm 1\pm i\}$. Therefore, we conclude that \[\PrePer(z^2+i,\Q(i))=\{ 0,\pm i, \pm(1-i)\}.\] One can easily check that the corresponding graph $G(f,K)$ appears in the appendix with the label 5(2)a.

We consider now the values $c=0,-1,-2$. Let $\PrePer_2(f)$ denote the set of all preperiodic points for $f$ in $\qbar$ of degree at most 2 over $\Q$. Since $c$ has very small height, we can quickly compute the set $\PrePer_2(f)$ as follows: first, we use Lemma \ref{quad_bdd_height} to compute the set $S$ of all algebraic integers $x$ satisfying the height bound in Lemma \ref{prep_height_bound_lemma}, so that $\PrePer_2(f)\subseteq S$. We now iterate each element $s\in S$ until a value in its orbit is repeated, in which case $s$ is preperiodic, or its orbit leaves the set $S$, in which case it is not preperiodic. This will require at most $1+\#S$ iterations of $s$. The results of these calculations are summarized in the table below. Here, $\omega$ denotes a primitive cube root of unity.
\begin{center}
	\begin{tabular}[h]{c|c}
		$c$ & $x \in \qbar$ is preperiodic for $f$ and $[\Q(x) : \Q] \leq 2$ \\
		\hline 
		\hline
		$0$ & $\displaystyle 0, \pm 1, \pm i, \pm \omega, \pm \omega^2$ \\
		\hline
		$-1$ & $ 
			0, \ \pm 1, \ \pm \sqrt{2}, \ \pm \frac{1}{2}(1+\sqrt{5}), 
				\ \pm \frac{1}{2} (1 - \sqrt{5})$\\
		\hline
		$-2$ & $ 0, \ \pm 1, \ \pm 2, \ \pm \sqrt{2}, \ \pm \sqrt{3}, 
			\ \pm \frac{1}{2}(1+\sqrt{5}), \ \pm \frac{1}{2}(1-\sqrt{5})$ \\
	\end{tabular}
\end{center}

For each of the above values of $c$ we can now determine the structure of the graphs $G(f,K)$ as $K$ ranges over all quadratic fields. In every case, we find that the graph is one of those listed in the appendix; the corresponding labels are given below.

For $c=0$ we obtain 
\[G(f_c,K)=
\begin{cases}
\text{5(1,1)b} & \text{if\;} K=\Q(i)\\
\text{7(2,1,1)a} & \text{if\;}K=\Q(\omega)\\
\text{3(1,1)} &\text{otherwise}.
\end{cases}\]

For $c=-1:$ 
\[G(f_c,K)=
\begin{cases}
\text{5(2)b} & \text{if\;} K=\Q(\sqrt 2)\\
\text{7(2,1,1)b} & \text{if\;}K=\Q(\sqrt 5)\\
\text{3(2)} &\text{otherwise}.
\end{cases}\]

For $c=-2:$ 
\[G(f_c,K)=
\begin{cases}
\text{7(1,1)a} & \text{if\;} K=\Q(\sqrt 2)\\
\text{7(1,1)b} & \text{if\;}K=\Q(\sqrt 3)\\
\text{9(2,1,1)} & \text{if\;}K=\Q(\sqrt 5)\\
\text{5(1,1)a} &\text{otherwise}.
\end{cases}\]

This completes the proof of the proposition.
\end{proof}

\begin{cor}\label{odd_vertices_cor} Let $K$ be a quadratic field and let $c\in K$. If the graph $G(f_c,K)$ has an odd number of vertices, then it is isomorphic to one of the graphs listed in Appendix \ref{graph_pictures}.
\end{cor}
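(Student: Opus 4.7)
The plan is to reduce this corollary directly to Proposition~\ref{pcf_maps_prop} by exploiting the symmetry $f_c(-z)=f_c(z)$. First I would observe that the involution $\iota\colon x\mapsto -x$ acts on $K$ and commutes with $f_c$ in the sense that $f_c\circ\iota=f_c$. Consequently, if $x\in\PrePer(f_c,K)$ then the forward orbit of $-x$ is $\{-x\}\cup\{f_c^n(x):n\ge 1\}$, which is finite, so $-x\in\PrePer(f_c,K)$ as well. Hence $\iota$ restricts to an involution of the vertex set of $G(f_c,K)$.

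The only fixed point of $\iota$ on $K$ is $0$, so the nonzero elements of $\PrePer(f_c,K)$ decompose into orbits of size two under $\iota$. Therefore
\[ \#\PrePer(f_c,K) \equiv \#\bigl(\PrePer(f_c,K)\cap\{0\}\bigr)\pmod{2}, \]
and $\#\PrePer(f_c,K)$ is odd if and only if $0\in\PrePer(f_c,K)$. Since $0$ is the unique (finite) critical point of $f_c$, this last condition is precisely the condition that $f_c$ be postcritically finite.

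Thus, under the hypothesis that $G(f_c,K)$ has an odd number of vertices, $f_c$ is PCF, and Proposition~\ref{pcf_maps_prop} immediately applies: it forces $c\in\{0,-1,-2,\pm i\}$, and for each such $c$ and each quadratic field $K$ containing $c$, the graph $G(f_c,K)$ has already been identified as one of the graphs listed in Appendix~\ref{graph_pictures}. There is no real obstacle here; the step that does the work is the parity argument, and all heavy lifting (height bounds, enumeration of PCF parameters, explicit determination of the resulting graphs) has already been carried out in Proposition~\ref{pcf_maps_prop}.
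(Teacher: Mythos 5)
Your proposal is correct and is essentially the paper's own argument: the involution $x\mapsto -x$ pairs up the finite $K$-rational preperiodic points, so an odd vertex count forces $0$ to be preperiodic, i.e.\ $f_c$ is PCF, and Proposition \ref{pcf_maps_prop} finishes the proof. No further comment is needed.
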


\begin{proof} Note that an element $x\in K$ is preperiodic for $f_c$ if and only if $-x$ is preperiodic. Hence, the preperiodic points for $f_c$ in $K$ can be grouped into pairs $(x,-x)$. If the number of such points is odd, this means that there is some point $x$ such that $x=-x$; in other words, 0 is preperiodic for $f_c$. By definition, this means that $f_c$ is PCF, and now the conclusion follows from Proposition \ref{pcf_maps_prop}.
\end{proof}

\begin{rem} Proposition \ref{pcf_maps_prop} and its proof achieve, for graphs $G$ with an odd number of vertices, our main classification goal stated in \S\ref{classification}; namely, an explicit description of all pairs $(K,c)$ such that $G(f_c,K)\cong G$.
\end{rem}

\subsection{Maps with a unique fixed point}\label{ufp}
Having classified the preperiodic graph structures with an odd number of vertices, we consider now the three graphs in our list that have a unique 1-cycle. The following result will give, for each one of these graphs, a complete list of all pairs $(K,c)$ giving rise to the graph.

\begin{prop}\label{ufp_prop} Let $K$ be a quadratic field, and suppose that $c \in K$ is such that the map $f(z)=z^2+c$ has a unique fixed point in $K$. Then $G(f,K)$ is one of the graphs listed in Appendix \ref{graph_pictures}. More precisely, $c = 1/4\;$ and:
\[G(f,K)=
\begin{cases}
\mathrm{6(2,1)} & \mathrm{if\;} K=\Q(i)\\
\mathrm{4(1)} & \mathrm{if\;}K=\Q(\sqrt{-3})\\
\mathrm{2(1)} &\mathrm{otherwise}.
\end{cases}\]
\end{prop}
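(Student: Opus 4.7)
The plan is to first force $c = 1/4$, and then to enumerate $\PrePer(f_{1/4}, K)$ for every quadratic $K$. For the reduction, observe that the fixed points of $f_c$ are the roots of $z^2 - z + c \in K[z]$, and their sum $1$ lies in $\Q \subseteq K$, so either both or neither of the roots belongs to $K$. Uniqueness of the $K$-rational fixed point therefore forces the discriminant $1-4c$ to vanish, giving $c = 1/4$ with fixed point $1/2$.

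Second, using the filled-Julia-set analysis of \S\ref{julia_sets}, I would show that every preperiodic $x \in K$ satisfies $y := 2x \in \O_K$ and $|\sigma(y)| \le 1 + \sqrt{2}$ at each archimedean embedding $\sigma$ of $K$. The integrality follows from Proposition \ref{finitefj}: at a place $v$ above an odd prime, $|c|_v = 1$ so $|x|_v \le 1$, hence $|y|_v \le 1$; at a place above $2$, $|c|_v > 1$ forces $|x|_v = |1/2|_v$, hence $|y|_v = 1$. The archimedean bound follows from Proposition \ref{infjulia}, since the radius there is $\tfrac{1}{2} + \sqrt{\tfrac{1}{4} + \tfrac{1}{4}} = (1 + \sqrt{2})/2$. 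By Northcott, the set $S$ of rational or quadratic algebraic integers all of whose conjugates lie in $D(0, 1+\sqrt{2})$ is finite and independent of $K$.

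To finish, I would iterate $f_{1/4}$ on each $y/2$ with $y \in S$ to find the preperiodic ones; a cleaner, structural approach uses Proposition \ref{cycles_prop} to classify periodic cycles. A $2$-cycle requires parameter $x$ with $-3/4 - x^2 = 1/4$, i.e.\ $x = \pm i$, so a $2$-cycle occurs in $K$ exactly when $K = \Q(i)$, and it is $\{(-1 \pm 2i)/2\}$. A $3$-cycle requires a root in $K$ of $F_{18}(x) = x^6 + 2x^5 + 5x^4 + 10x^3 + 10x^2 + 4x + 1$, which is easily seen to be irreducible over $\Q$ (e.g., by reduction modulo a small prime), so no quadratic field admits such a root. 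A $4$-cycle corresponds to a quadratic point on the model $y^2 = F_{16}(x)$ of $X_1(16)$ with $y(x^2-1) \ne 0$ and $c = 1/4$; Theorem \ref{131618_quad}(2) forces such a point to be obvious, hence $x \in \Q$, but the resulting polynomial in $x$ from $c = 1/4$ has no rational root. Cycles of length $\ge 5$ are then ruled out directly by iterating the finitely many remaining elements of $S$. Assembling these cases, the preperiodic elements in $K$ are $\pm 1/2$ always, plus $\pm \sqrt{-3}/2$ iff $K = \Q(\sqrt{-3})$, plus $\pm(1 \pm 2i)/2$ iff $K = \Q(i)$, yielding the graphs $2(1)$, $4(1)$, and $6(2,1)$.

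The main obstacle is uniformly ruling out long cycles across all quadratic $K$. Small cycle lengths are handled cleanly by the modular curves $X_1(16)$ and $X_1(18)$ together with the parametrization of Proposition \ref{cycles_prop}, but the disposal of longer cycles reduces to a finite but combinatorially fussy enumeration within the Northcott set $S$.
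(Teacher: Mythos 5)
Your proposal is correct, and its core is the same as the paper's: force $c=1/4$ from the coincidence of the two fixed points (your remark that the root sum $1$ lies in $K$, so the roots enter $K$ together, is exactly the detail the paper leaves implicit), then use Lemma \ref{finiteobstructions}/Proposition \ref{finitefj} to get $2x\in\O_K$ and Proposition \ref{infjulia} to bound every archimedean conjugate by $(1+\sqrt 2)/2$, and finally run a finite check over the resulting Northcott set. The paper does precisely this, bounding the minimal polynomial of $2x$ via Lemma \ref{quad_bdd_height} and the heights of the first few iterates via Lemma \ref{prep_height_bound_lemma}, and finishing with a Sage enumeration that produces $\pm\tfrac12$, $\pm\tfrac{\sqrt{-3}}{2}$, $\pm\tfrac12\pm i$. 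Where you genuinely differ is in offering a structural disposal of short cycles: your computations are right --- Proposition \ref{cycles_prop}(2) with $c=1/4$ gives $x^2=-1$, hence a $2$-cycle exactly over $\Q(i)$; Proposition \ref{cycles_prop}(3) with $c=1/4$ reduces to $F_{18}(x)=0$; and Proposition \ref{cycles_prop}(4) plus Theorem \ref{131618_quad}(2) forces $x\in\Q$ for a $4$-cycle, where the resulting sextic $x^6-4x^5-3x^4-8x^3+3x^2-4x-1$ has no rational root. This buys conceptual transparency for the periodic part, but note two caveats. First, a small correction to your $F_{18}$ argument: modulo $2$ one has $F_{18}\equiv(x^3+x^2+1)^2$, so a single small-prime reduction does not literally prove irreducibility; it does, however, exclude factors of degree $\le 2$, which is all you need (and irreducibility over $\Q$ does hold, e.g.\ by also ruling out a product of two integer cubics). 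Second, the cycle classification says nothing about the tails (e.g.\ the type-$1_2$ points $\pm\sqrt{-3}/2$ and the preimages $\tfrac12\pm i$ of the $2$-cycle), nor about cycles of length $\ge 5$, so the finite iteration over $S$ --- which is exactly the paper's computation --- is still what actually pins down the three graphs; you acknowledge this, so there is no gap, only a reminder that the "structural" route is a supplement rather than a replacement.
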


\begin{proof} If $f(z)$ has a unique fixed point, then the equation $z^2-z+ c=0$ has exactly one solution, so $c=1/4$. The set $\PrePer(f,\Q)$ is easily seen to be equal to $\{\pm\frac{1}{2}\}$: by Lemma \ref{prep_height_bound_lemma}, any rational preperiodic point for $f$ has height at most 3; this gives a short list of possible preperiodic points, and the preperiodicity of each point is decided within a few iterations. The corresponding graph $G(f,\Q)$ is the graph 2(1). We will now determine all the quadratic fields $K$ where $f$ might have preperiodic points that are not rational.

Suppose that $x\in\qbar$ is a quadratic preperiodic point for $f$, and let $K=\Q(x)$. It follows from Lemma \ref{finiteobstructions} that for every non-Archimedean place $v$ of $K$ we have
\begin{equation*}\label{x_abs}|x|_v
\begin{cases}
=2 & \mathrm{if\;} v|2\\
\leq 1 & \mathrm{otherwise}.
\end{cases}\end{equation*}
Hence, $2x$ is an algebraic integer. Since $x$ is preperiodic, Lemma~\ref{infobstructions}(1) gives
	\[
		|x|_v \leq \frac{1}{2} + \sqrt{\frac{1}{4} + \frac{1}{4}} < 1.21
	\]
for each $v \mid \infty$. Hence
	\begin{equation}\label{Height2x}
		H(2x) = \left(\prod_{v \mid \infty} \max \left\{ 1, |2x|_v\right\}^{n_v} \right)^{1/2}
			\leq \left(\prod_{v \mid \infty} 2^{n_v} \cdot \max \left\{ 1, |x|_v\right\}^{n_v} \right)^{1/2}< 2.42.		
	\end{equation}

Write $m(t) =t^2 + a_1t + a_0$ for the minimal polynomial of $2x$, where $a_0, a_1 \in \Z$. Applying Lemma~\ref{quad_bdd_height} with the bound in \eqref{Height2x}, we find that
	\[
		|a_1| \leq 2(2.42)^2 < 11.7 \ \;\;\;\text{and}\;\;\; \ |a_0| \leq (2.42)^2 < 5.9 .
	\]
Since all iterates of $x$ under $f$ are preperiodic, we have $H(f^n(x))<3.24$ for all $n$ (Lemma \ref{prep_height_bound_lemma}). 
Using Sage we consider all quadratic numbers $x$ satisfying these properties (with $n\leq3$), and find that the only possibilities have $a_1=0,a_0=3$ and $a_1=\pm 2, a_0=5$. This leads to the preperiodic points \[\pm \frac{\sqrt{-3}}{2} ,\;\; \pm \frac{1}{2} \pm i,\] and the result now follows easily.
\end{proof}


\appendix

\section{Proof of Lemma \ref{poly_lemma}}\label{counting_appendix}

Let $H:\Q\to\R_{\geq 1}$ be the standard multiplicative height function on $\Q$ given by $H(a/b)=\max\{|a|,|b|\}$ if $a$ and $b$ are coprime integers. Given real numbers $\alpha<\beta$, define a counting function on $\R_{\ge 1}$ by 
\[N(T;\alpha,\beta)=\#\{r\in\Q^{\ast}: \alpha\le r\le\beta\;\mathrm{and}\;H(r)\le T\}.\]
To prove Lemma \ref{poly_lemma} we will need the following asymptotic estimate for $N(T;\alpha,\beta)$.
\begin{prop}\label{counting_asymptotic} There is a constant $c=c(\alpha,\beta)$ such that
\[N(T;\alpha,\beta)\sim cT^2.\]
\end{prop}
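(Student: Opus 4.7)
The approach is a classical lattice-point count combined with Möbius inversion to handle the coprimality condition. I would write each $r \in \Q^*$ uniquely as $r = a/b$ with $a \in \Z \setminus \{0\}$, $b \in \Z_{\ge 1}$, and $\gcd(a,b) = 1$; then $H(r) = \max(|a|, b)$ and $r \in [\alpha, \beta]$ iff $\alpha b \le a \le \beta b$. Thus $N(T; \alpha, \beta)$ counts coprime integer pairs $(a,b)$ with $b \ge 1$, $a \ne 0$, lying in the dilate $T\Omega$, where
\[
\Omega := \{(x,y) \in \R^2 : 0 < y \le 1,\ |x| \le 1,\ \alpha y \le x \le \beta y\}.
\]
Since $\alpha < \beta$, for all sufficiently small $y > 0$ the segment $[\alpha y, \beta y]$ lies inside $(-1,1)$, so $\Omega$ contains a triangular region near the origin; in particular $A := \mathrm{Area}(\Omega) > 0$. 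Moreover $\Omega$ is bounded, and its boundary is a finite union of line segments drawn from the five lines $y = 1$, $x = \pm 1$, $x = \alpha y$, and $x = \beta y$.

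The first step is the plain lattice-point estimate
\[
M(T) := \#\bigl(T\Omega \cap (\Z \times \Z_{\ge 1})\bigr) = A T^2 + O(T),
\]
which follows from a standard Davenport-style bound: for a planar region whose boundary is a piecewise smooth curve of total length $O(T)$, the number of lattice points differs from the area by $O(\mathrm{perim}(T\Omega)) = O(T)$. The implied constant depends on $\alpha, \beta$ through the slopes of the bounding lines but is otherwise harmless.

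Next I would extract the coprime count via Möbius inversion. Setting $\tilde M(T) := M(T) - \#\{(0,b) \in T\Omega \cap \Z^2\}$ (the subtraction removes at most $\lfloor T \rfloor$ pairs, and only when $0 \in [\alpha,\beta]$), we still have $\tilde M(T) = AT^2 + O(T)$. Every pair $(a,b)$ counted by $\tilde M(T)$ decomposes uniquely as $e \cdot (a',b')$ with $\gcd(a',b') = 1$ and $e = \gcd(a,b) \ge 1$, and $(a',b')$ lies in $(T/e)\Omega$ with $a' \ne 0, b' \ge 1$. Hence
\[
\tilde M(T) = \sum_{e \ge 1} N(T/e;\alpha,\beta),
\]
and since $\tilde M(T/e) = 0$ for $e > T$, Möbius inversion together with $\sum_{e \le T} \mu(e)/e^2 = 1/\zeta(2) + O(1/T)$ and $\sum_{e \le T} 1/e = O(\log T)$ gives
\[
N(T;\alpha,\beta) = \sum_{e \le T} \mu(e)\,\tilde M(T/e) = A T^2 \sum_{e \le T} \frac{\mu(e)}{e^2} + O\!\left(T \sum_{e \le T} \frac{1}{e}\right) = \frac{A}{\zeta(2)}\, T^2 + O(T \log T).
\]
Taking $c := A/\zeta(2) > 0$ yields the desired asymptotic $N(T;\alpha,\beta) \sim c T^2$.

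The main obstacle is the Davenport-style $O(T)$ error in the first step. Because $\Omega$ has corners where the slope of the boundary changes, and the precise list of bounding segments depends on the signs and magnitudes of $\alpha$ and $\beta$ (e.g.\ $\alpha \ge 0$ versus $\alpha < 0 < \beta$, and whether $|\alpha|, |\beta|$ exceed $1$), one must verify in each configuration that every boundary segment of $T\Omega$ contains only $O(T)$ lattice points in an $O(1)$-neighborhood. This is classical bookkeeping but is where the care is needed; the rest of the argument is routine.
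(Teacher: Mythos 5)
Your proof is correct, and it shares the paper's overall skeleton --- count lattice points in a planar region, then strip off the coprimality condition by M\"obius inversion against $\sum_{d}\mu(d)/d^2 = 1/\zeta(2)$ --- but it obtains the lattice count by a genuinely different route. The paper first reduces to intervals $[0,\eta]\subseteq[0,1]$ via the symmetries $H(r)=H(-r)$ and $H(r)=H(1/r)$, and then proves a bespoke elementary estimate for the floor sum $S(X)=\sum_{1\le y\le X}\lfloor \eta y\rfloor = \eta X(X+1)/2 + O(X)$ (its Lemma \ref{S_bounds_lemma}); you instead handle a general interval $[\alpha,\beta]$ in one stroke by counting lattice points in the dilate $T\Omega$ and invoking an area-plus-perimeter bound. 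One simplification you missed: $\Omega$ is an intersection of half-planes ($0<y\le 1$, $|x|\le 1$, $\alpha y\le x\le \beta y$), hence convex (its closure is a convex polygon), so the standard Gauss/Davenport estimate $\#(T\Omega\cap\Z^2)=\operatorname{Area}(T\Omega)+O(\operatorname{perim}(T\Omega)+1)$ applies with no case analysis on the signs or magnitudes of $\alpha,\beta$; the ``classical bookkeeping'' you flag as the main obstacle is therefore unnecessary (a segment of length $L$ contains at most $L+1$ lattice points in any event, and there are at most six segments). Your route buys generality and an explicit error term $O(T\log T)$, and it identifies $c=\operatorname{Area}(\Omega)/\zeta(2)>0$, positivity being what the application in Lemma \ref{poly_lemma} actually uses; the paper's route is longer but entirely self-contained, needing nothing beyond the floor-function estimate and the two standard M\"obius identities it quotes.
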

\begin{proof}
We begin by making a series of reductions. The relation $H(r)=H(-r)$ for $r\in\Q$ implies that
\[
N(T; \alpha, \beta) = 
\begin{cases}
N(T; -\beta, -\alpha) & \text{if $\alpha < \beta \le 0$} \\
N(T; 0, -\alpha) + N(T; 0, \beta)  & \text{if $\alpha < 0 < \beta$}.
\end{cases}
\]
This shows that in order to prove the result it suffices to consider the case $0\le\alpha$. Making this assumption, the relation $H(r)=H(1/r)$ for $r\in\Q^{\ast}$ implies that
\[
N(T; \alpha, \beta) = 
\begin{cases}
N(T; \beta^{-1}, \alpha^{-1}) & \text{if $1< \alpha < \beta$} \\
N(T; \alpha,1) + N(T; \beta^{-1},1)  & \text{if $\alpha \leq 1 < \beta$}.
\end{cases}
\]
Therefore, it suffices to prove the result when $0\le\alpha<\beta\le 1$. Finally, making this assumption we have $N(T; \alpha, \beta)=N(T; 0, \beta)$ if $\alpha=0$, and otherwise $N(T; \alpha, \beta) = N(T; 0, \beta)-N(T; 0,\alpha) +\varepsilon$, where $\varepsilon\in\{0,1\}$; hence, it suffices to prove that for any real number $\eta\in(0,1]$ there is a constant $c=c(\eta)$ such that $N(T; 0, \eta)\sim cT^2$. The following estimate will be needed to prove this.

\begin{lem}\label{S_bounds_lemma} Let $\eta\in(0,1]$ be a real number, and define a function $S:[1,\infty)\to\Z_{\ge 0}$ by
\[S(X)=\#\{(x,y)\in\Z^2: 1\le y\le X \text{\;and\;} 1\le x \le \eta y \}.\]
Then, setting $g(X)=\eta X(X+1)/2 -S(X)$, we have $0\le g(X)< 2X$ for every $X\ge 1$.
\end{lem}

\begin{proof} Note that
\begin{equation}\label{floor_expansion}
S(X)=\sum_{1\le y\le X}\#\{x\in\Z:1\le x\le \eta y\}=\sum_{1\le y\le X}\lfloor\eta y\rfloor.
\end{equation}

Applying the trivial bound $\lfloor \eta y\rfloor\le \eta y$ we obtain $S(X)\le\eta X(X+1)/2$, and the inequality $g(X)\ge 0$ follows immediately. To show that $g(X)<2X$, write $\eta y=\lfloor \eta y\rfloor +\{\eta y\}$ with $0\le \{\eta y\}<1$. Then \eqref{floor_expansion} yields
\[S(X)=\sum_{1\le y\le X}\eta y \;-\sum_{1\le y\le X}\{\eta y\}>\eta\cdot\sum_{1\le y\le X}y\;-\sum_{1\le y\le X} 1=\eta\lfloor X\rfloor(\lfloor X\rfloor+1)/2-\lfloor X\rfloor.\]
Using the inequalities $X-1<\lfloor X\rfloor\le X$ and $\eta\le 1$ we have
\[\eta\lfloor X\rfloor(\lfloor X\rfloor+1)/2-\lfloor X\rfloor>\eta(X-1)X/2-X=\eta X(X+1)/2-\eta X-X\ge \eta X(X+1)/2-2X.\]
Therefore, $S(X)>\eta X(X+1)/2-2X$, and hence $g(X)<2X$.
\end{proof}

Fix a real number $\eta\in(0,1]$. In order to complete the proof of the proposition we need to show that $N(T; 0, \eta)\sim cT^2$ for some constant $c$ depending only on $\eta$. For any real number $T\ge 1$ we have
\begin{align*}
N(T; 0, \eta)&=\#\{r\in\Q:0<r\le\eta\text{\;and\;} H(r)\le T\}\\
&=\#\{(x,y)\in\N^2: \gcd(x,y)=1,\ y\le T, \ x\le\eta y\}\\
&=\sum_{1\le y\le T}\#\{x\in\N: \gcd(x,y)=1,\ x\le\eta y\}.
\end{align*}

We recall the following facts concerning the M\"obius function $\mu$ (see \cite[Thms. 263, 287]{hardy-wright}).
\begin{equation}\label{mu_props}
\sum_{d \mid n} \mu(d) = \begin{cases} 1 & \text{if} \ n = 1 \\ 0 & \text{if} \ n > 1 \end{cases} \;\;\;\; \text{and} \;\;\;\; \frac{1}{\zeta(s)}=\sum_{n=1}^{\infty}\frac{\mu(n)}{n^s} \; , \; s>1.
\end{equation}
Here, $\zeta$ denotes the Riemann zeta function. Using the first property of $\mu$ we can now write
\begin{align*}
N(T; 0, \eta)&= \sum_{y\le T}\sum_{x\le \eta y}\sum_{d|\gcd(x,y)}\mu(d) \\
&= \sum_{1\le d\le T}\mu(d)\cdot\#\{(x,y)\in\N^2: d|x, \ d|y, \ y\le T, \ x\le\eta y\} \\
&= \sum_{1\le d\le T}\mu(d)\cdot\#\{(a,b)\in\N^2:b\le T/d, \ a\le\eta b\}\\
&= \sum_{1\le d\le T}\mu(d)\cdot S(T/d),
\end{align*}
where $S$ is the function defined in Lemma \ref{S_bounds_lemma}. With the notation used in the lemma, we have
\begin{align*}
N(T; 0, \eta)&= \sum_{1\le d\le T}\mu(d)\cdot\left(\eta (T/d)(1+T/d)/2 - g(T/d)\right)\\
&=  (\eta T^2/2)\cdot \sum_{1\le d\le T}\frac{\mu(d)}{d^2} +  (\eta T/2)\cdot \sum_{1\le d\le T}\frac{\mu(d)}{d} - \sum_{1\le d\le T}\mu(d)\cdot g(T/d).
\end{align*}
Let $P(T)=\sum_{1\le d\le T}\mu(d)/d^2$, so that by \eqref{mu_props}, $P(T)\to 1/\zeta(2)$ as $T\to\infty$. We have
\begin{equation}\label{N_final}
\frac{2\cdot\zeta(2)\cdot N(T; 0, \eta)}{\eta T^2}=\zeta(2)\cdot P(T)\cdot\left(1+\frac{\sum_{1\le d\le T}\frac{\mu(d)}{d}}{T\cdot P(T)}-\frac{2\cdot\sum_{1\le d\le T}\mu(d)g(T/d)}{\eta T^2\cdot P(T)}\right).
\end{equation}

The bounds $0\le g(T/d)<2T/d$ proved in Lemma \ref{S_bounds_lemma} imply that
\[\left|\sum_{1\le d\le T}\mu(d)g(T/d)\right|\le2T\cdot\sum_{1\le d\le T}\frac{1}{d}<2T(\log T+1).\]
Hence,
\[\lim_{T\to\infty}\frac{\sum_{1\le d\le T}\mu(d)g(T/d)}{T^2}=0.\]

Using the fact that $\sum_{d=1}^{\infty}\mu(d)/d=0$ we see that the right-hand side of \eqref{N_final} converges to 1. Therefore, 
\[N(T; 0, \eta)\sim\left(\frac{\eta}{2\cdot\zeta(2)}\right)\cdot T^2,\] and this completes the proof of the proposition.
\end{proof}

The final ingredient needed in the proof of Lemma \ref{poly_lemma} is the following special case of \cite[Thm. B.6.1]{hindry-silverman}.
\begin{thm}\label{curve_counting}
Let $X/\Q$ be a curve of genus $g>0$ with $X(\Q)\neq\emptyset$. Let $H_X$ be a multiplicative Weil height on $X(\qbar)$ and define a counting function on $X(\Q)$ by
\[N(X(\Q),T)=\#\{P\in X(\Q): H_X(P)\le T\}.\] Then there are constants $a$ and $b$ such that
\[N(X(\Q), T)\sim
\begin{cases}
 a(\log T)^b & \text{if} \ g=1 \ (a>0, b\ge 0) \\
 a & \text{if} \ g\ge 2.
 \end{cases}
\]
\end{thm}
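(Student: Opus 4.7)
The plan is to reduce the counting asymptotics to two classical inputs from arithmetic geometry: Faltings' theorem in the higher genus range, and the Mordell--Weil theorem together with N\'eron--Tate canonical height theory in the elliptic case. I would split the argument according to $g \ge 2$ versus $g = 1$.

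For $g \ge 2$, Faltings' theorem (the former Mordell conjecture) says that $X(\Q)$ is finite. Hence $N(X(\Q), T)$ is uniformly bounded by $\#X(\Q)$ and equals this value for all sufficiently large $T$; thus $N(X(\Q), T) \sim a$ with $a = \#X(\Q)$. This is the easy case.

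For $g = 1$, the hypothesis $X(\Q) \ne \emptyset$ lets me pick a rational base point, turning $X$ into an elliptic curve $E/\Q$. By Mordell--Weil, $E(\Q) \cong \Z^r \oplus T$ with $T$ finite. If $r = 0$ then $E(\Q)$ is finite and we conclude as above with $b = 0$. Otherwise, I would introduce the canonical height $\hat h\colon E(\Q) \to \R_{\ge 0}$, a positive semidefinite quadratic form with kernel equal to $T$. The standard comparison between the Weil height and the canonical height gives
\[
\log H_X(P) = c\,\hat h(P) + O(1) \qquad (P \in E(\Q)),
\]
for a constant $c > 0$ depending on the degree of the divisor defining $H_X$. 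Thus $H_X(P) \le T$ is equivalent to $\hat h(P) \le (\log T)/c + O(1)$. Passing to $E(\Q)/T \cong \Z^r$, the form $\hat h$ descends to a positive definite quadratic form $Q$ on $\R^r$, and I reduce to counting lattice points $v \in \Z^r$ with $Q(v) \le R$, where $R = (\log T)/c + O(1)$. A classical volume argument (Gauss-style lattice point count for a dilated ellipsoid) shows that this count is asymptotic to $\kappa\, R^{r/2}$, where $\kappa = \operatorname{vol}\{v : Q(v) \le 1\}/\operatorname{covol}(\Z^r)$. Multiplying by $\#T$ yields $N(X(\Q), T) \sim a\,(\log T)^{r/2}$ with $b = r/2$.

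The main obstacle is controlling the $O(1)$ discrepancy between $\log H_X$ and $c\,\hat h$, which enters the lattice-point count as a perturbation of the radius $R$. This is the standard sandwiching: the number of $v \in \Z^r$ with $Q(v) \le R \pm O(1)$ differs from that with $Q(v) \le R$ by at most $O(R^{(r-1)/2})$, a lower-order term, so the leading asymptotic is unaffected. A secondary technical point is verifying the Weil-to-canonical height comparison, which comes from Silverman's development of the theory and in particular the quasi-quadraticity of $\hat h$ together with the finiteness of the difference $\hat h - \tfrac{1}{2}\log H_X \circ \phi$ for an appropriate map $\phi$ (or the divisor defining $H_X$). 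With these ingredients in place, the cases $g \ge 2$ and $g = 1$ combine to give the stated asymptotics, with the exponent $b$ recording the Mordell--Weil rank $r$ via $b = r/2$.
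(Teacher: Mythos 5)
The paper gives no internal proof of this statement: it is quoted verbatim as a special case of Hindry--Silverman, Theorem B.6.1, so the only thing to compare against is the argument behind that citation. Your proposal is correct and is essentially that standard argument: Faltings' theorem settles $g\ge 2$ (with $a=\#X(\Q)\ge 1$ by the hypothesis $X(\Q)\ne\emptyset$), and for $g=1$ the Mordell--Weil theorem plus the N\'eron--Tate canonical height reduce the problem to counting lattice points of $\Z^r$ in ellipsoids of radius-squared roughly $(\log T)/c$, which gives $b=r/2$ and $a=\#T\cdot\kappa>0$. So you have supplied, correctly, the proof that the paper outsources to the literature.

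One point to tighten: the comparison $\log H_X(P)=c\,\hat h(P)+O(1)$ is guaranteed only when the divisor class defining $H_X$ is symmetric (even). For a general ample divisor $D$ of degree $d$ on $E$, the canonical height attached to $D$ decomposes as $d\,\hat h(P)$ plus an odd, linear-in-$P$ part, which is bounded only by $O\bigl(\sqrt{\hat h(P)}\bigr)$; thus the radius in your ellipsoid count is perturbed by $O(\sqrt R)$ rather than $O(1)$, where $R=(\log T)/c$. This is harmless: your sandwiching step applies verbatim, since the number of $v\in\Z^r$ with $Q(v)\le R+O(\sqrt R)$ differs from the count at radius $R$ by $O\bigl(R^{(r-1)/2}\bigr)$, which is of lower order than $R^{r/2}$ for every $r\ge 1$. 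You should also note explicitly that $\hat h$ is invariant under translation by torsion (the torsion lies in the kernel of the N\'eron--Tate pairing), which is what justifies the clean factor $\#T$ in the leading constant. With those two remarks the argument is complete and matches the proof of the cited Theorem B.6.1.
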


Recall the statement of Lemma \ref{poly_lemma}: Let $p(x)\in\Q[x]$ have nonzero discriminant and degree $\ge 3$. For every rational number $r$, define a field $K_r$ by \[K_r:=\Q\left(\sqrt{p(r)}\right).\] Then, for every interval $I\subset\R$ of positive length, the set $\Sigma(I,p)=\{K_r:r\in\Q\cap I\}$ contains infinitely many quadratic fields.

\begin{proof}[Proof of Lemma \ref{poly_lemma}]
Let $C$ be the hyperelliptic curve defined by the equation $y^2=p(x)$. If $d$ is a squarefree integer, we denote by $C_d$ the quadratic twist of $C$ by $d$, i.e., the curve defined by $dy^2=p(x)$. Note that in order to prove the lemma it suffices to consider finite closed intervals $I$; hence, we fix an interval $I=[\alpha,\beta]$. 

Let $d_1,\ldots, d_n$ be squarefree integers, and define $d_0=1$. We will show that $\Sigma(I,p)$ must contain a field different from $\Q(\sqrt{d_i})$ for every $i$, which will prove the lemma. For $T\in\R_{\ge 1}$ define
\[\mathcal N(T)=\#\{r\in \Q\cap I: H(r)\le T \ \text{and} \ K_r=\Q(\sqrt{d_i}) \ \text{for some} \ i\}.\]

For any fixed $d\in\{d_0,\ldots, d_n\}$, we define a height function on the curve $X=C_d$ by $H_X(x,y)=H(x)$; this is the naive Weil height on $C_d$. It follows from Theorem \ref{curve_counting} that (in particular) 
\begin{equation}\label{twist_count}
\sum_{d=0}^n N(C_d(\Q),T)\ll T \hspace{1cm} (T\to\infty).
\end{equation} 

Note that if $r\in\Q\cap I$ is such that the field $K_r$ is equal to $\Q(\sqrt{d})$, then there is a rational number $s$ such that $(r,s)\in C_d(\Q)$. Thus, for every real number $T$,
\[\#\{r\in \Q\cap I: H(r)\le T\text{\;and\;}K_r=\Q(\sqrt d)\}\le N(C_d(\Q),T).\]
Therefore, by \eqref{twist_count} we have $\mathcal N(T)\ll T$. Now, by Proposition \ref{counting_asymptotic} we know that
 \[\#\{r\in\Q\cap I: H(r)\le T\}\sim cT^2\]
 for some constant $c$ depending on $\alpha$ and $\beta$. Hence, 
 \[\mathcal N(T) < \#\{r\in\Q\cap I: H(r)\le T\} \;\; \text{for $T\gg 0$}.\]
By choosing $T$ large enough, this shows that there is a rational number $r\in I$ for which the field $K_r$ is different from all the fields $\Q(\sqrt{d_i})$. Hence, as claimed, the set $\Sigma(I,p)$ contains a field different from $\Q(\sqrt{d_i})$ for every $i$.
\end{proof}


\section{Preperiodic graph structures}\label{graph_pictures}
The following is a list of 46 graphs representing preperiodic structures discovered via the methods described in \S\ref{data_gathering}, applied to the case of quadratic number fields as explained in \textsection\ref{quad_prep_comp}. The label of each graph is in the form $N(\ell_1, \ell_2, \ldots)$, where $N$ denotes the number of vertices in the graph and $\ell_1, \ell_2, \ldots$ are the lengths of the directed cycles in the graph in nonincreasing order. If more than one isomorphism class of graphs with this data was observed, we add a lowercase roman letter to distinguish them. For example, the labels 5(1,1)a and 5(1,1)b correspond to the two isomorphism classes of graphs observed that have five vertices and two fixed points. In all figures below we omit the connected component corresponding to the point at infinity.

We remark that all of the graphs from Poonen's paper \cite{poonen_prep} appear below, with the labels 0, 2(1), 3(1,1), 3(2), 4(1,1), 4(2), 5(1,1)a, 6(1,1), 6(2), 6(3), 8(2,1,1), and 8(3).

\newpage
\includegraphics[scale=0.8]{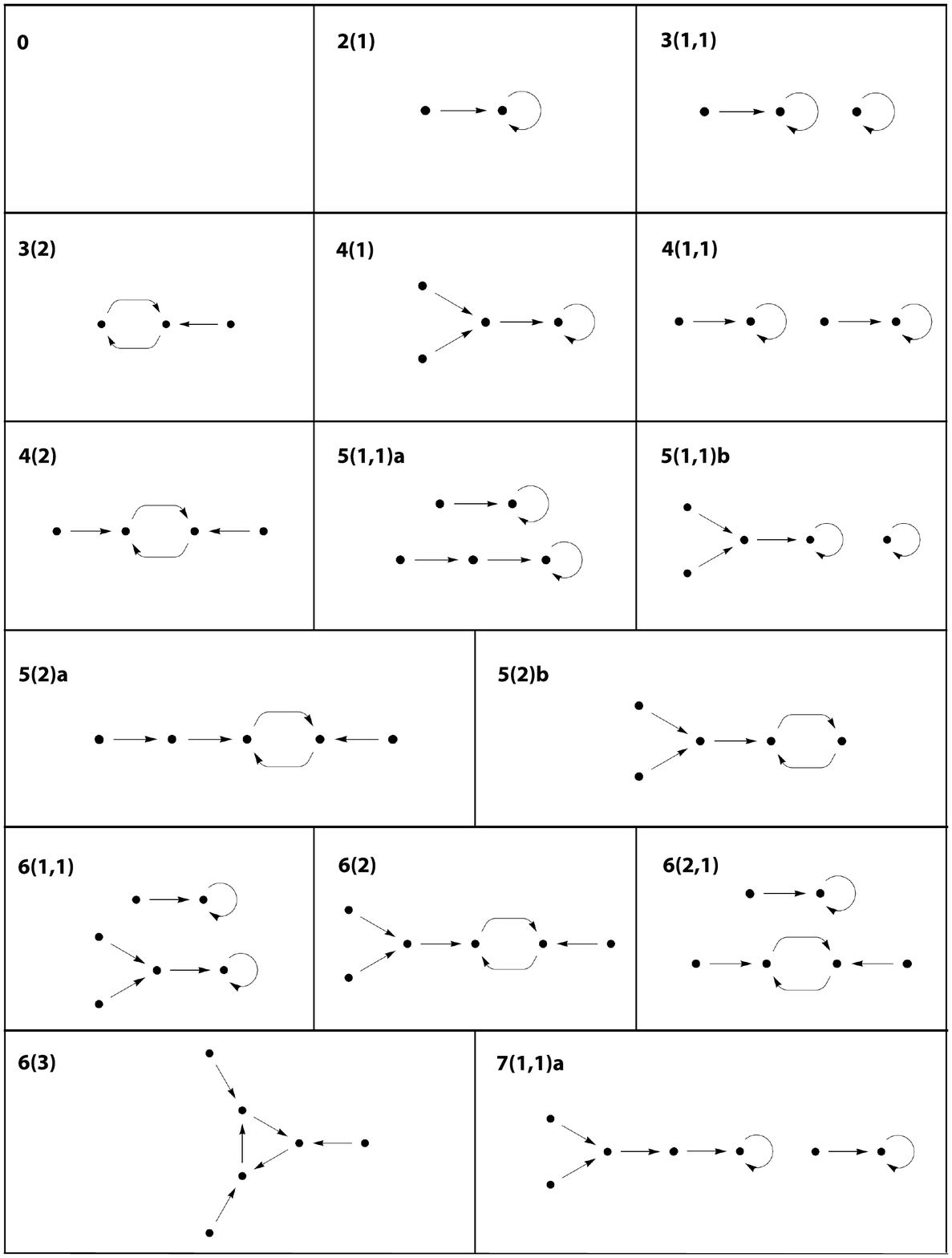}
\newpage
\includegraphics[scale=0.8]{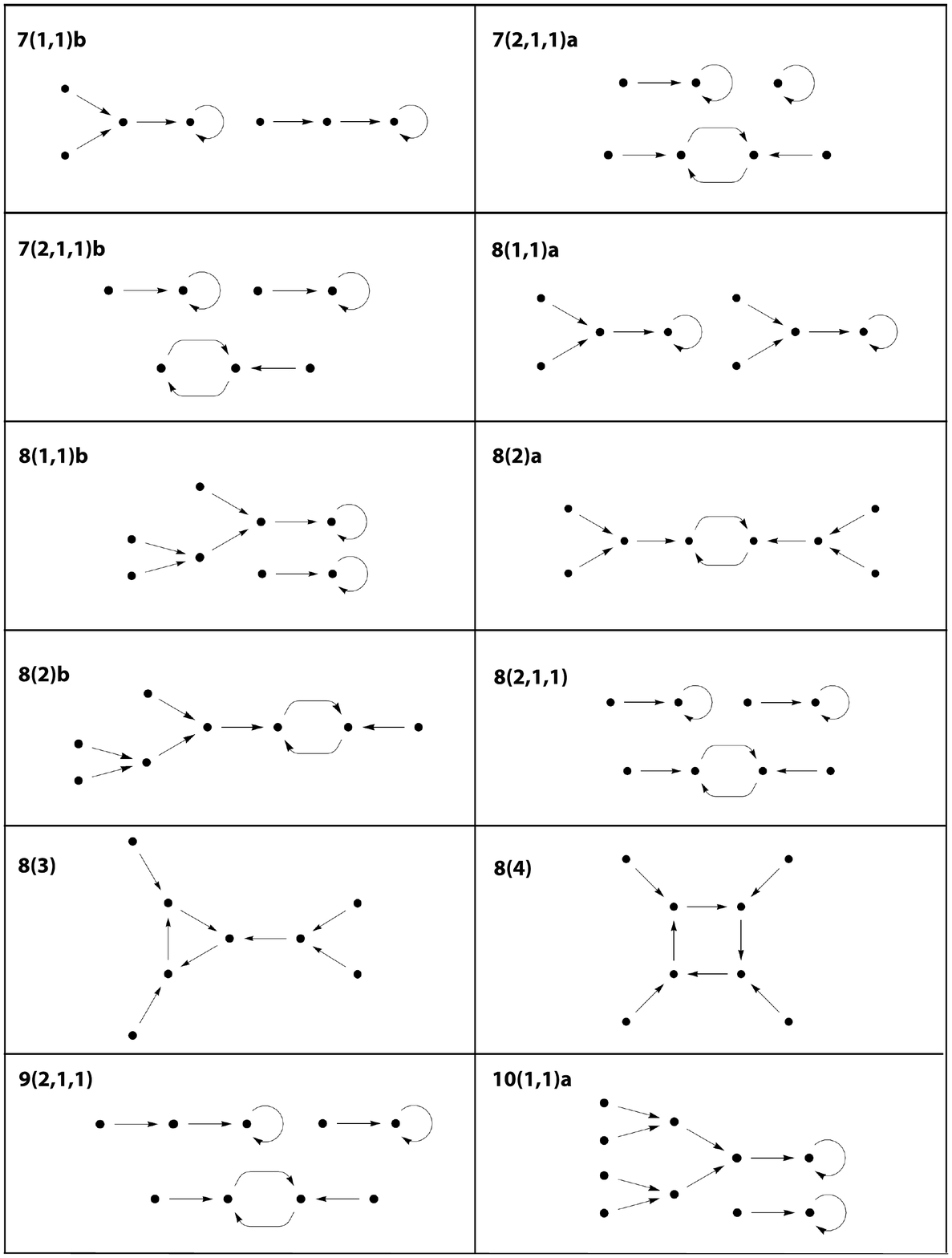}
\newpage
\includegraphics[scale=0.8]{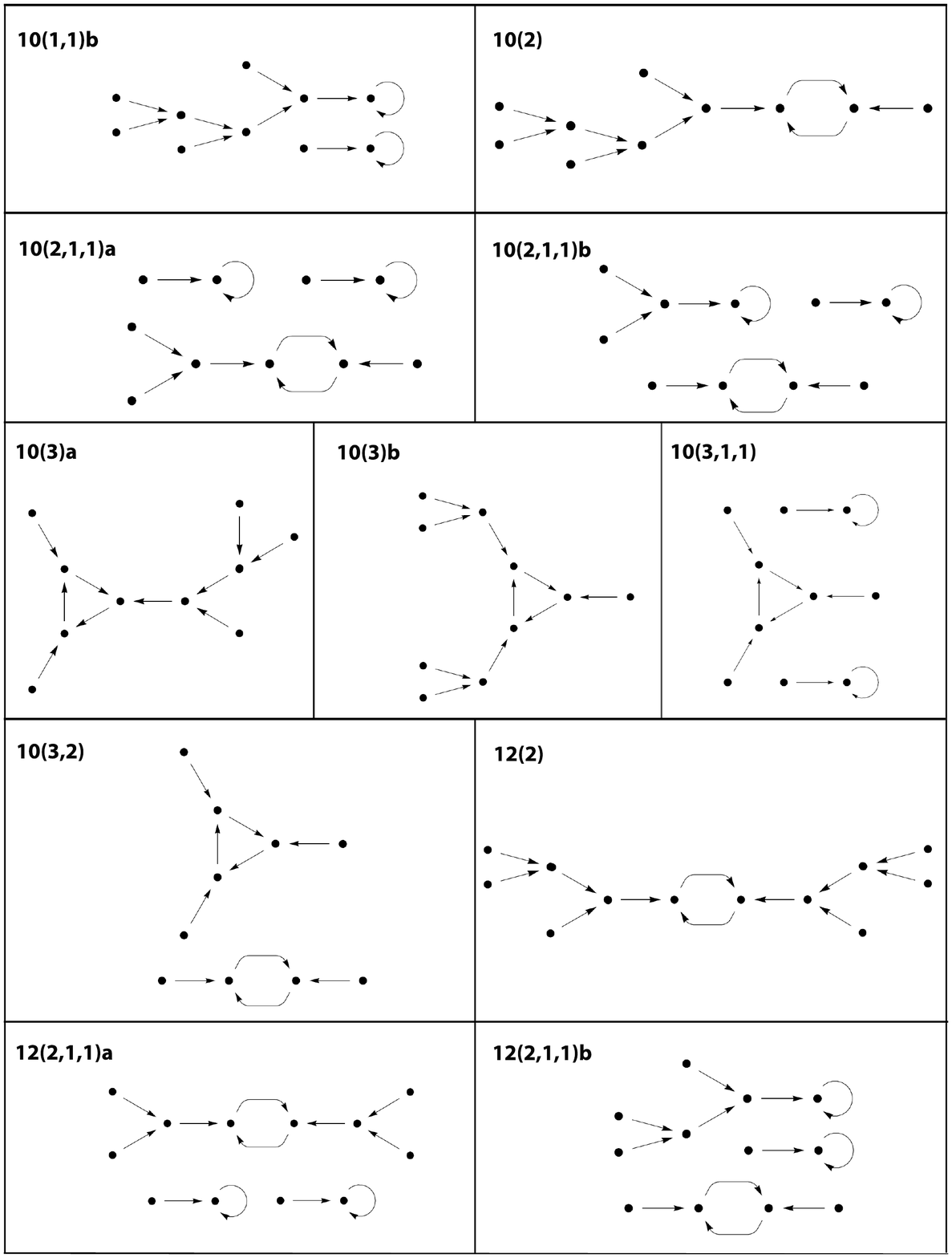}
\newpage
\includegraphics[scale=0.8]{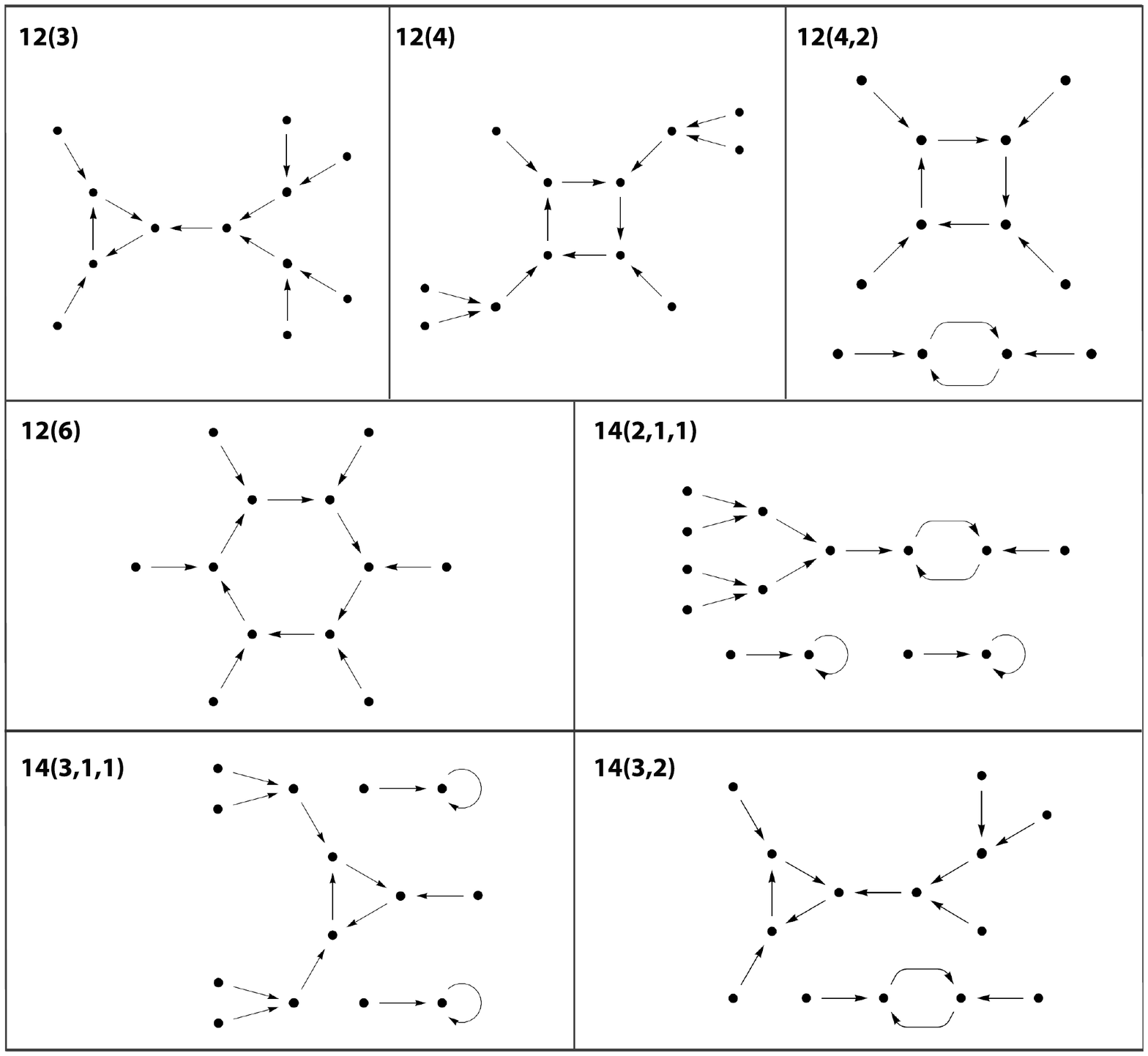}


\section{Representative data}\label{graph_data}
We give here a representative set of data for each graph in Appendix \ref{graph_pictures}. Each item in the list below includes the following information: 
\[
	K, p(t), c, \PrePer(f_c,K)'.
\]
Here $K = \Q(\sqrt{D})$ is a quadratic field over which this preperiodic structure was observed; $p(t)$ is a defining polynomial for $K$ with a root $g \in K$; $c$ is an element of $K$ such that the set $\PrePer(f_c,K)\backslash\{\infty\}$, when endowed with the structure of a directed graph, is isomorphic to the given graph; and $\PrePer(f_c,K)'$ is an abbreviated form of the full set of finite $K$-rational preperiodic points for $f_c$: since $x \in \PrePer(f_c, K)$ if and only if $-x \in \PrePer(f_c,K)$, we list only one of $x$ and $-x$ in the set $\PrePer(f_c,K)'$. We do not make explicit the correspondence between individual elements of this set and vertices of the graph. If a particular graph was observed over both real and imaginary quadratic fields, we give a representative set of data for each case.

{\bf 0.}\; 

\begin{itemize}[label={}, itemsep = 1.2mm, leftmargin = 1.5em]
\item $\Q(\sqrt{5}), \;\; t^2 - t - 1,\;\; 1, \;\; \emptyset$
\item $\Q(\sqrt{-3}), \;\; t^2 - t + 1,\;\; 2, \;\; \emptyset$
\end{itemize}

\bigskip
{\bf 2(1).}\;

\begin{itemize}[label={}, itemsep = 1.2mm, leftmargin = 1.5em]
\item $\Q(\sqrt{5}), \;\; t^2 - t - 1,\;\; \frac{1}{4}, \;\; \left\{\frac{1}{2}\right\}$
\item $\Q(\sqrt{-7}), \;\; t^2 - t + 2,\;\; \frac{1}{4}, \;\; \left\{\frac{1}{2}\right\}$
\end{itemize}

\bigskip
{\bf 3(1,1).}\;

\begin{itemize}[label={}, itemsep = 1.2mm, leftmargin = 1.5em]
\item $\Q(\sqrt{5}), \;\; t^2 - t - 1,\;\; 0, \;\; \left\{0, 1\right\}$
\item $\Q(\sqrt{-7}), \;\; t^2 - t + 2,\;\; 0, \;\; \left\{0, 1\right\}$
\end{itemize}

\bigskip
{\bf 3(2).}\;

\begin{itemize}[label={}, itemsep = 1.2mm, leftmargin = 1.5em]
\item $\Q(\sqrt{3}), \;\; t^2 - 3,\;\; -1, \;\; \left\{0, 1\right\}$
\item $\Q(\sqrt{-3}), \;\; t^2 - t + 1,\;\; -1, \;\; \left\{0, 1\right\}$
\end{itemize}

\bigskip
{\bf 4(1).}\; $\Q(\sqrt{-3}), \;\;t^2 - t + 1,\;\; \frac{1}{4}, \;\; \left\{ \frac{1}{2},  g - \frac{1}{2}\right\}$

\bigskip
{\bf 4(1,1).}\;

\begin{itemize}[label={}, itemsep = 1.2mm, leftmargin = 1.5em]
\item $\Q(\sqrt{5}), \;\; t^2 - t - 1,\;\; \frac{1}{5}, \;\; \left\{\frac{1}{5} g + \frac{2}{5}, \frac{1}{5} g - \frac{3}{5}\right\}$
\item $\Q(\sqrt{-3}), \;\;t^2 - t + 1, \;\;1, \;\; \left\{ g,  g - 1\right\}$
\end{itemize}

\newpage
\bigskip
{\bf 4(2).}\;

\begin{itemize}[label={}, itemsep = 1.2mm, leftmargin = 1.5em]
\item $\Q(\sqrt{5}), \;\; t^2 - t - 1,\;\; -\frac{4}{5}, \;\; \left\{\frac{1}{5} g + \frac{2}{5}, \frac{1}{5} g - \frac{3}{5}\right\}$
\item $\Q(\sqrt{-3}), \;\;t^2 - t + 1, \;\;-\frac{2}{3}, \;\; \left\{\frac{1}{3} g - \frac{2}{3}, \frac{1}{3} g + \frac{1}{3}\right\}$
\end{itemize}

\bigskip
{\bf 5(1,1)a.}\; 

\begin{itemize}[label={}, itemsep = 1.2mm, leftmargin = 1.5em]
\item $\Q(\sqrt{13}), \;\; t^2 - t - 3,\;\; -2, \;\; \left\{0, 2, 1\right\}$
\item $\Q(\sqrt{-3}), \;\; t^2 - t + 1,\;\; -2, \;\; \left\{0, 2, 1\right\}$
\end{itemize}

\bigskip
{\bf 5(1,1)b.}\; $\Q(\sqrt{-1}), \;\;t^2 + 1, \;\;0, \;\; \left\{0,  1,  g\right\}$

\bigskip
{\bf 5(2)a.}\; $\Q(\sqrt{-1}), \;\;t^2 + 1, \;\;g, \;\;\left\{0,  g, g - 1\right\}$

\bigskip
{\bf 5(2)b.}\; $\Q(\sqrt{2}), \;\;t^2 - 2, \;\;-1, \;\; \left\{0,  1,  g\right\}$

\bigskip
{\bf 6(1,1).}\; 

\begin{itemize}[label={}, itemsep = 1.2mm, leftmargin = 1.5em]
\item $\Q(\sqrt{5}), \;\; t^2 - t - 1,\;\; -\frac{3}{4}, \;\; \left\{\frac{1}{2},  g - \frac{1}{2}, \frac{3}{2}\right\}$
\item $\Q(\sqrt{-3}), \;\; t^2 - t + 1,\;\; -\frac{3}{4}, \;\; \left\{\frac{1}{2}, \frac{3}{2}, g - \frac{1}{2}\right\}$
\end{itemize}

\bigskip
{\bf 6(2).}\; 

\begin{itemize}[label={}, itemsep = 1.2mm, leftmargin = 1.5em]
\item $\Q(\sqrt{5}), \;\;t^2 - t - 1, \;\;-3, \;\; \left\{ 1,  2, 2 g - 1\right\}$
\item $\Q(\sqrt{-3}), \;\; t^2 - t + 1,\;\; -\frac{13}{9}, \;\; \left\{\frac{1}{3}, \frac{4}{3}, \frac{5}{3}\right\}$
\end{itemize}

\bigskip
{\bf 6(2,1).}\; $\Q(\sqrt{-1}), \;\;t^2 + 1, \;\;\frac{1}{4}, \;\;\left\{ \frac{1}{2},  g - \frac{1}{2}, g + \frac{1}{2}\right\}$

\bigskip
{\bf 6(3).}\; 

\begin{itemize}[label={}, itemsep = 1.2mm, leftmargin = 1.5em]
\item $\Q(\sqrt{33}), \;\; t^2 - t - 8,\;\; -\frac{301}{144}, \;\; \left\{\frac{5}{12}, \frac{19}{12}, \frac{23}{12}\right\}$
\item $\Q(\sqrt{-67}), \;\; t^2 - t + 17,\;\; -\frac{301}{144}, \;\; \left\{\frac{5}{12}, \frac{19}{12}, \frac{23}{12}\right\}$
\end{itemize}

\bigskip
{\bf 7(1,1)a.}\; $\Q(\sqrt{2}), \;\;t^2 - 2, \;\;-2, \;\; \left\{0,  1, 2,  g\right\}$

\bigskip
{\bf 7(1,1)b.}\; $\Q(\sqrt{3}), \;\;t^2 - 3, \;\;-2, \;\; \left\{0,  1, 2, g\right\}$

\bigskip
{\bf 7(2,1,1)a.}\; $\Q(\sqrt{-3}), \;\;t^2 - t + 1,\;\;0, \;\; \left\{0,  1,  g, g - 1\right\}$

\bigskip
{\bf 7(2,1,1)b.}\; $\Q(\sqrt{5}), \;\;t^2 - t - 1, \;\;-1, \;\; \left\{0,  1,  g, g - 1\right\}$

\newpage
\bigskip
{\bf 8(1,1)a.}\; 

\begin{itemize}[label={}, itemsep = 1.2mm, leftmargin = 1.5em]
\item $\Q(\sqrt{13}), \;\; t^2 - t - 3,\;\; -\frac{289}{144}, \;\; \left\{\frac{5}{6} g + \frac{1}{12}, \frac{1}{2} g - \frac{13}{12}, \frac{1}{2} g + \frac{7}{12}, \frac{5}{6} g - \frac{11}{12}\right\}$
\item $\Q(\sqrt{-15}), \;\; t^2 - t + 4,\;\; -\frac{5}{16}, \;\; \left\{\frac{1}{4}, \frac{3}{4}, \frac{5}{4}, \frac{1}{2} g - \frac{1}{4}\right\}$
\end{itemize}

\bigskip
{\bf 8(1,1)b.}\; 

\begin{itemize}[label={}, itemsep = 1.2mm, leftmargin = 1.5em]
\item $\Q(\sqrt{13}), \;\; t^2 - t - 3,\;\; -\frac{40}{9}, \;\; \left\{\frac{4}{3}, \frac{8}{3}, \frac{5}{3}, \frac{4}{3} g - \frac{2}{3}\right\}$
\item $\Q(\sqrt{-2}), \;\; t^2 + 2,\;\; -\frac{10}{9}, \;\; \left\{\frac{2}{3}, \frac{1}{3} g, \frac{4}{3}, \frac{5}{3}\right\}$
\end{itemize}

\bigskip
{\bf 8(2)a.}\; 

\begin{itemize}[label={}, itemsep = 1.2mm, leftmargin = 1.5em]
\item $\Q(\sqrt{10}), \;\; t^2 - 10,\;\; -\frac{13}{9}, \;\; \left\{\frac{1}{3}, \frac{1}{3} g, \frac{4}{3}, \frac{5}{3}\right\}$
\item $\Q(\sqrt{-3}), \;\;t^2 - t + 1,\;\; -\frac{5}{12}, \;\; \left\{ \frac{2}{3} g - \frac{5}{6},  \frac{2}{3} g + \frac{1}{6}, \frac{1}{3} g  + \frac{5}{6},   \frac{1}{3} g - \frac{7}{6} \right\}$
\end{itemize}

\bigskip
{\bf 8(2)b.}\; 

\begin{itemize}[label={}, itemsep = 1.2mm, leftmargin = 1.5em]
\item $\Q(\sqrt{13}), \;\; t^2 - t - 3,\;\; -\frac{37}{9}, \;\; \left\{\frac{4}{3}, \frac{5}{3}, \frac{7}{3}, \frac{4}{3} g - \frac{2}{3}\right\}$
\item $\Q(\sqrt{-7}), \;\; t^2 -t + 2,\;\; -\frac{13}{16}, \;\;  \left\{\frac{1}{4}, \frac{3}{4}, \frac{1}{2} g - \frac{1}{4}, \frac{5}{4}\right\}$
\end{itemize}

\bigskip
{\bf 8(2,1,1).}\; 

\begin{itemize}[label={}, itemsep = 1.2mm, leftmargin = 1.5em]
\item $\Q(\sqrt{5}), \;\; t^2 - t - 1,\;\; -12, \;\; \left\{3, 3 g - 1, 3 g - 2, 4\right\}$
\item $\Q(\sqrt{-3}), \;\; t^2 - t + 1,\;\; \frac{7}{12}, \;\; \left\{\frac{2}{3} g + \frac{1}{6}, \frac{2}{3} g - \frac{5}{6}, \frac{4}{3} g - \frac{7}{6}, \frac{4}{3} g - \frac{1}{6}\right\}$
\end{itemize}

\bigskip
{\bf 8(3).}\; 

\begin{itemize}[label={}, itemsep = 1.2mm, leftmargin = 1.5em]
\item $\Q(\sqrt{5}), \;\; t^2 - t - 1,\;\; -\frac{29}{16}, \;\; \left\{\frac{1}{4}, \frac{5}{4}, \frac{3}{4}, \frac{7}{4}\right\}$
\item $\Q(\sqrt{-3}), \;\; t^2 -t + 1,\;\; -\frac{29}{16}, \;\; \left\{\frac{1}{4}, \frac{5}{4}, \frac{3}{4}, \frac{7}{4}\right\}$
\end{itemize}

\bigskip
{\bf 8(4).}\; 

\begin{itemize}[label={}, itemsep = 1.2mm, leftmargin = 1.5em]
\item $\Q(\sqrt{10}), \;\;t^2 - 10,\;\; -\frac{155}{72}, \;\; \left\{ \frac{1}{4} g - \frac{1}{6},   \frac{1}{4} g + \frac{1}{6},  \frac{1}{12} g - \frac{3}{2},   \frac{1}{12} g + \frac{3}{2}\right\}$
\item $\Q(\sqrt{-455}), \;\;t^2 - t + 114,\;\; \frac{199}{720},\;\;\left\{ \frac{1}{10}g+\frac{17}{60}, \frac{1}{15}g - \frac{47}{60},\frac{1}{10}g-\frac{23}{60},\frac{1}{15}g+\frac{43}{60}\right\}$
\end{itemize}

\bigskip
{\bf 9(2,1,1).}\; $\Q(\sqrt{5}), \;\;t^2 - t - 1, \;\;-2, \;\; \left\{0,  1, 2,  g,  g - 1\right\}$

\bigskip
{\bf 10(1,1)a.}\; $\Q(\sqrt{-7}), \;\;t^2 - t + 2,\;\; \frac{3}{16}, \;\; \left\{\frac{1}{4}, \frac{1}{2} g + \frac{1}{4}, \frac{1}{2} g - \frac{1}{4}, \frac{1}{2} g - \frac{3}{4}, \frac{3}{4}\right\}$

\bigskip
{\bf 10(1,1)b.}\; $\Q(\sqrt{17}), \;\;t^2 - t - 4, \;\;-\frac{1}{2} g - \frac{13}{16}, \;\; \left\{\frac{1}{4}, \frac{1}{2} g + \frac{3}{4}, \frac{3}{4}, \frac{1}{2} g - \frac{1}{4}, \frac{1}{2} g + \frac{1}{4}\right\}$

\newpage
\bigskip
{\bf 10(2).}\; 

\begin{itemize}[label={}, itemsep = 1.2mm, leftmargin = 1.5em]
\item $\Q(\sqrt{73}), \;\;t^2 - t - 18,\;\; \frac{1}{9} g - \frac{205}{144}, \;\; \left\{\frac{1}{6} g + \frac{1}{12}, \frac{1}{6} g - \frac{11}{12}, \frac{1}{6} g + \frac{7}{12}, \frac{1}{3} g - \frac{7}{12}, \frac{1}{3} g - \frac{1}{12} \right\}$
\item $\Q(\sqrt{-7}), \;\;t^2 - t + 2,\;\; -\frac{1}{2} g - \frac{5}{16}, \;\; \left\{\frac{1}{4}, \frac{1}{2} g - \frac{1}{4}, \frac{1}{2} g + \frac{1}{4}, \frac{3}{4}, \frac{1}{2} g + \frac{3}{4}\right\}$
\end{itemize}

\bigskip
{\bf 10(2,1,1)a.}\;

\begin{itemize}[label={}, itemsep = 1.2mm, leftmargin = 1.5em]
\item $\Q(\sqrt{17}), \;\; t^2 - t - 4,\;\; -\frac{273}{64}, \;\; \left\{\frac{11}{8}, \frac{13}{8}, \frac{19}{8}, \frac{5}{4} g - \frac{5}{8}, \frac{21}{8}\right\}$
\item $\Q(\sqrt{-1}), \;\;t^2 + 1,\;\; \frac{3}{8} g - \frac{1}{4}, \;\; \left\{\frac{3}{4} g + \frac{1}{4},  \frac{3}{4} g - \frac{3}{4},   \frac{1}{4} g - \frac{1}{4},  \frac{1}{4} g + \frac{3}{4},   \frac{1}{4} g - \frac{5}{4}\right\}$
\end{itemize}

\bigskip
{\bf 10(2,1,1)b.}\;

\begin{itemize}[label={}, itemsep = 1.2mm, leftmargin = 1.5em]
\item $\Q(\sqrt{13}), \;\;t^2 - t - 3, \;\;-\frac{10}{9}, \;\; \left\{\frac{2}{3}, \frac{4}{3}, \frac{5}{3}, \frac{1}{3} g - \frac{2}{3}, \frac{1}{3} g + \frac{1}{3} \right\}$
\item $\Q(\sqrt{-7}), \;\; t^2 -t + 2,\;\; -\frac{21}{16}, \;\; \left\{\frac{1}{4}, \frac{7}{4}, \frac{1}{2} g - \frac{1}{4}, \frac{3}{4}, \frac{5}{4}\right\}$
\end{itemize}

\bigskip
{\bf 10(3)a.}\; $\Q(\sqrt{41}), \;\;t^2 - t - 10,\;\; -\frac{29}{16}, \;\; \left\{\frac{1}{4}, \frac{5}{4}, \frac{3}{4}, \frac{1}{2} g - \frac{1}{4}, \frac{7}{4}\right\}$

\medskip
{\bf 10(3)b.}\; $\Q(\sqrt{57}), \;\;t^2 - t - 14, \;\;-\frac{29}{16}, \;\; \left\{  \frac{1}{4}, \frac{3}{4}, \frac{5}{4},  \frac{7}{4},  \frac{1}{2} g - \frac{1}{4}\right\}$

\bigskip
{\bf 10(3,1,1)}\; $\Q(\sqrt{337}), \;\;t^2 - t - 84, \;\;-\frac{301}{144}, \;\; \left\{\frac{5}{12}, \frac{19}{12}, \frac{23}{12}, \frac{1}{6}g + \frac{5}{12}, \frac{1}{6}g - \frac{7}{12} \right\}$

\bigskip
{\bf 10(3,2).}\; $\Q(\sqrt{193}), \;\;t^2 - t - 48,\;\; -\frac{301}{144}, \;\; \left\{\frac{5}{12}, \frac{19}{12}, \frac{23}{12}, \frac{1}{6} g + \frac{5}{12}, \frac{1}{6} g - \frac{7}{12} \right\}$

\medskip
{\bf 12(2).}\; $\Q(\sqrt{2}), \;\;t^2 - 2, \;\;-\frac{15}{8}, \;\; \left\{\frac{3}{4} g + \frac{1}{2}, \frac{3}{4} g - \frac{1}{2}, \frac{1}{4} g + \frac{1}{2}, \frac{1}{4} g - \frac{3}{2}, \frac{1}{4} g - \frac{1}{2}, \frac{1}{4} g + \frac{3}{2}\right\}$

\bigskip
{\bf 12(2,1,1)a.}\; $\Q(\sqrt{17}), \;\;t^2 - t - 4, \;\;-\frac{13}{16}, \;\; \left\{\frac{1}{4}, \frac{3}{4}, \frac{5}{4},  \frac{1}{2} g + \frac{1}{4}, \frac{1}{2} g - \frac{3}{4}, \frac{1}{2} g - \frac{1}{4}\right\}$

\bigskip
{\bf 12(2,1,1)b.}\; 

\begin{itemize}[label={}, itemsep = 1.2mm, leftmargin = 1.5em]
\item $\Q(\sqrt{33}), \;\; t^2 - t - 8,\;\; -\frac{45}{16}, \;\; \left\{\frac{3}{4}, \frac{9}{4},  \frac{5}{4}, \frac{1}{2} g - \frac{3}{4}, \frac{1}{2} g + \frac{1}{4}, \frac{1}{2} g - \frac{1}{4}\right\}$
\item $\Q(\sqrt{-7}), \;\;t^2 - t + 2, \;\;-\frac{5}{16}, \;\; \left\{\frac{1}{4}, \frac{3}{4}, \frac{5}{4},  \frac{1}{2} g + \frac{1}{4}, \frac{1}{2} g - \frac{3}{4}, \frac{1}{2} g - \frac{1}{4}\right\}$
\end{itemize}

\medskip
{\bf 12(3).}\; $\Q(\sqrt{73}), \;\;t^2 - t - 18, \;\;-\frac{301}{144}, \;\; \left\{\frac{1}{6} g - \frac{1}{12},  \frac{5}{12}, \frac{19}{12}, \frac{1}{3} g + \frac{1}{12}, \frac{1}{3} g - \frac{5}{12}, \frac{23}{12}\right\}$

\bigskip
{\bf 12(4).}\; $\Q(\sqrt{105}), \;\;t^2 - t - 26,\;\; -\frac{95}{48}$, 
$\;\; \left\{\frac{1}{6} g - \frac{13}{12}, \frac{1}{6} g + \frac{11}{12}, \frac{1}{3} g - \frac{5}{12}, \frac{1}{6} g + \frac{5}{12}, \frac{1}{6} g - \frac{7}{12}, \frac{1}{3} g + \frac{1}{12}\right\}$

\bigskip
{\bf 12(4,2).}\; $\Q(\sqrt{-15}), \;\;t^2 - t + 4,\;\; -\frac{31}{48},$
$ \;\; \left\{ \frac{1}{3} g + \frac{1}{12}, \frac{1}{6} g - \frac{13}{12}, \frac{1}{3} g - \frac{5}{12}, \frac{1}{6} g + \frac{5}{12}, \frac{1}{6} g - \frac{7}{12},  \frac{1}{6} g + \frac{11}{12}\right\}$

\bigskip
{\bf 12(6).}\; $\Q(\sqrt{33}), \;\;t^2 - t - 8, \;\;-\frac{71}{48}$,
$ \;\; \left\{\frac{1}{6} g - \frac{13}{12}, \frac{1}{6} g - \frac{7}{12}, \frac{1}{3} g - \frac{5}{12}, \frac{1}{6} g + \frac{5}{12}, \frac{1}{3} g + \frac{1}{12}, \frac{1}{6} g + \frac{11}{12}\right\}$

\bigskip
{\bf 14(2,1,1).}\; $\Q(\sqrt{17}), \;\;t^2 - t - 4,\;\; -\frac{21}{16}, \;\; \left\{\frac{1}{4},  \frac{3}{4}, \frac{5}{4}, \frac{7}{4}, \frac{1}{2} g - \frac{1}{4}, \frac{1}{2} g - \frac{3}{4}, \frac{1}{2} g + \frac{1}{4} \right\}$

\bigskip
{\bf 14(3,1,1).}\; $\Q(\sqrt{33}), \;\;t^2 - t - 8, \;\;-\frac{29}{16}, \;\; \left\{\frac{1}{4}, \frac{5}{4}, \frac{3}{4}, \frac{1}{2} g - \frac{3}{4}, \frac{1}{2} g + \frac{1}{4}, \frac{1}{2} g - \frac{1}{4}, \frac{7}{4}\right\}$

\bigskip
{\bf 14(3,2).}\; $\Q(\sqrt{17}), \;\;t^2 - t - 4,\;\; -\frac{29}{16}, \;\; \left\{\frac{1}{4}, \frac{5}{4}, \frac{3}{4}, \frac{1}{2} g - \frac{1}{4}, \frac{1}{2} g - \frac{3}{4}, \frac{1}{2} g + \frac{1}{4}, \frac{7}{4}\right\}$

\bigskip
\printbibliography[title=References]

\bigskip
\end{document}